\newtheorem{thm}{Theorem}[section]
\newtheorem{lem}[thm]{Lemma}
\newtheorem{cor}[thm]{Corollary}
\newtheorem{prop}[thm]{Proposition}
\theoremstyle{definition}
\newtheorem{defin}[thm]{Definition}
\newtheorem{ex}[thm]{Example}
\newtheorem{exc}[thm]{Exercise}
\newtheorem*{defn}{Definition}          
\newtheorem*{prob}{Problem}    
\newtheorem*{rem}{Remark}
\newtheorem{remark}[thm]{Remark}
\newcommand{\N}{\mathbb{N}}
\newcommand{\Z}{\mathbb{Z}}
\renewcommand{\P}{\mathbb{P}}
\newcommand{\R}{\mathbb{R}}
\newcommand{\E}{\mathbb{E}}
\newcommand{\G}{\mathbb{G}}
\newcommand{\cA}{\mathcal A}
\newcommand{\cF}{\mathcal F}
\newcommand{\cL}{\mathcal L}
\newcommand{\cM}{\mathcal M}
\newcommand{\cP}{\mathcal P}
\newcommand{\cT}{\mathcal T}
\newcommand{\supp}{\text{\rm supp}}
\newcommand{\spa}{\text{\rm span}}
\newcommand{\diam}{\text{\rm diam}}
\newcommand{\dist}{\text{\rm dist}}
\newcommand{\length}{\text{\rm length}}
\newcommand{\Lip}{\text{\rm Lip}}
\newcommand{\Leaf}{\text{\rm Leaf }}
\newcommand{\sign}{\text{\rm sign}}
\newcommand{\Wa}{\text{\rm Wa}}
 \newcommand{\kin}{\!\in\!}
\newcommand{\kplus}{\!+\!}
\newcommand{\keq}{\!=\!}
\newcommand{\kle}{\!<\!}
\newcommand{\ksubset}{\!\subset\!}
\newcommand{\vp}{\varepsilon}
\newcommand{\ie}{{\it i.e.,}} 
\newcommand{\DS}{\text{\rm DS}}
\newcommand{\tc}{\text{\rm tc}}
\newcommand{\Per}{\text{\rm Per}}
\newcommand{\VF}{\text{\rm VF}}
\newcommand{\CVF}{\text{\rm CVF}}
\newcommand{\la}{\langle}
\newcommand{\ra}{\rangle}
\begin{document}
\thanks{The author was supported by the National Science Foundation under Grant Number DMS-2054443.}
\subjclass[2010]{46B85, 68R12, 46B20}]
\keywords{Transportation Cost Spaces, Lipschitzfree Spacs, $L_1$-distortion}

\title{Transportation cost spaces and their embeddings into $L_1$, a Survey}

\author{Th.~Schlumprecht}
\address{Department of Mathematics, Texas A\&M University, College
  Station, TX 77843, USA and Faculty of Electrical Engineering, Czech
  Technical University in Prague, Zikova 4, 166 27, Prague, Czech
  Republic}
\begin{abstract}These notes present a basic survey on Transportation cost spaces (aka Lipschitzfree spaces, Wasserstein spaces) and their 
bi-Lipschitz and linear embeddings into $L_1$ spaces. To make these notes as self-contained as possible, we added the proofs of several relevant results from computational graph theory in the appendix.
\end{abstract}
\maketitle

\tableofcontents
\allowdisplaybreaks
\section{Introduction}
Transportation cost spaces are of high theoretical interest,  and they also are fundamental in applications in many areas of applied mathematics, engineering, physics, computer science, finance, social sciences, and more. For this reason, they appear under several different other  names in the literature:
 {\em Lipschitz  Free Spaces},
 {\em Wasserstein Spaces}  or more precisely  {\em Wasserstein 1-Spaces},
 {\em Arens Eals spaces},
 {\em Earthmover spaces}.
 
Depending on the context, we will use the names {\em Transportation cost space} and {\em Lipschitz free space}. 
 If $\sigma$ and $\tau $ are two probabilities on a metric space $(M,d)$, both having finite support, then the difference $\sigma-\tau$ can be seen as an element of the Lipschitz free space  $\cF(M)$
  of $(M,d)$. Thus $d_{\Wa}(\sigma,\tau) = \|\sigma-\tau\|$ is a metric on the space of probabilities on $M$ with finite support. We will call that metric space 
  {\em Wasserstein space of $M$}.
  
 We investigate the relationship between the  Transportation cost space $\cF(M)$ over a metric space $(M,d)$ and $L_1$ spaces; in particular, we concentrate on the question of how 
  well $\cF(M)$ is embeddable into an $L_1$. We will mostly consider finite metric spaces. 
  
  For better readability, we tried to make these notes as self-contained as possible and added the proof of some results from computational graph theory in the appendix.

\section{Basic facts about Lipschitz free spaces}
In this section, we will introduce the {\em Lipschitz free space } $\cF(M)$ over a metric space $(M,d)$ and only concentrate on the very basic facts, which we will need later.
 For a more comprehensive account of the properties of $\cF(M)$, we refer the reader to the papers \cite{Kalton2004,GodefroyKalton2003}.
  For two metric spaces $(M,d_M)$ and $(N,d_N)$, and a map $\phi$ we denote the  Lipschitz constant by $\Lip(\phi)$, \ie
  $$\Lip(\phi)=\sup_{m,m'\in M, m\not=m'} \frac{  d_N(\phi(m),\phi(m')}{d_M(m,m')}.$$

 Let $(M,d)$ be a metric space with a special point $0$. Let $\Lip_0(M)$ be the Banach space of Lipschitz functions $f$ on $M$ 
   with value in $\R$, for which $f(0)=0$. In that case $\Lip(\cdot)$ becomes a norm on $\Lip_0(M)$
   and we write 
   for $f\in \Lip_0(M)$
   $$\|f\|_{\Lip}= \Lip(f)= \sup_{x\not =y, x,y\in M} \frac{|f(y)-f(x)|}{d(x,y)}.$$
      
\begin{prop}\label{P:6.1.2} Assume that $M$ is a metric space and $X$  a Banach space. Then 
$\|\cdot\|_L$ is a norm on $\Lip_0(M,X)$ which turns $\Lip_0(M,X)$ into a Banach space.
\end{prop}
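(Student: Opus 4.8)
The plan is to verify the vector-space norm axioms first and then establish completeness via the standard pointwise-limit argument, using that $X$ is complete. Throughout I write $\|\cdot\|_L$ for the Lipschitz (constant) norm on $\Lip_0(M,X)$, i.e. the supremum of the difference quotients $\|f(x)-f(y)\|_X/d(x,y)$ over $x\neq y$ in $M$.

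Positive homogeneity $\|\lambda f\|_L=|\lambda|\,\|f\|_L$ and the triangle inequality $\|f+g\|_L\le\|f\|_L+\|g\|_L$ follow immediately from this definition as a supremum of difference quotients together with the corresponding properties of $\|\cdot\|_X$; that these quotients are bounded, so that $\|\cdot\|_L$ is finite-valued, is exactly the membership condition defining $\Lip_0(M,X)$, which is plainly a linear subspace of $X^M$. For definiteness, if $\|f\|_L=0$ then $f(x)=f(y)$ for all $x,y\in M$, so $f$ is constant, and since $f(0)=0$ this forces $f\equiv 0$.

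For completeness, let $(f_n)$ be Cauchy in $(\Lip_0(M,X),\|\cdot\|_L)$. First I would observe that for each fixed $x\in M$,
$$\|f_n(x)-f_m(x)\|_X=\|(f_n-f_m)(x)-(f_n-f_m)(0)\|_X\le\|f_n-f_m\|_L\,d(x,0),$$
so $(f_n(x))_n$ is Cauchy in $X$ and, by completeness of $X$, converges to an element I call $f(x)$; this defines a map $f\colon M\to X$ with $f(0)=0$. Since a Cauchy sequence is bounded, set $C:=\sup_n\|f_n\|_L<\infty$; then for $x\neq y$ we get $\|f(x)-f(y)\|_X=\lim_n\|f_n(x)-f_n(y)\|_X\le C\,d(x,y)$, so $f\in\Lip_0(M,X)$ with $\|f\|_L\le C$. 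Finally, to see $\|f_n-f\|_L\to 0$, fix $\vp>0$ and choose $N$ with $\|f_n-f_m\|_L<\vp$ for $n,m\ge N$; for fixed $x\neq y$ and $n\ge N$ we have $\|(f_n-f_m)(x)-(f_n-f_m)(y)\|_X\le\vp\,d(x,y)$ for every $m\ge N$, and letting $m\to\infty$ (using continuity of $\|\cdot\|_X$) gives $\|(f_n-f)(x)-(f_n-f)(y)\|_X\le\vp\,d(x,y)$; taking the supremum over $x\neq y$ yields $\|f_n-f\|_L\le\vp$ for $n\ge N$.

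The computations are entirely routine, and I do not anticipate a genuine obstacle; the only point deserving a little care is the order of operations in the last step, where one must pass to the pointwise limit $m\to\infty$ \emph{before} taking the supremum over pairs $(x,y)$, and similarly one needs boundedness of the Cauchy sequence (not merely its pointwise convergence) to ensure the limit function has finite Lipschitz constant.
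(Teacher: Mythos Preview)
Your argument is correct and is the standard textbook proof. The paper states this proposition without proof, treating it as a well-known fact, so there is nothing to compare against; your write-up would serve perfectly well as the omitted verification.
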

      
    Let $\cM(M)$
   be the space of linear combinations of Dirac measures  $\mu$ on $M$.
 The elements of  $\cM(M)$  can be seen as elements of the dual of $\Lip_0(M)$ 
 and we define 
 $\cF(M)$ to be closure of $\cM(M)$ in $\Lip^*_0(M)$. Thus $\cF(M)$ is the completion of $\cM_0(M)$ with respect to  the norm 
  $$\|\mu\|_\cF= \sup_{\substack{ f\in \Lip_0(M)\\ \|f\|_\Lip\le 1}} \int f(x) \,d\mu(x)= \sup_{\substack{ f\in \Lip_0(M)\\ \|f\|_\Lip\le 1} } \sum_{j=1}^n a_j f(x_j)\text{ for $\mu=\sum_{j=1}^n a_j \delta_{x_j}\in \cM(M)$}.$$
  
  $\cF(M)$ can be seen as a  ``linearization of $M$''  as the following observation suggests.
  \begin{prop}\label{P:6.1.4}  Let $M$ be a metric space. The map $\delta_M: M\to \cF(M)$, $m\mapsto \delta_m$,  is an isometry. 

We will, from now on, identify elements of $M$ with their image in $\cF(M)$ under $\delta_M$.
\end{prop}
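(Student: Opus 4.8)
The plan is to verify directly that $\|\delta_m-\delta_{m'}\|_\cF=d(m,m')$ for every pair $m,m'\in M$. Since the right-hand side is strictly positive when $m\neq m'$, this simultaneously gives injectivity of $\delta_M$, so the statement ``$\delta_M$ is an isometry'' is established in the sense of distance preservation. (One should keep in mind that $\delta_M$ is not norm-preserving: $\delta_0$ represents the zero functional on $\Lip_0(M)$, because every $f\in\Lip_0(M)$ vanishes at $0$.) As a preliminary remark I would note that $\delta_m$ genuinely defines an element of $\cF(M)$: it is the Dirac measure at $m$, hence lies in $\cM(M)$, which sits inside $\Lip_0(M)^*$ and whose closure is $\cF(M)$.

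For the inequality $\|\delta_m-\delta_{m'}\|_\cF\le d(m,m')$ I would simply unwind the definition of the $\cF(M)$-norm: the functional $\delta_m-\delta_{m'}$ acts on $\Lip_0(M)$ by $f\mapsto f(m)-f(m')$, and whenever $\|f\|_\Lip\le 1$ we have $|f(m)-f(m')|\le d(m,m')$ straight from the definition of the Lipschitz constant. Taking the supremum over all such $f$ yields the bound.

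For the reverse inequality the key step is to exhibit a norming function. I would take $f(x)=d(x,m')-d(0,m')$; subtracting the constant $d(0,m')$ is forced on us only so that $f(0)=0$, i.e. $f\in\Lip_0(M)$, and this shift affects neither $\|f\|_\Lip$ nor the difference $f(m)-f(m')$. The triangle inequality $|d(x,m')-d(y,m')|\le d(x,y)$ gives $\|f\|_\Lip\le 1$, while $f(m)-f(m')=d(m,m')-d(m',m')=d(m,m')$, so $\|\delta_m-\delta_{m'}\|_\cF\ge f(m)-f(m')=d(m,m')$.

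There is no serious obstacle in this argument; the only point requiring (minor) care is the basepoint normalization just discussed, namely that one must use the shifted distance function $x\mapsto d(x,m')-d(0,m')$ rather than $x\mapsto d(x,m')$ in order to land in $\Lip_0(M)$, the additive constant being harmless for everything that matters. Combining the two inequalities gives $\|\delta_m-\delta_{m'}\|_\cF=d(m,m')$, which completes the proof.
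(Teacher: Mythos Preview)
Your proof is correct and follows essentially the same approach as the paper: both establish the upper bound directly from the definition of the Lipschitz norm and obtain the lower bound by testing against a shifted distance function. The paper uses $f_m(x)=d(m,x)-d(m,0)$ (distance to $m$) whereas you use $f(x)=d(x,m')-d(0,m')$ (distance to $m'$), but these play identical roles by symmetry.
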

\begin{proof} For $m,m'\in M$ 
$$\|\delta_m-\delta_{m'}\|_{\Lip^*}=\sup_{f\in \Lip_0(M),  \|f\|_L\le 1} \|f(m)-f(m')\|\le d(m,m').$$
On the other hand, define for $m\in M$
$$f_m(m')= d(m,m')- d(m,0), \text{ for $m'\in M$}.$$
then $f \in \Lip_0(M)$ with $\|f\|_L=1$, and 
$$\la  \delta_{m'}-\delta_{m}, f\ra = f(m')-f(m)=d(m',m),$$
and thus $\|\delta_m-\delta_{m'} \|_{L^*}\ge d(m,m')$.
\end{proof}
 \begin{remark} The  measure $\delta_0$, as elment of $\Lip_0^*(M)$ is the $0$-functional. Therefore the family  $(\delta_m:m\in M)$ is not linear independent in $\Lip_0^*(M)$.
 But it is easy to see that the family  $(\delta_m:m\in M\setminus\{0\})$ is linear independent, and the linear span of it is dense in $\cF(M)$.
 
 Let $\cM_0(M)$ be the  elements $\mu\in \cM(M)$ for which $\mu(M)=0$.
 After adding an appropriate  multiple of $\delta_0$  to $\mu\in \cM$,  which will not change how $\mu$ acts on elements of $\Lip_0(M)$, we can assume that $\mu\in \cM_0(M)$ and thus
 we also can see $\cF(M)$  as the closure of $\cM_0(M)$ in $\Lip^*_0(M)$.
 \end{remark}

 We will show    (cf.  \cite{Weaver1999}*{Section 2}) that  $\Lip_0(M)$ is, in a natural way, isometrically equivalent to the dual of $\cF(M)$.
 The following observation is easy to see and crucial.
 
%
%
%
%
%

\begin{prop}\label{P:6.1.1} Assume that 
$(f_i)_{i\in I}$ is a net in $ \Lip_0(M)$, with $\|f\|_{\Lip}\le 1$, for $i\in I$ and   $f(m)=\lim_{i\in I} f_i(m)$ exists for every $m\in M$, then 
$f\in  \Lip_0(M)$, and $\|f\|_{\Lip}\le 1$.
\end{prop}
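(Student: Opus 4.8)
The plan is to read off the two defining requirements for membership in $\Lip_0(M)$ directly from the hypothesis of pointwise convergence, using only the elementary fact that every inequality satisfied by all the $f_i$ passes to the limit in $\R$. There is no need to invoke completeness or any weak$^*$ compactness here, since the limiting values $f(m)$ are assumed to exist.

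First I would check that $f$ respects the basepoint: each $f_i$ lies in $\Lip_0(M)$, so $f_i(0)=0$, and hence the scalar net $(f_i(0))_{i\in I}$ is identically $0$, giving $f(0)=\lim_{i\in I}f_i(0)=0$. Next, fix two distinct points $m,m'\in M$. For every index $i$ we have $|f_i(m)-f_i(m')|\le \Lip(f_i)\,d(m,m')\le d(m,m')$. Since $f_i(m)\to f(m)$ and $f_i(m')\to f(m')$, the scalar net $|f_i(m)-f_i(m')|$ converges to $|f(m)-f(m')|$; as each of its terms is bounded above by the fixed number $d(m,m')$, so is the limit, i.e. $|f(m)-f(m')|\le d(m,m')$. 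Dividing by $d(m,m')$ and taking the supremum over all pairs $m\ne m'$ yields $\|f\|_{\Lip}\le 1$, and in particular $f\in\Lip_0(M)$, as claimed.

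This argument has essentially no obstacle; the only two points deserving a word of care are (i) that the limit of a convergent net of reals, each of which is $\le c$, is again $\le c$ — which uses only that the half-line $(-\infty,c]$ is closed, not any completeness of $\R$ — and (ii) that one first passes to the pointwise limit on each fixed pair $(m,m')$ and only afterwards takes the supremum over pairs, since a priori the convergence $f_i\to f$ need not be uniform on $M$. For a finite (or bounded) $M$ these subtleties are invisible, but the proof as written works verbatim for arbitrary $M$.
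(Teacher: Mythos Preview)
Your proof is correct and is exactly the standard argument; the paper itself does not spell out a proof, labeling the proposition as ``easy to see and crucial,'' so your write-up supplies precisely what was left implicit.
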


\begin{thm}\label{T:6.2.4} 
Let $M$ be a metric space and $Z$ a Banach space then the canonical map 
$$\Lip_0(M)\to \cF^*(M), \quad f\mapsto \chi_f, \text{ with }\chi_f(\mu)=\la \mu, f\ra ,\text{ for $\mu\in \cF(M)$}$$ 
is an isometry  from $\Lip_0(M)$ onto $\cF^*(M)$.

\end{thm}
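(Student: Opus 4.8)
The plan is to establish three things separately: that $f \mapsto \chi_f$ is well defined with $\|\chi_f\|_{\cF^*(M)} \le \|f\|_\Lip$, that the reverse inequality holds (so the map is an isometry, in particular injective), and that the map is onto.

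For the upper estimate I would fix $f \in \Lip_0(M)$; linearity of $\chi_f$ on $\cM(M)$ is immediate, and for $\mu \in \cM(M)$ the defining formula for $\|\mu\|_\cF$ yields $|\langle \mu, f\rangle| \le \|f\|_\Lip\,\|\mu\|_\cF$ — here one uses $\|{-f}\|_\Lip = \|f\|_\Lip$ to pass from the signed supremum in the definition of the $\cF$-norm to the absolute value. Since $\cM(M)$ is dense in $\cF(M)$, $\chi_f$ extends uniquely to a bounded functional on $\cF(M)$ with $\|\chi_f\|_{\cF^*(M)} \le \|f\|_\Lip$. For the lower estimate I would invoke Proposition \ref{P:6.1.4}: for $x \ne y$ in $M$ the element $\mu_{x,y} = (\delta_x - \delta_y)/d(x,y) \in \cF(M)$ has norm $1$ and $\chi_f(\mu_{x,y}) = (f(x)-f(y))/d(x,y)$, so taking the supremum over $x \ne y$ gives $\|\chi_f\|_{\cF^*(M)} \ge \|f\|_\Lip$. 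Combining the two inequalities shows the map is a linear isometry.

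For surjectivity I would take $\phi \in \cF^*(M)$ and define $f\colon M \to \R$ by $f(m) = \phi(\delta_m)$. Then $f(0) = \phi(\delta_0) = 0$ because $\delta_0$ is the zero element of $\cF(M)$ (as noted in the Remark), and $|f(m) - f(m')| = |\phi(\delta_m - \delta_{m'})| \le \|\phi\|\,d(m,m')$ by Proposition \ref{P:6.1.4}, so $f \in \Lip_0(M)$ with $\|f\|_\Lip \le \|\phi\|$. The functionals $\chi_f$ and $\phi$ then agree on every Dirac measure, hence on the dense linear span $\cM(M) \subseteq \cF(M)$, and both are continuous, so $\chi_f = \phi$.

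I do not expect a substantial obstacle; the only points needing care are the signed-versus-absolute-value passage in the supremum defining $\|\cdot\|_\cF$, the fact that $\delta_0$ is the zero element, and upgrading agreement on Dirac measures to equality of functionals via density of $\cM(M)$ and continuity. Proposition \ref{P:6.1.1} is not required for this direct argument, though it would be the essential ingredient for the alternative route of identifying $\cF(M)$ as an isometric predual of $\Lip_0(M)$ through a weak$^*$-compactness argument.
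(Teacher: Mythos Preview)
Your argument is correct. The three steps are all sound: the upper bound is immediate from the definition of $\|\cdot\|_\cF$, the lower bound follows from testing on normalized molecules via Proposition~\ref{P:6.1.4}, and surjectivity follows cleanly by pulling back $\phi\in\cF^*(M)$ to the function $f(m)=\phi(\delta_m)$ and invoking density of $\cM(M)$.

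The paper, however, proceeds differently: it appeals to the Dixmier--Ng criterion (Theorem~\ref{T:6.2.5}), taking $U=\Lip_0(M)$ and $V=\cF(M)\subset U^*$, and verifies that $B_{\Lip_0(M)}$ is $\sigma(\Lip_0(M),\cF(M))$-compact. That compactness is obtained by passing to a pointwise-convergent subnet via the product $\prod_{m\in M}[0,d(0,m)]$ and then using Proposition~\ref{P:6.1.1} to ensure the limit stays in the unit ball. Your direct route is more elementary and avoids both Proposition~\ref{P:6.1.1} and the Dixmier theorem entirely; its cost is that it does not, as a byproduct, establish the $\sigma(\Lip_0(M),\cF(M))$-compactness of the unit ball, which the paper's argument gives for free and which is exactly the ingredient needed later when one wants to recognize maps as $w^*$-continuous (as in the proof of Proposition~\ref{P:6.2.9}). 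You anticipated this trade-off in your final paragraph.
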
 

We will need the following well known result by Dixmier:
   
\begin{thm}{\rm \cite{Dixmier1948,Ng1971} }\label{T:6.2.5}
Assume that $U$ is a Banach space and $V$ is a closed subspace of $U^*$ so that
   $B_U$, the unit ball in $U$, is compact in the topology $\sigma(U,V)$, the topology on $U$ generated by $V$.

Then $V^*$ is isometrically isomorphic to $U$ and the map
$$T: U\to V^*,\quad u\mapsto F_u, \text{ with $F_u(v)=\la u,v\ra $, for $u\in U$ and $v\in V$,}$$
is an isometrical isomorphism onto $V^*$.
\end{thm}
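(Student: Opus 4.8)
\medskip
\noindent\textbf{Proof proposal.} The plan is to show that $T$ is a linear, injective, norm-one contraction that maps $B_U$ \emph{onto} $B_{V^*}$; these facts together force $T$ to be an isometric isomorphism. Linearity is clear, and $\|F_u\|_{V^*}=\sup_{v\in B_V}|\la u,v\ra|\le\sup_{v\in B_{U^*}}|\la u,v\ra|=\|u\|_U$ gives the contraction estimate. For injectivity I would use that $V$ separates the points of $U$ — equivalently, that $\sigma(U,V)$ is Hausdorff — which is part of the hypothesis in its intended reading and is transparent in the application $U=\Lip_0(M)$, $V=\cF(M)$ (test $f$ against $\delta_m-\delta_{m'}$): then $F_u=0$ forces $\la u,v\ra=0$ for all $v\in V$, hence $u=0$. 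Granting $T(B_U)=B_{V^*}$, $T$ is a linear bijection carrying the closed unit ball onto the closed unit ball, so for every $u$
$$\|Tu\|_{V^*}=\inf\{\lambda>0:Tu\in\lambda B_{V^*}\}=\inf\{\lambda>0:Tu\in T(\lambda B_U)\}=\inf\{\lambda>0:u\in\lambda B_U\}=\|u\|_U,$$
the middle step using injectivity; thus $T$ is an isometry onto $V^*$.

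So the real work is the identity $T(B_U)=B_{V^*}$, and this is where the compactness assumption enters. The key observation is that, by the very definitions of the topologies involved, $T$ maps $\sigma(U,V)$-convergent nets to $\sigma(V^*,V)$-convergent (weak$^*$-convergent) nets, so $T\colon(B_U,\sigma(U,V))\to(V^*,w^*)$ is continuous; since $B_U$ is $\sigma(U,V)$-compact, $K:=T(B_U)$ is a weak$^*$-compact — hence weak$^*$-closed — convex, balanced subset of $B_{V^*}$. If some $\phi\in B_{V^*}$ were not in $K$, I would separate it from $K$ by the Hahn--Banach theorem inside the locally convex space $(V^*,w^*)$, whose topological dual is exactly $V$: this yields $v\in V$ with $\sup_{\psi\in K}\operatorname{Re}\la\psi,v\ra<\operatorname{Re}\la\phi,v\ra$. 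But $\sup_{\psi\in K}\operatorname{Re}\la\psi,v\ra=\sup_{u\in B_U}\operatorname{Re}\la u,v\ra=\|v\|_{U^*}$, whereas $\operatorname{Re}\la\phi,v\ra\le\|\phi\|_{V^*}\,\|v\|_V\le\|v\|_V=\|v\|_{U^*}$, because $V$ carries the norm inherited from $U^*$ and $\|\phi\|_{V^*}\le1$ — a contradiction. Hence $K=B_{V^*}$.

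I expect the surjectivity step to be the main obstacle: one must correctly identify $V$ as the dual of $(V^*,w^*)$ so that the separating functional really comes from $V$, and one must keep careful track of the fact that the norm on $V$ is the one inherited from $U^*$, so that $\sup_{u\in B_U}\la u,v\ra$ genuinely equals $\|v\|_V$. A lesser point that still needs a clean statement is the injectivity/separation remark above — without it $T$ may fail to be injective, hence fail to be an isometry, even though $T(B_U)=B_{V^*}$ still holds. An equivalent route to surjectivity, avoiding the weak$^*$ separation, is to check that for each finite $F\subset V$ the ``moment set'' $K_F=\{u\in B_U:\la u,v\ra=\phi(v)\text{ for all }v\in F\}$ is nonempty (a finite-dimensional separation argument applied to $E=\spa F\subseteq V$) and $\sigma(U,V)$-closed, observe that the family $\{K_F\}$ has the finite intersection property, and invoke $\sigma(U,V)$-compactness of $B_U$ to produce a point of $\bigcap_FK_F$, which is then the required preimage of $\phi$; the two arguments are essentially the same computation packaged differently.
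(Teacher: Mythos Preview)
The paper does not supply its own proof of this theorem; it is quoted with citations to Dixmier and Ng and then invoked as a black box to show $\Lip_0(M)\cong\cF(M)^*$. Your argument is correct and is essentially the standard one (closest to Ng's formulation): $T$ is a contraction, $T(B_U)$ is $\sigma(V^*,V)$-compact convex balanced, a Hahn--Banach separation in $(V^*,\sigma(V^*,V))$ --- whose continuous dual is $V$ --- rules out any $\phi\in B_{V^*}\setminus T(B_U)$, and injectivity together with $T(B_U)=B_{V^*}$ then forces the isometry. Your caveat about needing $V$ to separate points of $U$ is well placed: as literally written the hypothesis permits $V=\{0\}$, where the conclusion fails; one either reads ``compact'' as ``compact Hausdorff'' or adds the separation hypothesis explicitly, and in the intended application $V=\cF(M)$ this is automatic.
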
 

\begin{proof}[Proof of Theorem \ref{T:6.2.4}] We will verify that the assumptions of Theorem \ref{T:6.2.5} hold for $U=\Lip_0(M)$ and $V=\cF(M)\subset \Lip^*_0(M)$.

Let $(f_i)_{i\in I}\subset B_{ \Lip_0(M)}$  be a net in $\subset B_{ \Lip_0(M)}$. We have to show that there is a subnet $(g_j)_{j\in J}$ of $(f_i)_{i\in I }$ 
 which converges to some element  $f\in B_{\Lip_0(M)}$ with respect to $\sigma\big(\Lip_0(M), \cF(M)\big)$. Let
  $$K=\prod_{m\in M} [0, d(0,m)]$$
 which is compact in the product topology.  It follows  that  some  subnet $(g_j)_{j\in J}$ of $(f_i)_{i\in I }$  is pointwise converging to $g\in K$, and  from
 Proposition  \ref{P:6.1.1}  it follows that  $g\in \Lip_0(M)$, and $\|g\|_{\Lip_0(M)}\le 1$.
  Thus $\la\mu, g_j\ra$ converges to $\la \mu, g\ra$ for each $\mu$ in the linear span of $(\delta_m:m\in M)$, and since  $(f_i)_{i\in I}$ is bounded it follows that 
   $\la\mu, g_j\ra$ converges to $\la \mu, g\ra$ for all $\mu\in \cF(M)$. We deduce therefore our claim from Theorem \ref{T:6.2.5}.
 \end{proof}

\subsection{Example}
\begin{ex}\label{Ex:1.6} Let $M=\R$. We want to find a concrete representation of  the space $\cF(\R)$. 
 Every Lipschitz function on $\R$  is absolutely continuous and   thus, by the Fundamental Theorem of Calculus \cite[Theorem 3.35]{Folland1999}
 almost everywhere differentiable. Moreover the derivative 
 $$D: \Lip_0(\R)\to L_\infty(\R), \quad  f\mapsto f'$$
 is an isometry whose inverse is 
 $$I: L_\infty(\R)\to \Lip_0(\R),\quad  f\mapsto F,\text{ with }F(x)=\int_0^x  f(t)\, dt\text{ for $x\in \R$.}$$
 We claim that the map
  $\delta_x\mapsto 1_{[0,x]}$, for $x\ge 0$ and  $\delta_x\mapsto 1_{[x,0]}$, if $x<0$, extends to an isometric isomorphism between  $\cF(\R)$ and $L_1(\R)$.
  
  Indeed, we represent the span of the $\delta_x$, $x\in \R$ by 
  $$\tilde\cF=\left\{ \sum_{j=-m}^{n-1} \xi_j (\delta_{x_{j+1}} -\delta_{x_{j}}): \begin{matrix} m,n\kin\N,\, x_{-m}\kle x_{-m+1}\kle\ldots\kle x_{-1}\kle x_0\keq0\kle x_1\kle\ldots x_n\\  \text{ and $(\xi_j)_{j=-m}^n\subset \R$}\end{matrix} \right\}.$$ 
  Then, we consider the map 
  $$T:\tilde \cF \to L_1(\R),\quad \sum_{j=-m}^{n-1} \xi_j (\delta_{x_{j+1}} -\delta_{x_j})\mapsto\sum_{j=-m}^{n-1} \xi_j 1_{[x_j, x_{j+1})}.$$
  Since $T$ has a dense image, we only need to show that $T$ is an isometry.

 For $\mu=    \sum_{j=-m}^{n-1} \xi_j (\delta_{x_{j+1}}-\delta_{x_j} )\in\tilde\cF$ 
  it follows that
  \begin{align*}
 \big\|T(\mu)\|&= \sup_{f\in L_\infty(\R), \|f\|_\infty\le 1}  \Bigg| \sum_{j=-m}^{n-1} \xi_j \int_{x_{j}}^{x_{j+1}} f(t)\, dt \Bigg|\\
 &= \sup_{f\in L_\infty(\R), \|f\|_\infty\le 1}   \sum_{j=-m}^{n-1}\xi_j\big( I(f)(x_{j+1})- I(f)(x_j)\big)\\
 &=\sup_{F\in \Lip_0(\R), \|f\|_\infty\le 1}   \sum_{j=-m}^{n-1}\xi_j\big( F(x_{j+1})- F(x_j)\big)= \|\mu\|_{\Lip^*},
 \end{align*}
  which verifies our claim.
\end{ex}
 \begin{ex}\label{Ex:6.2.7} Similarly the following can be shown. If $M=\Z$ with its usual metric, then
 $$T: \Lip_0(\Z)\to \ell_\infty(\Z),   \,\, f\mapsto x_f, \text{ with } x_f(n)=f(n)-f(n-1), \,\, \text{ for } n\kin \Z,$$
 is an isometric isomorphism onto $\ell_\infty(\Z)$, and it is the adjoint 
 of the operator 
 $$S:\ell_1(\Z) \to \cF(\Z), \quad e_n \mapsto \delta_{n}-\delta_{n-1},$$
 which is also an isometric isomorphism.  
 \end{ex}
 \subsection{The Universality property of $\cF(M)$}
 An important property is the following extension property of $\cF(M)$. 
 \begin{prop}\label{P:6.2.9} Let $M$ and $N$ be metric spaces, with special $0$-points $0_M$ and $0_N$. Then every  Lipschitz map $\varphi:M\to N$, with $\varphi(0_M)=0_N$, can be extended
 to a linear  bounded map $\hat \varphi:  \cF(M)\to \cF(N)$ (meaning that $\hat\varphi(\delta_m)=\delta_{\varphi(m)}$, for $m\in M$) so that for the operator norm of $\hat\varphi$ we have
 $$\|\hat\varphi\|_{\cF(M)\to \cF(N)} \le \Lip(\varphi).$$
 \end{prop}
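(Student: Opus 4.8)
The plan is to use the duality $\cF^*(M)=\Lip_0(M)$ from Theorem \ref{T:6.2.4} and simply take an adjoint. First I would define a map on Lipschitz functions going the "wrong" way: for $g\in\Lip_0(N)$, set $\Phi(g)=g\circ\varphi$. Since $\varphi(0_M)=0_N$ we have $(g\circ\varphi)(0_M)=g(0_N)=0$, so $g\circ\varphi\in\Lip_0(M)$; and the estimate
\[
\frac{|g(\varphi(m))-g(\varphi(m'))|}{d_M(m,m')}=\frac{|g(\varphi(m))-g(\varphi(m'))|}{d_N(\varphi(m),\varphi(m'))}\cdot\frac{d_N(\varphi(m),\varphi(m'))}{d_M(m,m')}\le\|g\|_{\Lip}\,\Lip(\varphi)
\]
(interpreting the ratio as $0$ when $\varphi(m)=\varphi(m')$) shows $\Phi:\Lip_0(N)\to\Lip_0(M)$ is linear and bounded with $\|\Phi\|\le\Lip(\varphi)$. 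Identifying $\Lip_0(N)=\cF^*(N)$ and $\Lip_0(M)=\cF^*(M)$ via Theorem \ref{T:6.2.4}, the map $\Phi$ is a bounded operator between dual spaces.

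Next I would check that $\Phi$ is weak$^*$ continuous, i.e.\ $\sigma(\cF^*(N),\cF(N))$-to-$\sigma(\cF^*(M),\cF(M))$ continuous, so that it is itself an adjoint operator. It suffices to verify this on the predual: for $m\in M$, evaluation of $\Phi(g)=g\circ\varphi$ against $\delta_m\in\cF(M)$ gives $\langle\delta_m,\Phi(g)\rangle=g(\varphi(m))=\langle\delta_{\varphi(m)},g\rangle$, which is weak$^*$ continuous in $g$ as an element of $\cF^*(N)$; since the span of the $\delta_m$ is dense in $\cF(M)$ and $\Phi$ is bounded, weak$^*$ continuity follows. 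Hence there is a bounded linear $\hat\varphi:\cF(M)\to\cF(N)$ with $\hat\varphi^{\,*}=\Phi$, and $\|\hat\varphi\|=\|\Phi\|\le\Lip(\varphi)$.

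Finally I would identify $\hat\varphi$ on Dirac measures: for every $m\in M$ and every $g\in\Lip_0(N)$,
\[
\langle\hat\varphi(\delta_m),g\rangle=\langle\delta_m,\hat\varphi^{\,*}(g)\rangle=\langle\delta_m,g\circ\varphi\rangle=g(\varphi(m))=\langle\delta_{\varphi(m)},g\rangle,
\]
so $\hat\varphi(\delta_m)=\delta_{\varphi(m)}$, which is the asserted extension property. An alternative, more hands-on route avoiding the weak$^*$-continuity step is to define $\hat\varphi$ directly on the dense subspace $\cM(M)$ by $\hat\varphi\bigl(\sum a_j\delta_{x_j}\bigr)=\sum a_j\delta_{\varphi(x_j)}$, note this is well defined (independent of representation), and bound its norm by dualizing against $g\in B_{\Lip_0(N)}$: $\langle\sum a_j\delta_{\varphi(x_j)},g\rangle=\langle\sum a_j\delta_{x_j},g\circ\varphi\rangle\le\Lip(\varphi)\|\sum a_j\delta_{x_j}\|_{\cF(M)}$, then extend by density. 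The only mild subtlety — the "main obstacle," such as it is — is handling the degenerate case $\varphi(m)=\varphi(m')$ in the Lipschitz estimate for $\Phi$ (resp.\ checking well-definedness of $\hat\varphi$ on $\cM(M)$ when $\varphi$ is not injective), but this is routine.
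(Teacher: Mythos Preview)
Your proof is correct and follows essentially the same route as the paper: define the pullback $\Phi=\varphi^\#$ on $\Lip_0$, check it is bounded with norm $\le\Lip(\varphi)$ and weak$^*$-continuous, then take its pre-adjoint and verify the action on Dirac measures. Your verification of weak$^*$-continuity is a slightly more compressed version of the paper's (which spells out the Cauchy argument for the pushed-forward measures $\tilde\mu_k$), and the alternative direct definition on $\cM(M)$ you sketch at the end is also valid.
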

 \begin{proof} Let $\lambda =\Lip(\varphi)$. We define $$\varphi^{\#}: \Lip_0(N)\to \Lip_0(M), \qquad f\mapsto f\circ \varphi.$$
 Thus, $\varphi^\#$ is a linear bounded operator, with $\|\varphi^\#\|\le \lambda$. We claim that $\varphi^\#$   is also $w^*$ continuous,
 more precisely $\varphi^\#$ is  $\sigma(\Lip_0(N),\cF(N))$- $\sigma(\Lip_0(M),\cF(M))$-continuous.
 
 Let $(f_i)_{i\in I}$ be a net  in $\Lip_0(N)$ which $w^*$-converges to 0, and let $\mu\in \cF(M)$,
 say $\mu=\lim_{k\to\infty }\mu_k$ in $\cF(M)$, and thus in $\Lip^*_0(M)$, where  $\mu_k$ is of the form
 $$\mu_k=\sum_{j=1}^{l_k} a_{(k,j)} \delta_{m(k,j)}, \text{ with $(a_{(k,j)})_{j=1}^{l_k}\subset \R$, and $(m_{(k,j)})_{j=1}^{l_k}\subset M$.}$$
 Then 
 $$\tilde\mu_k=\sum_{j=1}^{l_k} a_{(k,j)} \delta_{\varphi(m(k,j))}$$
 converges in $\cF(N)$ to some element $\tilde\mu\in \cF(N)$ with the property that
 $\la \tilde\mu, f\ra =\la \mu , f\circ \varphi\ra $, for $f\in \Lip_0(N)$. Indeed, note that for $k,k'\in \N$
 \begin{align*}
 \|\tilde\mu_k-\tilde\mu_{k'}\|_{\Lip^*}&= \sup_{f\in\Lip_0(N),\|f\|_L\le 1} \la f,\  \tilde\mu_k-\tilde\mu_{k'}\ra\\
&=\sup_{f\in\Lip_0(N), \|f\|_L\le 1} \la f\circ \varphi,\  \mu_k-\mu_{k'}\ra\\
&\le \lambda\sup_{g\in\Lip_0(M),\|g\|_L\le 1} \la g,  \mu_k-\mu_{k'}\ra= \|\mu_k-\mu_{k'}\|_{\Lip^*}. 
    \end{align*}  
    It follows therefore that 
    $$\lim_{i\in I} \la f_i\circ\varphi ,\mu\ra =\lim_{i\in I}  \la f_i, \tilde \mu\ra =0$$ 
    and thus we verified that $\varphi^\#$ is $w^*$-continuous. 
    
    It follows therefore that $\varphi^\#$ is the adjoint of an operator $\hat\varphi:\cF(M)\to \cF(N)$.
    
    Since for $f\in \Lip_0(N)$ and $m\in M$
    $$\la \hat\varphi(\delta_m), f\ra=\la \delta_m, \varphi^\#(f)\ra = f(\varphi(m))= \la \delta_{\varphi(m)}, f\ra,$$
    it follows that $\hat\varphi(\delta_m)=\delta_{\varphi(n)}$, which finishes our proof.
 \end{proof} 
 The following extension result of $\cF(M)$ is the reason why Godefroy and Kalton coined the name {\em Lipschitz free space over $M$} for $\cF(M)$. 
 \begin{prop}{\rm \cite{Weaver1999}*{Theorem 2.2.4}}\label{P:6.2.10} Let $M$ be a metric space and $X$ a Banach space, and let $L:M\to X$ be a Lipschitz function.
 Then a unique linear and bounded  {\em extension  } $\hat L: \cF(M)\to X$ of $L$ exists. This   means that 
 $\hat L(\delta_m)=L(m)$, for all $m\in\N$.
 
 Moreover we have in this case that $\|\hat L\|_{\cF(M)\to X}=\|L\|_L$.
 \end{prop}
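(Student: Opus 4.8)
The plan is to define $\hat L$ by hand on the dense span of the Dirac measures and then extend it by uniform continuity, using that $X$ is complete; the one point that is not pure bookkeeping is to produce test functions in $\Lip_0(M)$ by precomposing functionals on $X$ with $L$. We assume $L(0_M)=0_X$, which is implicit in the statement, since $\delta_{0_M}=0$ in $\cF(M)$ forces $\hat L(\delta_{0_M})=0$ on any linear extension. By the Remark following Proposition \ref{P:6.1.4}, the family $(\delta_m)_{m\in M\setminus\{0_M\}}$ is linearly independent and its linear span $E$ is dense in $\cF(M)$, so
\[
\hat L\Big(\sum_{j=1}^n a_j\delta_{m_j}\Big):=\sum_{j=1}^n a_j L(m_j)
\]
is an unambiguously defined linear map on $E$, with $\hat L(\delta_m)=L(m)$ for every $m\in M$.

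The step carrying the content is the sharp estimate $\|\hat L(\mu)\|_X\le\|L\|_L\,\|\mu\|_{\cF}$ for $\mu\in E$. Fix such a $\mu=\sum_j a_j\delta_{m_j}$ and an arbitrary $\phi\in X^*$ with $\|\phi\|\le 1$. Then $f:=\phi\circ L$ satisfies $f(0_M)=0$ and $|f(x)-f(y)|\le\|\phi\|\,\|L(x)-L(y)\|_X\le\|L\|_L\,d(x,y)$, so $f\in\Lip_0(M)$ with $\|f\|_{\Lip}\le\|L\|_L$; moreover $\phi(\hat L(\mu))=\sum_j a_j\phi(L(m_j))=\la\mu,f\ra$, and hence $|\phi(\hat L(\mu))|\le\|f\|_{\Lip}\,\|\mu\|_{\cF}\le\|L\|_L\,\|\mu\|_{\cF}$. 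Taking the supremum over $\phi\in B_{X^*}$ and using the Hahn--Banach description $\|x\|_X=\sup_{\|\phi\|\le1}|\phi(x)|$ gives the estimate. Since $E$ is dense in $\cF(M)$ and $X$ is a Banach space, $\hat L$ extends uniquely to a bounded linear operator $\hat L:\cF(M)\to X$ with $\|\hat L\|\le\|L\|_L$ and $\hat L(\delta_m)=L(m)$ for all $m\in M$.

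For the reverse inequality I would use Proposition \ref{P:6.1.4}: for $m\ne m'$ in $M$ it gives $\|\delta_m-\delta_{m'}\|_{\cF}=d(m,m')$, while $\hat L(\delta_m-\delta_{m'})=L(m)-L(m')$, so $\|L(m)-L(m')\|_X\le\|\hat L\|\,d(m,m')$; taking the supremum over $m\ne m'$ yields $\|L\|_L\le\|\hat L\|$, hence $\|\hat L\|=\|L\|_L$. Uniqueness is immediate, since any bounded linear $T:\cF(M)\to X$ with $T\delta_m=L(m)$ agrees with $\hat L$ on the dense subspace $E$. (Alternatively, one can copy the argument of Proposition \ref{P:6.2.9}: the map $L^{\#}:X^*\to\Lip_0(M)$, $\phi\mapsto\phi\circ L$, is bounded by $\|L\|_L$ and $\sigma(X^*,X)$-$\sigma(\Lip_0(M),\cF(M))$-continuous, hence it is the adjoint $\hat L^*$ of a unique bounded operator $\hat L:\cF(M)\to X$, for which one checks $\hat L(\delta_m)=L(m)$.) I expect no genuine obstacle here; the slightly delicate issue, namely exhibiting enough elements of the predual $\Lip_0(M)$ to compute the norm of $\hat L(\mu)$, is exactly resolved by the functionals $\phi\circ L$.
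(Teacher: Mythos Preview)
Your proof is correct and follows essentially the same approach as the paper's: define $\hat L$ on the span of the Dirac measures, obtain the bound $\|\hat L(\mu)\|_X\le\|L\|_L\|\mu\|_\cF$ by testing against functionals $\phi\circ L\in\Lip_0(M)$ for $\phi\in B_{X^*}$ (the paper picks a single norming $x^*$ via Hahn--Banach, you take the supremum, which is the same thing), extend by density, and get the reverse inequality from the isometric embedding $m\mapsto\delta_m$.
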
 
 \begin{proof} Since the $\delta_m$, $m\in M\setminus\{0\}$ are linearly independent as elements of  $\cF(M)$  we can at least extend $L$ linearly to $\spa(\delta_m: m\in M\setminus\{0\})$. 
 We denote this extension by $\tilde L$, 
 and we have to show that   $\tilde L$ extends to a bounded linear operator on $\cF(M)$, whose operator norm coincides with  
   $\| L\|_{\Lip}$. Let $\mu=\sum_{j=1}^n a_j \delta_{m_j}\in \spa(\delta_m: m\in M)$. Then there is some $x^*\in B_{X^*} $,  so that 
 $\la x^*, \tilde L(\mu)\ra =\|\tilde L(\mu) \|$. It follows that $x^*\circ L$ is in $\Lip_0(M)$ and $\|x^*\circ L\|_{\Lip}\le \|L\|_{\Lip},$
 and thus, 
 \begin{equation}\label{E:6.2.10.1}
 \|\tilde L(\mu)\|_X= \la x^*, \tilde L(\mu)\ra\le \|x^*\circ L\|_{\Lip}\cdot \|\mu\|_{\Lip^*}\le \Lip(L)\cdot \|\mu\|_{\Lip^*}.\end{equation}
  Thus, $\tilde L$ can be extended to a linear bounded operator $\hat L $ on all of $\cF(M)$. $\hat L$ is of course also Lipschitz with 
  $\Lip(\hat L)=\|\hat L\|_{\cF(M)\to X}$ and since $M$ isometrically embeds into $\cF$, it follows that
  $$\|\hat L\|_{\cF(M)\to X}=\Lip(\hat L)\ge \sup_{m,m'\in M, m\not=m'} \frac{ \|L(m)-L(m')\|}{d(m,m')}=\Lip(L),$$
  and thus, together with \eqref{E:6.2.10.1},   we deduce that $\|\hat L\|_{\cF(M)\to X}=\Lip(L)$.
 \end{proof}
 \begin{prop}\label{P:6.2.11} If $M$ is a metric space and $N\subset M$, then $\cF(N)$ is (in the natural way) a subspace of $\cF(M)$. 
 \end{prop}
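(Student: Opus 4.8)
The plan is to realize the inclusion $N\hookrightarrow M$ on the level of Lipschitz free spaces and to check that it is isometric. We may assume $0\in N$, choosing the distinguished point of $M$ to lie in $N$ (every point of $N$ is a point of $M$, and $\cF(M)$ does not depend, up to isometry, on which point is singled out); then $N$ is a pointed metric subspace of $M$ and the inclusion $\iota\colon N\to M$ is an isometric embedding with $\iota(0)=0$, in particular $1$-Lipschitz. By Proposition \ref{P:6.2.9} it extends to a linear operator $\hat\iota\colon \cF(N)\to\cF(M)$ with $\hat\iota(\delta_n)=\delta_n$ for all $n\in N$ and $\|\hat\iota\|\le 1$. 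The claim is that $\hat\iota$ is an isometry; its range $\overline{\spa}\{\delta_n: n\in N\}$ is then the asserted subspace of $\cF(M)$.

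The tool that drives the proof is the McShane--Whitney extension theorem for scalar Lipschitz functions: given $f\in\Lip_0(N)$ with $\|f\|_\Lip\le 1$, the formula $F(x)=\inf_{n\in N}\bigl(f(n)+d(x,n)\bigr)$ defines a function $F\in\Lip_0(M)$ with $\|F\|_\Lip\le 1$ and $F|_N=f$. Indeed, for $m\in N$ the Lipschitz bound gives $f(n)+d(m,n)\ge f(m)$ so $F(m)=f(m)$ (take $n=m$); taking $n=0$ and using $f(n)\ge -d(n,0)$ one gets $F(0)=0$; and $|F(x)-F(x')|\le d(x,x')$ follows from the triangle inequality applied inside the infimum. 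Consequently the restriction map $R\colon\Lip_0(M)\to\Lip_0(N)$, $F\mapsto F|_N$, carries the unit ball of $\Lip_0(M)$ \emph{onto} the unit ball of $\Lip_0(N)$.

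Now fix $\mu=\sum_j a_j\delta_{n_j}\in\cM_0(N)$. By definition of the norm on $\cF(M)\subset\Lip_0^*(M)$,
$$\|\hat\iota(\mu)\|_{\cF(M)}=\sup_{\substack{F\in\Lip_0(M)\\ \|F\|_\Lip\le 1}}\ \sum_j a_j F(n_j)=\sup_{\substack{F\in\Lip_0(M)\\ \|F\|_\Lip\le 1}}\bigl\langle\mu,\ F|_N\bigr\rangle .$$
Since each $F|_N$ lies in the unit ball of $\Lip_0(N)$, the right-hand side is $\le\|\mu\|_{\cF(N)}$; and since, by the previous paragraph, every $f$ in the unit ball of $\Lip_0(N)$ equals $F|_N$ for some admissible $F$, the right-hand side is also $\ge\|\mu\|_{\cF(N)}$. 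Hence $\|\hat\iota(\mu)\|_{\cF(M)}=\|\mu\|_{\cF(N)}$. As $\cM_0(N)$ is dense in $\cF(N)$ and $\hat\iota$ is bounded, $\hat\iota$ is an isometry on all of $\cF(N)$, which proves the proposition.

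The only substantive point is the \emph{norm-preserving} extension of scalar Lipschitz functions given by the McShane formula; without it Proposition \ref{P:6.2.9} yields merely the contraction $\|\hat\iota\|\le 1$. Everything else — the reduction to a common base point and the two-sided estimate of the norm — is routine.
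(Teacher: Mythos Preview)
Your proof is correct and follows essentially the same route as the paper: the paper simply states that Proposition~\ref{P:6.2.11} follows from the McShane extension Lemma~\ref{L:6.2.12}, and you have spelled out precisely how that lemma yields the two-sided norm estimate making $\hat\iota$ an isometry. The only cosmetic difference is that the paper writes the extension formula with the factor $\|f\|_L$ in front of $d(n,m)$, whereas you normalize to $\|f\|_\Lip\le 1$ from the outset; the content is identical.
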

 The proof of Proposition \ref{P:6.2.11} follows from the following extension result for  Lipschitz functions.
 \begin{lem}\label{L:6.2.12} Any Lipschitz function $f:N\to \R$ can be extended to a Lipschitz function $F:M\to \R$, by defining
 $$F(m)  =\inf_{n\in N}( f(n) +\|f\|_L d(n,m)), \text{ for $m\in M$}.$$
Moreover, it follows that  $\Lip(F)=\Lip(f)$.
 \end{lem}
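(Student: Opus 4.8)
The plan is to verify, in order, the four standard properties of the McShane--Whitney extension formula: that the infimum defining $F$ is finite, that $F$ restricted to $N$ recovers $f$, that $\Lip(F)\le\Lip(f)$, and that $\Lip(F)\ge \Lip(f)$. Throughout I write $L=\Lip(f)=\|f\|_L$.

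First I would check that $F(m)$ is well defined for every $m\in M$, i.e.\ that the set $\{f(n)+Ld(n,m):n\in N\}$ is bounded below. Fixing an auxiliary point $n_0\in N$, the triangle inequality gives $d(n,m)\ge d(n_0,m)-d(n,n_0)$ and the Lipschitz bound for $f$ gives $f(n)\ge f(n_0)-Ld(n,n_0)$, so
$$f(n)+Ld(n,m)\ \ge\ f(n_0)-Ld(n,n_0)+Ld(n_0,m)-Ld(n,n_0)$$
fails to be the sharpest estimate; more cleanly, $f(n)+Ld(n,m)\ge f(n_0)-Ld(n,n_0)+Ld(n,m)\ge f(n_0)-Ld(n_0,m)$, which is independent of $n$. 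Hence the infimum is a real number. Next, to see $F|_N=f$: for $m\in N$, taking $n=m$ in the infimum yields $F(m)\le f(m)$, while for every $n\in N$ the estimate $f(m)-f(n)\le Ld(n,m)$ gives $f(n)+Ld(n,m)\ge f(m)$, hence $F(m)\ge f(m)$.

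Then I would establish the one-sided Lipschitz estimate. For $m,m'\in M$ and any $n\in N$, two applications of the triangle inequality give $F(m)\le f(n)+Ld(n,m)\le f(n)+Ld(n,m')+Ld(m,m')$; since the last term does not depend on $n$, taking the infimum over $n\in N$ preserves the inequality and yields $F(m)\le F(m')+Ld(m,m')$. Interchanging the roles of $m$ and $m'$ gives $|F(m)-F(m')|\le Ld(m,m')$, so $\Lip(F)\le L$. Finally, because $F$ agrees with $f$ on $N$, the supremum defining $\Lip(F)$ over pairs in $M$ dominates the same supremum over pairs in $N$, which equals $\Lip(f)=L$; combined with the previous inequality this gives $\Lip(F)=\Lip(f)$, as claimed.

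There is essentially no serious obstacle here: the only point requiring a moment's care is the double use of the triangle inequality in the Lipschitz estimate and the observation that passing to the infimum over $n$ preserves the resulting one-sided bound because the extra term $Ld(m,m')$ is independent of $n$. Everything else is routine bookkeeping.
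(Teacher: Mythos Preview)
Your proof is correct and follows the same core idea as the paper's for the Lipschitz estimate: bound $F(m)$ above by $f(n)+Ld(n,m')+Ld(m,m')$ and take the infimum over $n$. Your write-up is in fact more complete than the paper's, which only verifies $\Lip(F)\le \Lip(f)$ and leaves well-definedness, the extension property $F|_N=f$, and the reverse inequality $\Lip(F)\ge \Lip(f)$ implicit; you spell all of these out.
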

 
 \begin{proof} Assume that $m,m'\in M$, and assume without loss of generality that $F(m)\le F(m')$. Let $\vp>0$ and choose $n\in N$ so that 
$F(m)\ge f(n) +\Lip(f)  d(n,m)-\vp$.

Then it follows 
\begin{align*}
0&\le F(m')-F(m)\\
   &\le      f(n) +\Lip(f) d(n,m')- \big(f(n)+\Lip(f) d(n,m')\big) +\vp\\
    & =\Lip(f) ( d(n,m) - d(n,m'))+\vp \le \Lip(f) d(m',m) +\vp,\end{align*}
thus, we deduce the claim if we let $\vp$ tend to $0$.
 \end{proof}

 \section{The Transportation cost norm}
\subsection{The Duality Theorem of Kantorovich}

 This section presents an intrinsic definition of the space $\cF(M)$ for a metric space $(M,d)$, \ie\ a definition which only uses the metric on $M$. 
 It represents $\cF(M)$ as a {\em Transportation Cost Space}.

Let $\mu=\sum_{j=1}^n a_j \delta_{x_j}\in \tilde\cF(M)$, $(a_j)_{j=1}^n\subset \R$ and $(x_j)_{j=1}^n \subset M\setminus\{0\}$.  
Since $\delta_0\equiv 0$ (as a functional acting on $\Lip_0(M)$), we can put $a_0=-\sum_{j=1}^n a_j$, and write $\mu$ as
$$\mu=a_0\delta_0+ \sum_{j=1}^na_j \delta_{x_j}$$
Thus, from now on, we define
$$\tilde\cF(M)= \Big\{ \sum_{j=1}^n a_j \delta_{x_j}: n\in \N, (a_i)_{j=1}^n\subset \R,\, (x_j)_{j=1}^n \subset M, \text{ with }\sum_{j=1}^n a_j=0\Big\}.$$

\begin{prop}\label{P:1.1.1} Every $\mu\in \tilde\cF(M)$ can be represented as
\begin{equation}\label{E:1.1.1.1}
\mu=\sum_{j=1}^l r_j (\delta_{x_j}-\delta_{y_j}), \text{ with $(x_j)_{j=1}^l,(y_j)_{j=1}^l\subset M, (r_j)_{j=1}^l \subset \R^+$}.
\end{equation}\end{prop}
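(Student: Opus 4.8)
The plan is to start from the given representation $\mu=\sum_{j=1}^n a_j\delta_{x_j}$ with $\sum_{j=1}^n a_j=0$ and to repackage it as a sum of elementary differences $\delta_{x}-\delta_{y}$ with positive coefficients. First I would split the index set according to the sign of the coefficients: let $P=\{j: a_j>0\}$ and $Q=\{j: a_j<0\}$, and note that $\sum_{j\in P} a_j = -\sum_{j\in Q} a_j =: s$. If $s=0$ then all $a_j=0$ and $\mu=0$, which is trivially of the desired form (the empty sum, or with all $r_j=0$), so assume $s>0$. The point is that we can now write $\mu = \sum_{j\in P} a_j\delta_{x_j} - \sum_{j\in Q}|a_j|\delta_{x_j}$, where the total positive mass equals the total negative mass.

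The key step is a standard ``transport/flow decomposition'' (essentially the same greedy argument that shows any bipartite flow decomposes into path flows). I would proceed by induction on $|P|+|Q|$. Pick $p\in P$ and $q\in Q$ and set $r=\min(a_p,|a_q|)>0$. Then
\begin{equation*}
\mu = r(\delta_{x_p}-\delta_{x_q}) + \mu',
\end{equation*}
where $\mu'$ is obtained from $\mu$ by replacing $a_p$ with $a_p-r$ and $a_q$ with $a_q+r$; by the choice of $r$, at least one of these two new coefficients is zero, so $\mu'$ has strictly fewer nonzero coefficients, still sums to zero, and still lies in $\tilde\cF(M)$. Applying the inductive hypothesis to $\mu'$ yields a representation $\mu'=\sum_{j=2}^l r_j(\delta_{x_j}-\delta_{y_j})$ with $r_j>0$, and prepending the term $r_1(\delta_{x_p}-\delta_{x_q})$ with $r_1=r$ gives \eqref{E:1.1.1.1}. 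The base case, when one of $P$, $Q$ is empty, forces both to be empty (since the positive and negative masses are equal), i.e. $\mu=0$.

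There is no real obstacle here; the only thing to be a little careful about is the bookkeeping in the induction — making sure that after the subtraction $\mu'$ still has balanced mass (it does, since we subtracted a balanced term $r(\delta_{x_p}-\delta_{x_q})$) and that the number of nonzero coefficients genuinely drops (it does, by the minimality in the choice of $r$). One could alternatively phrase this without induction by invoking the classical fact that a nonnegative flow on a bipartite graph — here with sources at the $x_j$, $j\in P$, of strength $a_j$ and sinks at the $x_j$, $j\in Q$, of strength $|a_j|$ — decomposes into at most $|P|+|Q|-1$ path-flows, each path being a single edge $x_p\to x_q$; this is exactly the kind of computational-graph-theory fact the introduction promises to prove in the appendix, so citing it there would be consistent with the survey's style.
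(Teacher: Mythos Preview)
Your proof is correct but takes a genuinely different route from the paper's. The paper gives a one-line explicit formula: writing $\mu=\sum_{i=1}^m a_i\delta_{x_i}-\sum_{j=1}^n b_j\delta_{y_j}$ with $a_i,b_j>0$ and $S=\sum a_i=\sum b_j$, it simply multiplies and divides by $S$ to obtain $\mu=\frac1S\sum_{i,j}a_ib_j(\delta_{x_i}-\delta_{y_j})$. Your greedy inductive argument (essentially the northwest-corner rule for the transportation problem) is slightly longer but buys something the paper's does not: it produces a representation with at most $m+n-1$ molecules rather than $mn$, and it connects directly to the flow-decomposition viewpoint you mention. Both are perfectly valid; the paper optimizes for brevity, yours for economy of the output and for the algorithmic flavor that resurfaces later in the notes.
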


\begin{defin}\label{D:1.2}
For $x,y\in M$ we call $\delta_x-\delta_y$, with $x\not=y$ a {\em molecule in $\tilde F(M)$} and \eqref{E:1.1.1.1} a {\em molecular representation of $\mu\in \tilde\cF(M)$.}
We will always assume in that case that $x_j\not=y_j$.
\end{defin}
\begin{proof}[Proof of Proposition \ref{P:1.1.1}]
We can write $\mu\in\tilde \cF(M)$ as
$$\mu=\sum_{i=1}^m a_i \delta_{x_i}-\sum_{j=1}^n b_j \delta_{y_j}$$
with $a_i,b_j>0$, $1\le i\le m$, $1\le j\le n$, and  $S:=\sum_{i=1}^m a_i =\sum_{j=1}^n b_j $, and $(x_i)_{i=1}^m$, $(y_j)_{j=1}^n\subset M$.

\begin{align*}\mu&=\frac1S\Big( \sum_{j=1}^n b_j\Big)\sum_{i=1}^m a_i \delta_{x_i}-\frac1S\Big(\sum_{i=1}^m a_i \Big)\sum_{j=1}^n b_j \delta_{y_j}\\
&=\frac1S \sum_{i=1}^m\sum_{j=1}^n a_ib_j\big(\delta_{x_i}-\delta_{y_j}\big).
\end{align*} 
\end{proof}
\begin{defin}\label{D:1.3}
Let $\mu=\cF(M)$ have the molecular representation
$$\mu=\sum_{j=1}^n r_j (\delta_{x_j}-\delta_{y_j}),$$
$r_j>0$, $x_j,y_j\in M$, for $j=1,2,\ldots n$, then we define
\begin{equation}\label{E:1.3.1} t\big((r_j)_{j=1}^n, (x_j)_{j=1}^n,(y_j)_{j=1}^n\big):= \sum_{j=1}^n r_jd(x_j,y_j),\end{equation}
and call it the {\em transportation costs of that representation}.
\end{defin}

\noindent{\bf Interpretation:}  Let us assume that transporting $a$ units of a product from $x$ to $y$ costs $a\cdot d(x,y)$.

Let $\mu=\mu^+-\mu^-\in \tilde \cF(M)$,
where  $\mu^+$ is the positive part  and a negative part $\mu^-$.
We interpret  $\mu^+$  as the  distribution of the surplus and 
$\mu^-$ as the distribution of the need of the product.

Then, a molecular  representation
$$\mu=\sum_{j=1}^l r_j (\delta_{x_j}-\delta_{y_j})$$
can be seen as a {\em transportation plan}  (and will be called as such)  to move $r_j$ units from $x_j$ to $y_j$ and thereby 
balancing the surplus with the need. For such a transportation plan, 
$t\big((r_j)_{j=1}^n, (x_j)_{j=1}^l(y_j)\big)$ represents the 
total transportation costs.

We define  
$$\|\mu\|_{\tc} =\inf\Big\{ t\big((r_j)_{j=1}^n, (x_j)_{j=1}^n(y_j)_{j=1}^n\big): \mu=\sum_{j=1}^n r_j(\delta_{x_j}-\delta_{y_j})\Big\}$$

 We will soon see that $\|\cdot\|_{\tc}$ is a norm.
We first want to show that the $\inf$ in the definition of $\|\cdot\|_{\tc}$ is attained for $\mu\in\cF(M)$.
To do that we need the following proposition
\begin{prop}\label{P:1.4}
Assume $\mu\in\tilde\cF(M) $  and 
$$\mu=\sum_{j=1}^n r_i(\delta_{x_j}-\delta_{y_j})$$
is a molecular representation.

Then, there exists a molecular  representation
$$\mu=\sum_{j=1}^{n'} r'_i(\delta_{x'_j}-\delta_{y'_j})$$
for which the sets $\{x'_j:j=1,2,\ldots, n'\}$ and 
$\{y'_j:j=1,2,\ldots n'\}$  are disjoint and 
$$t\big((r'_j)_{j=1}^n,(x'_j)_{j=1}^{n'},(y'_j)_{j=1}^{n'}\big)\le t\big((r_j)_{j=1}^n,(x_j)_{j=1}^n,(y_j)_{j=1}^n\big).$$
In this case, we call
$$\mu=\sum_{j=1}^{n'} r'_i(\delta_{x'_j}-\delta_{y'_j})$$
{\em a disjoint molecular representation}.
\end{prop}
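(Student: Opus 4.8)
The plan is to reduce an arbitrary molecular representation to a disjoint one by repeatedly eliminating a point that appears both as a source $x_j$ and as a sink $y_k$, while never increasing the transportation cost. So suppose some $z\in M$ satisfies $z=x_j$ for some $j$ and $z=y_k$ for some $k$. First I would observe that we may collect all the mass flowing into $z$ and all the mass flowing out of $z$; write $a=\sum_{j:x_j=z} r_j>0$ and $b=\sum_{k:y_k=z} r_k>0$ for these totals. The idea is to ``route through'' $z$: the portion of the outgoing mass at $z$ that is matched by incoming mass at $z$ can be rerouted directly from its ultimate origin to its ultimate destination, bypassing $z$, and by the triangle inequality $d(x,y)\le d(x,z)+d(z,y)$ this does not increase cost.

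The key steps, in order: (1) Split each molecule $r_j(\delta_{x_j}-\delta_{y_j})$ with $x_j=z$ or $y_j=z$ into finitely many molecules with smaller coefficients so that, after this refinement, we may pair up the incoming molecules $r(\delta_{u}-\delta_z)$ and outgoing molecules $r(\delta_z-\delta_w)$ with exactly matching coefficients $r$ (a standard bookkeeping step: order the incoming pieces and the outgoing pieces and subdivide at the common breakpoints, using $\min(a,b)$ worth of mass; the cost is unchanged because $r_j d(x_j,y_j)$ is additive under splitting a molecule). (2) For each such matched pair, replace $r(\delta_u-\delta_z)+r(\delta_z-\delta_w)$ by the single molecule $r(\delta_u-\delta_w)$ (dropping it entirely if $u=w$). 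This is still a molecular representation of the same $\mu$, since $\delta_z$ cancels telescopically, and by the triangle inequality $r\,d(u,w)\le r\,d(u,z)+r\,d(z,w)$, so the total cost does not increase. (3) Note that after this operation, $z$ no longer appears simultaneously as a source and a sink: if $a\ge b$ then all outgoing mass at $z$ has been consumed, and $z$ is only a sink (or absent); if $a<b$, then $z$ is only a source (or absent). In either case the number of points appearing on ``both sides'' has strictly decreased. (4) Since the index set of points occurring in the representation is finite, iterate this procedure finitely many times until the set of sources and the set of sinks are disjoint; the cost never increased along the way, giving the desired inequality $t\big((r'_j)_j,(x'_j)_j,(y'_j)_j\big)\le t\big((r_j)_j,(x_j)_j,(y_j)_j\big)$.

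The main obstacle, such as it is, is the bookkeeping in step (1): one must be careful that subdividing molecules to get matching coefficients, and then merging pairs, genuinely terminates and genuinely removes $z$ from one side. The clean way to phrase it is to work with the total incoming mass $a$ and outgoing mass $b$ at $z$ and transfer $\min(a,b)$ units of ``flow'' through $z$ all at once: list the incoming molecules as $\sum_p s_p(\delta_{u_p}-\delta_z)$ with $\sum_p s_p=a$ and the outgoing ones as $\sum_q t_q(\delta_z-\delta_{w_q})$ with $\sum_q t_q=b$; a greedy matching of these two ordered lists (the ``northwest corner'' rule for transportation tableaux) produces finitely many triples $(u,w,r)$ with $r$-masses summing to $\min(a,b)$, and replacing the matched incoming/outgoing fragments by $\sum r(\delta_u-\delta_w)$ leaves either no incoming mass or no outgoing mass at $z$. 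Everything else is a direct application of the triangle inequality and the observation that $t$ is additive under refinement of a molecular representation. I would also remark that, combined with the next results of the section, this lets one assume the infimum defining $\|\mu\|_{\tc}$ is taken over disjoint representations, which is convenient when comparing with optimal transport plans.
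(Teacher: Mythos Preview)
Your proof is correct and follows essentially the same approach as the paper's: both repeatedly reroute mass through a point that appears as both a source and a sink, invoke the triangle inequality to show the cost does not increase, and iterate until the source and sink sets are disjoint. The only organizational difference is that the paper treats one overlapping pair of molecules at a time (taking indices with $x_{n-1}=y_n$ and merging just those two), whereas you aggregate all incoming and outgoing mass at the shared point $z$ and clear it in one sweep; your version makes the termination argument slightly cleaner, but the idea is the same.
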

\begin{proof} Assume  $\{x_j:j=1,2,\ldots, n\}$ and 
$\{y_j:j=1,2,\ldots n\}$ are not disjoint and without loss of generality $x_{n-1}=y_n$.
We write 
\begin{align*}
\mu=\sum_{j=1}^{n-2} r_j(\delta_{x_j}-\delta_{y_j}) + \underbrace{r_{n-1}(\delta_{x_{n-1}}-\delta_{y_{n-1}})+r_{n}(\delta_{x_{n}}-\delta_{y_{n}})}_{\nu}.
\end{align*}
Assume $r_{n-1}\ge r_n$ (similar argument if $r_{n-1}<r_n$) and write  $\nu$ 
as 
$$\nu= (r_{n-1} - r_n)(\delta_{x_{n-1}}-\delta_{y_{n-1}}) + r_n(\delta_{x_n} -\delta_{y_{n-1}}) $$
and note that 
\begin{align*}
(r_{n-1}-r_n)d(x_{n-1},y_{n-1})+ r_nd(x_n,y_{n-1})&= r_{n-1}d(x_{n-1},y_{n-1})+ r_n(d(x_n,y_{n-1})- d(x_{n-1},y_{n-1}))\\
&  = r_{n-1}d(x_{n-1},y_{n-1})+  r_l(d(x_l,y_{n-1})- d(y_{n},y_{n-1}))\\ &\le  r_{n-1}d(x_{n-1},y_{n-1})+  r_l(d(x_n,y_{n}).
\end{align*}
Thus $$\mu= \sum_{j=1}^{n-2} r_j(\delta_{x_j}-\delta_{y_j}) +  (r_{n-1} - r_n)(\delta_{x_{n-1}}-\delta_{y_{n-1}}) + r_n(\delta_{x_n} -\delta_{y_{n-1}})$$
is a molecular representation 
eliminating  $x_{n-1}=y_n$ in the intersection without increasing the transportation costs. 

We can therefore iterate this procedure until we arrive at a disjoint representation of $\mu$.
\end{proof}

\begin{rem} Let $$\mu=\sum_{j=1}^n r_j(\delta_{x_j}-\delta_{x_j})$$
be a disjoint molecular representation of $\mu$.

Then it follows from the {\em Jordan Decomposition Theorem} (which in the simple case of finite linear combinations of Dirac measures is trivial)
$$\mu^+= \sum_{j=1}^n r_j\delta_{x_j} \text{ is the  positive part and }\mu^-=\sum_{j=1}^n r_j\delta_{y_j} \text{ is  the negative part of $\mu$}.$$
Put
$$A^+=\supp(\mu^+)=\{ x\in M: \mu^+(x)>0\} \text{ and } A^-=\supp(\mu^-)=\{ x\in M: \mu^-(x)>0\}.$$
A disjoint  molecular decomposition is then always of the following form
$$\mu=\sum_{x\in A^+}\sum_{y\in A^-} \nu(x,y)(\delta_x-\delta_y).$$ 
For $x\in A^+$  and $y\in A^-$
$$ \mu(x)=\mu^+(x) =\sum_{y\in A^-} \nu(x,y)\text{ and } \mu^-(y)= \sum_{x\in A^+} \nu(x,y).$$
Thus $\nu$  (put $\nu(x,y)=0$ if $(x,y)\not= A^+\times A^-$) can be seen as a (positive) measure on $M^2$ whose {\em marginals} are $\mu^+$ and $\mu^-$.
Also, note that. 
$$\nu(M^2) =\sum_{x,y\in M^2} \nu(x,y) = \sum_{x\in M} \sum_{y\in M} \nu(x,y)=\sum_{x\in A^+} \sum_{y\in A^-} \nu(x,y) = \mu^+(M) =\mu^-(M).$$

Let $t(\nu)$ be the transportation costs of the representation
$$\mu=\sum_{x\in A^+}\sum_{y\in A^-} \nu(x,y)(\delta_x-\delta_y).$$
Then
$$t(\nu)=\sum_{x,y\in M} \nu(x,y) d(x,y)= \int_M\int_M d(x,y) \, d\nu(x,y).$$ 
and thus
\begin{align}\label{E:1.4.1}
 \|\mu\|_{\tc}&= \inf \left\{ \sum_{(x,y)\in M^2} \nu(x,y) d(x,y): \begin{matrix} \nu\text{ measure  on }M^2, \\
  \mu^+(x)=\sum_{y'\in M}\nu(x,y') \text{ for $x\in M$ and } \\ \mu^-(y)=\sum_{x'\in M}\nu(x',y)\text{ for $y\in M$} \end{matrix} \right\} \\
  &= \inf \left\{ \sum_{(x,y)\in M^2} \nu(x,y) d(x,y): \begin{matrix} \nu\text{ measure  on }M^2, \nu(M^2)=\mu^+(M)\\
  \supp(\nu)\subset \supp(\mu^+)\times \supp(\mu^-)\\
  \mu^+(x)=\sum_{y'\in M}\nu(x,y') \text{ for $x\in M$ and } \\ \mu^-(y)=\sum_{x'\in M}\nu(x',y)\text{ for $y\in M$} \end{matrix} \right\}.\notag
 \end{align}
\end{rem} 
From compactness, we therefore deduce that 

\begin{cor}\label{C:1.5} For $\mu\in \tilde\cF$ 
$$\|\mu\|_{\tc}=\inf\Big\{ t\big((r_j)_{j=1}^n, (x_j)_{j=1}^l(y_j)\big): \mu=\sum_{j=1}^n r_j(\delta_{x_j}-\delta_{y_j})\Big\}$$
is attained. 

We call  a representation of $\mu$  {\em optimal} 
if its transportation cost equals to $\|\mu\|_{\tc}$.
\end{cor}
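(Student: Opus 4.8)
\emph{Proof proposal.} The plan is to invoke the reformulation \eqref{E:1.4.1} of $\|\mu\|_{\tc}$ in order to replace the infimum over \emph{all} molecular representations of $\mu$ (whose number of terms is not bounded a priori) by an infimum over a fixed finite-dimensional compact set, on which a continuous function automatically attains its minimum.

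First I would observe that since $\mu\in\tilde\cF(M)$ is a finite linear combination of Dirac measures, the sets $A^+=\supp(\mu^+)$ and $A^-=\supp(\mu^-)$ are finite, say $|A^+\times A^-|=N$. Consider
$$\Pi=\Big\{\nu\in[0,\infty)^{A^+\times A^-}:\ \sum_{y\in A^-}\nu(x,y)=\mu^+(x)\ \text{for all }x\in A^+,\ \ \sum_{x\in A^+}\nu(x,y)=\mu^-(y)\ \text{for all }y\in A^-\Big\},$$
regarded as a subset of $\R^{N}$. A measure $\nu$ on $M^2$ satisfying the marginal constraints in \eqref{E:1.4.1} is automatically supported on $A^+\times A^-$ and satisfies $\nu(M^2)=\mu^+(M)$, so by \eqref{E:1.4.1} and the Remark preceding Corollary \ref{C:1.5} we have $\|\mu\|_{\tc}=\inf\{t(\nu):\nu\in\Pi\}$, where $t(\nu)=\sum_{(x,y)\in A^+\times A^-}\nu(x,y)d(x,y)$, and each $\nu\in\Pi$ corresponds to the disjoint molecular representation $\mu=\sum_{x\in A^+}\sum_{y\in A^-}\nu(x,y)(\delta_x-\delta_y)$.

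Next I would check that $\Pi$ is a nonempty compact subset of $\R^{N}$. It is nonempty: Proposition \ref{P:1.1.1} (or Proposition \ref{P:1.4}) produces a molecular, indeed a disjoint molecular, representation of $\mu$, which is exactly an element of $\Pi$. It is closed, being the intersection of the finitely many closed sets cut out by the linear equality constraints and by the inequalities $\nu(x,y)\ge0$. It is bounded, since every $\nu\in\Pi$ satisfies $0\le\nu(x,y)\le\mu^+(x)\le\mu^+(M)$ for all $(x,y)$. The function $\nu\mapsto t(\nu)$ is linear, hence continuous on $\R^{N}$, so it attains its minimum on the compact set $\Pi$ at some $\nu_0\in\Pi$; then $\|\mu\|_{\tc}=\inf_{\nu\in\Pi}t(\nu)=t(\nu_0)$, and the associated molecular representation of $\mu$ is optimal.

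There is essentially no genuine obstacle here: the argument is just the extreme value theorem applied to a transportation polytope. The only point requiring a little care is the bookkeeping reduction from arbitrary molecular representations to disjoint ones living in the finite-dimensional set $\Pi$ — but this reduction is precisely what Proposition \ref{P:1.4} and the identity \eqref{E:1.4.1} already encode, so once those are in hand the corollary follows at once.
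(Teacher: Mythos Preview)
Your proposal is correct and is exactly the argument the paper has in mind: the paper's ``proof'' is the single phrase ``From compactness, we therefore deduce that\ldots'' immediately following \eqref{E:1.4.1}, and you have simply spelled out that compactness argument (nonempty closed bounded transportation polytope, continuous linear cost) in full. There is nothing different in approach here.
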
 
\begin{cor}\label{C:1.6} For $x,y\in M$
$$\|\delta_x-\delta_y\|_{\tc}=d(x,y).$$
\end{cor}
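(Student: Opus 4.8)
The plan is to establish the two inequalities separately by elementary means, avoiding the (not yet proven in this section) duality $\|\cdot\|_{\tc}=\|\cdot\|_{\Lip^*}$.

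For the upper bound, I would simply observe that $\delta_x-\delta_y=1\cdot(\delta_x-\delta_y)$ is itself a molecular representation of $\delta_x-\delta_y$ (with $n=1$, $r_1=1$, $x_1=x$, $y_1=y$), whose transportation cost is $1\cdot d(x,y)=d(x,y)$. Since $\|\delta_x-\delta_y\|_{\tc}$ is by definition the infimum of the transportation costs over \emph{all} molecular representations, this already gives $\|\delta_x-\delta_y\|_{\tc}\le d(x,y)$.

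For the lower bound, I would test an arbitrary molecular representation $\delta_x-\delta_y=\sum_{j=1}^n r_j(\delta_{x_j}-\delta_{y_j})$ against the function $g(z)=d(z,y)-d(0,y)$, which lies in $\Lip_0(M)$ and satisfies $\|g\|_{\Lip}=1$. Evaluating the measure $\delta_x-\delta_y$ on $g$ gives $g(x)-g(y)=d(x,y)$; evaluating the representation on $g$ gives $\sum_{j=1}^n r_j\big(g(x_j)-g(y_j)\big)$, and since $r_j>0$ and $g(x_j)-g(y_j)\le|g(x_j)-g(y_j)|\le d(x_j,y_j)$, this last sum is at most $\sum_{j=1}^n r_j d(x_j,y_j)=t\big((r_j)_{j=1}^n,(x_j)_{j=1}^n,(y_j)_{j=1}^n\big)$. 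Hence every molecular representation of $\delta_x-\delta_y$ has transportation cost at least $d(x,y)$, so $\|\delta_x-\delta_y\|_{\tc}\ge d(x,y)$, and combining the two inequalities finishes the proof.

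I do not expect a genuine obstacle here; the only point needing a word of care is the harmless shift by the constant $d(0,y)$, which makes $z\mapsto d(z,y)$ into an admissible element of $\Lip_0(M)$ without affecting the pairing, because $\delta_x-\delta_y$ and each $\delta_{x_j}-\delta_{y_j}$ are measures of total mass zero. (Alternatively, once Kantorovich duality is available this is immediate from Proposition \ref{P:6.1.4}, but the argument above is self-contained and uses nothing beyond the definitions.)
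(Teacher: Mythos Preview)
Your proof is correct. Both inequalities are cleanly established, and the use of the Lipschitz witness $g(z)=d(z,y)-d(0,y)$ for the lower bound is sound: since any molecular representation of $\delta_x-\delta_y$ is an equality of measures, pairing both sides with $g$ is legitimate, and the $1$-Lipschitz bound $g(x_j)-g(y_j)\le d(x_j,y_j)$ does the rest.

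Your route differs from the paper's. The paper invokes the remark after Proposition~\ref{P:1.4} (and implicitly Corollary~\ref{C:1.5}): the infimum defining $\|\cdot\|_{\tc}$ may be restricted to measures $\nu$ on $M^2$ with $\supp(\nu)\subset\supp(\mu^+)\times\supp(\mu^-)$, and for $\mu=\delta_x-\delta_y$ this forces $\supp(\nu)\subset\{(x,y)\}$; the marginal constraints then pin down $\nu=\delta_{(x,y)}$, whose cost is $d(x,y)$. In other words, the paper argues that there is essentially only one admissible competitor, so no lower-bound argument is needed. Your approach instead produces a dual certificate (a $1$-Lipschitz function) that bounds every competitor from below; this is a self-contained miniature of Kantorovich duality and avoids relying on the disjoint-representation machinery and the compactness step. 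The paper's argument is shorter given the infrastructure already in place, while yours is more portable and foreshadows Theorem~\ref{T:1.7}.
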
 
\begin{proof} By above remark
$\|\mu\|_{\tc} =t(\nu)$ where $\nu$ is a measure on $M\times M$ with $\supp(\nu)=\{(x,y)\}$, and  thus $\nu=\delta_x-\delta_y$ is an optimal representation
\end{proof}

  \begin{thm}[Duality Theorem of Kantorovich \cite{Kantorovich1942}]\label{T:1.7} For $\mu\in\tilde\cF(M)$ it follows that 
  $\|\mu\|_\cF=\|\mu\|_{\tc}$.
  \end{thm}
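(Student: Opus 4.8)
The plan is to prove the two inequalities $\|\mu\|_\cF \le \|\mu\|_{\tc}$ and $\|\mu\|_{\tc} \le \|\mu\|_\cF$ separately, each being an easy estimate once the right test object is in hand.

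\medskip

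\noindent\textbf{Step 1: $\|\mu\|_\cF \le \|\mu\|_{\tc}$.} Fix any molecular representation $\mu = \sum_{j=1}^n r_j(\delta_{x_j}-\delta_{y_j})$ with $r_j>0$. For every $f\in\Lip_0(M)$ with $\|f\|_\Lip\le 1$ we have
\begin{equation*}
\Big|\la \mu, f\ra\Big| = \Big| \sum_{j=1}^n r_j\big(f(x_j)-f(y_j)\big)\Big| \le \sum_{j=1}^n r_j\, |f(x_j)-f(y_j)| \le \sum_{j=1}^n r_j\, d(x_j,y_j) = t\big((r_j),(x_j),(y_j)\big).
\end{equation*}
Taking the supremum over such $f$ gives $\|\mu\|_\cF = \|\mu\|_{\Lip^*} \le t\big((r_j),(x_j),(y_j)\big)$, and then the infimum over all molecular representations yields $\|\mu\|_\cF \le \|\mu\|_{\tc}$.

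\medskip

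\noindent\textbf{Step 2: $\|\mu\|_{\tc} \le \|\mu\|_\cF$.} This is the direction that carries content; it amounts to the Kantorovich--Rubinstein duality, i.e. exhibiting a transportation plan whose cost is at most $\|\mu\|_\cF$. I expect to argue by linear-programming (finite-dimensional) duality: by the Remark preceding Corollary~\ref{C:1.5}, $\|\mu\|_{\tc}$ equals the minimum of the linear functional $\nu\mapsto\sum_{(x,y)} \nu(x,y)d(x,y)$ over the polytope of nonnegative $\nu$ on $A^+\times A^-$ with prescribed marginals $\mu^+$ and $\mu^-$ (with $A^\pm$ the finite supports); this is a transportation problem in the classical sense. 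Its LP dual has variables $(u_x)_{x\in A^+}$, $(v_y)_{y\in A^-}$ subject to $u_x - v_y \le d(x,y)$ for all $x\in A^+, y\in A^-$, maximizing $\sum_x \mu^+(x) u_x - \sum_y \mu^-(y) v_y$. By strong LP duality the optimal dual value equals $\|\mu\|_{\tc}$. The final move is to convert an optimal dual solution $(u,v)$ into a genuine $1$-Lipschitz function $f$ on all of $M$ with $\la\mu,f\ra = \|\mu\|_{\tc}$: set $f$ appropriately on $A^+\cup A^-$ (the constraint $u_x-v_y\le d(x,y)$ lets one choose a common function dominated correctly — e.g. $f(y) = \min_{x\in A^+}(u_x + d(x,y))$ type formulas, or simply a McShane--Whitney extension as in Lemma~\ref{L:6.2.12}), extend it to a $1$-Lipschitz function on $M$ via Lemma~\ref{L:6.2.12}, and subtract $f(0)$ so that $f\in\Lip_0(M)$ (which does not change $\la\mu,f\ra$ since $\mu\in\tilde\cF(M)$ annihilates constants). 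Then $\|\mu\|_\cF \ge \la\mu,f\ra = \|\mu\|_{\tc}$.

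\medskip

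\noindent\textbf{Main obstacle.} The only nontrivial ingredient is strong duality for the transportation LP together with the bookkeeping that turns the dual optimizer $(u_x,v_y)$ into a single $1$-Lipschitz function witnessing the value; finiteness of the supports (Corollary~\ref{C:1.5} already reduced us to finitely many molecules) makes the LP finite-dimensional, so strong duality is automatic and no measure-theoretic subtleties arise. Everything else is the routine estimate of Step~1. An alternative to LP duality, which avoids quoting it, is a direct greedy/flow argument: given $\varepsilon>0$ pick $f$ with $\la\mu,f\ra > \|\mu\|_\cF - \varepsilon$, and build a transportation plan by repeatedly sending mass along a pair $(x,y)$ with $f(x)-f(y)$ close to $d(x,y)$ — but organizing this cleanly is essentially reproving the duality, so I would present the LP-duality route as the clean one.
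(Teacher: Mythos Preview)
Your argument is correct, but the paper takes a different and rather slicker route for the hard direction $\|\mu\|_{\tc}\le\|\mu\|_\cF$. Instead of invoking LP duality, the paper first observes (your Step~1, phrased more abstractly) that $\|\cdot\|_{\tc}$ is the \emph{largest} seminorm on $\tilde\cF(M)$ satisfying $\|\delta_x-\delta_y\|=d(x,y)$. It then appeals to the universal extension property of $\cF(M)$ (Proposition~\ref{P:6.2.10}): the isometry $M\to X:=\overline{(\tilde\cF(M),\|\cdot\|_{\tc})}$, $x\mapsto\delta_x$, extends to a linear map $\bar L\colon\cF(M)\to X$ of operator norm~$1$, whence $\|\mu\|_{\tc}=\|\bar L(\mu)\|\le\|\mu\|_\cF$. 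Your LP approach is the classical Kantorovich--Rubinstein argument and has the advantage of producing an explicit optimal potential (precisely what the paper records afterward as Corollary~\ref{C:1.7b}); the paper's approach is coordinate-free, needs no LP machinery, and leans only on the functoriality of $\cF$ already set up in Section~1. One small caveat in your sketch: the raw dual variables $(u_x)_{x\in A^+}$, $(v_y)_{y\in A^-}$ need not define a $1$-Lipschitz function on $A^+\cup A^-$ (the constraint $u_x-v_y\le d(x,y)$ says nothing about $|u_x-u_{x'}|$ or $|v_y-v_{y'}|$), so the ``set $f$ appropriately'' step genuinely requires the $c$-transform you allude to---e.g.\ take $f(z)=\sup_{x\in A^+}(u_x-d(x,z))$, which is $1$-Lipschitz on all of $M$, satisfies $f\ge u$ on $A^+$ and $f\le v$ on $A^-$, and hence $\la\mu,f-f(0)\ra\ge\|\mu\|_{\tc}$.
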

    \begin{proof}
 It is easy to see that $\|\cdot\|_{\tc}$ is a semi norm on $\tilde\cF(M)$. 
 By Corollary \ref{C:1.6} $\|\delta_x-\delta_{y}\|_{\tc}=d(x,y)$, for $x,y\in M$.
 
\noindent{\bf Claim.} For every norm $\|\cdot\|$ on $\tilde \cF(M)$ with $\|\delta_x-\delta_y\| =d(x,y)$, for all $x,y\in M$, it follows that $\|\mu\|_{\tc}\ge \| \mu\|$ for all $\mu\in \tilde \cF(M)$.

Indeed, let $\mu=\sum_{x,y} \nu(x,y) (\delta_x-\delta_y)\in \tilde\cF(M)$ be a molecular representation.
Then 
$$t(\nu)= \sum_{x,y\in M} \nu(x,y) d(x,y) \ge \Big\|\sum_{x,y} \nu(x,y) (\delta_x-\delta_y)\Big\|.$$
So the claim follows by taking the infimum over all representations.
 
Since $\|\cdot \|_{\cF} $ is such a norm, it follows  that $\|\cdot\|_{\cF}\le \|\cdot\|_{\tc}$, and in particular that  $\|\cdot\|_{\tc}$ is also a norm.

Let $X$ be the completion of 
$\tilde\cF(M)$ with respect  to $\|\cdot\|_{\tc}$. 
 The map $L:M \to X$, $x\to \delta_x$ is an isometric embedding, which by the extension property of $\cF(M)$ can be extended to a linear operator 
 $\bar L:\cF(M)\to X$ with $\|\bar L\|_{\cF(M)\to X}=1$.
 
 This means that  $\|\mu \|_{\cF}\ge \|\mu \|_{\tc}$, thus  $\|\mu \|_{\cF}= \|\mu \|_{\tc}$, for $\mu\in\tilde\cF(M)$.
Therefore   $X=\cF(M)$ and $\|\cdot\|_{\tc}=\|\cdot \|_\cF$ the norms are the same.
\end{proof}
\begin{cor}\label{C:1.7b}Let $\mu\in \tilde\cF(M)$. Then a representation $\mu=\sum_{j=1}^n r_j (\delta_{x_j}-\delta_{y_j})$ is optimal 
if and only if there is an $f\in \Lip_0(M)$, with $\|f\|_\Lip=1$, for which
\begin{equation}\label{E:1.7b.1} f(x_j)-f(y_j)= d(x_j,y_j), \text{ for $j=1,2,\ldots, n$.}\end{equation}
\end{cor}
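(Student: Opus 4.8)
This is the \emph{complementary slackness} characterization of optimality for Kantorovich duality, so the proof will run off the identity $\|\mu\|_\cF=\|\mu\|_\tc$ from Theorem~\ref{T:1.7} together with the variational formula $\|\mu\|_\cF=\sup\{\la\mu,f\ra:f\in\Lip_0(M),\ \|f\|_\Lip\le 1\}$. The plan is to prove the two implications separately, and the only step that is not pure bookkeeping is showing that this supremum is attained.

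For the direction ``\eqref{E:1.7b.1}$\Rightarrow$optimal'', suppose $f\in\Lip_0(M)$ with $\|f\|_\Lip=1$ satisfies \eqref{E:1.7b.1}. Then one computes directly $\la\mu,f\ra=\sum_{j=1}^n r_j\bigl(f(x_j)-f(y_j)\bigr)=\sum_{j=1}^n r_j d(x_j,y_j)=t\bigl((r_j)_{j=1}^n,(x_j)_{j=1}^n,(y_j)_{j=1}^n\bigr)$. Since also $\la\mu,f\ra\le\|\mu\|_\cF=\|\mu\|_\tc$, while $\|\mu\|_\tc\le t\bigl((r_j),(x_j),(y_j)\bigr)$ by the definition of $\|\cdot\|_\tc$ as an infimum over representations, the chain $t\le\|\mu\|_\tc\le t$ collapses and $t=\|\mu\|_\tc$, i.e.\ the representation is optimal in the sense of Corollary~\ref{C:1.5}.

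For the converse, assume $t\bigl((r_j),(x_j),(y_j)\bigr)=\|\mu\|_\tc=\|\mu\|_\cF$. The first task is to produce a \emph{norming} functional, that is, an $f\in\Lip_0(M)$ with $\|f\|_\Lip\le 1$ and $\la\mu,f\ra=\|\mu\|_\cF$. Since only the finitely many points $F:=\{0\}\cup\{x_j,y_j:1\le j\le n\}$ enter the pairing $\la\mu,\cdot\ra$, I would replace the supremum over $B_{\Lip_0(M)}$ by the supremum over $B_{\Lip_0(F)}$: restricting a function to $F$ changes neither $\la\mu,\cdot\ra$ nor increases the Lipschitz constant, while conversely Lemma~\ref{L:6.2.12} extends any $g\in B_{\Lip_0(F)}$ to $M$ with the same Lipschitz constant (and still vanishing at $0$, as $0\in F$). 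As $\Lip_0(F)$ is finite-dimensional, $B_{\Lip_0(F)}$ is compact and the linear map $g\mapsto\sum_j r_j\bigl(g(x_j)-g(y_j)\bigr)$ attains its maximum there; its extension to $M$ is the desired $f$. (Alternatively one could invoke the $\sigma(\Lip_0(M),\cF(M))$-compactness of $B_{\Lip_0(M)}$ established along with Theorem~\ref{T:6.2.4}.) With such an $f$ in hand, using $f(x_j)-f(y_j)\le\|f\|_\Lip\,d(x_j,y_j)\le d(x_j,y_j)$ and optimality at the last step,
$$\|\mu\|_\cF=\la\mu,f\ra=\sum_{j=1}^n r_j\bigl(f(x_j)-f(y_j)\bigr)\le\sum_{j=1}^n r_j d(x_j,y_j)=t\bigl((r_j),(x_j),(y_j)\bigr)=\|\mu\|_\cF.$$
Since each summand satisfies $r_j\bigl(f(x_j)-f(y_j)\bigr)\le r_j d(x_j,y_j)$ with $r_j>0$, equality of the two sums forces $f(x_j)-f(y_j)=d(x_j,y_j)$ for every $j$, which is \eqref{E:1.7b.1}; and then $\|f\|_\Lip=1$ automatically, because $f(x_j)-f(y_j)=d(x_j,y_j)>0$ (recall $x_j\neq y_j$ in a molecular representation) already gives $\|f\|_\Lip\ge 1$.

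I expect the only genuinely delicate point to be the attainment of the supremum defining $\|\mu\|_\cF$ — equivalently, the existence of the extremal Lipschitz function — which the finiteness reduction via Lemma~\ref{L:6.2.12} (or the weak$^*$ compactness of the Lipschitz ball) takes care of; everything else is elementary manipulation of the inequality $f(x_j)-f(y_j)\le d(x_j,y_j)$.
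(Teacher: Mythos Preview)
Your proof is correct and follows essentially the same route as the paper's. The only difference is how you obtain the norming functional: the paper simply invokes the Hahn--Banach theorem (together with Theorem~\ref{T:1.7}) to produce $f\in\Lip_0(M)$ with $\|f\|_\Lip=1$ and $\la\mu,f\ra=\|\mu\|_\cF$, whereas you give a more explicit construction via the finite-dimensional reduction and Lemma~\ref{L:6.2.12}; both are perfectly valid, and the subsequent termwise equality argument is identical.
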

\begin{proof} If the representation  $\mu=\sum_{j=1}^n r_j (\delta_{x_j}-\delta_{y_j})$,with  $r_j>0$ and $x_j,y_j\in M$, for $j=1,2,\ldots,n$, is optimal, then it follows from the Hahn-Banach theorem and Theorem \ref{T:1.7}, that there
is an $f\in \Lip_0(M)$, $\|f\|_\Lip=1$ for which 
$$\int f \,d\mu = \sum_{j=1}^n r_j \big(f(x_j) -f(y_j)\big) =\|\mu\|_{tc}= \sum_{j=1}^n r_i d(x_j,(y_j) $$
since $\|f\|_\Lip=1$ it follows  that $ f(x_i) -f(y_i)\le d(x_j,(y_j)$, and, thus,  $ f(x_i) -f(y_i)= d(x_j,y_j)$, for all $j=1,2, \ldots n$.

Conversely, if \eqref{E:1.7b.1} holds for some $f\in \Lip_0(M)$, with $\|f\|_\Lip=1$, then by Theorem \ref{T:1.7} 
\begin{equation}\label{E:1.7b.2}
\int f \,d\mu = \sum_{j=1}^n r_j \big(f(x_j) -f(y_j)\big)\le \|\mu\|_\cF=\|\mu\|_{tc}.
\end{equation}
On the other hand, by definition of $\|\mu\|_{tc}$,
it follows that
 $$\|\mu\|_{tc}\le  \sum_{j=1}^n r_i d(x_j,(y_j)=  \sum_{j=1}^n r_i\big(f(x_j)-(y_j)\big),$$
and thus, the inequality in \eqref{E:1.7b.2} is an equality, and the representation is optimal. 
\end{proof} 
\subsection{The Extreme Points of $B_{\cF(M)}$}

\begin{defin} Let $\mu\in \cF(M)$ and 
let $$\mu=\sum_{j=1}^n r_j (\delta_{x_j} -\delta_{y_j})$$
be a molecular representation with  $r_j>0$, $j=1,2,\ldots n$.
We call a   sequence $(j_i)_{i=1}^l$ of pairwise distinct elements in $\{1,2,\ldots n\}$ {\em path } for the above representation if 
$x_{j_{i+1}}=y_{j_i}$, for $i=1,2,\ldots, l-1$, and we call it a {\em circle}, if
moreover $y_{j_l}=x_{j_1}$
\end{defin}
Not every optimal representation needs to be disjoint. Nevertheless, if it is not disjoint, something special has to happen.

\begin{prop}\label{P:1.9} If for $\mu\in\cF(M)$ the representation
\begin{equation}\label{E:1.9.1}
\mu=\sum_{j=1}^n r_j (\delta_{x_j} -\delta_{y_j}).\end{equation}
 is optimal,
then for any path $(j_i)_{i=1}^l$ it follows that 
$$d(x_{j_1}, y_{j_l})=\sum_{i=1}^l d(x_{j_i}, y_{j_i})= d(x_{i_1},y_{i_1})+\sum_{i=2}^l d(y_{j_{i-1}}, y_{j_i}).$$
In particular, it does not contain a circle.
\end{prop}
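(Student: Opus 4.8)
The plan is to use Corollary \ref{C:1.7b} to fix a witnessing Lipschitz function and then exploit the telescoping structure of a path. Concretely, since the representation \eqref{E:1.9.1} is optimal, there is an $f\in\Lip_0(M)$ with $\|f\|_\Lip=1$ such that $f(x_j)-f(y_j)=d(x_j,y_j)$ for every $j=1,\dots,n$. Let $(j_i)_{i=1}^l$ be a path, so $x_{j_{i+1}}=y_{j_i}$ for $i=1,\dots,l-1$. Then
\begin{align*}
\sum_{i=1}^l d(x_{j_i},y_{j_i}) &= \sum_{i=1}^l\big(f(x_{j_i})-f(y_{j_i})\big)
= f(x_{j_1})-f(y_{j_1})+\sum_{i=2}^l\big(f(x_{j_i})-f(y_{j_i})\big)\\
&= f(x_{j_1})-f(y_{j_l}),
\end{align*}
where the last equality is the telescoping cancellation coming from $f(x_{j_{i+1}})=f(y_{j_i})$. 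Since $\|f\|_\Lip=1$, we have $f(x_{j_1})-f(y_{j_l})\le d(x_{j_1},y_{j_l})$; on the other hand, each summand satisfies $d(x_{j_i},y_{j_i})\ge 0$, but more to the point the chain $x_{j_1},y_{j_1}=x_{j_2},y_{j_2}=x_{j_3},\dots,y_{j_l}$ together with the triangle inequality gives $d(x_{j_1},y_{j_l})\le\sum_{i=1}^l d(x_{j_i},y_{j_i})$. Combining the two inequalities forces equality throughout, which is exactly the asserted identity $d(x_{j_1},y_{j_l})=\sum_{i=1}^l d(x_{j_i},y_{j_i})$; rewriting $d(x_{j_i},y_{j_i})=d(y_{j_{i-1}},y_{j_i})$ for $i\ge 2$ (again using $x_{j_i}=y_{j_{i-1}}$) yields the second displayed form.

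For the ``in particular'' clause, suppose toward a contradiction that the path is actually a circle, i.e.\ $y_{j_l}=x_{j_1}$. Then $d(x_{j_1},y_{j_l})=0$, so the identity just proved gives $\sum_{i=1}^l d(x_{j_i},y_{j_i})=0$, hence $d(x_{j_i},y_{j_i})=0$ for each $i$, i.e.\ $x_{j_i}=y_{j_i}$. But a molecule $\delta_{x_j}-\delta_{y_j}$ is by Definition \ref{D:1.2} assumed to have $x_j\neq y_j$ (and $r_j>0$), so this is impossible. Therefore no path can be a circle.

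I expect the only point requiring care to be the bookkeeping of indices in the telescoping sum — making sure the condition $x_{j_{i+1}}=y_{j_i}$ is applied in the right direction and that the endpoints $x_{j_1}$ and $y_{j_l}$ survive — and the explicit invocation of the triangle inequality along the chain for the reverse inequality. Both are routine; the substantive input is entirely Corollary \ref{C:1.7b}, which converts optimality into the pointwise saturation $f(x_j)-f(y_j)=d(x_j,y_j)$ that makes the cancellation work.
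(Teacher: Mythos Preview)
Your proof is correct but follows a genuinely different route from the paper. The paper argues directly from the definition of $\|\cdot\|_{\tc}$ as an infimum: after reindexing so that the path occupies the last $l$ terms, it subtracts $\vp=\min_{i} r_{j_i}$ from each of those coefficients and uses the telescoping identity $\sum_{i=1}^l(\delta_{x_{j_i}}-\delta_{y_{j_i}})=\delta_{x_{j_1}}-\delta_{y_{j_l}}$ to produce a competing representation; optimality then forces $d(x_{j_1},y_{j_l})\ge\sum_i d(x_{j_i},y_{j_i})$, and the triangle inequality gives the reverse. Your argument instead invokes Corollary~\ref{C:1.7b} to obtain a norming $f\in\Lip_0(M)$ with $f(x_j)-f(y_j)=d(x_j,y_j)$ and telescopes \emph{in $f$} rather than in the measures. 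The paper's approach is more self-contained (it needs only the definition of transportation cost, not Kantorovich duality or Hahn--Banach), while yours is slicker once Corollary~\ref{C:1.7b} is available and makes the mechanism---saturation of a single Lipschitz functional along the whole path---completely transparent.
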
 
Intuitively the claim is clear.
\begin{proof} Without loss of generality (after reordering), we can assume that $j_i=n-l+i$, for $i=1,2,\ldots l$. We put $\vp=\min\{r_j: n-l< j\le n\}$.
Then we write 
\begin{align*}
\mu&=\sum_{j=1}^{n-l} r_j(\delta_{x_{j}}-\delta_{y_{j}})+ \sum_{j=n-l+1}^n (r_j-\vp)(\delta_{x_{j}}-\delta_{y_{j}})+\vp\underbrace{\sum_{j=n-l+1}^n (\delta_{x_{j}}-\delta_{y_{j}})}_{=:\nu}\\
      &=\sum_{j=1}^{n-l} r_j(\delta_{x_{j}}-\delta_{y_{j}})+ \sum_{j=n-l+1}^n (r_j-\vp)(\delta_{x_{j}}-\delta_{y_{j}})+\vp(\delta_{x_{j_1}}-\delta_{y_{j_l}}).
      \end{align*}
From the optimality of the representation \eqref{E:1.9.1}, it follows that
      $$d(x_{j_1},y_{j_l})=\sum_{i=1}^l d(x_{j_i}, y_{j_i}).$$
\end{proof}
We deduce
\begin{cor}\label{C:1.10} Assume $x,y\in M$, $x\not=y$, and there is no $z\kin M\setminus\{x,y\}$ for which $d(x,y)\keq d(x,z)\kplus d(z,y)$.
Then every optimal representation of $\delta_x-\delta_y$ is disjoint, \ie\ by the remark after Proposition \ref{P:1.4}
it is of the form
$$\delta_x-\delta_y=\sum_{j=1}^n r_j (\delta_{x}-\delta_y), \text{with $\sum_{j=1}^n r_j =d(x,y)$}.$$
\end{cor}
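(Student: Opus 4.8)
The plan is to regard an optimal representation of $\delta_x-\delta_y$ as an acyclic flow from $x$ to $y$ on a directed multigraph, and to use Proposition \ref{P:1.9} twice: to exclude circles, and to show that any flow-path from $x$ to $y$ consists of a single molecule. So suppose $\delta_x-\delta_y=\sum_{j=1}^n r_j(\delta_{x_j}-\delta_{y_j})$ is an optimal representation, with $r_j>0$ and $x_j\neq y_j$ for all $j$; by Corollary \ref{C:1.6} its transportation cost is $d(x,y)$. Let $G$ be the directed multigraph with an edge $e_j$ from $x_j$ to $y_j$ carrying weight $r_j$. Reading off the coefficient of $\delta_v$ on each side of the representation shows that $\sum_{j:x_j=v}r_j-\sum_{j:y_j=v}r_j$ equals $1$ if $v=x$, equals $-1$ if $v=y$, and equals $0$ otherwise; thus $(r_j)_{j=1}^n$ is a flow of value $1$ from $x$ to $y$ on $G$.

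First I would note that $G$ is acyclic: a directed cycle in $G$ is precisely a circle for this representation in the sense of the definition preceding Proposition \ref{P:1.9}, and by that proposition an optimal representation contains no circle. Next I claim that every edge $e_j$ lies on a simple directed path of $G$ from $x$ to $y$ --- this is the flow-decomposition theorem in the acyclic case, but it can be checked directly. Starting at $y_j$, repeatedly follow an outgoing edge; such an edge exists at every vertex other than $y$ by flow conservation, and because $G$ is acyclic this walk repeats no vertex, reuses $e_j$ on no step, and hence must terminate at $y$. Tracing incoming edges backwards from $x_j$ to $x$ in the same way and using acyclicity once more to see that the forward and backward walks are vertex-disjoint and avoid $e_j$, we may concatenate them with $e_j$ to obtain the required simple path through $e_j$.

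Finally, fix $j$ and write $(j_1,\dots,j_l)$ for the sequence of indices along such a path, so that the $j_i$ are pairwise distinct, $x_{j_{i+1}}=y_{j_i}$, $x_{j_1}=x$, $y_{j_l}=y$, $j\in\{j_1,\dots,j_l\}$, and the vertices $x_{j_1},y_{j_1},\dots,y_{j_l}$ are pairwise distinct. This is a path for the representation, so Proposition \ref{P:1.9} yields $d(x,y)=d(x_{j_1},y_{j_l})=\sum_{i=1}^l d(x_{j_i},y_{j_i})$. If $l\geq 2$, put $z=y_{j_1}$, which lies in $M\setminus\{x,y\}$ because the path is simple; since $x_{j_i}=y_{j_{i-1}}$ for $i\geq 2$, the triangle inequality along $y_{j_1},\dots,y_{j_l}$ gives $\sum_{i=2}^l d(x_{j_i},y_{j_i})\geq d(z,y)$, so $d(x,y)\geq d(x,z)+d(z,y)\geq d(x,y)$ and therefore $d(x,y)=d(x,z)+d(z,y)$ --- contradicting the hypothesis on $x$ and $y$. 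Hence $l=1$, which means $x_j=x$ and $y_j=y$. As $j$ was arbitrary, every molecule equals $\delta_x-\delta_y$, the representation is disjoint in the sense of the remark after Proposition \ref{P:1.4}, and $\mu=(\sum_{j}r_j)(\delta_x-\delta_y)$ forces $\sum_j r_j=1$. I expect the flow-tracing claim to be the only delicate point: acyclicity is exactly what makes the traced walks simple, makes them stop at the right endpoints, and keeps them from overlapping; the remainder is bookkeeping together with one use of the triangle inequality.
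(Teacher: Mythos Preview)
Your proof is correct and follows essentially the same route as the paper: both hinge on Proposition~\ref{P:1.9} to rule out any path of length $l\ge 2$ from $x$ to $y$ in the representation, which would produce a forbidden midpoint $z$. The paper simply asserts that a non-disjoint representation ``would have'' such a path, whereas you supply the missing justification via the flow/DAG argument (acyclicity from Proposition~\ref{P:1.9}, then tracing forward and backward through each edge $e_j$); you also correctly obtain $\sum_j r_j=1$, which is what the displayed conclusion should read.
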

\begin{proof} Note that a representation 
$$\delta_x-\delta_y= \sum_{j=1}^n r_j (\delta_{x_j}-\delta_{y_j}),$$
    which is not disjoint, would have  a path 
$(j_i)_{i=1}^l$, $l\ge 2$, with $x=x_{j_1}$ and $y=y_{j_l}$, but this would by the assumption on $x$ and $y$ and by Proposition \ref{P:1.9} mean that this representation is not optimal.
\end{proof}

The following result was shown in the general case (\ie\  in the case that $M$ is not finite) by Aliega and Preneck\'a in \cite{Aliaga_Pernecka2020}.
Recall that for a Banach space $E$, and  a subset $C\subset E$ an element $x\in C$  is called {\em extreme point of $C$} if 
$x$ cannot be written as   $x=\alpha y+ (1-\alpha) z$,with $0<\alpha<1$ and $y\not=z$, $z,y\in C\setminus \{x\}$.
In other words, if 
$$x=\sum_{j=1}^nx_j \alpha_j, \, 0<\alpha_j<1, \text{ and } x_j\in C,\, j=1,2\ldots n,\quad\text{ with }\sum_{j=1}^n \alpha_j=1,$$
then $x_1=x_2=\ldots =x_n=x$.

Recall (Krein-Milman Theorem): Every convex and compact subset $C$ of $E$ is the closed convex hull of its extreme points,

\begin{thm}\label{T:1.11} Let $(M, d)$ be a finite metric space, then 
$\mu\in B_{\cF(M)}$ is an extreme point if and only if 
$\mu$ is of the form
$$\mu=\frac{\delta_x-\delta_y}{d(x,y)}, \text{ with $x\not=y$, and there is no $z\kin M\setminus\{x,y\}$ for which $d(x,y)\keq d(x,z)\kplus d(z,y)$.}$$
\end{thm}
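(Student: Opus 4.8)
The plan is to prove both implications separately, using the molecular/transportation-cost machinery developed above together with the characterization of optimal representations in Corollary \ref{C:1.7b}.

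\textbf{The easy direction.} First I would show that if $\mu=\tfrac{\delta_x-\delta_y}{d(x,y)}$ with no metric midpoint $z$, then $\mu$ is extreme. Suppose $\mu=\sum_{j=1}^n\alpha_j\mu_j$ with $\alpha_j>0$, $\sum\alpha_j=1$, $\mu_j\in B_{\cF(M)}$. Since $\|\mu\|_{\tc}=1$ by Corollary \ref{C:1.6}, and by the triangle inequality $1=\|\mu\|\le\sum\alpha_j\|\mu_j\|\le 1$, we get $\|\mu_j\|=1$ for all $j$. Now pick, via Corollary \ref{C:1.7b} applied to the optimal representation $\mu=\tfrac1{d(x,y)}(\delta_x-\delta_y)$, a norming function $f\in\Lip_0(M)$, $\|f\|_\Lip=1$, with $f(x)-f(y)=d(x,y)$. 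Then $\langle\mu,f\rangle=1=\langle\mu_j,f\rangle$ for each $j$ (again by the equality case in $\sum\alpha_j\langle\mu_j,f\rangle=1$ and $\langle\mu_j,f\rangle\le 1$). Writing each $\mu_j$ in an optimal molecular representation $\mu_j=\sum_i r^{(j)}_i(\delta_{a_i}-\delta_{b_i})$, the equality $\langle\mu_j,f\rangle=\|\mu_j\|_{\tc}$ forces $f(a_i)-f(b_i)=d(a_i,b_i)$ for every molecule. The key point: the set $\{(a,b): f(a)-f(b)=d(a,b)\}$ where $f$ is ``metrically geodesic'' — combined with the no-midpoint hypothesis on $(x,y)$ — should force every molecule appearing to be a positive multiple of $\delta_x-\delta_y$ itself. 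I would argue that $f$ restricted to the segment determined by $x,y$ behaves like a coordinate, and that a molecule $(\delta_a-\delta_b)$ with $f(a)-f(b)=d(a,b)$ that is not proportional to $\delta_x-\delta_y$ would, together with the fact that the $\mu_j$ sum to something supported on $\{x,y\}$, produce a metric midpoint. Hence $\mu_j=\lambda_j(\delta_x-\delta_y)$ and normalization gives $\mu_j=\mu$.

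\textbf{The harder direction.} Conversely, assume $\mu\in B_{\cF(M)}$ is extreme; I must show it has the stated form. First, $\|\mu\|=1$ (an interior point of the ball is a nontrivial average of two points on a segment through it). Take an optimal \emph{disjoint} molecular representation $\mu=\sum_{j=1}^n r_j(\delta_{x_j}-\delta_{y_j})$ (Proposition \ref{P:1.4} and Corollary \ref{C:1.5}), so $\sum r_j d(x_j,y_j)=1$. If $n\ge 2$, I would split $\mu=r_1(\delta_{x_1}-\delta_{y_1})+\big(\sum_{j\ge2}r_j(\delta_{x_j}-\delta_{y_j})\big)$; writing $s=r_1d(x_1,y_1)\in(0,1)$, this expresses $\mu$ as $s\cdot\tfrac{\delta_{x_1}-\delta_{y_1}}{d(x_1,y_1)}+(1-s)\cdot\nu$ where $\nu$ has transportation cost exactly $1$ by disjointness and optimality (the costs add, since the representation is optimal — any cancellation would contradict optimality via Proposition \ref{P:1.9}). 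Both summands lie in $B_{\cF(M)}$, they are distinct (different supports, using disjointness and $n\ge2$), contradicting extremality. Hence $n=1$ and $\mu=r(\delta_x-\delta_y)$ with $rd(x,y)=1$, i.e. $\mu=\tfrac{\delta_x-\delta_y}{d(x,y)}$. Finally, if there were a midpoint $z$ with $d(x,y)=d(x,z)+d(z,y)$, then $\delta_x-\delta_y=(\delta_x-\delta_z)+(\delta_z-\delta_y)$, and normalizing, $\mu=\tfrac{d(x,z)}{d(x,y)}\cdot\tfrac{\delta_x-\delta_z}{d(x,z)}+\tfrac{d(z,y)}{d(x,y)}\cdot\tfrac{\delta_z-\delta_y}{d(z,y)}$ is a nontrivial convex combination of distinct unit-norm elements, again contradicting extremality. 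So no midpoint exists.

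\textbf{Main obstacle.} The routine reductions (killing $n\ge2$, killing midpoints, extracting $\|\mu\|=1$) are straightforward. The genuine difficulty is the rigidity argument in the easy direction: showing that when $\mu=\tfrac{\delta_x-\delta_y}{d(x,y)}$ has no midpoint, any way of averaging it in $B_{\cF(M)}$ is trivial. One must rule out, for instance, $\mu$ being an average of $\tfrac1{d(x,y)}(\delta_x-\delta_w+\delta_w-\delta_y)$-type perturbations where $w$ is \emph{not} a midpoint but combinations still close up. The cleanest route is probably to use the norming functional $f$ and the structure theorem for optimal representations (Proposition \ref{P:1.9}: no circles, paths are geodesic) to show every molecule in every $\mu_j$ lies on an $f$-geodesic from a point of $\supp(\mu^+)$ to a point of $\supp(\mu^-)$; since $\mu^+=\tfrac1{d(x,y)}\delta_x$ and $\mu^-=\tfrac1{d(x,y)}\delta_y$ after combining, and no intermediate point can sit on such a geodesic without being a midpoint, the molecules collapse. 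I would double-check finiteness of $M$ is used precisely here (to guarantee the infimum is attained and supports are finite), matching the hypothesis of the theorem.
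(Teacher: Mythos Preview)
Your ``harder direction'' (extreme $\Rightarrow$ stated form) is correct and matches the paper: take an optimal representation $\mu=\sum_j a_j\frac{\delta_{x_j}-\delta_{y_j}}{d(x_j,y_j)}$ with $\sum a_j=1$; extremality forces all the normalized molecules to coincide, giving $\mu=\frac{\delta_x-\delta_y}{d(x,y)}$, and a midpoint $z$ would give the obvious nontrivial convex decomposition.

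For the direction you call ``easy'' (stated form $\Rightarrow$ extreme), your route through a norming functional $f$ and Corollary~\ref{C:1.7b} is workable but is a detour, and you correctly identify that the final collapse step is where the content lies. The paper's argument is shorter and avoids $f$ entirely: write $\frac{\delta_x-\delta_y}{d(x,y)}=\alpha\mu+(1-\alpha)\nu$, take optimal representations of $\mu$ and $\nu$, and simply \emph{concatenate} them. The resulting representation of $\frac{\delta_x-\delta_y}{d(x,y)}$ has cost $\alpha\|\mu\|_\cF+(1-\alpha)\|\nu\|_\cF\le 1$, hence equals $1$, hence is optimal. Now invoke Corollary~\ref{C:1.10} directly: since $x,y$ admit no metric midpoint, \emph{every} optimal representation of $\delta_x-\delta_y$ has all molecules equal to $\delta_x-\delta_y$. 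Therefore every molecule in the representations of $\mu$ and $\nu$ is already $\frac{\delta_x-\delta_y}{d(x,y)}$, so $\mu=\nu=\frac{\delta_x-\delta_y}{d(x,y)}$.

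Your sketch in the ``Main obstacle'' paragraph is essentially re-deriving Corollary~\ref{C:1.10} on the fly (via Proposition~\ref{P:1.9} and support considerations), which is fine but redundant: the paper proves Corollary~\ref{C:1.10} precisely to package this step. The norming functional $f$ adds nothing once you observe that concatenating optimal representations of the pieces of a convex combination with total cost $1$ yields an optimal representation of the whole---that observation plus Corollary~\ref{C:1.10} is the entire argument.
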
 
\begin{proof} (We are using that $\cF(M)=\tilde \cF(M)$) Assume that $\mu\in B_{\cF(M)}$  is an extreme point, and  let  
$$\mu=\sum_{j=1}^l a_j\frac{\delta_{x_j}-\delta_{y_j}}{d(x_j,y_j)},$$
be its optimal representation.
 Thus it follows 
that 
$$1=\|\mu\|_{\cF}= \sum_{j=1}^l a_j.$$
Since $\mu$ is an extreme point it follows that $x_i=x_j=x$  and all $y_i=y_j=y$ for all $i,j\in\{1,2,\ldots, l\}$, and thus $\mu=\frac{\delta_x-\delta_y}{d(x,y)}$.
There cannot be a $z\in M$ so that $d(x,y)=d(x,z)+d(z,y)$, because otherwise we could 
write
$$\mu=\frac{d(x,z)}{d(x,y)}\frac{\delta_x-\delta_z}{d(x,z)} +\frac{d(y,z)}{d(x,y)}\frac{\delta_z-\delta_y}{d(y,z)}.$$
and note that this is also an optimal representation.

Conversely, assume that $x\not=y$ are in $M$, and that there is no $z\in M$ for which $d(x,y)=d(x,z)+d(z,y)$,
and assume we write 
$$\frac{\delta_x-\delta_y}{d(x,y)}=\alpha \mu+ (1-\alpha)\nu, \text{ with $\mu, \nu \in B_{\cF}$, and $0<\alpha<1$.}$$
Write the  optimal decompositions of $\mu$ and $\nu$ as 
$$\mu=\sum_{j=1}^l a_j \frac{\delta_{x_j}-\delta_{y_j}}{d(x_j,y_j)}\text{  and } \nu= \sum_{j=l+1}^{m+l} a_j  \frac{\delta_{x_j}-\delta_{y_j}}{d(x_j,y_j)},$$
with $a_j\ge 0$, $j=1,2\ldots,l+m$. By Corollary \ref{C:1.10} 
\begin{align*}
1=\Big\|\frac{\delta_x-\delta_y}{d(x,y)}\Big\|_{\cF}
 \le \alpha \|\mu\|_{\cF}+ (1-\alpha)  \|\nu\|_{\cF}
= \alpha \sum_{j=1}^{l} a_j+(1-\alpha) \sum_{j=l+1}^{l+m} a_j =1.
\end{align*}
This implies that 
$$\frac{\delta_x-\delta_y}{d(x,y)}=\sum_{j=1}^{m+l} b_j \frac{\delta_{x_j} -\delta_{y_j}}{d(x_j,y_j)} $$
with $b_j=\alpha a_j$, if $j=1,2\ldots,l$ and $b_j=(1-\alpha) a_j$, if $j=l+1,l+2,\ldots, l_{l+m}$, is an optimal 
representation of $\frac{\delta_x-\delta_y}{d(x,y)}$, and thus, from 
Corollary \ref{C:1.10} it follows that 
$x_j=x$ and $y_j=y$, for $j=1,2,\ldots,, m+l$. This implies that $\frac{\delta_x-\delta_y}{d(x,y)}$, is an extreme point of $B_{\cF(M)}$.
\end{proof} 
\subsection{Some Notational  Remarks}\label{ss:2.3}
In the literature there are several names for the space $\cF(M)$: Other than the {\em Lipschitz free space over $M$}, $\cF(M)$ is also called 
\begin{itemize}
\item {\em Transportation Cost Space},
\item {\em Wasserstein Space}  or more precisely  {\em Wasserstein 1-Space},
\item {\em Arens Eals} (denoted by $\AE$),
\item {\em Earthmover Space}.
\end{itemize}
Let us introduce some more notation 

Denote  the set of  measures on $M$, with finite support, by $\cM$ \ie
$$\cM(M)=\Big\{\sum_{j=1}^n a_j \delta_{x_j} : n\in\N,  a_j\in \R, x_j\in M, \text{ for $j=1,2,\ldots n$}\Big\}$$
(actually $\cM(M)=\tilde\cF(M)$).
Let $\cM^+(M)$ denote the positive measures and 
$\cP(M)$ denote  probabilities on $M$, with finite support.
For $\sigma,\tau\kin\cP(M)$ define the {\em Wasserstein distance of $\sigma$ and $\tau $} by
\begin{align*}
d_\Wa(\sigma,\tau)=\|\sigma-\tau\|_\cF= \|\sigma-\tau\|_\tc
\end{align*}
Thus if we  let $\mu=\sigma-\tau$, it follows by the Remark after Proposition \ref{P:1.4} that
 $$d_\Wa(\sigma,\tau)= \inf \left\{ \sum_{(x,y)\in M^2} \nu(x,y) d(x,y): \begin{matrix} \nu\in \cM(M^2), \\
  \mu^+(x)=\sum_{y'\in M}\nu(x,y') \text{ for $x\in M$ and } \\ \mu^-(y)=\sum_{x'\in M}\nu(x',y)\text{ for $y\in M$} \end{matrix} \right\}$$
We claim that 
$$d_\Wa(\sigma,\tau)= \inf \left\{ \sum_{(x,y)\in M^2} \pi(x,y) d(x,y): \begin{matrix} \\ \pi\in \cP(M^2), \\
  \sigma(x)=\sum_{y'\in M} \pi(x,y') \text{ for $x\in M$ and } \\ \tau(y)=\sum_{x'\in M}\pi(x',y)\text{ for $y\in M$} \end{matrix} \right\}.$$
(which is the usual definition of the Wasserstein distance).

Indeed 
$\sigma-\tau=\mu=\mu^+-\mu^-$
and thus, for every $x\in M$
$$\rho(x):=\sigma(x)-\mu^+(x)=\tau(x)-\mu^-(x)$$
Let $\nu\in \cM^+(M^2)$ be such that
\begin{equation}\label{E:1.1}
 \mu^+(x)=\sum_{y'\in M}\nu(x,y'), \text{ for $x\in M$ and }  \mu^-(y)=\sum_{x'\in M}\nu(x',y),\text{ for $y\in M$}. \end{equation}
Then define  $\pi= \nu+\sum_{x\in M} \delta_{(x,x)} \rho(x)$
                                        and note that 
$$ \sigma(x)=\sum_{y'\in M}\pi(x,y'), \text{ for $x\in M$ and }  \tau(y)=\sum_{x'\in M}\pi(x',y),\text{ for $y\in M$},$$ 
and thus $\pi\in \cP(M^2)$.
Moreover, we have
\begin{equation}\label{E:1.2}\sum_{(x,y)\in M^2} \nu(x,y) d(x,y)=\sum_{(x,y)\in M^2} \pi(x,y) d(x,y), \end{equation}

Similarly it follows that if $\pi\in \cP(M^2)$ satisfies \eqref{E:1.2} then 
$\nu=\pi-\sum_{x\in M} \delta_{(x,x)}\pi(x,x)$ satisfies \eqref{E:1.1}.

We define for $\sigma,\tau\in \cP(M)$
the {\em Transition Probabilities from $\sigma$ to $\tau$} 
$$\cP(\sigma,\tau)=\Big\{ \pi\in \cP(M^2):\sigma(x)=\sum_{y'\in M}\pi(x,y'),  \tau(y)=\sum_{x'\in M}\pi(x',y),\text{ for $x,y\in M$}\Big\}$$
and can rewrite $d_\Wa(\sigma,\tau)$
as 
\begin{equation}\label{E:1.3}
d_\Wa(\sigma,\tau)=\min\Big\{ \sum_{x,y\in M} \pi(x,y)d(x,y) : \pi\in\cP(\sigma, \tau)\Big\} .
\end{equation}
\begin{defin}  We call  $\cP(M)$ together with the metric $d_\Wa(\cdot,\cdot)$ {\em Wasserstein space}, and denote it by $\Wa(M)$.
\end{defin}

More generally if $\mu_1,\mu_2\in \cM^+(M)$, with $\mu_1(M)=\mu_2(M)$,  we put 
$$\cM^+(\mu_1,\mu_2)=\Big\{ \nu\in \cM^+(M^2),:\mu_1(x)=\sum_{y'\in M}\nu(x,y'),  \mu_2(y)=\sum_{x'\in M}\nu(x',y),\text{ for $x,y\kin M$}\Big\}$$
and conclude that 
\begin{equation}\label{E:1.4}
\|\mu_1-\mu_2\|_\cF= \min\Big\{ \underbrace{\sum_{x,y\in M} \nu(x,y)d(x,y)}_{=\int_{M^2} d(x,y) \, d\nu(x,y)} : \nu\in\cM^+(\mu_1, \mu_2)\Big\}.
\end{equation} 

\subsection{Uniform Distibutions}
   \medskip  
    For $A\subset M$ we denote the uniform distribution  on $A$  by $\mu_A$,\ie\ 
    $\mu_A(x)= \frac1{|A|}\chi_A(x)$.

    \begin{prop}\label{P:1.12}
    If $A,B\subset M$ with $n=|A|=|B|$, then  there exist a bijection $f:A\to B$
    So that 
    $$d_\Wa(\mu_A,\mu_B)=\frac1n \sum_{x\in M}  d(x,f(x))$$
    In other words, the representation
    $$\mu_A-\mu_B=\frac1n \sum_{x\in A} \big(\delta_x -\delta_{f(x)}\big)$$
    is optimal.
    \end{prop}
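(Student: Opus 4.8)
The plan is to use the Birkhoff–von Neumann theorem, following the standard route for transportation problems between uniform distributions. By \eqref{E:1.3}, $d_\Wa(\mu_A,\mu_B)$ is the minimum of $\sum_{x,y} \pi(x,y)d(x,y)$ over $\pi\in\cP(\mu_A,\mu_B)$, and by Corollary \ref{C:1.5} this minimum is attained by some optimal $\pi$. Both marginals of $\pi$ are $\frac1n$ on their respective supports $A$ and $B$, so, after scaling by $n$, the matrix $\big(n\pi(x,y)\big)_{x\in A, y\in B}$ is a doubly stochastic $n\times n$ matrix. This is where the real work is.

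The main step is: by the Birkhoff–von Neumann theorem the doubly stochastic matrix $P=(n\pi(x,y))_{x\in A,y\in B}$ is a convex combination $P=\sum_{k=1}^m \lambda_k P_k$ of permutation matrices $P_k$ (each $P_k$ corresponding to a bijection $f_k:A\to B$), with $\lambda_k>0$ and $\sum_k\lambda_k=1$. Then
\begin{align*}
d_\Wa(\mu_A,\mu_B)=\frac1n\sum_{x\in A}\sum_{y\in B} P(x,y)\, d(x,y)
= \sum_{k=1}^m \lambda_k \Big(\frac1n\sum_{x\in A} d(x,f_k(x))\Big).
\end{align*}
Each inner sum $\frac1n\sum_{x\in A} d(x,f_k(x))$ is the transportation cost of the representation $\mu_A-\mu_B=\frac1n\sum_{x\in A}(\delta_x-\delta_{f_k(x)})$, hence is at least $\|\mu_A-\mu_B\|_\tc = d_\Wa(\mu_A,\mu_B)$. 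Since the weighted average of these quantities equals $d_\Wa(\mu_A,\mu_B)$ and none can be smaller, all of them must equal $d_\Wa(\mu_A,\mu_B)$. Picking any one bijection $f=f_k$ gives the claim, and the representation $\mu_A-\mu_B=\frac1n\sum_{x\in A}(\delta_x-\delta_{f(x)})$ is optimal.

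The only genuine obstacle is having the Birkhoff–von Neumann theorem available; if it is not quoted elsewhere in the paper, one should either cite it or prove it via an elementary augmenting-path / extreme-point argument (the extreme points of the Birkhoff polytope are permutation matrices). A small bookkeeping point: the bijection $f$ produced this way is literally from $A$ to $B$, and since $A$ and $B$ need not be disjoint the resulting molecular representation need not be disjoint — but that is harmless, optimality is all that is asserted. Everything else (attainment of the infimum, the identification $\|\mu\|_\tc=d_\Wa$) is already in hand from Corollary \ref{C:1.5} and Theorem \ref{T:1.7}.
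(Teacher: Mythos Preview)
Your proof is correct and follows essentially the same approach as the paper: both reduce to Birkhoff's theorem (stated as Theorem~\ref{T:1.13} and proved in the appendix) via the observation that $(n\pi(x,y))_{x\in A,\,y\in B}$ is doubly stochastic. The only cosmetic difference is that the paper invokes the principle that a linear functional on a compact convex set attains its minimum at an extreme point, whereas you decompose an optimal $\pi$ into permutation matrices and use an averaging argument---two sides of the same coin.
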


   The proof is a Corollary of the following theorem by   Birkhoff (see appendix for a proof)
   
   \begin{thm}\label{T:1.13}(Birkhoff)
   Assume $n\in\N$ and  that $A=(a_{i,j})_{i,j=1}^n$ is a doubly stochastic matrix, \ie
   \begin{align*}
   &0\le a_{i,j}\le 1\text{ for all $1\le i,j\le n$,}\\
   &\sum_{j=1}^n  a_{i,j}=1\text{ for $i=1,2,\ldots, n$,}\text{ and }
   \sum_{i=1}^n  a_{i,j}=1\text{ for $j=1,2,\ldots, n$.}
   \end{align*}
   Then $A$ is a convex combination of permutation matrices, \ie matrices which have in each 
   row and each column exactly one entry whose value is  $1$ and vanish elsewhere.
   \end{thm}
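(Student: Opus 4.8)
The plan is to argue by induction on the number of nonzero entries of $A$. First I would dispose of the base case: a doubly stochastic $n\times n$ matrix has at least $n$ nonzero entries (each of the $n$ rows contains at least one), and if it has exactly $n$, then every row and every column contains exactly one nonzero entry, which the row‑sum condition forces to equal $1$; hence in this case $A$ is itself a permutation matrix.

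For the inductive step the crux is to produce a permutation $\sigma$ of $\{1,\dots,n\}$ with $a_{i,\sigma(i)}>0$ for every $i$ — a ``positive diagonal''. I would encode the support of $A$ as a bipartite graph $G$ with vertex classes $R=\{r_1,\dots,r_n\}$ (rows) and $C=\{c_1,\dots,c_n\}$ (columns), joining $r_i$ to $c_j$ exactly when $a_{i,j}>0$, and then invoke Hall's marriage theorem (available in the appendix) to obtain a perfect matching, which is precisely the desired $\sigma$. Hall's condition is immediate from double stochasticity: for any set $S\subseteq R$ of rows, the total mass $\sum_{r_i\in S}\sum_{j} a_{i,j}=|S|$ carried by those rows lies entirely in the columns belonging to $N(S)$, whose total mass is $|N(S)|$, so $|S|\le|N(S)|$.

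Given such a $\sigma$, let $P_\sigma$ be the associated permutation matrix and set $t=\min_{i} a_{i,\sigma(i)}>0$. If $t=1$, then since each row of $A$ sums to $1$ we get $A=P_\sigma$ and we are done. Otherwise $0<t<1$ and $B=\tfrac{1}{1-t}\bigl(A-tP_\sigma\bigr)$ is again doubly stochastic: its entries are nonnegative because we subtract $t\le a_{i,\sigma(i)}$ only at the positions $(i,\sigma(i))$, and each row and column of $A-tP_\sigma$ sums to $1-t$. Moreover $B$ has strictly fewer nonzero entries than $A$, since a position attaining the minimum $t$ is zeroed out while no new nonzero entries appear. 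By the induction hypothesis $B=\sum_{k}\lambda_k P_k$ is a convex combination of permutation matrices, whence $A=tP_\sigma+(1-t)\sum_{k}\lambda_k P_k$ is as well.

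The main obstacle is the existence of the positive diagonal; the rest is bookkeeping. If one did not wish to take Hall's theorem for granted, one could instead extract the matching from König's theorem or prove it directly by an augmenting‑path argument, but since that combinatorial input is placed in the appendix, the argument in the main text reduces to the reduction described above.
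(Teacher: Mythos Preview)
Your proof is correct, but it follows a different classical route from the paper's. The paper argues via extreme points: it observes that $\DS_n$ is compact and convex, so by Krein--Milman it is the convex hull of its extreme points, and then shows directly that any $A\in\DS_n$ with a non-integer entry fails to be extreme. The mechanism is to trace a cycle of non-integer entries (alternating along rows and columns) and perturb by $\pm\varepsilon$ along that cycle to write $A=\tfrac12(B^{(1)}+B^{(2)})$ with $B^{(1)},B^{(2)}\in\DS_n$. Your argument instead peels off permutation matrices one at a time via Hall's marriage theorem, which is more constructive and gives an explicit algorithm for the decomposition; the paper's approach is existential but avoids importing Hall's theorem.

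One factual slip: Hall's theorem is \emph{not} in the appendix of this paper---the appendix contains only the proofs of Birkhoff's theorem itself, of Theorem~\ref{T:3.2}, and of Theorem~\ref{T:3.3}. This does not damage the mathematics, since Hall's theorem is standard, but you should either supply the short augmenting-path proof you allude to or cite an external reference rather than pointing to a nonexistent appendix item.
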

     
   \begin{proof}[Proof of Proposition \ref{P:1.12}] Let 
    $A=\{x_1,x_2,\ldots x_n\}$ and $B=\{y_1,y_2,\ldots, y_n\}$. 
   We note that for every $\pi\in \cP(\sigma,\tau)$ 
   the matrix\\ $M=(n \pi(x_i,y_j) : 1\le i,j\le n)$ is a doubly stochastic matrix
   (since $\sum_{x\in A} \pi(x,y) =\tau(y)=\frac1{|B|}=\frac1{|A|}=\sigma(x)=\sum_{y\in B} \pi(x,y)$).
   Thus by \eqref{E:1.3}
   \begin{align*} d_\Wa(\mu_A,\mu_B)=\frac1n \min \Big \{ \sum_{i,j=1}^n M_{i,j} d(x_i,y_j) : M\in \DS_n\Big\}
   \end{align*}
   Since the map
   $$\DS\to [0,\infty), \quad M\mapsto  \sum_{i,j=1}^n M_{i,j} d(x_i,y_j) $$
   is linear, it achieves its minimum on an extreme point; our claim follows from Theorem \ref{T:1.13} 
      \end{proof}
 \vfill\eject
\section{Embeddings of Transportation cost  spaces over trees into $L_1$}
\subsection{Distortion}
 Let $(M,d)$ and $(M',d')$ be two metric spaces.
For $f: M\to M'$ the {\em  distortion of $f$ } is defined by
$$\dist(f)=\sup_{x\not=y, x,y\in M} \underbrace{\frac{d'(f(x), f(y))}{d(x,y)}}_{\Lip(f)}\sup_{x\not=y, x,y\in M}\underbrace{\frac{d(x,y)}{d'(f(x), f(y))}}_{\Lip(f^{-1})} $$
where $f^{-1}$ is defined on $f(M)$, if $f$ is injective, and  otherwise $\dist(f):=\infty$.

We define  the {\em $M'$- distortion of  $M$} by
$$c_{M'}(M) =\inf \big\{ \dist(f)\vert \,  f:M\to M'\big\}.$$

Let $\cM'$ be a family of metric spaces. We define the {\em $\cM'$-distortion of $M$ } by
$$c_{\cM'}(M)=\inf_{M'\in \cM'} c_{M'}(M).$$

The main question we want to address:  

\begin{prob}
Let $(M,d)$  be a finite metric space.
Find upper and lower  estimates of 
\begin{itemize}
\item $c_{(\ell_1^n, n\in\N)}(\cF(M))$, 
 \item  $c_{(\ell_1^n, n\in\N)}(\Wa(M))$,
 \item  $ \inf\big\{ \|T\|_{\cF(M)\to L_1}\cdot \|T^{-1}\|_{T(L_1)\to \cF(M)} : T: \cF\to L_1 \text{ linear and bounded}\big\}$.
 \end{itemize}
\end{prob}

Our families of metric spaces are usually closed under scaling, \ie\ if $(M', d')\in \cM'$ and $\lambda>0$, then
also  $(\cM',\lambda\!\cdot\!d')\in \cM'$. In that case
$$c_{\cM'}(M)= \inf_{M'\in \cM'}  \{ \|f\|_\Lip: f: M\to M' \text{ is expansive}\}$$ 
where $f: (M,d)\to (M',d') $ is called {\em expansive}, if
$$d'(f(x),f(z))\ge d(x,z), \text{ for $x,z\in M$.}$$
\subsection{Lipschitz embeddings  of $\Wa(M)$ into $L_1$ imply  linear embeddings of $\cF(M)$ into $\ell_1(N)$}
Throughout this subsection, we assume that $(M,d)$ is a  finite metric space and let $n=|M|$.

\begin{thm}\label{T:2.2}
 Assume that 
$$ F: \cP(M)\to L_1[0,1]$$
has the property that for some $L\ge 1$
\begin{equation*}d_\Wa(\sigma,\tau)\le \|F(\sigma)-F(\tau)\|_1\le L d_\Wa(\sigma,\tau) \text{ for $\sigma,\tau\in \cP(M)$.}
\end{equation*}
Then, there exists a Lipschitz map
$$H: \cF(M)\to L_1[0,1]$$
for which 
$$\|\mu-\nu\|_\cF\le \|H(\mu)-H(\nu)\|_1\le 3 L\|\mu-\nu\|_\cF,  \text{ for $\mu,\nu\in \cF(M)$}.$$
\end{thm}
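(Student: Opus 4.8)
The plan is to build $H$ out of $F$ by a two-step reduction: first reduce an arbitrary $\mu\in\cF(M)$ to a difference of probabilities, and then apply $F$. Given $\mu\in\cF(M)=\tilde\cF(M)$, write $\mu=\mu^+-\mu^-$ with $\mu^+(M)=\mu^-(M)=:m(\mu)$ (the total mass of the positive part). Fix the base point $0\in M$ and set $\sigma_\mu=\mu^++\bigl(1-m(\mu)\bigr)\delta_0$ and $\tau_\mu=\mu^-+\bigl(1-m(\mu)\bigr)\delta_0$ whenever $m(\mu)\le 1$; for general $\mu$ first scale. Concretely, put $s(\mu)=\max\{1,m(\mu)\}$, let $\mu'=\mu/s(\mu)$, so that $m(\mu')\le 1$, and define
$$H(\mu)=s(\mu)\,\bigl(F(\sigma_{\mu'})-F(\tau_{\mu'})\bigr).$$
The point of padding with the same multiple of $\delta_0$ on both sides is that $\sigma_{\mu'}-\tau_{\mu'}=\mu'$ exactly, so that $d_\Wa(\sigma_{\mu'},\tau_{\mu'})=\|\mu'\|_\cF=\|\mu\|_\cF/s(\mu)$, and hence the hypothesis on $F$ gives $\|\mu\|_\cF\le\|H(\mu)\|_1\le L\|\mu\|_\cF$ — the correct two-sided estimate for the single vector $\mu$ relative to $0$.

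The real work is controlling $\|H(\mu)-H(\nu)\|_1$ for two vectors. First I would handle the case $m(\mu'),m(\nu')\le 1$ with equal scaling $s(\mu)=s(\nu)=1$, i.e.\ $m(\mu),m(\nu)\le 1$. Then $H(\mu)-H(\nu)=\bigl(F(\sigma_\mu)-F(\sigma_\nu)\bigr)-\bigl(F(\tau_\mu)-F(\tau_\nu)\bigr)$, so by the triangle inequality and the upper bound on $F$,
$$\|H(\mu)-H(\nu)\|_1\le L\,d_\Wa(\sigma_\mu,\sigma_\nu)+L\,d_\Wa(\tau_\mu,\tau_\nu).$$
Now $\sigma_\mu-\sigma_\nu=\mu^+-\nu^++\bigl(m(\nu)-m(\mu)\bigr)\delta_0$ and similarly for $\tau$, and one checks $d_\Wa(\sigma_\mu,\sigma_\nu)=\|\sigma_\mu-\sigma_\nu\|_\cF\le\|\mu-\nu\|_\cF$: indeed $\sigma_\mu-\sigma_\nu$ and $\tau_\mu-\tau_\nu$ are obtained from $\mu-\nu=(\mu^+-\nu^+)-(\mu^--\nu^-)$ by redistributing, and an optimal transportation plan for $\mu-\nu$ (between a partition of its positive and negative parts, including the $\delta_0$ corrections, which cancel the same way on both sides) restricts to admissible plans for the two ``half'' differences, each of cost $\le\|\mu-\nu\|_\cF$. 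This yields $\|H(\mu)-H(\nu)\|_1\le 2L\|\mu-\nu\|_\cF$ on this region; the lower bound $\|\mu-\nu\|_\cF\le\|H(\mu)-H(\nu)\|_1$ follows because $H(\mu)-H(\nu)=\bigl(F(\sigma_\mu)-F(\tau_\mu)\bigr)-\bigl(F(\sigma_\nu)-F(\tau_\nu)\bigr)$ and each bracket is the $F$-image-difference computing $\mu$ resp.\ $\nu$, but more directly one routes $\mu-\nu$ through a single pair of probabilities $(\sigma_{\mu}+\tau_{\nu})/C$ versus $(\sigma_\nu+\tau_\mu)/C$ and applies the lower bound of $F$.

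To get the full statement I would then remove the restriction $m\le 1$. Given arbitrary $\mu,\nu$, set $s=\max\{1,m(\mu),m(\nu)\}$ and work with $\mu/s,\nu/s$, so $H(\mu)-H(\nu)=s\bigl(H_0(\mu/s)-H_0(\nu/s)\bigr)$ with $H_0$ the already-treated map on the region $m\le 1$; by homogeneity $\|H(\mu)-H(\nu)\|_1\le 2Ls\|\mu/s-\nu/s\|_\cF=2L\|\mu-\nu\|_\cF$ and likewise from below — but one must check $H$ is well defined, i.e.\ the formula does not depend on the choice of admissible $s$; this is where the factor $3$ instead of $2$ enters, since the honest construction compares two different scalings and one must add a correction term like $\pm(m(\mu)-m(\nu))\delta_0$ whose cost is $\le\|\mu-\nu\|_\cF\cdot\diam$ — rather, one absorbs it into an extra $L d_\Wa\le L\|\mu-\nu\|_\cF$, giving $3L$ total. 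Finally, Lipschitzness of $H$ on all of $\cF(M)$ is immediate from the $3L\|\mu-\nu\|_\cF$ bound. The main obstacle, I expect, is precisely the bookkeeping around scaling by $s(\mu)$: the padding-with-$\delta_0$ trick is clean only when total masses are $\le 1$, and making the two-vector estimate survive the passage to unequal scalings — without losing the lower bound or blowing up the constant past $3L$ — is the delicate point; everything else is triangle inequalities plus the elementary fact (following from the Remark after Proposition~\ref{P:1.4}) that restricting an optimal transportation plan to a sub-collection of mass yields an admissible, hence cost-dominating, plan for the corresponding sub-difference.
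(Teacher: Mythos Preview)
Your construction has a genuine gap at the heart of the two-vector upper bound. You claim that $d_\Wa(\sigma_\mu,\sigma_\nu)\le\|\mu-\nu\|_\cF$ and $d_\Wa(\tau_\mu,\tau_\nu)\le\|\mu-\nu\|_\cF$, justified by ``restricting an optimal transportation plan for $\mu-\nu$''. This is false, and the justification does not make sense: $\sigma_\mu-\sigma_\nu$ and $\tau_\mu-\tau_\nu$ are not sub-measures of $\mu-\nu$ in any way that would let a plan restrict. Concretely, take $M=\{0,a,b\}$ with $d(0,a)=d(0,b)=K$ and $d(a,b)=1$, and set $\mu=\tfrac12(\delta_a-\delta_b)$, $\nu=0$. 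Then $\tau_\mu=\tfrac12\delta_b+\tfrac12\delta_0$, $\tau_\nu=\delta_0$, so $d_\Wa(\tau_\mu,\tau_\nu)=\tfrac12 d(b,0)=K/2$, while $\|\mu-\nu\|_\cF=\tfrac12 d(a,b)=\tfrac12$. The ratio is $K$, unbounded. The problem is structural: the map $\mu\mapsto\mu^\pm$ is not Lipschitz for $\|\cdot\|_\cF$, and padding the excess mass at the fixed point $0$ forces transport over the distance from $\supp(\mu)$ to $0$, which bears no relation to $\|\mu-\nu\|_\cF$. (Your handling of the scaling $s(\mu)$ has a separate gap: $H$ as you defined it uses $s(\mu)$ depending on $\mu$ alone, so you cannot later replace it by a common $s=\max\{1,m(\mu),m(\nu)\}$ without changing $H$; $F$ is nonlinear, so the formula is genuinely sensitive to the choice of $s$.)

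The paper avoids both problems by a different route. Instead of the Jordan decomposition it uses an \emph{affine} map into $\cP(M)$: after rescaling so that $d\ge 1$ (hence $\|\mu\|_\infty\le\|\mu\|_\cF$), set $\Psi(\mu)=\sum_{x\in M}\frac{1+\mu(x)}{n}\,\delta_x$ on the ball $B=\{\|\mu\|_\infty\le 1\}$. Then $\Psi(\mu)-\Psi(\nu)=\tfrac1n(\mu-\nu)$ \emph{exactly}, so $h=nF\circ\Psi$ is bi-Lipschitz on $B$ with the same constant $L$ as $F$ --- no splitting, no padding, and the two-vector estimate is immediate. The passage from $B$ to all of $\cF(M)$ is then done by a separate, self-contained trick: post-compose $h$ with the isometric embedding $\chi:L_1[0,1]\to L_1([0,1]\times\R)$, $\chi(f)(s,t)=\sign(f(s))1_{[0,|f(s)|]}(t)$, which lands in $\{-1,0,1\}$-valued functions, and set $H(\mu)=\|\mu\|_\cF\cdot\chi\!\circ h\big(\mu/\|\mu\|_\cF\big)$. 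Because $\chi\circ h$ takes values in $\{-1,0,1\}$, one has the pointwise identity
\[
|H(\mu)-H(\nu)|=\|\nu\|_\cF\,\big|\chi\circ h(\tfrac{\mu}{\|\mu\|_\cF})-\chi\circ h(\tfrac{\nu}{\|\nu\|_\cF})\big|+\big(\|\mu\|_\cF-\|\nu\|_\cF\big)\,\big|\chi\circ h(\tfrac{\mu}{\|\mu\|_\cF})\big|
\]
(for $\|\mu\|_\cF\ge\|\nu\|_\cF$), from which both the lower bound $\|\mu-\nu\|_\cF$ and the upper bound $3L\|\mu-\nu\|_\cF$ drop out by triangle-inequality bookkeeping. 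This is where the factor $3$ actually comes from --- not from reconciling two scalings as you guessed, but from the three-term estimate produced by the $\chi$-identity.
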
 
\begin{rem} Using more sophisticated tools, we can obtain a linear bounded operator
$$T: \cF(M)\to L_1[0,1]$$
for which 
$$\|\mu\|_\cF\le \|T(\mu)\|_1\le  L\|\mu\|_1.$$
Here, we are following a more elementary but also more technical proof by Naor and Schechtman
\cite{NaorSchechtman2007}.
\end{rem} 
\begin{proof} After scaling we can assume that $d(u,v)\ge 1$ for all $u\not=v$ in $M$, and secondly we can 
assume that the image of the uniform distribution, $\mu_0= \frac1n\sum_{x\in M}\delta_x$, under $F$ vanishes.
Note that for $\mu\in \cF(M)$  
$$\|\mu\|_\infty=\max_{x\in M} |\mu(x)|\le \|\mu\|_{\cF}.$$
Indeed, if $\nu\in \cM^+(\mu^+,\mu^-)$, then 
\begin{align*}
\int_{M\times M}d(x,y) \,d \nu(x,y)&\ge \int _{M\times M}1 \,d \nu(x,y) = \int_{M} 1\,d\mu^+(x) =  \mu^+(M)
                                  = \mu^-(M) \ge \|\mu\|_\infty,
                                  \end{align*}
                                  and thus, taking the infimum over all $\nu\in \cM^+(\mu^+,\mu^-)$, we deduce our claim.

We put 
$$B=\{\mu\in\cF(M), \|\mu\|_\infty \le 1\}. $$

$$\Psi: B\to \cP(M),\quad  \mu\mapsto \sum_{x\in M}\frac{1+\mu(x)}n\delta_x\in\cP(M).$$
For any $f\in \Lip_0(M)$, $\mu,\nu\in B$
$$\int_M f(x) d(n\Psi(\mu)-n\Psi(\nu))=\sum_{x\in M} f(x) (\mu(x)-\nu(x))= \int_M f(x) d(\mu-\nu),$$
and thus 
$$\|\mu-\nu\|_{\cF(M)}= n\|\Psi(\mu)-\Psi(\nu)\|_{\cF(M)}=n d_\Wa\big(\Psi(\mu),\Psi(\nu)\big).$$
Then define 
$$h: B\to L_1[0,1] \quad\mu\mapsto n F\circ \Psi(\mu).$$
Then $h(0)=0$ (because the image of the uniform distribution under $F$ was assumed to vanish) and for $\mu,\nu\in B$
\begin{align}\label{E:2.2.1}
\|h(\mu)&-h(\nu)\|_1\\ 
&=n \|F(\Psi(\mu))-F(\Psi(\nu))\|_1\notag\\
&=n\Big\|F\Big(\sum_{x\in M}\frac1n (1+\mu(x))\delta_x\Big)-F\Big(\sum_{x\in M} \frac1n(1+\nu(x))\delta_x\Big)\Big\|_1\notag\\
&\ge n\Big\| \sum_{x\in M} \frac1n(1+\mu(x))\delta_x-\sum_{x\in M}\frac1n (1+\nu(x))\delta_x\Big\|_\cF\quad (\text{expansiveness)}\notag\\
&= \|\mu-\nu\|_\cF,  \notag
\end{align}
and
\begin{align}\label{E:2.2.2}
\|h(\mu)&-h(\nu)\|_1\\
&=n\Big\|F\Big(\sum_{x\in M}\frac1n (1+\mu(x))\delta_x\Big)-F\Big(\sum_{x\in M} \frac1n(1+\nu(x))\delta_x\Big)\Big\|_1\notag\\
&\le n  Ld_\Wa\Big(\sum_{x\in M}\frac1n (1+\mu(x))\delta_x,\sum_{x\in M}\frac1n (1+\nu(x))\delta_x\Big)
= \|\mu-\nu\|_\cF.
\end{align}

We define   $\chi(f): [0,1] \times \R\to \{-1,0,1\}$, for  $f\in L_1[0,1]$,  by
$$\chi(f)(s,t) = \sign\big(f(s)\big) 1_{[0,|f(s)|]}(t) = \begin{cases}  1 &\text{if $f(s)>0$ and $0\le t\le f(s)$}, \\
                                                                                                    -1 &\text{if $f(s)<0$ and $0\le t\le- f(s)$},\\
                                                                                                    0 &\text{else.}
\end{cases}$$
Note that (by Fubini's Theorem)

\begin{equation}\label{E:2.2.3}\|\chi(f)-\chi(g)\|_{L_1([0,1]\times \R)}=\|f-g\|_{L_1[0,1] }, \text{  for $f,g\in L_1[0,1]$}.
\end{equation} Indeed,
note that  $f(s)\ge g(s) \iff \chi(f)(s,t)\ge \chi(g)(s,t)$ and  that, since $\chi(f)$ and $\chi(g)$ are $-1,0,1$ valued, we have 
\begin{align*}
\int_0^1\int_{-\infty}^\infty & |\chi(f)(s,t)-\chi(g)(s,t)|\, dt ds\\
&=\int_{\{s: f(s)>g(s) \}}\int_{-\infty}^{+\infty} \chi(f)(s,t)-\chi(g)(s,t)\,dtds \\
  &\qquad+  \int_{\{s: f(s)<g(s) \}} \int_{-\infty}^{+\infty}\chi(g)(s,t)-\chi(f)(s,t) \,dtds \\
  &=\int_{\{s: f(s)>g(s) \}} f(s)-g(s) ds+\int_{\{s: f(s)<g(s) \}}g(s)-f(s) ds =\|f-g\|_{L_1[0,1]}.
\end{align*}
Now we put  (recall that $\mu/\|\mu\|_\cF\in B$, for $\mu\in\cF(M)$)
$$H:\cF(M)\to L_1([0,1]\times \R),\quad \mu\mapsto \|\mu\|_\cF \cdot\chi\circ  h \Big( \frac{\mu}{\|\mu\|_\cF}\Big) \text{if $\mu\not=0$, and 
$H(0)=0$.}$$
It follows  for $\mu,\nu\in \cF(M)$, with $\|\mu\|_\cF\ge \|\nu\|_\cF$ (w.l.o.g.), that
\begin{align*}
\big| H(\mu)-H(\nu)\big|&= \Big| \|\mu\|_\cF\cdot \chi\circ  h \Big( \frac{\mu}{\|\mu\|_\cF}\Big)- \|\nu\|_\cF \cdot\chi\circ  h \Big( \frac{\nu}{\|\nu\|_\cF}\Big)\Big|\\
&=\Big|\|\nu\|_\cF \Big(\chi\circ  h \Big( \frac{\mu}{\|\mu\|_\cF}\Big)-\chi\circ  h \Big( \frac{\nu}{\|\nu\|_\cF}\Big)\Big)\\
 &\qquad\qquad\qquad+(\|\mu\|_\cF-\|\nu\|_\cF)\cdot\chi\circ  h \Big( \frac{\mu}{\|\mu\|_\cF}\Big)\Big|\\
&= \|\nu\|_\cF \Big|\chi\circ  h \Big( \frac{\mu}{\|\mu\|_\cF}\Big)-\chi\circ  h \Big( \frac{\nu}{\|\nu\|_\cF}\Big)\Big|\\
 &\qquad\qquad\qquad+(\|\mu\|_\cF-\|\nu\|_\cF)\Big|\chi\circ  h \Big( \frac{\mu}{\|\mu\|_\cF}\Big)\Big|\\
 &\Big(\text{Check possible values  of  $\chi\circ h\Big( \frac{\mu}{\|\mu\|_\cF}\Big), \chi\circ h\Big( \frac{\nu}{\|\nu\|_\cF}\Big)\kin\{-1,0,1\}$}\Big).
\end{align*} 
Thus
\begin{align*}
\big\|H(\mu)-H(\nu)\big\|_{L_1([0,1]\times\R)}&= 
\|\nu\|_\cF \Big\|\chi\circ  h \Big( \frac{\mu}{\|\mu\|_\cF}\Big)-\chi\circ  h \Big( \frac{\nu}{\|\nu\|_\cF}\Big)\Big\|_1\\
 &\qquad\qquad\qquad+(\|\mu\|_\cF-\|\nu\|_\cF)\Big\|  h \Big( \frac{\mu}{\|\mu\|_\cF}\Big)\Big\|_1\\
&= \|\nu\|_\cF \Big\|  h \Big( \frac{\mu}{\|\mu\|_\cF}\Big)- h \Big( \frac{\nu}{\|\nu\|_\cF}\Big)\Big\|_1 \text{ (By \eqref{E:2.2.3})}\\
 &\qquad\qquad\qquad+(\|\mu\|_\cF-\|\nu\|_\cF)\Big\|  h \Big( \frac{\mu}{\|\mu\|_\cF}\Big)\Big\|_1\\
 &\ge \|\nu\|_\cF \Big\|\frac{\mu}{\|\mu\|_\cF}-  \frac{\nu}{\|\nu\|_\cF}\Big\|_\cF+\|\mu\|_\cF-\|\nu\|_\cF \text{ (By \eqref{E:2.2.1})}\\
 &\ge \|\nu-\mu\|_\cF-\Big\|\mu\Big(1-\frac{\|\nu\|_{\cF}}{\|\mu\|_\cF}\Big)\Big\|+\|\mu\|_\cF-\|\nu\|_\cF.
 =\|\mu-\nu\|_\cF
 \end{align*}
We also get 
\begin{align*}
\big\|H(\mu)-H(\nu)\big\|_{L_1([0,1]\times\R)}&= 
\|\nu\|_\cF \Big\|  h \Big( \frac{\mu}{\|\mu\|_\cF}\Big)- h \Big( \frac{\nu}{\|\nu\|_\cF}\Big)\Big\|_1 \\
 &\qquad\qquad\qquad+(\|\mu\|_\cF-\|\nu\|_\cF)\Big\|  h \Big( \frac{\mu}{\|\mu\|_\cF}\Big)\Big\|_1\\
 &\le L \|\nu\|_\cF  \Big\| \frac{\mu}{\|\mu\|_\cF}-\frac{\nu}{\|\nu\|_\cF}\Big\|_1 \\
&\qquad\qquad\qquad+L(\|\mu\|_\cF-\|\nu\|_\cF)  \text{ (By \eqref{E:2.2.2}) }\\
&=L\Big\|\frac{\|\nu\|_\cF}{\|\mu\|_\cF} \mu-\nu\Big\|_\cF+ L(\|\mu\|_\cF-\|\nu\|_\cF)\\
&\le L\Big(\Big\|\frac{\|\nu\|_\cF}{\|\mu\|_\cF}\mu-\mu\Big\|_\cF+\|\mu-\nu\|_\cF\Big) + L(\|\mu-\nu\|_\cF)\\
&=L  \Big( {\|\mu\|_\cF} -  {\|\nu\|_\cF} \Big)+   2L(\|\mu-\nu\|_\cF)\le 3 L(\|\mu-\nu\|_\cF).
\end{align*}
Thus $H$ is  a bi- Lipschitz map from $\cF(M)$ to $L_1([0,1]\times \R) \equiv L_1[0,1]$ of distortion not larger than $3L$.
\end{proof}
\begin{rem} If $F:\cP(M)\to L_1[0,1]$ is Lipschitz, we could perturb $F$ a bit, to a map $\tilde F$ having a finite-dimensional image and almost the 
same distortion.
If we could then produce a Lipschitz map  $H:\cF(M)\to L_1[0,1]$, which also has a finite-dimensional image we would only need Rademacher's Theorem 
to linearize $H$,
\end{rem}

The following result is a generalization of Rademacher's Theorem by Heinrich and Mankiewicz.

Let $X$ and $Z$ be   Banach spaces
and let  $f: X\to Z^*$ be a Lipschitz map. We say that $f$ is $w^*$ differentiable at a point $x_0$ if for all
$x\in X$
$$(D^*f)_{x_0} (x)= w^*-\lim_{\lambda\to 0} \frac{f(x_0+\lambda x)-f(x_0)}{\lambda} \text{  for all $x\in X$ exists.}$$
We call $(D^*f)_{x_0}$ the $w^*$-derivative of $f$ at $x_0$.
\begin{thm}\label{T:2.3} \cite{heinrichmankiewicz1982}*{Theorem 3.2}
Let $X$ and $Z$ be Banach spaces, $Z$ being separable,  and $f: X\to Z^*$ be a Lipschitz map. Then 
there is a dense set $D\subset X$ so that for all $x_0\in D$
the $w^*$ derivative 
$(D^*f)_{x_0}$ exists.

Moreover:
\begin{enumerate} 
\item For every $x\in D$, $(D^*f)_{x_0}$ is bounded linear operator from $X$ to $Z^*$ and $\|(D^*f)_{x_0}\|\le \|f\|_\Lip$.
\item If, moreover, $f$ is bi-Lipschitz, then $(D^*f)_{x_0}$ is an isomorphic embedding and
 $\|(D^*f)_{x_0}^{-1}\|$ $\le \|f^{-1}\|_\Lip$.
\end{enumerate}
\end{thm}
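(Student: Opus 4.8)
\emph{Plan.} Because $Z$ is separable one can reconstruct a $w^*$-derivative coordinatewise. Fix a sequence $(z_k)_{k\in\N}$ dense in $B_Z$ and let $\iota\colon Z^*\to\ell_\infty$, $\xi\mapsto(\langle \xi,z_k\rangle)_k$; this is an isometry which, restricted to norm-bounded sets, carries the $w^*$-topology of $Z^*$ onto the coordinatewise topology of its image. Put $\phi_k:=\langle f(\cdot),z_k\rangle\colon X\to\R$, a Lipschitz function with $\|\phi_k\|_\Lip\le\|f\|_\Lip$. For $x_0,u\in X$ the difference quotients $q_t:=t^{-1}\big(f(x_0+tu)-f(x_0)\big)$ satisfy $\|q_t\|\le\|f\|_\Lip\|u\|$, so a standard $3\varepsilon$-argument (uniform bound plus density of $(z_k)$) shows that $q_t$ is $w^*$-convergent as $t\to0$ if and only if $\langle q_t,z_k\rangle$ converges for every $k$, i.e. if and only if each $\phi_k$ has a directional derivative at $x_0$ in the direction $u$. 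It therefore suffices to produce a dense set of points $x_0$ at which \emph{all} the $\phi_k$ are Gâteaux differentiable. (For nonseparable $X$ one needs in addition a routine separable-reduction argument; in this survey $M$ is finite, so $X=\cF(M)$ is finite-dimensional and that issue does not arise.)

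\emph{Existence of the derivative and item (1).} Here I would invoke the classical theorem of Aronszajn (also Christensen, Mankiewicz) that a real-valued Lipschitz function on a separable Banach space is Gâteaux differentiable outside an Aronszajn-null set, the Aronszajn-null sets forming a $\sigma$-ideal whose members have dense complement. Applying this to each $\phi_k$ and discarding the union $N$ of the exceptional sets leaves a dense set $D_0=X\setminus N$. For $x_0\in D_0$ and every $u$, each $\partial_u\phi_k(x_0)=\langle\nabla\phi_k(x_0),u\rangle$ exists and is linear in $u$; by the equivalence above $q_t$ $w^*$-converges to some $(D^*f)_{x_0}(u)\in Z^*$, and $w^*$-lower semicontinuity of the norm gives $\|(D^*f)_{x_0}(u)\|\le\|f\|_\Lip\|u\|$. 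Homogeneity in $u$ is clear from reparametrising $t$; additivity follows by testing against the $z_k$ (where it holds since $\nabla\phi_k(x_0)$ is linear) and then using boundedness and density. This establishes the $w^*$-derivative on the dense set $D_0$ together with the bound in (1).

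\emph{Item (2) and the main obstacle.} The estimate $\|(D^*f)_{x_0}^{-1}\|\le\|f^{-1}\|_\Lip$, i.e. $\|(D^*f)_{x_0}(u)\|\ge\|u\|/\|f^{-1}\|_\Lip$, is the genuinely delicate part, because it is not inherited from the $w^*$-limit: a $w^*$-limit can only make the norm drop, and in fact ``mass may escape to infinity'' (one can exhibit an isometric Lipschitz curve into $\ell_\infty$ whose coordinatewise derivative is $0$ at an isolated bad parameter), so the lower bound has to be extracted from a finer choice of $x_0$. The natural way to do this is to shrink $D_0$ to a still-dense set on which $f$ has extra regularity at $x_0$ --- for instance, that the symmetric difference quotients $\tfrac1{2t}\big(f(x_0+tu)-f(x_0-tu)\big)$ converge to $(D^*f)_{x_0}(u)$ in norm, or, equivalently in spirit, that a fundamental-theorem-of-calculus holds for $f$ along a.e.\ line through $x_0$ together with $0$ being a Lebesgue point of $s\mapsto\|(D^*f)_{x_0+su}(u)\|$. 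Granting such a statement, the bi-Lipschitz lower bound $\|f(x_0+tu)-f(x_0-tu)\|\ge 2t\|u\|/\|f^{-1}\|_\Lip$ passes to the limit and yields exactly $\|(D^*f)_{x_0}(u)\|\ge\|u\|/\|f^{-1}\|_\Lip$, so that $(D^*f)_{x_0}$ is an isomorphic embedding with the asserted inverse bound. The real work --- and the step I expect to be the main obstacle --- is securing this extra regularity on a dense set: one must also apply generic differentiability to the auxiliary Lipschitz functions $x\mapsto\|f(x)-f(y)\|$ for $y$ in a countable dense subset of $X$, and then use that an Aronszajn-null set is Lebesgue-null on lines in a dense set of directions to pass from these countably many reference points and directions to all $x_0$ and all $u$, all the while keeping the union of the discarded null sets with dense complement.
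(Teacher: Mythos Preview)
The paper does not supply a proof of this theorem; it is quoted from \cite{heinrichmankiewicz1982} and used as a black box in the proof of Corollary~\ref{C:2.4} (where, moreover, $X=\cF(M)$ is finite-dimensional). So there is no ``paper's own proof'' to compare against, and your sketch should be judged on its own merits.

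Your argument for existence and for item~(1) is correct and standard: reduce via a dense sequence $(z_k)\subset B_Z$ to countably many scalar Lipschitz functions and apply Aronszajn--Christensen--Mankiewicz generic G\^ateaux differentiability. For item~(2) you correctly isolate the real difficulty, namely that the lower bound cannot be read off a $w^*$-limit. Of the two routes you propose, the ``fundamental theorem plus Lebesgue point'' one is the right one and can be made precise as follows: from the Gelfand-integral identity $\langle f(x_0+bu)-f(x_0+au),z\rangle=\int_a^b\langle (D^*f)_{x_0+su}(u),z\rangle\,ds$ one obtains
\[
\|f(x_0+bu)-f(x_0+au)\|\le\int_a^b\|(D^*f)_{x_0+su}(u)\|\,ds,
\]
and together with the bi-Lipschitz lower bound this forces the (measurable, since $Z$ is separable) integrand to be $\ge\|u\|/\|f^{-1}\|_{\Lip}$ a.e.\ on the line. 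One then upgrades from a countable dense set of directions to all $u$ using boundedness of $(D^*f)_{x_0}$, with Fubini (finite dimensions) or the Aronszajn $\sigma$-ideal (general case) to keep the exceptional set null.

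Your alternative suggestion --- differentiating the auxiliary functions $x\mapsto\|f(x)-f(y)\|$ for $y$ in a countable dense set --- is less promising than you indicate. Note that $x\mapsto\|f(x)-f(x_0)\|$ fails to be G\^ateaux differentiable at $x_0$ itself, and for nearby $y$ it is not clear how the directional derivatives of these scalar norms control $\|(D^*f)_{x_0}(u)\|$ from below. I would drop that branch and carry the FTC/averaging argument through.
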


\begin{cor}\label{C:2.4}   Assume that 
$$ F: \cP(M)\to L_1[0,1]$$
has the property that 
\begin{equation}\label{E:2.4.1}d_\Wa(\sigma,\tau)\le \|F(\sigma)-F(\tau)\|_1\le Ld_\Wa(\sigma,\tau), \text{ for $\sigma,\tau\in \cP(M)$}
\end{equation}
and let $\vp>0$

Then there exists an $N\in\N$, and an linear embedding $T$ of $\cF(M)$ into $\ell_1^N$ so that 
$$\|\mu\|_\cF\le \|T(\mu)\|_1\le (3L+\vp)\|\mu\|_\cF.$$

\end{cor}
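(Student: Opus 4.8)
The plan is to combine Theorem~\ref{T:2.2} with the Heinrich--Mankiewicz differentiation result, Theorem~\ref{T:2.3}, and then compress the resulting (possibly infinite-dimensional) target $L_1$ down to a finite-dimensional $\ell_1^N$ using a standard random-sampling (empirical measure) argument. First I would apply Theorem~\ref{T:2.2} to the given bi-Lipschitz map $F$ to obtain a Lipschitz map $H:\cF(M)\to L_1([0,1]\times\R)\equiv L_1[0,1]$ with $\|\mu-\nu\|_\cF\le\|H(\mu)-H(\nu)\|_1\le 3L\|\mu-\nu\|_\cF$. Then, identifying $L_1[0,1]$ with $(L_\infty[0,1])^*=Z^*$ for the separable space $Z=C[0,1]$ (or arguing that $L_1$ embeds isometrically in a dual of a separable space in the appropriate way), I would invoke Theorem~\ref{T:2.3} with $X=\cF(M)$: since $M$ is finite, $\cF(M)$ is finite-dimensional, so the dense set $D$ of points of $w^*$-differentiability is nonempty; picking any $x_0\in D$, the $w^*$-derivative $T_0:=(D^*H)_{x_0}$ is a linear operator with $\|T_0\|\le 3L$ and, because $H$ is bi-Lipschitz, $T_0$ is an isomorphic embedding with $\|T_0^{-1}\|\le 1$ on its range. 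After rescaling so the lower bound is exactly $1$, this gives a linear embedding $T_0:\cF(M)\to L_1[0,1]$ with $\|\mu\|_\cF\le\|T_0(\mu)\|_1\le 3L\|\mu\|_\cF$.

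The remaining task is to replace $L_1[0,1]$ by $\ell_1^N$ at the cost of an extra $\vp$. Since $\cF(M)$ is $(n-1)$-dimensional, $V:=T_0(\cF(M))$ is a finite-dimensional subspace of $L_1[0,1]$; fix a finite $\delta$-net of its unit sphere. I would write, for $g\in V$, $\|g\|_1=\int_0^1|g(s)|\,ds=\E_{s\sim\mathrm{unif}[0,1]}|g(s)|$, and sample $s_1,\dots,s_N$ independently and uniformly; the map $R:g\mapsto \tfrac1N(g(s_1),\dots,g(s_N))\in\ell_1^N$ satisfies $\E\|Rg\|_{\ell_1^N}=\|g\|_1$, and by a Chernoff/Bernstein concentration bound together with a union bound over the $\delta$-net (and a standard approximation argument passing from the net to the whole sphere), for $N$ large enough depending on $n$, $\delta$, and the relevant constants, with positive probability $(1-\delta')\|g\|_1\le\|Rg\|_{\ell_1^N}\le(1+\delta')\|g\|_1$ for all $g\in V$. (One must address that the functions in $V$ need not be bounded or even have good tail behaviour; this is handled by first truncating the net elements, or by noting that on a finite-dimensional subspace of $L_1$ one can pass to an equivalent realization with uniformly integrable elements — e.g.\ replace $[0,1]$ by a finite partition adapted to $V$ — which is where a little care is needed.) Composing, $T:=R\circ T_0$ is the desired linear embedding into $\ell_1^N$, and choosing $\delta'$ small enough in terms of $\vp$ and $L$ gives the bound $\|\mu\|_\cF\le\|T(\mu)\|_1\le(3L+\vp)\|\mu\|_\cF$ after a final rescaling.

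The main obstacle I expect is the finite-dimensional reduction of the target space, specifically controlling the sampling argument when the elements of $V\subset L_1[0,1]$ are unbounded. The clean way around this is to observe that a finite-dimensional subspace of $L_1(\mu)$ over a \emph{finite} measure space can always be realized, up to isometry, inside $L_1$ of an atomic (hence essentially finite, after truncation) measure space — or, more concretely, since $\cF(M)$ is spanned by finitely many molecules and $H$ (hence $T_0$) has a concrete description, one can arrange $T_0$ to already land in a finite-dimensional subspace of simple functions, bypassing the need for Theorem~\ref{T:2.3} entirely and making the reduction to $\ell_1^N$ automatic. Either route works; the subtlety is purely in bookkeeping the measure-theoretic realization so that the Chernoff bound has bounded-variable inputs.
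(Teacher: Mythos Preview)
Your overall strategy---Theorem~\ref{T:2.2} followed by Theorem~\ref{T:2.3} followed by a finite-dimensional reduction---is exactly the paper's route. There is, however, a real slip in the middle step: $L_1[0,1]$ is not a dual space (and certainly not $(L_\infty[0,1])^*$), so to invoke Theorem~\ref{T:2.3} one embeds $L_1[0,1]$ isometrically into $C^*[0,1]$, the dual of the separable space $C[0,1]$. The $w^*$-derivative $(D^*H)_{x_0}$ is then a linear map into $C^*[0,1]$, and there is no reason for its image to lie in the $L_1[0,1]$ slice---the $w^*$-limit of $L_1$ functions inside $C^*[0,1]$ can easily be a singular measure. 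So your $T_0$ maps into $C^*[0,1]$, not $L_1[0,1]$, and the empirical-sampling argument on $[0,1]$ with Lebesgue measure does not apply.

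The paper sidesteps this cleanly: it keeps the target as $C^*[0,1]$ and uses that $C^*[0,1]$ is an $\cL_{1,1+}$-space, i.e.\ every finite-dimensional subspace sits inside a larger finite-dimensional subspace that is $(1+\vp)$-isomorphic to some $\ell_1^N$. This immediately gives the reduction to $\ell_1^N$ with no measure-theoretic bookkeeping. Your sampling idea can be rescued (the finite-dimensional image in $C^*[0,1]$ lies in $L_1(\nu)$ for a suitable finite measure $\nu$, and one can sample with respect to $\nu$ after a change-of-density argument to control tails), but this is strictly more work than invoking the $\cL_1$-property; your suggested shortcut via an explicit simple-function description of $T_0$ does not survive the differentiation step, since the $w^*$-derivative of $H$ has no such concrete form.
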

\begin{proof} Let $H: \cF(M)\to L_1[0,1]$ be defined as in Theorem \ref{T:2.2}. Since $L_1[0,1]$ is isometrically a subspace of $C^*[0,1]$
we can apply  Theorem \ref{T:2.3} and obtain an isomorphic embedding $S$  of $\cF(M)$ into $C^*[0,1]$, with 
$$\|\mu\|_\cF\le \|S(\mu)\|_1\le 3L\|\mu\|_\cF.$$
Since $\cF(M)$ is finite-dimensional $S(\cF(M))$ is also finite-dimensional.  $C^*[0,1]$ is  a $\cL_1$-space of constant 1, which means
that for every finite dimensional subspace $F$ of $C^*[0,1]$ there is a finite dimensional subspace $G$ of $C^*[0,1]$ which contains $F$,
and which is $(1+\vp)$-isomorphic to $\ell_1^N$, for some $N$ and $(1+\vp)$-complemented in $C^*[0,1]$ (the complementation is not needed).
We deduce from this our claim.
\end{proof}
\begin{exc} Prove that $C^*[0,1]$ is a $\cL_1$-space. 
\end{exc}

\subsection{Geodesic Graphs}

An undirected  graph $G$ is a pair $G=(V(G),E(G))$  with $$E(G)\subset [V(G)]^2=\{ e\subset V(G): |e|=2\}.$$
For $v$ we call
$$\deg(v)=\big\{ e\in E(G): v\in e\big\}$$
the {\em degree of $v$.}

A {\em walk}  in  a graph $G$ is a graph $W=(V(W),E(W))$ with $V(W)\subset V(G)$, and $E(W)\subset E(G)$,
and $V(W)$ can be ordered into $\{x_j:j=0,1,2,\ldots n\}$  (where  the $x_j$ are not necessarily distinct) so that 
$E(W)=\big\{ \{x_{j-1}, x_j\}: j=1,2\ldots n\}$. In that case we call $W$ {\em a walk from $x_0$ to $x_n$}, and also write $W=(x_j)_{j=0}^n$.

In the case that $x_j\not= x_i$, if $i\not=j$ in $\{0,1,2,\ldots,n\}$ we call $W$ a path.
If $x_0=x_n$, $x_j\not= x_i$,  unless $\{i,j\}=\{0,n\}$, we call $W$ a {\em cycle}.

We call a graph $G=(V(G),E(G))$ {\em connected } if for each $u,v\in V(G)$ there is a walk (and thus also a path) from $u$ to $v$.
A connected graph that does not contain a cycle is called a {\em tree}. In that case, a unique path exists between 
any two vertices $u$ and $v$. We denote that unique path by $[u,v]_G$.

\begin{prop}\label{P:2.1} Let $G=(V(G),E(G))$. The following statements are equivalent:
\begin{enumerate}
\item $G$ is a tree,
\item $G$ is minimal  connected graph, \ie\ for every $e\in E(G)$, the graph $G'=(V(G), E(G)\setminus\{e\})$ is not connected,
\item $G$ is a maximal graph without a cycle, \ie\ for every $e\in[V(G)]^2\setminus E(G)$, the graph $\tilde G=(V(G), E(G)\bigcup \{e\})$ has a cycle,
\end{enumerate}
and in the case that $n=|V(G)|<\infty$
\begin{enumerate}
\item[(4)] $G$ is connected and $|E(G)|=n-1$,
\end{enumerate}
\end{prop}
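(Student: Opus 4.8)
The plan is to prove the equivalence of (1)--(4) by establishing a cycle of implications among (1), (2), (3), together with the two directions linking (4) to the finite case. First I would fix notation and recall the one fact already available: in a tree there is a \emph{unique} path between any two vertices, which is essentially the definition of a tree being connected and acyclic.

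\textbf{(1)$\Rightarrow$(2).} Suppose $G$ is a tree and $e=\{u,v\}\in E(G)$. If $G'=(V(G),E(G)\setminus\{e\})$ were still connected, there would be a path $P$ from $u$ to $v$ in $G'$ not using $e$; appending $e$ to $P$ produces a cycle in $G$, contradicting that $G$ is a tree. So every edge is a bridge, i.e.\ $G$ is minimally connected.

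\textbf{(2)$\Rightarrow$(3).} Assume $G$ is minimally connected. First, $G$ has no cycle: if it contained a cycle $C$, then deleting any edge $e$ of $C$ leaves $G$ connected (any path through $e$ can be rerouted around the remaining arc of $C$), contradicting minimality. Now take $e=\{u,v\}\in[V(G)]^2\setminus E(G)$. Since $G$ is connected, there is a path from $u$ to $v$ in $G$; since $e\notin E(G)$, this path has length $\ge 2$, and adjoining $e$ closes it into a cycle in $\tilde G=(V(G),E(G)\cup\{e\})$. Hence $G$ is maximally acyclic.

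\textbf{(3)$\Rightarrow$(1).} Assume $G$ is maximally acyclic. By hypothesis $G$ has no cycle, so it remains to show $G$ is connected. If not, pick $u,v$ in different components; then $e=\{u,v\}\notin E(G)$, and adding $e$ cannot create a cycle, since a cycle through $e$ would yield a path from $u$ to $v$ in $G$, placing them in the same component. This contradicts maximality, so $G$ is connected, hence a tree. This closes the loop (1)$\Leftrightarrow$(2)$\Leftrightarrow$(3).

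\textbf{The finite case.} For (1)$\Rightarrow$(4), I would argue by induction on $n=|V(G)|$ that a finite tree has $n-1$ edges: a finite tree with $n\ge 2$ vertices has a leaf $w$ (a vertex of degree $1$) --- indeed, a maximal path in $G$ must end at such a vertex, else it could be extended or would close a cycle --- and removing $w$ and its unique incident edge leaves a tree on $n-1$ vertices; connectedness of $G$ is already given. For (4)$\Rightarrow$(1), suppose $G$ is connected with $|E(G)|=n-1$; I would show $G$ is acyclic: if $G$ had a cycle, delete an edge of the cycle to get a connected graph $G_1$ on $n$ vertices with $n-2$ edges, and repeat until we reach a connected acyclic graph, i.e.\ a tree $G_k$ on $n$ vertices, which by the already-proved (1)$\Rightarrow$(4) has $n-1$ edges --- but $G_k$ has fewer than $n-1$ edges, a contradiction. (Alternatively, invoke the standard fact that a connected graph on $n$ vertices has at least $n-1$ edges, with equality forcing acyclicity.) The main obstacle, and the only place requiring genuine care, is the existence of a leaf in a finite tree and the bookkeeping in the edge-count induction; the rest is routine rerouting of paths around cycles and components. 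All arguments are finite and combinatorial, using nothing beyond the definitions and the uniqueness-of-paths property recorded just before the proposition.
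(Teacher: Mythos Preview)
Your proof is correct and entirely standard; the paper itself leaves this proposition as an exercise and gives no argument, so there is nothing to compare against. Your cycle (1)$\Rightarrow$(2)$\Rightarrow$(3)$\Rightarrow$(1) together with the induction on $n$ via a leaf for (1)$\Rightarrow$(4) and the edge-deletion argument for (4)$\Rightarrow$(1) is exactly the kind of solution intended.
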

\begin{proof} Exercise.
\end{proof} 

\begin{defn} Let $T=(V(T),E(T))$ be a tree we call $v\in V(T)$ a {\em leaf of $T$} if $\deg(v)=1$,

\end{defn}
\begin{exc} Every finite tree has leaves.
\end{exc} 

If $G$ is a connected graph and  $d_G$ is a metric on $V(G)$, we call $d_G$ a {\em geodesic metric on $G$} if 
$$d_G(u,v)=  \min\{ \length_{d_G}(p): p\text{ is a path from $u$ to $v$}\},\text{ for $u,v\in V$,}$$
where for a path $p=(x_j)_{j=0}^n$ in $G$ we define the length of $p$ by
$$\length_{d_G}(p)=\sum_{j=1}^n d_G(x_{j-1},x_j).$$
In that case we call the pair $(G,d_G)$ a {\em geodesic graph}.
For $e=\{u,v\}\in E(G)$ we put $d_G(e)=d_G(u,v)$. If $G$ is a cycle, a path, or a tree, we call it a {\em geodesic cycle},
 {\em geodesic path}, or a {\em geodesic tree}, respectively.

Assume that $w:E(G)\to \R^+$ is a function. 
Define for  $u,v\in V(G)$
$$d_G(u,v):=\min\Big\{ \sum_{j=1}^n w(\{x_{i-1},x_i\}) : (x_j)_{j=0}^n \text{ is a path from $u$ to $v$}\Big\}.$$
Then $d_G$ is a geodesic metric on $G$, and we call it the {\em metric generated by the weight function} $w$.

This is why geodesic graphs are often referred to as {\em weighted graphs}.

\subsection{Transportation cost spaces over geodesic trees}

Let $(M,d)$ be a metric space. Recall the following notation from \ref{ss:2.3}:
for $\mu=\mu^+-\mu^-\in\tilde\cF(M)$, 
$$\cM^+(\mu_1,\mu_2)=\Big\{ \nu\in \cM^+(M^2):\mu_1(x)=\sum_{y'\in M}\nu(x,y'),  \mu_2(y)=\sum_{x'\in M}\nu(x',y),\text{ for $x,y\in M$}\Big\}.$$
We showed
\begin{equation*}
\|\mu\|_\cF= \|\mu\|_{\tc}=\min\Big\{ \sum_{x,y\in M} \underbrace{d(x,y)\, d\nu(x,y)}_{=\int_{M^2} d(x,y)\,\nu(x,y)}  : \nu\in\cM^+(\mu^+, \mu^-)\Big\}.
\end{equation*}

Throughout this subsection $T=(V(T), E(T))$ is a, possibly infinite, tree, and $d_T$ is a geodesic distance.
 Let $w:E(T)\to (0,\infty)$, $e\mapsto d(e)$ be the corresponding weight function.
  
 Our goal is to  show that 
$\cF(V(T),d_T)$ is isometrically isomorphic to the {\em weighted $\ell_1$-space}  $\ell_1(E(T), w)$, with the norm
$$\|x\|_1=\sum_{e\in E(T)} |x_e| w(e) \text{ for $x=(x_e:e\in E(T))\subset \R$}.$$

We first introduce some notation. 

Recall that for $u,v\in V(T)$, $[u,v]_T$ denotes the unique path from $u$ to $v$.
We choose a {\em  root} $v_0\in V$ and note that for any $e=\{u,v\}\in E(T)$ 
$$\text{Either $u\in [v_0,v]_T$ or $v\in[v_0,u]_T$}.$$
We assign to $e\in \{u,v\}\in E(T)$ an orientation, and choose $(u,v)$ iff  $u\in [v_0,v]_T$, which is equivalent to $d(v_0,u)< d(v_0,v)$
and define
$$E_d(T)=\big\{ (u,v): \{u,v\}\in E(T)\text{ and }d(v_0,u)< d(v_0,v\big\},$$
which means that if $e=\{u,v\}\in E(T)$, and $v$ is further awy from $v_0$ than $u$ then $(u,v)$ is the  orientation of $e$.
For $e=(u,v)\in E_d(T)$ we put $e^-=u$ and $e^+=v$.
We define a partial order in $V(T)$ by
$$u\preceq v:\iff  [v_0,u]_T\subset [v_0,v]_T.$$

From the uniqueness of paths in $T$, we deduce the following facts
\begin{exc} 
\begin{enumerate}
\item For every $v\in V(T)$,
we have $\{u\in V(t): u\preceq v\}=[v_0,v]_T$ , and thus is linearly ordered with respect to $\preceq$.
\item If $v\in V(T)$, $v\not=v_0$,  there is a unique
$e\in E(T)$ with $e^+=v$. We call, in that case, $e^-$ the {\em immediate predecessor of $v$.}

\item (Tripot property of trees) If $u,v\in V(T)$ then there exists a (unique) $z\in V(T)$ for which $[v_0,z]_T=[v_0,u]_T\cap [v_0,v]_T$ which we denote by
$\min(u,v)$.  Moreover, it follows in this case that 
$d_T(u,v)=d_T(u,z)+d_T(z,v)$.
\end{enumerate}
\end{exc}

\begin{prop}\label{P:2.2} The map $f: (V(T), d_T)\to \ell_1(E(T),w)$, $v\mapsto 1_{E([v_0,v]_T}$ is an isometry.

Here  $1_{E([v_0,v]_T)}: E(T)\to \R$ is seen as element of $\ell_1(E(T),w)$  defined by
$$1_{E([v_0,v]_T)}(e):=\begin{cases} 1 &\text{if $e\in E([v_0,v]_T)$,}\\
                                                                   0 &\text{else.}\end{cases}$$
\end{prop}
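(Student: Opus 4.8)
The statement claims that $f(v) = 1_{E([v_0,v]_T)}$ defines an isometry from $(V(T), d_T)$ into $\ell_1(E(T), w)$. The plan is to compute $\|f(u) - f(v)\|_1$ directly and match it with $d_T(u,v)$, using the tree structure and in particular the tripod property recorded in the preceding exercise.

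First I would invoke the tripod property to write $z = \min(u,v)$, so that $[v_0, u]_T \cap [v_0, v]_T = [v_0, z]_T$ and $d_T(u,v) = d_T(u,z) + d_T(z,v)$. Since $[v_0,z]_T$ is the common initial segment of the two paths to $u$ and to $v$, the edge sets satisfy $E([v_0,z]_T) = E([v_0,u]_T) \cap E([v_0,v]_T)$, while $E([v_0,u]_T)$ decomposes as the disjoint union $E([v_0,z]_T) \sqcup E([z,u]_T)$ and similarly for $v$. Consequently the symmetric difference of $E([v_0,u]_T)$ and $E([v_0,v]_T)$ is exactly the disjoint union $E([z,u]_T) \sqcup E([z,v]_T)$. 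Hence the function $f(u) - f(v)$ is $+1$ on $E([z,u]_T) \setminus E([z,v]_T)$, $-1$ on $E([z,v]_T) \setminus E([z,u]_T)$, and $0$ elsewhere — the two segments $[z,u]_T$ and $[z,v]_T$ share no edges because $z$ is the last common vertex on the two paths from $v_0$.

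Then I would compute
\begin{align*}
\|f(u) - f(v)\|_1 &= \sum_{e \in E([z,u]_T)} w(e) + \sum_{e \in E([z,v]_T)} w(e) = \length_{d_T}([z,u]_T) + \length_{d_T}([z,v]_T).
\end{align*}
Because $T$ is a tree, the unique path $[z,u]_T$ is the only path from $z$ to $u$, so $d_T(z,u) = \length_{d_T}([z,u]_T)$, and likewise $d_T(z,v) = \length_{d_T}([z,v]_T)$. Therefore $\|f(u) - f(v)\|_1 = d_T(z,u) + d_T(z,v) = d_T(u,v)$ by the tripod identity. One should also check the degenerate cases where $u \preceq v$ or $v \preceq u$ (then $z$ is the smaller of the two and one of the segments is trivial), but these are subsumed by the same computation.

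The only mildly delicate point — the ``main obstacle,'' such as it is — is the bookkeeping that the edge set of a path to $v$ equals the union of the edge set of the path to $z$ and the edge set of the path from $z$ to $v$, with the intersection being empty; this rests on uniqueness of paths in a tree and the fact that $[v_0,v]_T$ passes through $z$ and that $[z,v]_T$ is its terminal portion. Once that combinatorial identity is in place the norm computation is immediate, and the fact that $f$ extends to a linear isometric isomorphism onto all of $\ell_1(E(T),w)$ will follow (as in Example~\ref{Ex:1.6} and Example~\ref{Ex:6.2.7}) by checking that the induced map on molecules $\delta_u - \delta_v \mapsto f(u) - f(v)$ has dense range and is norm-preserving, invoking Proposition~\ref{P:6.2.10}.
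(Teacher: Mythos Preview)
Your proof is correct and follows essentially the same route as the paper: introduce the tripod point $z=\min(u,v)$, observe that $f(u)-f(v)$ is supported on the disjoint edge sets $E([z,u]_T)$ and $E([z,v]_T)$, and sum the weights to obtain $d_T(z,u)+d_T(z,v)=d_T(u,v)$. The final paragraph about extending to a linear isometry onto $\ell_1(E(T),w)$ goes beyond what Proposition~\ref{P:2.2} asserts (that extension is established separately in Corollary~\ref{C:B.6.4} via Theorem~\ref{T:2.4}), so you can drop it here.
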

\begin{proof} Let $u,v\in V(T)$ and let $z=\min(v,u)$.
Since $[v,z]_T\cup[ z,u]_T $ is the (unique!) path from $u$ to $v$, it follows that
\begin{align*}
d_T(u,v)&= \sum_{e\in E([u,z])} w(e) +\sum_{e\in E([z,v])} w(e)\\
&= \sum_{E\in E(T)} |1_{E([u,z])}(e) - 1_{E([z,v])}(e)| w(e)\\
&=  \sum_{E\in E(T)} |1_{E([v_0,u])}(e) - 1_{E([v_0,v])}(e)| w(e)=\| f(u)-f(v)\|_1.
\end{align*}
\end{proof}
We use now the Extension property of $\cF(V(T),d_T)$ and denote the unique linear extension of $f$ by $F$, and recall that 
$\|F\|_{\cF(V(T),d_T)\to \ell_1(E(T),w)} =1$. By linearity of $F: \cF(M)\to\ell_1(E(T),w)$ we deduce that for any
$\mu=\mu^+-\mu^-$ and any $\nu\in \cM^+(\mu^+,\mu^-)$, that 
\begin{align*}
F(\mu)= F\Big( \sum_{x,y\in V(T)} \nu(x,y) (\delta_x-\delta_y)\Big)
           =\sum_{x,y\in V(T)} \nu(x,y) (f(x)-f(y))
           \end{align*}
 and thus 
\begin{align}\label{E:2.1} \|F(\mu)\|_1\le \|\mu\|_{\cF} = \|(\mu)\|_{\tc} &= \inf_{\nu\in \cM^+(\mu^+,\mu^-)}  \Big\|\sum_{x,y\in V(T)} \nu(x,y) (f(x)-f(y))\Big\|_1.
\end{align}
For our next step, we introduce another notation:
Let $e=(e^-,e^+)\in E_d(T)$, and  put 
$$V_e=\{ v\in V(T): e^{+} \preceq v\},$$
We note that $T_e=(V_e,E(T)\cap[V_e]^2) $ and its complement
$T^c_e=(V(T)\setminus V_e,E(T)\cap[V(T)\setminus V_e]^2)$ are subtrees of $T$.

\begin{prop}\label{P:2.3} For any $\mu\in  F(V(T), d_T)$ it follows that 
$$\|F(\mu)\|_1=\sum_{e\in E} w_e|\mu(V(T_e))|.$$
\end{prop}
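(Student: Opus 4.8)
The plan is to compute $\|F(\mu)\|_1$ directly by exploiting the structure of $F$ on molecules together with the orientation of edges. Recall that $F$ is the linear extension of $f\colon v\mapsto 1_{E([v_0,v]_T)}$, so for a molecule $\delta_x-\delta_y$ we have $F(\delta_x-\delta_y)=1_{E([v_0,x]_T)}-1_{E([v_0,y]_T)}$. The first step is to identify, for a fixed edge $e=(e^-,e^+)\in E_d(T)$, the coordinate $(F(\delta_x-\delta_y))(e)$. Using the tripod property, the path $[x,y]_T$ passes through $z=\min(x,y)$, and $e\in E([x,y]_T)$ precisely when $e$ lies on $[z,x]_T$ or $[z,y]_T$; moreover $e\in E([v_0,v]_T)$ iff $e^+\preceq v$, i.e.\ iff $v\in V_e=V(T_e)$. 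So the value $(F(\delta_x-\delta_y))(e)$ equals $1_{V(T_e)}(x)-1_{V(T_e)}(y)$, which is $+1$ if $x\in V(T_e)$ and $y\notin V(T_e)$, $-1$ in the opposite case, and $0$ otherwise. In other words, $(F(\delta_x-\delta_y))(e)=(\delta_x-\delta_y)(V(T_e))$.

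The second step is to extend this to general $\mu\in\tilde\cF(V(T),d_T)$ by linearity: writing $\mu=\sum_j r_j(\delta_{x_j}-\delta_{y_j})$ in any molecular representation, we get
$$(F(\mu))(e)=\sum_j r_j\big((\delta_{x_j}-\delta_{y_j})(V(T_e))\big)=\mu(V(T_e)),$$
since the expression $\mu\mapsto\mu(V(T_e))$ is linear in $\mu$ and agrees with the molecule computation. This holds for every $e\in E_d(T)$, hence
$$\|F(\mu)\|_1=\sum_{e\in E(T)}w(e)\,\big|(F(\mu))(e)\big|=\sum_{e\in E(T)}w_e\,|\mu(V(T_e))|,$$
which is exactly the claimed formula. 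Note this step also tacitly uses that the sum is finite (only finitely many $V(T_e)$ can meet the finite support of $\mu$ and give a nonzero value), so no convergence issue arises.

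The only genuinely delicate point is the bookkeeping in the first step: one must be careful that the orientation convention ($e^-$ closer to the root $v_0$, so $V_e$ is the ``far side'' subtree) makes $e\in E([v_0,v]_T)\iff v\in V(T_e)$ hold with no exceptional cases, including when $x$ or $y$ equals $v_0$ or when $z=\min(x,y)$ is an endpoint of $e$. I expect this case-check to be the main obstacle, though it is entirely routine given the Tripod property and part (1) of the preceding exercise (that $\{u:u\preceq v\}=[v_0,v]_T$). Everything after that is pure linearity.
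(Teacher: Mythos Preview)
Your proof is correct and rests on the same key observation as the paper's: for a directed edge $e=(e^-,e^+)$, one has $e\in E([v_0,v]_T)$ if and only if $v\in V(T_e)$. The paper's argument is slightly more direct---it expands $F(\mu)=\sum_{v} \mu(v)\,1_{E([v_0,v])}$ over Dirac masses rather than molecules and reads off the coordinate at $e$ immediately, so the tripod property and the molecule case-analysis you flag as ``delicate'' never enter.
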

\begin{rem} Since for $e\in E(T)$ the identicator  function $1_{T_e}$, as function on $V(T)$, is a Lipschitz function whose Lipschitz norm is $\frac1{w(e)}$ the term 
$\mu(T_e)$ is therefore well defined, even if $T$ is not a finite tree.
\end{rem} 

\begin{proof} W.lo.g. $\mu\in \tilde\cF(M)$.

By linearity
$$F(\mu)=\sum_{v\in V(T)} \mu(v) f(v)= \sum_{v\in V(T)}\mu(v) 1_{E([v_0,v])}$$
and thus 
\begin{align*}\|F(\mu)\|_1=\sum_{e\in E(T)} w(e) \Big|\sum_{v\in V(T)} \mu(v) \underbrace{1_{E([v_0,v])}(e)}_{=1\iff v\succeq e^+\iff v\in T_e}\Big|=\sum_{e\in E(T)} w(e) |\mu(T_e)|.
\end{align*}
\end{proof}
We are now able to provide an explicit formula for $\|\mu\|_{\tc}$, for $\mu\in \cF(V(T),d_T)$
\begin{thm}\label{T:2.4} For $\mu\in \cF(M)$
\begin{equation}\label{E:2.4.1a}\|\mu\|_{tc} =\sum_{e\in E} w_e|\mu(T_e))|.\end{equation}
\end{thm}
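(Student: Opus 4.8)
The plan is to establish \eqref{E:2.4.1a} by proving two inequalities. The upper bound $\|\mu\|_{\tc}\le \sum_{e\in E} w_e|\mu(T_e)|$ comes essentially for free from what precedes: by Proposition~\ref{P:2.3} the right-hand side equals $\|F(\mu)\|_1$, and by \eqref{E:2.1} we already know $\|F(\mu)\|_1\le\|\mu\|_{\cF}=\|\mu\|_{\tc}$ — wait, that inequality goes the wrong way, so in fact the content of the theorem is exactly that $F$ is an \emph{isometry}, i.e. that the reverse inequality $\|\mu\|_{\tc}\le\|F(\mu)\|_1$ also holds. So the real work is the lower bound on $\|F(\mu)\|_1$ as a transportation-cost functional: I must produce a transportation plan $\nu\in\cM^+(\mu^+,\mu^-)$ whose cost $\sum_{x,y}\nu(x,y)d_T(x,y)$ is at most $\sum_{e\in E(T)} w_e|\mu(T_e)|$.

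First I would reduce to $\mu\in\tilde\cF(V(T))$, so $\mu$ has finite support, and hence only finitely many edges $e$ have $\mu(T_e)\neq 0$; let $G$ be the minimal subtree of $T$ spanning $\supp(\mu)\cup\{v_0\}$, which is finite. The construction of the optimal plan is by induction on the number of edges of $G$ (or, equivalently, peeling off leaves). The key local move: pick a leaf $\ell$ of $G$ other than the root, with incident edge $e=(e^-,\ell)$, so $V_e\cap \supp(\mu)=\{\ell\}$ and thus $\mu(T_e)=\mu(\ell)$. If $\mu(\ell)>0$, route all of the mass $\mu(\ell)$ out of $\ell$: formally replace $\mu$ by $\mu' = \mu - \mu(\ell)\delta_\ell + \mu(\ell)\delta_{e^-}$, which moves the surplus at $\ell$ one edge toward the rest of the tree; symmetrically if $\mu(\ell)<0$ we move need. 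This contributes $|\mu(\ell)|\,w(e) = |\mu(T_e)|\,w_e$ to the cost, $\mu'$ is supported on a tree with one fewer edge, and crucially $\mu'(T_{e'}) = \mu(T_{e'})$ for every edge $e'\neq e$ of $G$ (since $V_{e'}$ either contains $\ell$ and $e^-$ both, or neither), while the term for $e$ has been consumed. Iterating until $\mu$ is exhausted and then summing the costs gives a plan of total cost exactly $\sum_{e\in E(G)} w_e|\mu(T_e)|$, hence $\|\mu\|_{\tc}\le\sum_{e} w_e|\mu(T_e)|$.

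Combining this with \eqref{E:2.1} and Proposition~\ref{P:2.3}, which give $\sum_e w_e|\mu(T_e)| = \|F(\mu)\|_1 \le \|\mu\|_{\tc}$, yields equality, proving \eqref{E:2.4.1a}; and simultaneously one sees the constructed plan is optimal and $F$ is an isometry onto $\ell_1(E(T),w)$. An alternative, perhaps cleaner, route to the lower bound avoids the induction: given \emph{any} $\nu\in\cM^+(\mu^+,\mu^-)$, use the tripod property to write $d_T(x,y) = \sum_{e\in E(T)} w_e\,[\![e\in E([x,y]_T)]\!]$, substitute into $\sum_{x,y}\nu(x,y)d_T(x,y)$, swap the order of summation, and observe that $\sum_{x,y}\nu(x,y)[\![e\in E([x,y]_T)]\!]\ge |\mu(T_e)|$ because any unit of mass transported across the edge $e$ contributes, and the net flow across $e$ must equal $\mu(T_e)$ (the mass that $\nu$ sends from $T_e^c$ into $T_e$ minus the reverse must balance the surplus/deficit $\mu(V_e)=\mu(T_e)$ inside $T_e$). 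Taking the infimum over $\nu$ then gives $\|\mu\|_{\tc}\ge\sum_e w_e|\mu(T_e)|$ directly. The main obstacle in either approach is the same bookkeeping point: verifying carefully that a transportation plan restricted to the ``cut'' defined by removing edge $e$ must move at least $|\mu(T_e)|$ units of mass across that cut — this is the flow-conservation / cut argument, intuitively obvious but needing the tree structure (uniqueness of paths) to make the decomposition $d_T(x,y)=\sum_e w_e[\![e\in[x,y]_T]\!]$ valid.
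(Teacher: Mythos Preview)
Your proposal is correct and your primary argument---reducing to finitely supported $\mu$, taking the minimal spanning subtree containing $\supp(\mu)\cup\{v_0\}$, and peeling leaves one at a time while tracking the invariant $\mu'(T_{e'})=\mu(T_{e'})$ for the surviving edges---is exactly the paper's proof (the paper inducts on $|S_\mu|$ rather than on the edge count of the spanning subtree, but these are equivalent). Your alternative cut/flow argument, decomposing $d_T(x,y)=\sum_e w_e\,[\![e\in E([x,y]_T)]\!]$ and lower-bounding the flow across each edge by $|\mu(T_e)|$, is a genuinely different and somewhat slicker route that the paper does not take; it trades the inductive construction of an explicit optimal plan for a direct duality-style estimate valid for \emph{every} plan, at the cost of not exhibiting the optimizer.
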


\begin{proof} It is enough to show \eqref{E:2.4.1a} for $\mu\in \tilde\cF(V(T),d_T)$. 

From  the inequality \eqref{E:2.1}  and  Proposition \ref{P:2.3} we deduce 
$$\sum_{e\in E} w_e|\mu(V(T_e))|=\|F(\mu)\|_1\le \|\mu\|_{\tc}  .$$
We need to show that there is a specific transportation plan $\nu\in\cM^+(\mu^+,\mu^-)$ 
for which
$$t(\nu)=\sum_{x,y\in V(T)} \nu(x,y) d_T(x,y)=  \sum_{e\in E} w_e|\mu(V(T_e))|.$$ 
We put 
$$S_{\mu} =\bigcup\{ [v_0,v]_T: v\in \supp(\mu)\} $$

Note that $T_\mu=(S_{\mu},E(T)\cap [S_\mu]^2)$ is a finite subtree of $T$,
we put  $n_\mu= |S_{\mu}|$  and we will verify our claim by induction for all values of $n_{\nu}$.

If $n_\nu=0$, it follows that $\mu=0$; thus, our claim is trivial.
Assume that $n_\mu=n+1$ and that our claim is true as long as $n_\mu\le n$.

We choose a leaf   $v$  of $T_\mu$. Note $v \not=v_0$ 
(otherwise $\mu$ would be non zero multiple of $\delta_{v_0}$) and let $u$ be its immediate predecessor.
Then we define $\mu'= \mu- \mu(v)\delta_v+ \mu(v)\delta_u$.
It follows that  
\begin{enumerate}
\item $S_{\mu'}\subset S_{\mu}\setminus \{v\}$, and thus  $n_{\mu'}<n_\mu$,
\item  $\mu(T_{(u,v)})=\mu(v)$ and $\mu'( T_{(u,v)}  )=0$,
\item  $\mu(T_e)=\mu'(T_e)$  for all $e\in E(T)\setminus\{(u,v)\}$.
\end{enumerate}
Using the induction hypothesis, let 
 $\nu'\in \cM^+({\mu'}^+,{\mu'}^-)$ be such that 
 $$\|\mu'\|_{\tc} = \sum_{x,y} \nu'(x,y) d(x,y)= \sum_{e\in E(T)}  w_e|\mu'(T_e)|=\sum_{e\in E(T), e\not=(u,v)}  w_e|\mu(T_e)|.$$

If $\mu(v)>0$ then  $\nu:= \nu'+\mu(v) (\delta_v-\delta_u)\in \cM^+(\mu^+,\mu^-)$ and if 
$\mu(v)<0$  then   $\nu:= \nu'+|\mu(v)|  (\delta_u-\delta_v) \in \cM^+(\mu^+,\mu^-)$, and in both cases
$$\sum_{x,y\in V(T)} \nu(x,y) d(x,y)= \sum_{x,y\in V(T)} \nu'(x,y) d(x,y)+ |\mu(v)| d(u,v)= \sum_{e\in T}  w(e) |\mu(T_e|.$$
\end{proof}

\begin{cor}\label{C:B.6.4} $\cF(V(T),{d_T})$ is isometrically isomorphic to $\ell_1(E(T),w)$ via the operator 
$$I: \cF(V(T),d_T)\to \ell_1(E(T),w),\quad  \sigma\mapsto \big(w_e\sigma(T_e): e\in E(T)\big).$$
\end{cor}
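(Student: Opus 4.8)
The plan is to recognize $I$ as (essentially) the linear extension to $\cF(V(T),d_T)$ of the isometry $f$ from Proposition~\ref{P:2.2}, so that almost all the work has already been done: Theorem~\ref{T:2.4} supplies the norm identity, and the only remaining content is surjectivity of $I$. First I would check that $I$ is a well-defined linear map into $\ell_1(E(T),w)$. Linearity is clear on the dense subspace $\tilde\cF(V(T))$, where each coordinate $\sigma\mapsto\sigma(T_e)$ is a finite linear combination of point evaluations; it is also bounded, since by the Remark after Proposition~\ref{P:2.3} the function $1_{T_e}$ is Lipschitz with constant $1/w(e)$, so that $|\sigma(T_e)|\le\|\sigma\|_\cF/w(e)$. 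Hence each coordinate, and therefore $I$, extends to all of $\cF(V(T),d_T)$. That $I(\sigma)$ really belongs to $\ell_1(E(T),w)$, i.e.\ that $\sum_{e\in E(T)}w(e)\,|\sigma(T_e)|<\infty$, is precisely the statement of Theorem~\ref{T:2.4}.

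Next I would read off the isometry property. Combining Theorem~\ref{T:2.4} with the Duality Theorem of Kantorovich (Theorem~\ref{T:1.7}) gives
\[
\|I(\sigma)\|_1=\sum_{e\in E(T)}w(e)\,|\sigma(T_e)|=\|\sigma\|_\tc=\|\sigma\|_\cF
\qquad\text{for all }\sigma\in\cF(V(T),d_T),
\]
so $I$ is an isometric embedding. In particular $I$ is injective, and since $\cF(V(T),d_T)$ is a Banach space its image $I(\cF(V(T),d_T))$ is a \emph{closed} subspace of $\ell_1(E(T),w)$.

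It remains to show that $I$ is onto, and here I would compute the images of the molecules. Fix $e_0=(u,v)\in E_d(T)$ and consider $\delta_v-\delta_u\in\cF(V(T),d_T)$. Using that $v\in T_e\iff e^+\preceq v$, that $u\prec v$, and that every vertex other than $v_0$ is the $+$-endpoint of a unique edge (the Exercise preceding Proposition~\ref{P:2.2}), one checks that $v\in T_e$ and $u\notin T_e$ hold simultaneously only for $e=e_0$; hence $I(\delta_v-\delta_u)$ is a nonzero scalar multiple of the unit vector of $\ell_1(E(T),w)$ supported at $e_0$. Thus the range of $I$ contains all finitely supported sequences, which are dense in $\ell_1(E(T),w)$; being closed, the range must be all of $\ell_1(E(T),w)$, completing the proof. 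The one place that needs genuine care is this last step when $T$ is infinite: surjectivity cannot be obtained from a dimension count (as it could for a finite tree, where $|E(T)|=|V(T)|-1=\dim\cF(V(T),d_T)$) and instead requires the density-plus-closedness argument just sketched; every other step is essentially bookkeeping on top of Theorem~\ref{T:2.4}.
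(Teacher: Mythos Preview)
Your proof is correct and follows essentially the same route as the paper: invoke Theorem~\ref{T:2.4} for the isometry identity $\|I(\sigma)\|_1=\|\sigma\|_{\tc}$, then obtain surjectivity by observing that the edge molecules $\delta_{e^+}-\delta_{e^-}$ map to (multiples of) the unit vectors of $\ell_1(E(T),w)$. Your version is in fact more careful than the paper's on two points: you explicitly justify well-definedness of each coordinate functional via the Lipschitz bound on $1_{T_e}$, and you spell out the density-plus-closedness argument needed for surjectivity when $T$ is infinite, which the paper leaves implicit.
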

\begin{proof} For $\mu\in \cF(V(T),d_T)$  we have, by Theorem \ref{T:2.4}, $\|I(\mu)\|= \sum_{e\in E(T) }w_e|\mu(T_e)|=\|\mu\|_{\tc}$.
Since
$$I\Big(\frac{\delta_{e^+}-\delta_{e^-}}{d(e)}:e\in E(T)\Big)$$  is the unit vector basis of $\ell_1(E(T),w)$ $I$ is also surjective.
\end{proof}

\begin{rem} Alain Godard \cite{Godard2020} proved Corollary  \ref{C:B.6.4} for more general trees ($\R$-trees), we followed a simplified version of his arguments.
 The most general version of Corollary  \ref{C:B.6.4} can be found in \cite{Dalet_Kaufmann_Prochazka2016}.
In \cite{Godard2020}, Godard also proved a converse, namely that every metric space $(M,d)$, for which $\cF(M,d)$ isometrically to  $L_1$, $ \ell_1$ or $\ell_1^N$ , is isometrically equivalent to an  $\R$-tree.
We will prove that fact for a finite metric space (in which case the proof is much simpler).
\end{rem}

\begin{prop}\label{P:B.6.6} Assume $(M,d)$ is a finite metric space  and $\cF(M)$ is isometrically equivalent to $\ell_1^{n}$ for some $n\in\N$.

Then $(M,d)$ is isometrically equivalent to a geodesic tree, meaning there is a $E\subset [M]^2$ so that $T=(M,E)$ is a tree and $d$ is a geodesic metric for $T$.
\end{prop}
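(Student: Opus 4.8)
The plan is to recover the tree structure directly from the extreme points of $B_{\cF(M)}$. By Theorem \ref{T:1.11}, since $\cF(M)$ is isometric to $\ell_1^n$, whose unit ball has exactly $2n$ extreme points (the vectors $\pm e_i/\|e_i\|$), the ball $B_{\cF(M)}$ has exactly $2n$ extreme points, which come in $\pm$ pairs of the form $\pm(\delta_x-\delta_y)/d(x,y)$ where the pair $\{x,y\}$ admits no metric midpoint, i.e.\ no $z\in M\setminus\{x,y\}$ with $d(x,y)=d(x,z)+d(z,y)$. Let $E$ be the collection of these unordered pairs $\{x,y\}$; so $|E|=n$. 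I would first observe $n=|M|-1$: indeed $\dim\cF(M)=|M|-1$ (the $\delta_m$, $m\in M\setminus\{0\}$, form a basis), and this must equal $n=\dim\ell_1^n$, so $|E|=|M|-1$.

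The second step is to show $G=(M,E)$ is connected, hence (having $|M|-1$ edges) a tree by Proposition \ref{P:2.1}(4). For connectedness, suppose toward a contradiction that $M=A\sqcup B$ is a partition with no edge of $E$ between $A$ and $B$. Pick $x\in A$, $y\in B$ realizing the minimal distance between the two parts. The pair $\{x,y\}$ is not in $E$, so there is $z$ with $d(x,y)=d(x,z)+d(z,y)$; then $z\neq x,y$, and $z$ lies strictly closer to both, so $z$ cannot be separated cleanly — more carefully, $d(x,z)<d(x,y)$ and $d(z,y)<d(x,y)$, so if $z\in B$ then $d(x,z)<d(x,y)$ contradicts minimality of $d(x,y)$ over $A\times B$, and symmetrically if $z\in A$; either way we get a shorter cross-pair, contradiction. (If one insists on iterating down to an actual edge: every pair not in $E$ can be "split," so by induction on distance every $\{x,y\}$ is joined by a path in $G$ whose consecutive pairs are in $E$ — this is the real content and also directly gives connectedness.) Hence $G$ is a tree.

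The third step is to verify that $d$ is the geodesic metric on $T=(M,E)$ with weights $w(\{x,y\})=d(x,y)$. Write $d_T$ for that geodesic metric. On edges $d_T=d$ by definition, and $d_T\le d_T$-length of any path, so $d_T(x,y)\le d(x,y)$ is false in general — rather we must show $d_T(x,y)=d(x,y)$ for all $x,y$. The inequality $d(x,y)\le d_T(x,y)$ is the triangle inequality applied along the edges of the tree-path. For the reverse, induct on $d$: if $\{x,y\}\in E$ we are done; otherwise split $d(x,y)=d(x,z)+d(z,y)$ with $d(x,z),d(z,y)<d(x,y)$, apply the inductive hypothesis to get $d_T(x,z)=d(x,z)$, $d_T(z,y)=d(z,y)$, and concatenate the corresponding tree-paths to obtain a walk from $x$ to $y$ of $d_T$-length $d(x,z)+d(z,y)=d(x,y)$, whence $d_T(x,y)\le d(x,y)$. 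Combined with connectedness this forces $d=d_T$, so $(M,d)$ is the geodesic tree $T$.

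The main obstacle is the splitting/induction lemma underpinning steps two and three: namely that for every pair $\{x,y\}\notin E$ there is an intermediate $z$ with $d(x,y)=d(x,z)+d(z,y)$ and both pieces strictly shorter, and that iterating this terminates in a path using only edges of $E$. Termination is where one must be a little careful — a naive argument could split into pieces that are themselves not edges forever; the clean fix is the induction on the (finite, positive) value $d(x,y)$ among the finitely many pairwise distances, since each split strictly decreases this value. Once that lemma is in hand, the identification $|E|=|M|-1$ plus connectedness plus the geodesic identity assemble immediately. One should also double-check the bookkeeping that the $n$ unordered pairs $\{x,y\}$ are genuinely distinct (two different extreme-point pairs $\pm(\delta_x-\delta_y)/d(x,y)$ cannot have the same underlying set), which is clear, and that $\ell_1^n$ indeed has precisely $2n$ extreme points in its unit ball — a standard fact I would cite rather than prove.
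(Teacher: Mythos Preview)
Your proposal is correct and follows essentially the same route as the paper: define $E$ as the pairs with no intermediate point, count extreme points of $B_{\cF(M)}$ via Theorem~\ref{T:1.11} and Proposition~\ref{P:B.6.7} to get $|E|=n=|M|-1$, and then invoke Proposition~\ref{P:2.1}(4). The only difference is one of presentation: the paper simply asserts up front that $(M,E)$ is connected and that $d$ is already geodesic on this graph (these hold for \emph{any} finite metric space, independent of the $\ell_1^n$ hypothesis), whereas you spell out the splitting/induction argument that justifies both facts.
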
 

We will use the following fact:
\begin{prop}\label{P:B.6.7} A Banach space $E$  of dimension $n\in\N$ is isometric to $\ell_1^n$ if and only 
if there are $x_1,x_2,\ldots,x_n \in S_E$ for which 
$\{\pm x_j: j=1,2,\ldots n\}$ are the extreme points of $B_E$.
\end{prop}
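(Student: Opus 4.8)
The plan is to reduce both implications to the single geometric fact that the unit ball of $\ell_1^n$ is the cross-polytope $B_{\ell_1^n}=\mathrm{conv}\{\pm e_1,\dots,\pm e_n\}$, combined with the finite-dimensional Krein--Milman theorem (recalled above) and its partial converse.

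\emph{The forward direction} ($E$ isometric to $\ell_1^n$ $\Rightarrow$ the extreme points have the stated form). First I would verify $B_{\ell_1^n}=\mathrm{conv}\{\pm e_j:j\le n\}$: the inclusion $\supseteq$ is clear by convexity, and for $\subseteq$ any $x=(x_j)_{j=1}^n$ with $\sum_j|x_j|\le1$ is written as $x=\sum_j|x_j|\,(\sign(x_j)e_j)+\big(1-\sum_j|x_j|\big)\cdot0$, which is a convex combination of the points $\pm e_j$ once one notes $0=\tfrac12 e_1+\tfrac12(-e_1)$. Since the extreme points of the convex hull of a finite set are among the points of that set, every extreme point of $B_{\ell_1^n}$ lies in $\{\pm e_j\}$. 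Conversely each $e_j$ (hence each $-e_j$) is extreme: if $e_j=\tfrac12(y+z)$ with $y,z\in B_{\ell_1^n}$, then $y_j+z_j=2$, so $|y_j|\ge 2-|z_j|\ge 1\ge\|y\|_1\ge|y_j|$, forcing $\|y\|_1=|y_j|=1$; thus $y_i=0$ for $i\ne j$ and $y=e_j$, and likewise $z=e_j$. So $\{\pm e_j\}$ is exactly the set of extreme points of $B_{\ell_1^n}$, and carrying this along the given isometry $\ell_1^n\to E$ produces the required $x_1,\dots,x_n\in S_E$.

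\emph{The converse direction.} Assume $\{\pm x_j:j\le n\}$ is exactly the set of extreme points of $B_E$, with $x_j\in S_E$. Since $\dim E=n<\infty$, the ball $B_E$ is compact and convex, so by Krein--Milman $B_E=\overline{\mathrm{conv}}\{\pm x_j\}=\mathrm{conv}\{\pm x_j\}$, the convex hull of a finite set already being closed. Because $0=\tfrac12 x_1+\tfrac12(-x_1)$ lies in this hull, its affine span coincides with $\mathrm{span}\{x_1,\dots,x_n\}$; as $B_E$ has nonempty interior in the $n$-dimensional space $E$, this span must be all of $E$, so $\{x_1,\dots,x_n\}$ is a basis. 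Now define the linear bijection $T:\ell_1^n\to E$ by $Te_j=x_j$. Since an affine map commutes with taking convex hulls, $T\big(B_{\ell_1^n}\big)=T\big(\mathrm{conv}\{\pm e_j\}\big)=\mathrm{conv}\{\pm x_j\}=B_E$; a linear bijection mapping the unit ball onto the unit ball is an isometry, because $\|Tx\|_E\le1\iff x\in B_{\ell_1^n}\iff\|x\|_1\le1$, and homogeneity then gives $\|Tx\|_E=\|x\|_1$ for every $x$.

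\emph{Where the work sits.} There is no deep obstacle; the one step that deserves a moment of care is the claim that the $x_j$ form a \emph{basis} — i.e.\ that one truly has $n$ linearly independent extreme directions — which is precisely the observation that $B_E\subset\mathrm{span}\{x_j\}$ forces $\mathrm{span}\{x_j\}=E$ by a dimension count, since $B_E$ spans $E$. Everything else is the standard geometry of the cross-polytope together with Krein--Milman and its converse in finite dimensions.
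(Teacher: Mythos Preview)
Your proof is correct; the paper itself does not supply a proof of this proposition but merely states it as a known fact to be used in the proof of Proposition~\ref{P:B.6.6}. Your argument via $B_{\ell_1^n}=\mathrm{conv}\{\pm e_j\}$, Krein--Milman, and the dimension count showing the $x_j$ must be a basis is exactly the natural one.
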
 

\begin{proof}[Proof  of Proposition \ref{P:B.6.6}]
Define
$$E=\big\{\{x,y\}\in [M]^2: \text{ there does not exist }z\in M\setminus\{x,y\} \text{ with } d(x,y)=d(x,z)+d(z,y)\big\}.$$
Then $G=(M,E)$ is a connected graph and $d$ is a geodesic metric with respect to $G$. 
Thus, by Theorem \ref{T:1.11} the extreme points  of $B_{\cF(M)}$ are 
$$ \Big\{\pm \frac{ \delta_{u}-\delta_{v}}{d(u,v)}: \{u,v\}\in E\Big\}.$$

Assume that $\cF(M) $ is isometrically equivalent to $\ell_1^n$. Thus, 
$\dim(\cF(M))=n$, which implies that  $|M|=n+1$, and  
$B_{\cF(M)}$ has $2n$ extreme points, which means that 
the cardinality of $E$ must be $n$. So $(M,E)$ is a connected graph with $n+1$ vertices and $n$ edges. This implies by Proposition \ref{P:2.1} 
that $(M,E)$ is a tree.\end{proof}

\section{Stochastic Embeddings of metric spaces into trees}\label{S:3}

\subsection{Definition of Stochastic Embeddings, Examples}

  \begin{defn}\label{D:3.1} Let $\cM$ be a class of metric spaces, and let  $(M,d_M)$  be a  metric space.
  A family $(f_i)_{i=1}^n$ of maps $f_i:M\to M_i$, with $(M_i,d_i)\in\cM$, together with numbers $\P=(p_i)_{i=1}^\infty\subset(0,1]$, with $\sum_{i=}^\infty p_i=1$    is called a 
  {\em $D$-stochastic embedding of $M$ into elements of the class  $\cM$} if  for all $x,y\in M$ and $i=1,2\ldots,$
  \begin{align}
  \label{E:3.1.1}&d_M(x,y)\le  d_i(f_i(x),f_i(y)) \text{ (expansiveness)},\\
   \label{E:3.1.2}&\E_\P\big(d_i(f_i(x),f_i(y)\big)=\sum_{i=1}^\infty p_i d_i\big(f_i(x),f_i(y) \big)\le D d_M(x,y).
  \end{align} 
  In that case we say that $(M,d_M)$  is {\em $D$-stochastically  embeds  into   $\cM$}. If moreover 
  the maps $f_i: M\to M_i$ are bijections we 
  say that  $(f_i)_{i=1}^n$  together with  $(p_i)_{i=1}^n$ is  a {\em bijective $D$-stochastic embedding of $M$ into elements of the class  $\cM$}.  \end{defn}  
\begin{rem} Of course if  $(f_i)_{i=1}^\infty$,  $f_i:M\to M_i$, $i=1,2,3,\ldots $  together with numbers   $(p_i)_{i=1}^\infty$  is a
  bijective $D$-stochastic embedding of $M$ into elements of the class  $\cM$, we can assume that as sets $M_i=M$, and that $d_i$ is a metric on $M$.
\end{rem}

We will mainly be interested in how a finite metric graph can be stochastically embedded into trees.

\begin{ex} Let $C_n=(V(C_n), E(C_n))$ be a cycle of length $n$. We can write $V(C_n)$ and $E(C_n)$ as 
$V(C_n)=\Z/nZ$ and $E(C_n)=\big\{\{j-1,j\}: j=1,2,\ldots n\big\}$
and let $d$ be the geodesic metric generated by the constant weight function $1$.

Consider for $j_0=1,2,\ldots n$, the path  $P_{j_0}$ defined by
$V(P_j)=V(C_n)=\{0,1,2,\ldots, n-1\}$
and $$E(P_{j_0})= E(C_n)\setminus \big\{\{j_0-1,j_0\}\big\}.$$ 
We consider  on $P_{j_0}$ the (usual {\em path distance} ) generated
by the weight function $w(e)= 1$, for $e\in P_{j_0}(e)$, and denote it by $d_{j_0}$

It follows  that
$$d_{j_0}(j_0-1,j_0)= n-1$$
and thus it follows for the identity $Id: (V(C_n),d_{C_n})\to (V(C_n), d_{j_0})$
that $\dist(Id)=n-1$. 

Let $\cT$ be the set of all metric trees. It can be shown that the $\cT$-distortion of a cycle $C_n$ of length  
$n$ is of the order $n$, ie\ $c_{T}(C_n)\ge c\!\cdot\! n$.
In other words, there are no embeddings of cycles into trees with sublinear (with respect to $n$) distortion.

Nevertheless, the  distortion of stochastic embedding of $C_n$  into trees is not larger than $2$:

$I=\{1,2,\ldots n\}$, $p_i=\frac1n$, and let for $i\in I$, $P_i$ be  the above introduced path.
Then 
$d_{C_n}(u,v)\le d_i(u,v)$, for $u,v\in V(C_n)$, and for $e=\{j-1,j\}\in E(C_n)$ it follows that
$$\sum_{i=1}^n p_i d_i(j-1,j)=\frac1n( 2(n-1))<2.$$

\end{ex}

\subsection{Stochastic Embeddings into Trees: the Theorem of  Fakcharoenphol, Rao, and Talwar}

\begin{thm}\label{T:3.2} {\rm \cite{FakcharoenpholRaoTalwar2004}} Let $M$ be a metric space with $n\in\N$ elements. Then there is a $O(\log n)$ stochastic embedding of $M$ into the class of weighted trees. 
\end{thm}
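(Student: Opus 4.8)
The plan is to produce the embedding via a randomized hierarchical partitioning of $M$, the standard \emph{ball-carving} (or \emph{low-diameter decomposition}) scheme of Fakcharoenphol--Rao--Talwar. First I would normalize so that the minimum interpoint distance is $1$ and let $\Delta$ be the diameter; choose the number of scales $k$ with $2^{k-1} < \Delta \le 2^k$. The random object is a \emph{laminar family} of partitions $\cP_0, \cP_1, \dots, \cP_k$ of $M$, where $\cP_k = \{M\}$, each $\cP_{i-1}$ refines $\cP_i$, and every cluster in $\cP_i$ has diameter $< 2^i$. From such a laminar family one builds a weighted tree $T$ in the obvious way: the nodes at level $i$ are the clusters of $\cP_i$, a level-$(i-1)$ cluster is joined to the level-$i$ cluster containing it, the leaves (level $0$) are the singletons $\{x\}$, and each edge between level $i$ and level $i-1$ gets weight $2^i$. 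Then $d_T(x,y) = 2^{i+2} - 2$ roughly, where $i$ is the smallest level at which $x,y$ lie in the same cluster; in particular $d_T(x,y) \le 2^{i+3}$ and $d_T(x,y) \ge 2^i \ge d_M(x,y)$ if $i$ is chosen correctly, giving expansiveness \eqref{E:3.1.1} after a harmless constant rescaling.

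The randomized partition at each scale is generated as follows: pick a uniformly random permutation $\pi$ of the points of $M$ and a random radius $\beta \in [1,2]$ (say with density proportional to $1/\beta$, or uniformly — the constants change but not the order). At scale $i$, each point $z \in M$ is assigned to the cluster of the \emph{first} point $w$ in the $\pi$-order such that $d_M(z,w) \le \beta \cdot 2^{i-1}$; intersecting these assignments across scales inside each scale-$(i)$ cluster produces the refinement $\cP_{i-1}$. This guarantees the diameter bound at every scale by construction. The key quantitative step — and the one I expect to be the main obstacle — is the \emph{padding / cutting-probability} estimate: for fixed $x,y \in M$ with $d_M(x,y) = \delta$ and fixed scale $i$, the probability that $x$ and $y$ end up in \emph{different} clusters of $\cP_{i-1}$ is at most $O\!\left(\dfrac{\delta}{2^i}\right)\cdot \log\!\left(\dfrac{|B(x, 2^i)|}{|B(x, 2^{i-3})|}\right)$ or a similar ratio of ball cardinalities. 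The proof of this bound is the heart of FRT: one orders the potential cluster-centers $w_1, w_2, \dots$ by distance from $x$, observes that $w_t$ can ``separate'' $x$ from $y$ only if it appears in the permutation before all of $w_1, \dots, w_{t-1}$ \emph{and} the random radius falls in the narrow window where $w_t$ captures exactly one of $x,y$; the first event has probability $1/t$ and the second has probability $O(\delta/2^i)$, and summing $\frac{1}{t}$ over the $w_t$ lying in the annulus between radius $2^{i-3}$ and $2^i$ around $x$ produces the logarithmic ball-ratio factor.

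Granting the padding estimate, the remainder is a clean telescoping/summation argument. For $x,y$ with $d_M(x,y) = \delta$, let $i^*$ be the scale with $2^{i^*-1} \le \delta < 2^{i^*}$: at scales $i \le i^*$ the contribution to $\E[d_T(x,y)]$ is trivially bounded by $O(2^{i^*}) = O(\delta)$ (they are cut at every such scale, but the edge weights are geometric), while at scales $i > i^*$ the expected contribution is $\sum_{i > i^*} 2^{i+O(1)} \cdot \Pr[\text{cut at scale } i] \le \sum_{i>i^*} 2^{i} \cdot O(\delta/2^i)\log(\text{ball ratio}_i) = O(\delta)\sum_{i>i^*}\log\bigl(|B(x,2^i)|/|B(x,2^{i-3})|\bigr)$. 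The sum of the logarithms telescopes: $\sum_i \log(|B(x,2^i)|) - \log(|B(x,2^{i-3})|)$ is a sum of $3$ interleaved telescoping series, each bounded by $\log|M| - \log 1 = \log n$. Hence $\E[d_T(x,y)] \le O(\log n)\cdot d_M(x,y)$, which is exactly \eqref{E:3.1.2} with $D = O(\log n)$. Finally, the probability distribution $(p_i)$ in Definition \ref{D:3.1} is just the distribution on trees induced by the finitely many choices of $(\pi, \beta)$ at each of the $k$ scales (a finite probability space, so the ``$\sum_{i=1}^\infty$'' truncates), and by construction each tree is a weighted tree and each $f_i$ can be taken to be the identity on the vertex set $M$ sitting as the leaves of $T$. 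This completes the proof modulo the padding lemma, whose proof — the permutation-argument annulus sum described above — is the one genuinely delicate piece.
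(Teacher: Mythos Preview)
Your proposal is correct and follows essentially the same architecture as the paper's proof: normalize the metric, generate a hierarchical random partition via ball-carving with a uniformly random permutation and a random radius at each dyadic scale, assemble the laminar family into a weighted tree with geometric edge weights, read off expansiveness from the diameter bounds, and control the expected tree distance by the cutting-probability estimate whose log-ball-ratio factors telescope to $O(\log n)$.

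The one noteworthy technical difference lies inside the padding lemma. You sketch the classical FRT harmonic-sum argument: the $t$-th closest potential center can separate $x$ from $y$ only if it is first among the first $t$ in the permutation (probability $1/t$) and the radius falls in a window of width $O(\delta)$, and summing $1/t$ over the annulus gives the $\log(|B(x,2^i)|/|B(x,2^{i-3})|)$ factor. The paper takes a different route to the same bound: for a fixed radius $r$ it shows directly that $\P(B_t(x)\subset P_x)\ge |B_{r-t}(x)|/|B_{r+t}(x)|$ (because the ball $B_t(x)$ is captured whenever the \emph{first} point of $B_{r+t}(x)$ appearing in the permutation actually lies in $B_{r-t}(x)$), then integrates this ratio over the uniform radius $r\in[R/4,R/2)$ and applies Jensen's inequality to obtain $\P(B_t(x)\subset P_x)\ge (|B_{R/8}(x)|/|B_R(x)|)^{8t/R}$, finally converting to a cutting probability via $1-1/z\le\log z$. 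Both derivations are standard and give the same order; the paper's version cleanly decouples the permutation randomness from the radius randomness, while yours is closer to the original FRT presentation.
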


A complete proof of Theorem \ref{T:3.2} is given in the Appendix. Here, we only want to define the stochastic embedding.

Let $M=\{x_1,x_2,\ldots ,x_n\}$. After rescaling, we can assume that $d(x,y)>1$ for all $x\not=y$ in $M$.

We introduce the following notation.
\begin{enumerate} 
\item $B_r(x)=\{ z\in M: d(z,x)\le r\}$  for  $x\in X$ and $r>0$. 
\item For $A\subset M$, $\diam(A)=\max_{x,y\in A} d(x,y)$. \\
     We choose $k\in \N$ so that $2^{k-1}<\diam(M)\le 2^k$.
      
\item $\cP_M$ denotes the sets of all partitions of $M$. Let  $P=\{A_1,A_2, \ldots A_l\}\in\cP_M$.\\
          $P$ is called an $r$-Partition if $\diam (A_j)\le r$, $j=1,2\ldots l$\\
          for $B\subset M$, let  $P|_B=(A_1\cap B,A_2\cap B, \ldots,A_l\cap B)$
 \item For two partitions $P=\{A_1,A_2,\ldots A_m\}$ and $Q=\{B_1,B_2,\ldots,B_n  \}$ 
          we say $Q$ subdivides  $P$ and write $Q\succeq P$, if 
          for each $i=1,2,\ldots,m $ there is a $j=1,2\ldots ,n$ with $B_j\subset A_i$
  \item     Let $\Omega$ be the set of all sequences
             $(P^{(j)}  )_{j=0}^k$ so that:
             $P^{(j)} $ is a $2^{k-j}$-partition of $M$, with $P^{(0)}=(M)$ and $P^{(j)}\succeq P^{(j-1)}$ \\
             Note: $P^{(k)}$ are singletons.
  \item Every  $(P^{(j)}  )_{j=0}^k\in \Omega$ defines a tree $T=(V(T),E(T))$ as follows:\\
           $V   (T)=\{ (j,A): j=0,1,2,\ldots k, \text{ and } A\in P^{(j)}$\}, and\\ 
           $E(T)=\big\{\{(j,B), (j-1, A)\}  :  j=1,2,\ldots k\text{ and }A\subset   B\}\big\}$.\\
          The weight function is defined by  $w(\{(j,B), (j-1, A)\})=2^{k-j}$ if $\{(j,B), (j-1, A)\}\in E(T)$ 
\end{enumerate}
For each such  tree $T$, we can define a map from $M$ to the leaves of $T$ by assigning
each $x$ the element $(k, \{x\})\in V(T)$.

 We now have to define a  Probability on $\Omega$.
 
 We first define for each $R$ a probability $\P^R$ on $R$-bounded partitions:
 Let $\Pi_n$ be the set of all permutations on $\{1,2,\ldots n\}$
 
 We consider on $\Pi\times [R/4, R/2]$ the product of the uniform distribution on $\Pi$ and the   uniform distribution on $[R/4, R/2]$.
 each pair $(\pi, r)\in   \Pi\times [R/4, R/2]$ defines the following $r$-partition $(C_j{(\pi, r)})_{j=1}^l$:\\
 $\tilde C_1{(\pi, r)}=B_r(x_{\pi(1)})$, and assuming   $(\tilde C_1{(\pi, r)},\tilde C_2{(\pi, r)}), \ldots ,\tilde C_{j-1}{(\pi, r)}$ are defined 
 we put $\tilde C_j{(\pi, r)}=B_r(x_{\pi(j)})\setminus \bigcup_{i=1}^{j-1} \tilde C_i{(\pi, r)}$.
Then let $\{C_j{(\pi, r)}: j\keq1,2\ldots l\}$ be the non empty elements of $(\tilde C_j{(\pi, r)})_{j=1}^n$
and let $\P^R$ be the image distribution of that mapping $(\pi, r)\mapsto C(\pi,r)$.
For $B\subset M$ let $\P^{(R,B)}$ be the image distribution of the mapping $(\pi, r)\mapsto C(\pi,r)|_B$
 
Let $\P$ be the probability on $\Omega$ uniquely defined by 
 \begin{align} \P\big((P^{(j)}  )_{j=0}^k: P^{(0)}&=(0,\{M\})\big)=1,
 \intertext{for $j=1,2\ldots n$, $B\subset M$,  $\diam(B) \le  2^{k-(i-1)}$, and any $\cA\subset\ \cP_M$}
\P( P^{(i)}|_B \in \cA|(i-1,B)\in P^{(i-1)})&= \P^{(2^{k-i}, B)}(\cA).
 \end{align} 
\subsection{Bijective  embeddings onto trees: The Restriction Theorem by Gupta}

Our goal is to show that for some universal constant $c$, every metric space with $n$ elements can be bijectively $c\log(n)$-stochastic embedded into trees. This will follow from Theorem \ref{T:3.2}  and the following result by Gupta.
\begin{thm}\label{T:3.3} {\rm \cite[Theorem 1.1]{Gupta2001}}  Let $T=(V,E,W)$ be a weighted tree and $V'\subset V$. Then there is $E\subset [V']^2$ and $W': E(\G')\to [0,\infty)$
so that $T'=(V',E(\G'),W')$ is a tree 
\begin{equation}\label{E:13.4.1a} \frac14\le \frac{d_{T'}(x,y)}{d_{T}(x,y)}\le 2,\text{ for $x,y\in V'$}.\end{equation}
\end{thm}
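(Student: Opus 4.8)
\emph{The plan.} The statement concerns only the restriction of a tree metric to a subset, so I would argue by induction on the number of ``Steiner vertices'' $|V\setminus V'|$, deleting them one at a time; the base case $V=V'$ is trivial. For the inductive step, fix $v\in V\setminus V'$. If $\deg(v)=1$, simply discard $v$ and its incident edge: no distance between surviving vertices changes. If $\deg(v)=2$, with neighbours $u_1,u_2$ and incident weights $\ell_1,\ell_2$, discard $v$ and join $u_1$ to $u_2$ by an edge of weight $\ell_1+\ell_2$; again all surviving distances are preserved \emph{exactly}. The only substantive case is $\deg(v)=d\ge 3$: list the neighbours $u_1,\dots,u_d$ so that the incident edge weights satisfy $\ell_1\le\ell_2\le\dots\le\ell_d$, delete $v$, and for $j=2,\dots,d$ add an edge $\{u_1,u_j\}$ of weight $\ell_j$. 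The components $T_1,\dots,T_d$ of $T-v$ (with $u_j\in T_j$) are thereby reconnected through $u_1$, and one checks easily that the result is again a weighted tree.

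\emph{The easy half.} With this construction, no surviving distance ever increases: for $x\in T_a$, $y\in T_b$ with $a\ne b$ one has $d_{T_1}(x,y)=d_T(x,u_a)+\ell_a+\ell_b+d_T(u_b,y)=d_T(x,y)$ when $a,b\ge 2$, and $d_{T_1}(x,y)=d_T(x,u_1)+\ell_b+d_T(u_b,y)=d_T(x,y)-\ell_1$ when $a=1$, while distances inside a single $T_a$ are unchanged. Hence, after all Steiner vertices have been removed, the final tree $T'=(V',E(T'),W')$ satisfies $d_{T'}(x,y)\le d_T(x,y)$ for all $x,y\in V'$, which already gives the upper estimate $d_{T'}(x,y)\le 2\,d_T(x,y)$ with room to spare. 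Thus the entire content of the theorem is the \emph{lower} estimate $d_{T'}(x,y)\ge\frac14\,d_T(x,y)$.

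\emph{The hard half, and the obstacle.} The single step only loses $\ell_1$ for a pair $(x,y)$ that ``feels'' the removal of $v$, and in that case the $T$-path through $v$ contains the two edges $\ell_1\le\ell_j$, so $\ell_1\le\frac12 d_T(x,y)$; that gives $d_{T_1}(x,y)\ge\frac12 d_T(x,y)$ per step, but iterating this naively over $|V\setminus V'|$ removals lets the loss compound geometrically. The real work — and the step I expect to be the main obstacle — is a global amortization that prevents this. I would fix $x,y\in V'$, track the evolution of the path $[x,y]$ as Steiner vertices are deleted, and observe that removing $v$ from the current path replaces two consecutive edges of weights $\ell,\ell'$ by a single edge of weight $\max(\ell,\ell')$, the lighter one being discarded while the heavier one ``survives'' with its weight intact and migrates to bridge the gap; so $d_T(x,y)-d_{T'}(x,y)$ is exactly the total weight of the original path edges that eventually get discarded, and each discarded edge can be charged along a chain of non-decreasing weights to an edge present in $[x,y]_{T'}$. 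Carrying this bookkeeping out carefully — choosing a processing order (e.g.\ bottom-up after rooting $T$ at some $r\in V'$) that keeps the chains well defined, bounding how many discarded edges are charged to a common survivor, and thereby extracting the precise constant $4$ rather than a mere $O(1)$ — is the crux; once the estimate $d_T(x,y)\le 4\,d_{T'}(x,y)$ is in hand, the theorem follows together with the easy half above.
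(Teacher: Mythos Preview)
Your approach --- delete Steiner vertices one at a time and reconnect through the lightest neighbour --- is genuinely different from the paper's, which never touches individual Steiner vertices but instead cuts the tree along a sphere of radius $r_0/2$ about a Steiner root $x_0$ (where $r_0$ is the distance from $x_0$ to the nearest vertex of $V'$), recurses on the pieces, and glues the recursive solutions back together through the nearest $V'$-vertices, carrying along a strengthened inductive hypothesis $d_{T'}(x,v_0)\le 2d_T(x,x_0)-r_0$ that is what actually forces the constant.

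Your easy half is fine, but the construction itself is broken, so the hard half cannot be salvaged by any amortisation. Take a spine $a\!-\!s_0\!-\!s_1\!-\!\cdots\!-\!s_k\!-\!b$ with $a,b\in V'$, each $s_i$ Steiner, and give spine edges weights $1,\,1+\varepsilon,\,1+2\varepsilon,\,\ldots,\,1+(k{+}1)\varepsilon$; attach to each $s_i$ a leaf $t_i\in V'$ by an edge of weight $M\gg 1$. When you process $s_0$, the lightest incident edge is $\{a,s_0\}$, so $u_1=a$ and you create $\{a,s_1\}$ of weight $1+\varepsilon$; when you process $s_1$, the lightest incident edge is now $\{a,s_1\}$, so again $u_1=a$ and you create $\{a,s_2\}$ of weight $1+2\varepsilon$; and so on. After all $s_i$ are removed you are left with $d_{T'}(a,b)=1+(k{+}1)\varepsilon$, whereas $d_T(a,b)\approx k+2$. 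The ratio is unbounded in $k$, and the processing order does not help (bottom-up from $a$ gives exactly the same outcome). In the language of your charging scheme, the entire chain of discarded edges $\{a,s_0\},\{a,s_1\},\ldots$ --- each of weight about $1$ --- is charged to the single surviving edge $\{a,b\}$ of weight about $1$, so no constant bound on the charge-per-survivor is possible.

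The flaw is the choice of new weight: assigning $\{u_1,u_j\}$ the weight $\ell_j$ discards $\ell_1$ outright, and these discards can telescope along a path. If you instead set the new weight to $\ell_1+\ell_j$ (the true $T$-distance), every single step preserves all pairwise distances exactly --- but then the problem does not get smaller, since you have merely contracted a degree-$2$ configuration, and you still need a genuinely new idea to control what happens when a Steiner vertex has several heavy branches. The paper's radius-cut plus the auxiliary inequality $d_{T'}(x,v_0)\le 2d_T(x,x_0)-r_0$ is one clean way to supply that idea.
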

A proof of Theorem \ref{T:3.3} is given in the appendix.
\begin{cor}\label{C:13.7} If  a finite metric space $(M,d)$ $D$ stochastically embeds into geodesic trees,
it bijectively $8$-stochastically embeds into geodesic trees.

In particular,  by \ref{T:3.2} is a constant $c>0$ so that  every metric space $(M,d)$  with $n$ elements 
bijectively $c\log(n)$-stochastically embeds into a weighted tree.
\end{cor}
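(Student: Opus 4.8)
The plan is to derive Corollary \ref{C:13.7} from Theorem \ref{T:3.2} and Theorem \ref{T:3.3} in two moves: first upgrade an arbitrary $D$-stochastic embedding into trees to a \emph{bijective} $8$-stochastic embedding into (geodesic) trees, and then apply this upgrade to the FRT embedding from Theorem \ref{T:3.2}, whose distortion is $O(\log n)$, to conclude the final assertion.

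First I would set up the upgrade. Suppose $(f_i)_{i\ge 1}$ with $f_i : M \to M_i$ and probabilities $(p_i)_{i\ge 1}$ form a $D$-stochastic embedding of the finite metric space $(M,d)$ into geodesic trees $T_i=(V(T_i),E(T_i),w_i)$. For each $i$, the image $f_i(M)$ is a finite subset $V'_i \subset V(T_i)$, so Theorem \ref{T:3.3} provides a tree structure $T'_i=(f_i(M),E(T'_i),W'_i)$ with the geodesic metric $d_{T'_i}$ satisfying $\frac14 \le d_{T'_i}(x,y)/d_{T_i}(x,y) \le 2$ for all $x,y\in f_i(M)$. Now transport this structure back to $M$ along $f_i$: since $f_i$ is a map into a metric space but need not be injective, I first note that expansiveness \eqref{E:3.1.1} combined with $d(x,y)>0$ for $x\ne y$ (after the standing rescaling, or simply because $d$ is a metric on the \emph{finite} set $M$) forces $d_{T_i}(f_i(x),f_i(y))\ge d(x,y)>0$, hence $f_i$ is injective; therefore $f_i$ is a bijection from $M$ onto $V'_i$, and I can define a metric $\tilde d_i$ on $M$ by $\tilde d_i(x,y) = 4\, d_{T'_i}(f_i(x),f_i(y))$ and declare $\tilde f_i = f_i : (M,\tilde d_i)\to (M,\tilde d_i)$ (the identity, so certainly bijective), with the same probabilities $p_i$. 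Pulling the tree structure and the weight function $4W'_i$ back along $f_i$ makes $(M,\tilde d_i)$ a geodesic tree in the sense required.

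Next I would check the two defining inequalities of a stochastic embedding for $(\tilde f_i)_i$, $(p_i)_i$. For expansiveness: $\tilde d_i(x,y) = 4 d_{T'_i}(f_i(x),f_i(y)) \ge 4\cdot \frac14 d_{T_i}(f_i(x),f_i(y)) = d_{T_i}(f_i(x),f_i(y)) \ge d(x,y)$, the last step by \eqref{E:3.1.1}. For the expectation bound: $\E_\P \tilde d_i(x,y) = \sum_i p_i\, 4 d_{T'_i}(f_i(x),f_i(y)) \le \sum_i p_i\, 4\cdot 2\, d_{T_i}(f_i(x),f_i(y)) = 8 \sum_i p_i d_{T_i}(f_i(x),f_i(y)) \le 8 D\, d(x,y)$ by \eqref{E:3.1.2}. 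Wait — this gives $8D$, not $8$; so the clean statement ``bijectively $8$-stochastically embeds'' should read ``bijectively $8D$-stochastically embeds''. I would state and prove the quantitatively correct version (a bijective $8D$-stochastic embedding into geodesic trees) and then apply it: with $D = O(\log n)$ from Theorem \ref{T:3.2}, we get a bijective $8D = O(\log n)$-stochastic embedding, which is exactly the ``in particular'' clause with a (possibly adjusted) universal constant $c$. The factor $4$ in the rescaling of $\tilde d_i$ is there only to restore expansiveness after Gupta's lower bound $\frac14$; one could equally keep the expansiveness implicit and absorb constants, but the explicit rescaling keeps the bookkeeping transparent.

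The main obstacle I anticipate is not conceptual but a matter of care: Theorem \ref{T:3.3} is stated for a single weighted tree and a single vertex subset, so one must be attentive that the resulting $W'_i$ is genuinely positive (so $d_{T'_i}$ is a metric and not a pseudometric) — which is why the convention $d(x,y)>0$ on the finite set $M$, propagated through expansiveness, is used to guarantee $d_{T'_i}(x,y)>0$ for $x\ne y$, and hence $\tilde f_i$ injective. A secondary subtlety is making sure that ``$(M,\tilde d_i)$ is a geodesic tree with $\tilde f_i$ a bijection'' matches Definition \ref{D:3.1}'s notion of bijective stochastic embedding: here one uses the Remark after Definition \ref{D:3.1}, which explicitly permits taking $M_i = M$ as sets with $d_i$ a metric on $M$, so the construction lands squarely in the stated framework. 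Beyond this, everything is a one-line estimate, and I would not belabor the verification that the pulled-back structure $(M, E(T'_i)\text{ via }f_i^{-1}, 4W'_i)$ is a tree — that is immediate from $f_i$ being a bijection.
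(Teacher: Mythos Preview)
Your proposal is correct and follows the same approach as the paper: apply Gupta's restriction theorem (Theorem \ref{T:3.3}) to each tree $T_i$ in the given stochastic embedding, pull the resulting trees back to $M$ via the (necessarily injective, by expansiveness) maps $f_i$, and rescale by $4$ to recover expansiveness. The paper's own proof (in the appendix, Corollary \ref{C:13.7a}) is terse---it simply asserts that the construction follows from combining the two theorems---so your explicit bookkeeping (the injectivity argument, the factor of $4$, the verification of both inequalities) fills in exactly what the paper leaves implicit; you also correctly identify that the ``$8$'' in the statement should be ``$8D$'', which is indeed what the appendix version states.
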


\begin{exc} The estimate  in Corollary \ref{C:13.7} is optimal in the following sense:

Let $\G_n=(\Z/n\Z)^2$ is the $n$-t discrete torus, then there is constant $c>0$ so that 
for all $n\in \N$ the following holds:

If $T$ is a tree and $f:V(\G_n)\to V(T)$ is in an injective map so that 
$$1\le d_T(f(x),f(y))\text{ for all $x,y\in \G_n$, with $\{x,y\}\in E(\G)$}$$
then 
$$\frac1{|E(\G)|} \sum_{e=\{x,y\}\in E(\G)} d_T (f(x),f(y))\ge c\log(n)$$

The same is true for the family of diamond graphs $(D_n)_{n\in\N}$ (see \cite{GuptaNewmanRabinovichSinclair2004}*{Theorem 5.6}).
\end{exc}

\begin{rem} Assume that $(G, d_G)$ is a finite geodesic graph,
 and that the geodesic trees $(T_i,d_i)$, expansive maps $f_i: V(T_i)\to V(G)$, $i=1,2, \ldots n$, together with the probability
 $\P=(p_i)_{i=1}^n$ form a bijective $D$-stochastic embedding of $V(G)$. Then we can assume that actually 
 $V(T)=V(G)$,  and thus  $E(T_i)\subset [V(G)]^2$, and that $f_i$ is the identity.
 
 For $i=1,2,\ldots n$ and $e=\{x,y\}\in E(T_i)$ (which does not need to be in E(G)) 
 $$w'_i(e)= d_G(x,y)=\min\big\{ \length_{d_G}(P): \text{$P$ is a path in $G$ from $x$ to $y$}\big\}\le d_i(e).$$
 and let $d'_i$ be the geodesic metric on $V(T_i)=V(G)$ generated by $w'_i$.
 
 We note that $d_i'(x,y)\le d_i(x,y)$, for $x,y\in V(G)$, but that the identity 
 $(V(G),d_G)\to (V(G),d_i')$ is still expansive.
 It follows therefore that if we replace $d_i$ by $d'_i$ we do not increase the stochastic distortion, may only reduce it.
 \end{rem}

\subsection{Application: Embedding Transportation Cost Spaces  into $\ell_1$}\label{SS:3.4}

\begin{thm}\label{T:3.4.1}{\rm \cite{BMSZ2022,Mathey-PrevotValette2023}} Let $(M,d)$ be a countable  metric space which $D$-stochastically embeds into geodesic   trees.
Then, there  is an isomorphic embedding
$$\Phi: \cF(M)\to \ell_1^\infty$$ with
$$\|\mu\|_\cF\le \|\Phi(\mu)\|_1\le D\|\mu\|_\cF \text{ for all $\mu\in \cF(\cM)$}.$$
   \end{thm}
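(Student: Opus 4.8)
The plan is to assemble $\Phi$ directly from the data of the stochastic embedding: push forward measures into each tree, bundle the results into one $\ell_1$-space, and use Corollary~\ref{C:B.6.4} to see $\cF$ over a geodesic tree as a weighted $\ell_1$. First I would record two elementary consequences of Definition~\ref{D:3.1}. Let $(f_i)_i$, $f_i\colon M\to V(T_i)$, with geodesic trees $(T_i,d_i)$ and $\P=(p_i)_i$, be the given $D$-stochastic embedding. Expansiveness \eqref{E:3.1.1} forces each $f_i$ to be injective, since $f_i(x)=f_i(y)$ would give $d_M(x,y)\le d_i(f_i(x),f_i(y))=0$; and retaining only the $i$-th term of \eqref{E:3.1.2} gives $d_i(f_i(x),f_i(y))\le(D/p_i)\,d_M(x,y)$, so $\Lip(f_i)\le D/p_i<\infty$. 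Choosing the special point of $T_i$ to be $f_i(0_M)$, Proposition~\ref{P:6.2.9} yields bounded linear extensions $\hat f_i\colon\cF(M)\to\cF(V(T_i),d_i)$ with $\hat f_i(\delta_m)=\delta_{f_i(m)}$, and Corollary~\ref{C:B.6.4} identifies each $\cF(V(T_i),d_i)$ isometrically with $\ell_1(E(T_i),w_i)$.

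I would then define
$$\Phi\colon\cF(M)\to Y:=\Big(\bigoplus_i\cF(V(T_i),d_i)\Big)_{\ell_1(\P)},\qquad \Phi(\mu)=\big(\hat f_i(\mu)\big)_i,$$
where $\|(z_i)_i\|_Y=\sum_i p_i\|z_i\|_{\cF(V(T_i))}$. Since $M$ is countable each $E(T_i)$ is countable, so $Y$ is isometric to $\ell_1\big(\coprod_i E(T_i),\tilde w\big)$ with $\tilde w|_{E(T_i)}=p_iw_i$, hence to $\ell_1^\infty$. For the upper estimate, fix $\mu\in\cM_0(M)$ and any molecular representation $\mu=\sum_j r_j(\delta_{x_j}-\delta_{y_j})$; then $\hat f_i(\mu)=\sum_j r_j(\delta_{f_i(x_j)}-\delta_{f_i(y_j)})$, so $\|\hat f_i(\mu)\|_{\cF(V(T_i))}\le\sum_j r_j\,d_i(f_i(x_j),f_i(y_j))$, and summing against $p_i$, interchanging the nonnegative sums, and applying \eqref{E:3.1.2} termwise gives $\|\Phi(\mu)\|_Y\le D\sum_j r_j\,d_M(x_j,y_j)$; taking the infimum over representations gives $\|\Phi(\mu)\|_Y\le D\|\mu\|_\tc=D\|\mu\|_\cF$ by Theorem~\ref{T:1.7}. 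In particular $\Phi$ is well defined and bounded of norm $\le D$ on the dense subspace $\cM_0(M)$, so it extends to all of $\cF(M)$, and continuity of the coordinate maps shows the extension is still $\mu\mapsto(\hat f_i(\mu))_i$.

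For the lower estimate — the genuinely non-formal step — fix $\mu\in\cM_0(M)$ and $g\in\Lip_0(M)$ with $\|g\|_\Lip\le1$. Since $f_i$ is expansive, $f_i^{-1}\colon f_i(M)\to M$ is $1$-Lipschitz, hence $g\circ f_i^{-1}$ is $1$-Lipschitz on $f_i(M)\subset V(T_i)$; by Lemma~\ref{L:6.2.12} it extends to a function on $V(T_i)$ of Lipschitz norm $\le1$, and after subtracting a constant we obtain $g_i\in\Lip_0(V(T_i))$ with $\|g_i\|_\Lip\le1$. This subtraction is harmless because $\mu$ and $\hat f_i(\mu)$ have total mass zero and therefore annihilate additive constants, while $g_i\circ f_i$ agrees with $g$ up to a constant. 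Using the duality $\Lip_0(V(T_i))=\cF(V(T_i))^*$ from Theorem~\ref{T:6.2.4},
$$\la\mu,g\ra=\la\mu,g_i\circ f_i\ra=\la\hat f_i(\mu),g_i\ra\le\|\hat f_i(\mu)\|_{\cF(V(T_i))}$$
for every $i$; averaging against $\P$ gives $\la\mu,g\ra\le\|\Phi(\mu)\|_Y$, and taking the supremum over such $g$ together with Kantorovich duality (Theorem~\ref{T:1.7}) gives $\|\mu\|_\cF\le\|\Phi(\mu)\|_Y$. Both inequalities then pass to all of $\cF(M)$ by density and continuity, and under the isometry $Y\cong\ell_1^\infty$ this is exactly $\|\mu\|_\cF\le\|\Phi(\mu)\|_1\le D\|\mu\|_\cF$.

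I expect the main obstacle to be precisely this transfer of a norm-one Lipschitz test function from $M$ into each tree; the key realization that unlocks it is that an expansive map is automatically injective with $1$-Lipschitz inverse, after which the Lipschitz extension Lemma~\ref{L:6.2.12} does the work. Everything else is bookkeeping: tracking base points (rooting $T_i$ at $f_i(0_M)$) and the mass-zero condition so that additive constants may be ignored, checking via Proposition~\ref{P:6.2.9} that the push-forwards $\hat f_i$ extend to the completion, and observing that a countable $\ell_1$-sum of weighted $\ell_1$-spaces is again (isometrically) $\ell_1$.
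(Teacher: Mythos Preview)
Your proposal is correct and follows the same overall architecture as the paper: push $\mu$ forward along each $f_i$ into $\cF(V(T_i),d_i)$, identify the latter with a weighted $\ell_1$ via Corollary~\ref{C:B.6.4}, and bundle everything into an $\ell_1$-sum weighted by $\P$. The upper estimate is handled identically.

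The only point of divergence is the lower estimate $\|\mu\|_\cF\le\|\Phi(\mu)\|$. The paper argues on the primal side: it observes that among the optimal representations of $\hat f_i(\mu)$ there is a disjoint one, whose support therefore lies in $f_i(M)$; since $f_i$ is injective this representation pulls back to a representation of $\mu$, and expansiveness guarantees the cost does not increase, giving $\|\hat f_i(\mu)\|_{\tc}\ge\|\mu\|_{\tc}$ for every $i$. You instead work on the dual side, transporting a norm-one test function $g\in\Lip_0(M)$ into each tree via $g\circ f_i^{-1}$ and the McShane extension Lemma~\ref{L:6.2.12}. Both routes hinge on the same observation (expansive $\Rightarrow$ injective with $1$-Lipschitz inverse) applied on opposite sides of Kantorovich duality; your version is arguably tidier since it avoids the appeal to Proposition~\ref{P:1.4} and the lifting of representations.
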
 
We will use the following easy fact:
Let $\P$ be a probability on a finite or countable infinite set $I$, with $\P(i)>0$, for $i\in I$.
Put
$$L_1(\P,\ell_1)=\{ f:I\to \ell_1\}$$
and for $f\in L_1(\P,\ell_1)$, we write  $f(i)=\sum_{j=1}^\infty e_j f(i,j)$,  where $(e_j)$ denotes the unit vector basis of $\ell_1$, and put
$$\|f\|_{L_1(\P,\ell_1)}= \int_I \|f\|_1 d\P= \sum_{i\in I} \|f(i)\|_1 \P(i)= \sum_{i\in I} \sum_{j=1}^\infty |f(i,j)| $$
Then 
$$L_1(\P,\ell_1)\to \ell_1^{I\times \N},\quad f\mapsto \Big( \frac{f(i,j)}{\P(i)}: i\in I,j\in \N\Big).$$
is an onto isometry.

\begin{proof}[Proof of Theorem \ref{T:3.4.1}]
For $i\in I$ let  $(T_i,d_i)$   let  $d_i$ be  a geodesic  metric on $T_i$, $\P=(p_i)_{i\in I}$ a (strictly positive) probability on $I$,
 and let $\phi_i: M\to V(T_i)$ so that 
 \begin{align*}
 &d_i(\phi_i(x), \phi(y))\ge d(x,y)\text{ for $i\kin I$ and $x,y\kin M$}, \text{ and}
  &\sum_{i\in I} p_i d_i(\phi_i(x), \phi_i(y))\le D d(x,y)\text{ for $x,y\kin M$.}
  \end{align*}
  We can assume that $T_i$ are countable trees. By  Corollary \ref{C:B.6.4}
  $\cF(V(T_i), d_i)$ is isometrically isomorphic to $\ell_1^{(E(T_i))}$, for $i\in I$,
  and thus, there are 
   isometric embeddings 
   $E_i :\cF(V(T_i,) d_i)\to \ell_1$, for $i\in I$.
   
   We define 
   $$\Psi_i:\tilde\cF(M,d)\to \cF(V(T_i), d_i), \quad \sum_{j=1}^n r_i(\delta_{x_i}-\delta_{y_i})\mapsto  \sum_{j=1}^n r_i(\delta_{\phi_i(x_i)}-\delta_{\phi(y_i)}).$$
 Note that if  $\mu\in \tilde\cF(M,d)$ is represented by
 $\mu=\sum_{j=1}^n r_i(\delta_{x_i}-\delta_{y_i})$ then $\Psi_i(\mu)$ is represented
 by $\Psi_i(\mu)=\sum_{j=1}^n r_j(\delta_{\phi_i(x_j)}-\delta_{\phi_i(y_j)})$, and thus
 \begin{align*}
 \|\Psi_i(\mu)\|_{tc} &= \inf\Big\{ \sum_{j=1}^n r_jd_i(u_j,v_j) : \Psi_i(\mu)=\sum_{j=1}^n r_j(\delta_{u_j}-\delta_{v_j}), \,(r_j)_{j=1}^n\subset \R^+\Big\}\\
                                &= \inf\Big\{\sum_{j=1}^n r_j d_i(\phi_i(x_j),\phi_i(y_j)):\mu=\sum_{j=1}^n r_i(\delta_{x_j}-\delta_{y_j}),\, (r_j)_{j=1}^n\subset \R^+\Big\} 
                                \ge \|\mu\|_{tc}.\\
   \end{align*}
   For the second equality note that ``$\le$'' is clear and that ``$\ge$'' follows from the fact (see Proposition \ref{P:1.4}) that among the optimal representations of
   $\Psi_i(\mu)$, there always must be one whose support is the support of $\Psi_i(\mu)$, which is a subset of $\phi_i(M)$.
   
      We define 
   $$\Psi: \tilde\cF(M,d)\to L_1(\P, \ell_1)\equiv \ell_1, \quad \mu\mapsto \Psi(\mu): I\ni i \to E_i\circ \Psi_i(\mu)\in \ell_1.$$
  
  Then it follows for $\mu\in \tilde\cF(M,d)$, that 
  $$\|\Psi(\mu)\|_{L_1(\P,\ell_1)}=\sum_{i\in I}\pi \|\Psi_i(\mu)\|_{tc}\ge \|\mu\|_{tc}$$
  and on the other hand if $\mu=\sum_{j=1}^n r_j (\delta_{x_j}-\delta_{y_j})$ is an optimal representation of $\mu$
  then 
    \begin{align*}
    \|\Psi(\mu)\|_{L_1(\P,\ell_1)}&=\sum_{i\in I}p_i \|\Psi_i(\mu)\|_{tc}\le \sum_{i\in I}p_i\sum_{j=1}^n r_j d(\phi_i(x_j),\phi_i(y_j))\\
    &\le D\sum_{i\in I}p_i\sum_{j=1}^n r_j d(x_j,y_j))=D\|\mu\|_{tc}.\end{align*}

  \end{proof}
%
%
%

\subsection{Application: Extension of Integral Operators from a Conservative Field to the whole Vector Field,
and the  Embedding  of Transportation Cost Spaces into $L_1$  complementably}\label{SS:3.5}\phantom{aa}

We recall some notation from {\em discrete Calculus}.
We are given a finite graph $G=(V(G),E(G))$ with a geodesic metric $d_G$.
We put $\bar E(G)=\big\{ (x,y), (y,x): \{x,y\} \in E(G)\big\}$.

A map $f: \bar E(G)\to \R$ is called a {\em vector field on $G$} if $f(x,y)=-f(y,x)$ for all $\{x,y\}\in E(G)$,
and we put  $$\|f\|_\infty =\sup_{e=(x,y)\in \bar E(G)} |f(x,y)|.$$ 
We denote the space of vector fields on $G$ by  $\VF(G)$. Together with this norm $\VF(G)\equiv\ell_\infty(E(G))$

If $W=(x_j)_{j=0}^n$ is a walk in $G$ and $f\in \VF(G)$, we call the {\em  integral of $f$ along $W$} to be
$$\int_W f(e) d_G(e)=\sum_{j=1}^n f(x_{j-1}, x_j) d_G(x_{j-1}, x_j).$$
A vector field $f$ on $G$ is called {\em conservative } if the integral along any cycle $C$ vanishes,
\ie\ if $C=(x_j)_{j=1}^n$ is a walk with $x_n=x_0$, then
$$\int_C f(e) d_G(e)=\sum_{j=1}^n f(x_{j-1}, x_j) d_G(x_{j-1}, x_j)=0.$$
Equivalently, if for any $x,y\in V(G)$ and any two paths $P$ and $Q$ from $x$ to $y$
it follows that 
$$\int_P f(e) d_G(e)=\int_Q f(e) d_G(e).$$
We denote the space of conservative vector fields on $G$ by $\CVF(G)$.

\begin{rem} If $T$ is a tree, then $\VF(T)=\CVF(T)$.
\end{rem}

For a map $F: V(G)\to \R$ we define the {\em gradient of $F$} by
$$\nabla F=\nabla_dF: \bar E\to \R, \quad (x,y)\mapsto \frac{f(y)-f(x)}{d_G(x,y)}.$$

\begin{prop} For $F:V(G)\to \R$
$$\|F\|_{\Lip} =\|\nabla F\|_\infty.$$
\end{prop}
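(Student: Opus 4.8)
The plan is to prove the two inequalities $\|\nabla F\|_\infty \le \|F\|_{\Lip}$ and $\|F\|_{\Lip} \le \|\nabla F\|_\infty$ separately, where $\|F\|_{\Lip}$ is computed with respect to the geodesic metric $d_G$ on $V(G)$. The first inequality is immediate from the definitions: for any edge $\{x,y\}\in E(G)$ we have, by definition of the gradient,
\[
\bigl|\nabla F(x,y)\bigr| = \frac{|F(y)-F(x)|}{d_G(x,y)} \le \|F\|_{\Lip},
\]
since $d_G(x,y)$ is exactly the distance between $x$ and $y$ used in the Lipschitz quotient. Taking the supremum over all oriented edges $(x,y)\in\bar E(G)$ gives $\|\nabla F\|_\infty \le \|F\|_{\Lip}$.

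The reverse inequality is where the geodesic structure is used. Let $x,y\in V(G)$ be arbitrary and let $P=(x_0,x_1,\dots,x_n)$ be a path from $x=x_0$ to $y=x_n$ realizing the geodesic distance, so that $d_G(x,y)=\sum_{j=1}^n d_G(x_{j-1},x_j)$. Telescoping and applying the edgewise bound, I would estimate
\[
|F(y)-F(x)| = \Bigl|\sum_{j=1}^n \bigl(F(x_j)-F(x_{j-1})\bigr)\Bigr| \le \sum_{j=1}^n |F(x_j)-F(x_{j-1})| = \sum_{j=1}^n |\nabla F(x_{j-1},x_j)|\, d_G(x_{j-1},x_j) \le \|\nabla F\|_\infty \sum_{j=1}^n d_G(x_{j-1},x_j) = \|\nabla F\|_\infty\, d_G(x,y).
\]
Dividing by $d_G(x,y)$ and taking the supremum over $x\ne y$ yields $\|F\|_{\Lip}\le\|\nabla F\|_\infty$, and combining the two inequalities completes the proof.

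I do not expect any serious obstacle here; the only point requiring a little care is that the metric entering the Lipschitz constant is the geodesic metric $d_G$, not merely the weight function on edges, so that a shortest path exists (finiteness of $G$ guarantees this) and its length is genuinely $d_G(x,y)$. That is precisely what makes the telescoping argument tight: the same quantity $d_G(x_{j-1},x_j)$ appears both in the definition of $\nabla F$ and in the length of the path, so no slack is introduced. One should also note that $\nabla F$ is indeed a vector field, i.e. $\nabla F(x,y)=-\nabla F(y,x)$, which is clear from the formula, so $\|\nabla F\|_\infty$ is well defined as the norm of an element of $\VF(G)$.
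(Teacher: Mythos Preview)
Your proof is correct and follows essentially the same approach as the paper: both arguments pick a geodesic path from $x$ to $y$, telescope $F(y)-F(x)$ along it, and use that the path length equals $d_G(x,y)$. The only cosmetic difference is that the paper bounds the ratio $\frac{\sum_j |F(x_j)-F(x_{j-1})|}{\sum_j d_G(x_{j-1},x_j)}$ by the maximum edgewise ratio via the mediant inequality $\frac{a+b}{c+d}\le\max\bigl(\frac{a}{c},\frac{b}{d}\bigr)$, whereas you bound each summand in the numerator by $\|\nabla F\|_\infty\, d_G(x_{j-1},x_j)$ directly; these are equivalent reformulations of the same estimate.
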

\begin{proof} We need to observe that
$$\sup_{x,y\in V(G), x\not=y} \frac{|f(y)-f(x)|}{d_G(x,y)}=\sup_{\{x,y\}\in E(G)} \frac{|f(y)-f(x)|}{d_G(x,y)}.$$
Indeed, let $x,y\in V(G)$, $x\not=y$ and $P=(x_j)_{j=0}^n$ path of shortest metric length from $x$ to $y$
then 
\begin{align*}
\frac{|f(y)-f(x)|}{d_G(x,y)}&\le  \frac{\Big|\sum_{j=1}^nf(x_j)-f(x_{j-1})\Big|}{\sum_{j=1}^n d_G(x_{j-1}, x_j)}\\
&\le  \frac{\sum_{j=1}^n|f(x_j)-f(x_{j-1})|}{\sum_{j=1}^n d_G(x_{j-1}, x_j)}\le \max_{j=1,2\ldots} \frac{|f(x_j)-f(x_{j-1})|}{d_G(x_{j-1}, x_j)},
\end{align*}
where the last inequality follows from iteratively 
applying the inequality
$$\frac{a+b}{c+d} \le \max\Big(\frac{a}{c},\frac{b}{d}\Big).$$
\end{proof}

\begin{prop}\label{P:3.4.1}
Fix a point $0\in V(G)$.
Then for every $F\in \Lip_0(V(G))$, it follows that
$\nabla F\in \CVF(G)$ and 
$$\| F\|_\Lip= \|\nabla F\|_\infty.$$
Moreover
$$\nabla: \Lip_0(V(G))\to \CVF(G)$$
is a surjective isometry whose inverse is 
defined by the {\em Integral operator}
$$I(f)(x)=\int_P f(e) d(e), \text{ for $x\in V(G)$.}$$
where $P$ is any path from $0$ to $x$, for $x\in V(G)$.
\end{prop}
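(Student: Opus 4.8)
The plan is to reduce everything to telescoping sums; the only genuinely non-routine point is that a conservative vector field integrates to $0$ not merely along cycles but along arbitrary closed walks, and this is exactly what will make the integral operator $I$ well defined and will legitimise the ``append an edge'' computation needed for $\nabla\circ I=\mathrm{id}$.

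First I would check that $\nabla F$ is a conservative vector field. That $\nabla F\in\VF(G)$ is immediate from the symmetry of $d_G$, since $\nabla F(y,x)=(F(x)-F(y))/d_G(x,y)=-\nabla F(x,y)$. If $C=(x_j)_{j=0}^n$ is a cycle, so $x_n=x_0$, then
$$\int_C \nabla F(e)\,d_G(e)=\sum_{j=1}^n\frac{F(x_j)-F(x_{j-1})}{d_G(x_{j-1},x_j)}\,d_G(x_{j-1},x_j)=\sum_{j=1}^n\big(F(x_j)-F(x_{j-1})\big)=F(x_n)-F(x_0)=0,$$
so $\nabla F\in\CVF(G)$. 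The identity $\|F\|_\Lip=\|\nabla F\|_\infty$ is exactly the preceding Proposition, so nothing more is needed there.

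Next I would prove the key lemma: \emph{for $f\in\CVF(G)$ the integral $\int_W f(e)\,d_G(e)$ vanishes along every closed walk $W$}, and hence $\int_{W_1}f=\int_{W_2}f$ whenever $W_1,W_2$ are walks with the same endpoints (apply the closed-walk statement to $W_1$ concatenated with the reversal of $W_2$, using $f(y,x)=-f(x,y)$ on the reversed portion). The closed-walk statement follows by induction on the length of $W=(x_0,\dots,x_n=x_0)$: if some vertex repeats in the interior, say $x_i=x_j$ with $0\le i<j\le n$ and $(i,j)\ne(0,n)$, split $W$ into the two strictly shorter closed walks $(x_i,\dots,x_j)$ and $(x_0,\dots,x_i,x_{j+1},\dots,x_n)$, whose integrals add up to $\int_W f$; otherwise $W$ is either a cycle (use the definition of conservative) or has length at most $2$, and then $\int_W f=f(x_0,x_1)\,d_G(x_0,x_1)+f(x_1,x_0)\,d_G(x_1,x_0)=0$. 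Since $G$ is connected (a prerequisite for $d_G$ to be a genuine metric), paths from $0$ to any $x$ exist, and the lemma shows $I(f)(x):=\int_P f(e)\,d_G(e)$ is independent of the path $P$ — this is the equivalence noted in the text — and may even be computed along any walk from $0$ to $x$; moreover $I(f)(0)=0$ and, $G$ being finite, $I(f)$ is automatically Lipschitz, so $I(f)\in\Lip_0(V(G))$.

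Finally I would verify that $I$ and $\nabla$ are mutually inverse and conclude. For $F\in\Lip_0(V(G))$ and $x\in V(G)$, choosing a path $P=(0=x_0,x_1,\dots,x_n=x)$ gives $I(\nabla F)(x)=\sum_{j=1}^n\big(F(x_j)-F(x_{j-1})\big)=F(x)-F(0)=F(x)$, so $I\circ\nabla=\mathrm{id}$. Conversely, for $f\in\CVF(G)$ and an edge $\{x,y\}\in E(G)$, take a path $P$ from $0$ to $x$; then $P$ followed by the edge $(x,y)$ is a walk from $0$ to $y$, so the key lemma gives $I(f)(y)=I(f)(x)+f(x,y)\,d_G(x,y)$, whence $\nabla(I(f))(x,y)=f(x,y)$, and the reversed orientation follows by antisymmetry, so $\nabla\circ I=\mathrm{id}$. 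Thus $\nabla\colon\Lip_0(V(G))\to\CVF(G)$ is a bijection with inverse $I$, and combined with $\|F\|_\Lip=\|\nabla F\|_\infty$ from the previous Proposition this says precisely that $\nabla$ is a surjective isometry onto $\CVF(G)$. The one step deserving care is the key lemma — upgrading ``vanishes on cycles'' to ``vanishes on closed walks'' — since the definition of conservative is phrased only for cycles; everything else is bookkeeping with telescoping sums.
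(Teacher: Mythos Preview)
The paper states this proposition without proof, treating it as a standard fact from discrete calculus. Your argument is correct and complete. In particular, you identify the one point that actually requires justification --- upgrading the definition of conservative from ``vanishes on cycles'' to ``vanishes on all closed walks'' --- and your induction on walk length handles it cleanly; the paper asserts the path-independence formulation as an equivalence (just after the definition of $\CVF(G)$) but does not prove it, so your key lemma genuinely fills a gap in the exposition. Everything else is, as you say, telescoping, and your appeal to the preceding proposition for $\|F\|_\Lip=\|\nabla F\|_\infty$ is exactly what the paper intends.
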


\noindent{\bf Question:} How well is $\CVF(G)$ complemented in $\VF(G)$
Equivalently, what is the smallest constant  $C\ge 1$, so that 
the operator 
$$I:\CVF(G)\to \Lip_0(V(G),d)$$
can be extended to an  operator
$$ \tilde I: \VF(G)\to \Lip_0(V(G),d).$$

\begin{thm}\label{T:3.4}  Assume that $(V(G),d_G)$ is finite $D$-stochastically embeds into geodesic trees. Then
the integral operator $I:\CVF(G)\to \Lip_0(V(G),d_G)$ can be extended to an operator 
$\tilde I: \VF(G)\to \Lip_0(V(G),d)$,
with $\|\tilde I\|\le 8D$.
\end{thm}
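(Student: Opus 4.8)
The plan is to construct $\tilde I$ directly, using the stochastic tree embedding to split paths of $G$ apart and re-assemble them, rather than trying to project $\VF(G)$ onto $\CVF(G)$ abstractly. First I would normalize the hypothesis: by Corollary~\ref{C:13.7} (and the Remark following it), at the cost of replacing $D$ by $8D$, I may assume the $D$-stochastic embedding of $(V(G),d_G)$ into trees is bijective with vertex sets identified with $V(G)$. Concretely I am given a countable index set $I$, a probability $(p_i)_{i\in I}$, edge sets $E(T_i)\subset[V(G)]^2$ for which $T_i=(V(G),E(T_i))$ is a tree, and geodesic metrics $d_i$ on $V(G)$ associated with these trees, such that $d_i(x,y)\ge d_G(x,y)$ and $\sum_{i\in I}p_i\,d_i(x,y)\le 8D\,d_G(x,y)$ for all $x,y\in V(G)$. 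Fix the base point $0\in V(G)$. For each $i$ and each $x\in V(G)$, let $[0,x]_{T_i}=(u_0=0,u_1,\dots,u_m=x)$ be the unique path in $T_i$, and for each tree edge $\{u_{j-1},u_j\}$ fix a $d_G$-geodesic path in $G$ joining its endpoints; concatenating these gives a walk $P_i(x)$ in $G$ from $0$ to $x$ with
$$\length_{d_G}(P_i(x))=\sum_{j=1}^{m}d_G(u_{j-1},u_j)\le\sum_{j=1}^{m}d_i(u_{j-1},u_j)=d_i(0,x).$$
I then define $\tilde I:\VF(G)\to\R^{V(G)}$ by $\tilde I(f)(x)=\sum_{i\in I}p_i\int_{P_i(x)}f(e)\,d_G(e)$; this is linear in $f$, satisfies $\tilde I(f)(0)=0$, and the series is absolutely convergent since $\sum_i p_i\big|\int_{P_i(x)}f\big|\le\|f\|_\infty\sum_i p_i\,d_i(0,x)\le 8D\|f\|_\infty\,d_G(0,x)$.

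Next I would check that $\tilde I$ extends $I$. If $f\in\CVF(G)$, the integral of $f$ along a walk depends only on its endpoints, so $\int_{P_i(x)}f=I(f)(x)$ for every $i$ (this is exactly the definition of $I$ in Proposition~\ref{P:3.4.1}), whence $\tilde I(f)(x)=\big(\sum_{i}p_i\big)I(f)(x)=I(f)(x)$.

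It then remains to bound $\|\tilde I(f)\|_\Lip$. Fix $x,y\in V(G)$ and $i\in I$, and let $z$ be the last common vertex of $[0,x]_{T_i}$ and $[0,y]_{T_i}$ (i.e.\ $\min(x,y)$ in $T_i$). Writing $P_i(x)=W_0\ast W_x$ and $P_i(y)=W_0\ast W_y$, where $W_0$ runs from $0$ to $z$ and $W_x,W_y$ run from $z$ to $x$ and to $y$ respectively, the common segment cancels: since reversing a walk reverses the sign of the integral of a vector field,
$$\int_{P_i(x)}f-\int_{P_i(y)}f=\int_{W_x}f-\int_{W_y}f=\int_{\overline{W_y}\ast W_x}f.$$
Here $\overline{W_y}\ast W_x$ is a walk in $G$ from $y$ to $x$ of $d_G$-length $\length_{d_G}(W_y)+\length_{d_G}(W_x)\le d_i(z,y)+d_i(z,x)=d_i(x,y)$, the last equality being the tripod property of the tree $T_i$. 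Hence $\big|\int_{P_i(x)}f-\int_{P_i(y)}f\big|\le\|f\|_\infty\,d_i(x,y)$, and averaging over $i$ gives
$$|\tilde I(f)(x)-\tilde I(f)(y)|\le\|f\|_\infty\sum_{i\in I}p_i\,d_i(x,y)\le 8D\|f\|_\infty\,d_G(x,y).$$
Thus $\tilde I(f)\in\Lip_0(V(G),d_G)$ with $\|\tilde I(f)\|_\Lip\le 8D\|f\|_\infty$, which finishes the proof.

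Conceptually this argument is dual to lifting the flow map $\partial:\ell_1(E(G),d_G)\to\cF(V(G),d_G)$, $\mathbb 1_e\mapsto\delta_{e^+}-\delta_{e^-}$, through the averaged tree decompositions coming from Corollary~\ref{C:B.6.4}; one could also phrase the proof by producing a norm-$\le 8D$ right inverse of $\partial$ and taking its adjoint. The one delicate point is the normalization at the start: it is essential to first pass to a \emph{bijective} stochastic embedding (via Theorem~\ref{T:3.3}), so that the vertices of each $T_i$ really are the points of $G$ and each tree edge can be replaced by an honest path of $G$; once that is in place, the cancellation of the segment $W_0$ together with the tripod identity makes the Lipschitz estimate essentially automatic.
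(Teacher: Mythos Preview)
Your argument is correct and follows essentially the same construction as the paper: pass to a bijective $8D$-stochastic embedding via Gupta's theorem, replace each tree edge by a fixed $d_G$-geodesic path in $G$, and set $\tilde I(f)(x)$ to be the $\P$-average of the integral of $f$ along the resulting concatenated walks from $0$ to $x$. The paper packages the same estimate slightly differently---it introduces an auxiliary vector field $f^{(i)}\in\VF(T_i)$ with $|f^{(i)}(e)|\le\|f\|_\infty$ and checks the Lipschitz bound only on graph edges (which suffices since $\|\cdot\|_\Lip$ is attained on edges), whereas you argue directly via the tripod decomposition for arbitrary pairs $x,y$---but the underlying mechanism (cancellation of the common segment up to $z=\min_{T_i}(x,y)$ and the bound $\length_{d_G}(W_x)+\length_{d_G}(W_y)\le d_i(x,y)$) is identical.
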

\begin{proof}  Theorem \ref{T:3.3} and the assumption imply that  $(V(G),d_G)$ bijectively  $8D$-stochastically embeds into geodesic trees.
We can find $n\in\N$, geodesic trees $(T_i, d_i)$, with $V(T_i)=V(G)$, for $i=1,2\ldots, n$, and a probability $\P=( p_j)_{j=1}^n$ ,
so that 
\begin{align}
&\label{E:3.4.1.1}d_G(x,y)\le d_i(x,y) \text{ for all $i=1,2\ldots, n$, and $x,y\in V(G)$, and }\\
&\label{E:3.4.1.2}\sum_{j=1}^n p_i d_i(x,y)\le 8Dd(x,y),\text{ for all  $x,y\in V(G)$.}
\end{align}
By the Remark after Theorem 3.3. we can assume that for each $i=1,2,\ldots, n$ and any
$e=\{x,y\}\in E(T_i)$, it follows that $d_i(x,y)=d_G(x,y)$ and we will choose 
a path $P_e$ in $G$ from $x$ to $y$, so that
$$d_i(x,y)= \length_{d_G}(P_e)= d_G(x,y).$$

For $f\in VF(G)$, and $i=1,2\ldots, n$ we define $f^{(i)}\in \VF(T_i)$ (=$\CVF(T_i)$) by
$$f^{(i)}(e)=\frac1{d_i(x,y)} \int_{P_e} f(e') d_G(e') = \frac1{d_G(x,y)} \int_{P_e} f(e') d_G(e')   \text{ for $e\in E(T_i)$}.$$
From \eqref{E:3.4.1.1} we deduce that 
\begin{equation}
\label{E:3.4.1.3} |f^{(i)}(e)|\le \|f\|_\infty \frac{d_G(x,y)}{d_i(x,y)}\le \|f\|_\infty \text{ for all $i=1,2,\ldots,n$ and $e\in E(T_i)$.}
\end{equation}
Denote the (unique) path from $x$ to $y$ in $T_i$ by $[x,y]_i$.\
We define $\tilde I(f)\in \Lip_0(V(G),d_G)$ by 
\begin{align*}\tilde I(f)(x)&:=\sum_{i=1}^n p_i \int_{[0,x]_i} f^{(i)}(e)d_i(e)\\
&=\sum_{i=1}^n p_i \sum_{e\in E([0,x]_i)} f^{(i)}(e) d_i(e)= \sum_{i=1}^n p_i \sum_{e\in E([0,x]_i)}\int_{P_e} f(e') d_G(e').\end{align*}
We note:
\begin{itemize}
\item
If $f$ is a conservative field, then $\tilde I(f)=I(f)$. Indeed,
  denote for $i=1,2,\ldots, n$  and $x\in V(G)$ the walk in $G$ from $0$ to $x$,  obtained by concatenating the paths $P_e$,  $e\in E( [0,x]_i)$,  by $W_i$
 Then
$$\tilde I(f)(x)=\sum_{i\in I}p_i\sum_{e\in [0,x]_i}\int_{p_e} f d_G(e)  = \sum_{i\in I}p_i \int_{W_i} f(e) d_G(e) =I(f)(x).$$

\item For  $e=\{x,y\}\in E$, it follows from \eqref{E:3.4.1.3} that 
\begin{align*}
|\tilde I(f) (y)-\tilde I(f)(x)|&=\Bigg| \sum_{i\in I} p_i \int_{[x,y]_i} f^{(i)}(e) d_i(e)\Bigg| \\
&\le  \sum_{i\in I} p_i \Bigg|\int_{[x,y]_i} f^{(i)}(e) d_i(e)\Bigg|\\
               &\le \|f\|_\infty  \sum_{i\in I} p_i   \sum_{e\in [x,y]_i} d_i(e)\\
               &= \|f\|_\infty\sum_{i\in I} p_i d_i(x,y)\le 8Dd_G(x,y)\|f\|_\infty,\end{align*}
               
\end{itemize}
and, thus, $\|\tilde I(f)\|_\infty \le 8D \|f\|_\infty$.
\end{proof}
\begin{cor} If $(V(G),d_G)$ is a finite graph which  $D$-stochastically embeds into geodesic trees, then
$\cF(V(G),d_G)$ is $8D$-complemented in  $\ell_1(E(G))$.
\end{cor}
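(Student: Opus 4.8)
The plan is to obtain the projection by dualizing the norm‑controlled extension of the integral operator furnished by Theorem~\ref{T:3.4}. The whole argument takes place among finite‑dimensional spaces, which makes all the duality manipulations below automatic.

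First I would fix the identifications in play. On one side, $\VF(G)\equiv\ell_\infty(E(G))$ is the dual of $\ell_1(E(G))$; on the other, $\Lip_0(V(G),d_G)$ is the dual of $\cF(V(G),d_G)$ by Theorem~\ref{T:6.2.4}. By Proposition~\ref{P:3.4.1} the gradient $\nabla\colon\Lip_0(V(G),d_G)\to\VF(G)$ is an isometric embedding with range $\CVF(G)$, and, being a linear map between finite‑dimensional spaces, it is $w^*$–$w^*$ continuous; hence $\nabla=S^*$ for a unique $S\colon\ell_1(E(G))\to\cF(V(G),d_G)$. Writing out the pairing shows that, after choosing an orientation $(e^-,e^+)$ of each edge $e$, $S$ carries the $e$‑th unit vector of $\ell_1(E(G))$ to $(\delta_{e^+}-\delta_{e^-})/d_G(e^-,e^+)$, which has $\cF$‑norm $1$; thus $\|S\|\le 1$. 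This $S$ is the natural candidate for the embedding.

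Next I would invoke Theorem~\ref{T:3.4}: it produces $\tilde I\colon\VF(G)\to\Lip_0(V(G),d_G)$ with $\|\tilde I\|\le 8D$ extending the integral operator $I=\nabla^{-1}|_{\CVF(G)}$, so that $\tilde I\circ\nabla=\mathrm{id}$ on $\Lip_0(V(G),d_G)$. Let $T\colon\cF(V(G),d_G)\to\ell_1(E(G))$ be the preadjoint of $\tilde I$ (so $T^*=\tilde I$ and $\|T\|\le 8D$). Taking preadjoints in $\tilde I\circ\nabla=\mathrm{id}$ and using $(ST)^*=T^*S^*=\tilde I\circ\nabla$ yields $S\circ T=\mathrm{id}$ on $\cF(V(G),d_G)$. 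From $ST=\mathrm{id}$ it is immediate that $\rho:=TS$ is an idempotent on $\ell_1(E(G))$ with $\|\rho\|\le\|T\|\,\|S\|\le 8D$, that $\mathrm{ran}(\rho)=\mathrm{ran}(T)$, and that $T$ is an isomorphism of $\cF(V(G),d_G)$ onto $\mathrm{ran}(T)$ with inverse $S|_{\mathrm{ran}(T)}$; hence $\|\mu\|_\cF\le\|T\mu\|_1\le 8D\|\mu\|_\cF$ for all $\mu$. This exhibits $\cF(V(G),d_G)$ as an $8D$‑complemented subspace of $\ell_1(E(G))$, with $\rho$ the projection.

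The only genuine content is Theorem~\ref{T:3.4} itself (which rests on Gupta's restriction theorem and on Fakcharoenphol–Rao–Talwar); everything after that is bookkeeping. The one place I would be careful is the dualization: one must check that $\nabla$ is $w^*$‑continuous (automatic in finite dimensions) so that the preadjoint $S$ exists, identify $S$ explicitly so that $\|S\|\le 1$, and keep straight which pairings the identities $\nabla=S^*$ and $\tilde I=T^*$ refer to, so that $ST=\mathrm{id}$ emerges with the stated constants. I do not anticipate a serious obstacle beyond this.
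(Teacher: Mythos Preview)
Your proof is correct and follows the same approach as the paper: the paper simply says that Theorem~\ref{T:3.4} yields that $\Lip_0(V(G),d_G)\equiv\CVF(G)$ is $8D$-complemented in $\VF(G)\equiv\ell_\infty(E(G))$, and then ``passing to the dual'' gives the result. You have carefully unpacked this dualization step, identifying the preadjoints $S$ and $T$ explicitly and verifying $ST=\mathrm{id}$ and $\|TS\|\le 8D$; this is exactly what the paper's one-line ``passing to the dual'' means. (One small remark: Fakcharoenphol--Rao--Talwar is not actually used here, only Gupta's restriction theorem to upgrade the $D$-stochastic embedding to a bijective one inside the proof of Theorem~\ref{T:3.4}.)
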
 
\begin{proof} By Theorem  \ref{T:3.2} 
$\Lip_0(V(G),d_G)\equiv \CVF(G)$ is  $8D$ complemented in $\VF(G)\equiv \ell_\infty(E(G))$. Passing to the dual 
we obtain that $\cF(V(G),d_G)$ is $8D$ complemented in $\ell_\infty(E(G))$.
\end{proof} 

Now assume that $G=(V(G),E(G))$ is a countable graph with geodesic metric $d_G$, which is $D$ stochastically embeddable into trees. 
We cannot use Gupta's result Theorem \ref{T:3.3}.
Therefore, we must also assume that $(G,d_G)$ is bijectively  $D$ stochastically embeddable into trees. 
So let $I\subset \N$, $\P=(p_i)_{i\in I} \subset (0,1]$ with $\sum_{i\in I} p_i=1$, and for $i\in I$ let $T_i=(V(T_i),E(T_i))$ be a tree with $V(T_i)=V(T)$ with a geodesic metric 
$d_i$,   with $d_i(e)=d_G(e)$, if $e\in E(T_i)$ so that:
\begin{align}\label{E:3.4.1} &d_i\big(x,y)\big)\ge d_G(x,y),\text{ for $i\in I$, and $x,y\in V(G)$},\\
\label{E:3.4.2}  &\E_P\big( d_i\big(f_i(x),f_i(y)\big)\big)=\sum_{i\in I} p_id_i\big(f_i(x),f_i(y)\big)\le Dd_G(x,y),\text{ for $x,y\in V(G)$.}
\end{align}
Choose for each $i\in I$ a root $v_i\in V(T_i)$ and define the Tree $T$, by {\em gluing the trees $T_i$ together at $v_i$}, \ie\ 
put $V(T)=\bigcup_{i\in I} \{i\}\times V(T_i)$, and identify $(i,0)$ with $(j,0)$ for $i,j\in I$, and denote this point by $0$ (the root of $T$)
and $E(T)=\bigcup_{i\in I}\big\{\{(i,x), (i,y)\}: \{x,y\}\in E(T_i)\big\} $.
We put  $w_e=d_i(x,y)$, for $e=\{(i,x), (i,y)\}\in E( T)$ (and thus $\{x,y\}\in E(T_i)$). This   defines  a weight function on $E(T)$, which generates a geodesic metric on $d_T$, which has the property 
that $d_T(e)=d_i(x,y)$, for  $e=\{(i,x), (i,y)\}\in T$. 

We direct the edges of $E(T)$ by choosing the orientation $((i,x), (i,y))$ for $\{(i,x), (i,y)\}\in T$
if $d(0,(i,x))<d(0,(i,y))$.

We now consider the maps $f_i:V(G)\to V(T)$, $x\mapsto (i,x)$  which satisfy for  $x,y\in V(G)$
\begin{align}\label{E:3.4.3} d_T\big(f_i(x),f_i(y)\big)\ge d_G(x,y) \text{ and }
 \E_P\big( d_T\big(f_i(x),f_i(y)\big)\big)\le Dd_G(x,y).
\end{align}
Again, as in Theorem \ref{T:3.4}, we can choose for each $e\in E_d(T)$, say $e=\big((i,x),(i,y)\big)$, with $i\in I$, a path in $G$ from $x$ to $y$, which we denote by $P_e$,
with $\length(P_e)= d_T(e)=d_i(e)$

For $f\in VF(G)$ and $i\in I$, and $e=\{(i,x),(i,y)\}\in E(T)$   we define
$$f^{(i)}(e)=\frac1{d_T(e)}\int_{P_e} f(e')\, d_G(e')=\frac1{d_G(e)}\int_{P_e} f(e') \,d_G(e')$$
From \eqref{E:3.4.3} we deduce that 
\begin{equation}
\label{E:3.4.5} |f^{(i)}(e)|\le \|f\|_\infty \frac{d_G(x,y)}{d_T(e)}\le \|f\|_\infty \text{ for all $i\in I$ and $e=((i,x),(i,y))\in E_d(T)$.}
\end{equation}
For $x,y\in V(G)$ and $i\in I$ denote the  (unique) path from $(i,x)$ to $(i,y)$ in $T$ by $[x,y]_i$.\
We define $\tilde I(f)\in \Lip_0(V(G),d_G)$ by 
\begin{align*}\tilde I(f)(x)&:=\sum_{i\in I} p_i \int_{[0,x]_i} f^{(i)}(e)d_T(e)\\
&=\sum_{i\in I} p_i \sum_{e\in E([0,x]_i)} f^{(i)}(e) d_i(e)= \sum_{i\in I} p_i \sum_{e\in E([0,x]_i)}\int_{P_e} f(e') d_G(e').\end{align*}
We note:
\begin{itemize}
\item
If $f$ is a conservative field, then $\tilde I(f)=I(f)$. Indeed,
  denote for $i\in I$  and $x\in V(G)$ the walk in $G$ from $0$ to $x$,  obtained by concatenating the paths $P_e$,  $e\in E( [0,x]_i)$,  by $W_i$.
 Then
$$\tilde I(f)(x)=\sum_{i\in I}p_i\sum_{e\in [0,x]_i}\int_{p_e} f d_G(e)  = \sum_{i\in I}p_i \int_{W_i} f(e) d_G(e) =I(f)(x).$$

\item For  $e=\{x,y\}\in E$, it follows from \eqref{E:3.4.5} that 
\begin{align*}
|\tilde I(f) (y)-\tilde I(f)(x)|&=\Bigg| \sum_{i\in I} p_i \int_{[x,y]_i} f^{(i)}(e) d_i(e)\Bigg| \\
&\le  \sum_{i\in I} p_i \Bigg|\int_{[x,y]_i} f^{(i)}(e) d_i(e)\Bigg|\\
               &\le \|f\|_\infty  \sum_{i\in I} p_i   \sum_{e\in [x,y]_i} d_i(e)\\
               &= \|f\|_\infty\sum_{i\in I} p_i d_i(x,y)\le 8Dd_G(x,y)\|f\|_\infty,\end{align*}
               
\end{itemize}
and, thus, $\|\tilde I(f)\|_\infty \le 8D \|f\|_\infty$
\noindent 

\section{Lower estimates for embeddings of $\cF(M)$ into $L_1$}\label{S:4}
In this last section, we want to formulate a criterion on geodesic graphs $(G,d)$
which implies that the distortion of embeddings of $\cF(V(G),d)$ has to satisfy lower estimates.

This will lead to sequences of geodesic graphs $(G_n, d_n)$, for which
$$c_{L_1}(\cF(V(G_n), d_n)\ge C\sqrt{\log(V(G_n))}.$$
 Among these sequences are, for example the sequence of discrete tori $\big((\Z/n\Z)^2: n\in \N\big)$ \cite{NaorSchechtman2007} and the  sequence of diamond graphs
 $(D_n:n\in \N)$,  \cite{BaudierGartlandSchlumprecht2023}. The idea of the proof goes back to a result of Kislyakov from 1975 \cite{Kislyakov75}.
 This result of Kislyakov implies, for example that $\cF(\R^2)$ is not isomorphic to $\cF(\R)$ (which is isometrically isomorphic to $L_1(\R)$).

  Throughout this section, $(G,d)$ is a geodesic finite graph and $\nu$  a probability on $E(G)$, whose support is all of $E(G)$. We define the probability  on $V(G)$, {\em induced by $\nu$ } as follows
 $$\mu(v)=\mu_\nu(v)=\frac12\sum_{e\in E(G), v\in e} \nu(e)  \text{$v\in V(G)$.}$$
 
 Note that 
 $$\sum_{v\in V(G)} \mu(v)=\sum_{v\in V(G)}\frac12\sum_{e\in E(G), v\in e} \nu(e) =\frac12 \sum_{e\in E(G)} \sum_{ v\in e} \nu(e)= 1,$$
 which shows that $\mu$ is indeed a probability on $\mu$

 \subsection{Isoperimetric dimension and the Sobolev inequality}\label{S:4.1}
 Let $(G,d)$ be a geodesic finite graph and let $\nu$ be a probability on $E(G)$. We define the probability  on $V(G)$, {\em induced by $\nu$ } as follows
 $$\mu(v)=\mu_\nu(v)=\frac12\sum_{e\in E(G), v\in e} \nu(e)  \text{$v\in V(G)$.}$$
 
 Note that 
 $$\sum_{v\in V(G)} \mu(v)=\sum_{v\in V(G)}\frac12\sum_{e\in E(G), v\in e} \nu(e) =\frac12 \sum_{e\in E(G)} \sum_{ v\in e} \nu(e)= 1.$$

  For $A\subset V(G)$ we define the {\em boundary }of $A$ to be
$$\partial_G A := \{ \{x,y\}\in E(G) \colon | \{x, y\} \cap A| =1 \big\}$$
and the perimeter of $A$ to be 
 \begin{equation*}
		\Per_{\nu, d}(A) := \sum_{e \in \partial_G A } \frac{ \nu(e) }{d(e)}.
	\end{equation*}

\begin{defin}[Isoperimetric dimension]\label{D:4.1.1}
For  $\delta\in [1,\infty)$, and $C_{iso}\in(0, \infty)$, $\mu$  we say that $(G, d)$ has $\nu$-\emph{isoperimetric dimension} $\delta$ with constant $C_{iso}$ if for every $A \subset V(G)$
	\begin{equation}\label{E:4.1.1.1}
		\min\{\mu(A), \mu(A^c)\}^{\frac{\delta-1}{\delta}} \leq C_{iso} \Per_{\nu, d}(A),
	\end{equation}
\end{defin}
\begin{defin}[Sobolev Norm]\label{D:4.1.2}
For $f\colon (V(G), d) \to \R$ and $p\in[1,\infty]$, we define the $(1,p)$-Sobolev norm (with respect to $\nu$ and $d$) of $f$ by 
	\begin{align*}
		\|f\|_{W^{1,p}(\nu, d)}& = \|\nabla_d f\|_{L_p(\nu)} = \E_{\nu} [|\nabla_d f|^p]^{1/p} \\
		& = \left[ \int_{E(G)} |\nabla_d f(e)|^p d\nu(e)\right]^{1/p}
	  =  \left[ \sum_{e=\{u,v\}\in E(G)} \frac{ | f(u) - f(v) |^p }{ d(u,v)^p } \nu(e)\right]^{1/p},
	\end{align*}
with the usual convention when $p=\infty$.
\end{defin}
Note that if $\nu(e)>0$ for all $e\in E(G)$, then 
$$
\|f\|_{W^{1,\infty}(\nu, d)}= \max_{e=\{u,v\}\in E(G)} \frac{|f(u)-f(v)|}{d(u,v)}=\|f\|_{\Lip}.$$

\begin{thm}[Sobolev inequality from isoperimetric inequality] \label{T:4.1.3}
 Assume that $(G, d)$ has $\nu$ - isoperimetric dimension $\delta$ with constant $C$, then for every map $f: (V(G), d) \to  \R$,
	\begin{equation}\label{E:4.1.1.2}
		\| f- \E_\mu f \|_{L_{\delta'}(\mu)} \leq 2C \| f \|_{W^{1,1}(\nu, d)},
	\end{equation}
where $\E_\mu f = \int_{V(G)} f(x)d\mu(x)$, and $\delta'$ is the H\"older conjugate exponent of $\delta$, i.e. $\frac{1}{\delta} +\frac{1}{\delta'} =1$.
\end{thm}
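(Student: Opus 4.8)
The plan is to run the classical co-area argument that extracts a Sobolev inequality from an isoperimetric inequality, carrying a \emph{median} of $f$ through the intermediate estimate and only converting to the mean $\E_\mu f$ at the very end. The scheme has three steps.

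\textbf{Step 1 (reduction to one-sided, normalized functions).} Since $V(G)$ is finite, $f$ has a $\mu$-median $m$, i.e.\ a number with $\mu(\{f\le m\})\ge\tfrac12$ and $\mu(\{f\ge m\})\ge\tfrac12$; hence $\mu(\{f\ge m+t\})\le\tfrac12$ and $\mu(\{f\le m-t\})\le\tfrac12$ for every $t>0$. Write $f-m=g-h$ with $g(x)=\max(f(x)-m,0)$ and $h(x)=\max(m-f(x),0)$, so that $g,h\ge0$ have disjoint supports and $|f-m|=g+h$ pointwise. Using the elementary identity $|\max(a,0)-\max(b,0)|+|\max(-a,0)-\max(-b,0)|=|a-b|$ (applied to $a=f(u)-m$, $b=f(v)-m$ on each edge $e=\{u,v\}$) together with $\nabla_d(f-m)=\nabla_d f$, one gets
\[
\|g\|_{W^{1,1}(\nu,d)}+\|h\|_{W^{1,1}(\nu,d)}=\|f\|_{W^{1,1}(\nu,d)}.
\]
It therefore suffices to show $\|g\|_{L_{\delta'}(\mu)}\le C\|g\|_{W^{1,1}(\nu,d)}$ for any $g\ge0$ with $\mu(\{g\ge t\})\le\tfrac12$ for all $t>0$, and the same for $h$.

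\textbf{Step 2 (the co-area estimate).} For such a $g$, use the layer-cake representation $g=\int_0^\infty 1_{\{g\ge t\}}\,dt$ and the integral form of Minkowski's inequality in $L_{\delta'}(\mu)$:
\[
\|g\|_{L_{\delta'}(\mu)}\le\int_0^\infty\bigl\|1_{\{g\ge t\}}\bigr\|_{L_{\delta'}(\mu)}\,dt=\int_0^\infty\mu(\{g\ge t\})^{1/\delta'}\,dt .
\]
Because $1/\delta'=(\delta-1)/\delta$ and $\mu(A_t)\le\tfrac12$ for $A_t:=\{g\ge t\}$, we have $\min\{\mu(A_t),\mu(A_t^c)\}=\mu(A_t)$, so Definition \ref{D:4.1.1} bounds the integrand by $C\,\Per_{\nu,d}(A_t)$. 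Interchanging the finite sum over edges with the $t$-integral, the edge $e=\{u,v\}$ contributes $\frac{\nu(e)}{d(e)}$ times the Lebesgue measure of $\{t>0:e\in\partial_G A_t\}$, which equals $|g(u)-g(v)|$; hence $\int_0^\infty\Per_{\nu,d}(A_t)\,dt=\|g\|_{W^{1,1}(\nu,d)}$, and $\|g\|_{L_{\delta'}(\mu)}\le C\|g\|_{W^{1,1}(\nu,d)}$, likewise for $h$.

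\textbf{Step 3 (assembly and median-to-mean).} By the triangle inequality in $L_{\delta'}(\mu)$ and Steps 1--2, $\|f-m\|_{L_{\delta'}(\mu)}\le\|g\|_{L_{\delta'}(\mu)}+\|h\|_{L_{\delta'}(\mu)}\le C\|f\|_{W^{1,1}(\nu,d)}$. Since $\mu$ is a probability and $\delta'\ge1$, Jensen's inequality gives $|m-\E_\mu f|=|\E_\mu(m-f)|\le\E_\mu|f-m|=\|f-m\|_{L_1(\mu)}\le\|f-m\|_{L_{\delta'}(\mu)}$, so that
\[
\|f-\E_\mu f\|_{L_{\delta'}(\mu)}\le\|f-m\|_{L_{\delta'}(\mu)}+|m-\E_\mu f|\le2\|f-m\|_{L_{\delta'}(\mu)}\le2C\|f\|_{W^{1,1}(\nu,d)},
\]
which is \eqref{E:4.1.1.2}. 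The conceptual heart is the Fubini interchange in Step 2 (the two pointwise identities in Steps 1 and 2 are routine $\max$-arithmetic and level-set bookkeeping); I do not anticipate a genuine obstacle. The only point needing a word of care is the degenerate endpoint $\delta=1$, where $\delta'=\infty$: there $\|1_A\|_{L_\infty(\mu)}$ equals $1$ if $A\neq\emptyset$ and $0$ otherwise, and the entire chain of Step 2 goes through verbatim with Minkowski's inequality in $L_\infty$.
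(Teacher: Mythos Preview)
Your argument is correct and complete. The paper itself leaves this theorem as an exercise, with only the hint that \eqref{E:4.1.1.2} follows immediately from \eqref{E:4.1.1.1} when $f=1_A$; your co-area/layer-cake argument is precisely the standard way to upgrade that indicator case to general $f$, and your use of the median (rather than the mean) in the intermediate step is the right device to make $\min\{\mu(A_t),\mu(A_t^c)\}=\mu(A_t)$ hold on every level set, with the factor $2$ appearing exactly where it should when you pass back to $\E_\mu f$.
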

\begin{proof} Exercise. Hint:  \eqref{E:4.1.1.2} follows imediately from \eqref{E:4.1.1.1} if $f=1_A$.
\end{proof}

\subsection{Lipschitz- spectral profile of a graph}\label{S:4.2}
Before we define what we mean by a ``Lipschitz- spectral profile of a graph'' we want to motivate it with an example:

\begin{ex} Consider the finite abelian group $G=(\Z/n\Z)^2$, with the metric
$$d\big( (v_1,v_2), (u_1,u_2)\big)=\frac1n\max \big (|v_1-u_1|, |v_2-u_2|\big).$$

 The {\em  characters} of $G$ are 
the group homomorphisms $\chi: G\to T= \{e^{i2\pi x}: 0\le x\le 1\}$. These characters can be represented as follows:
$\chi: G\to T$ is a character if and only if for some $k,m\in \{0,1,2,\ldots n-1\}$
$$\chi=\chi_{(k,m)}: (\Z/n\Z)^2\to T,\quad (x,y)= e^{\frac{2\pi i}{n} (xk+ym)}.$$
Note the following properties of $(\chi_{(k,m)}:0\le k,m\le n)$
\begin{itemize} 
\item $(\chi_{(k,m)}:0\le k,m\le n)$ is an orthoromal basis in $L_2((\Z/n\Z)^2, \mu)$ where $\mu$ is the uniform distribution.
\item $\|\chi_{(k,m)}\|_{L_\infty(\mu)}=\|\chi_{(k,m)}\|_{L_1(\mu)}=1$,  for $0\le k,m\le n$,
\item $\|\chi_{k,m}\| \le C\max(k,m)$ and thus
$$\big|\{(k,m)\in (\Z/n\Z)^2: \|\chi_{(k,m)}\|_\Lip\le L\big\} \ge cL^2, \text{ for $L=1,2 \ldots n$}.$$
\end{itemize}
\end{ex} 

\begin{defin}[Lipschitz-spectral profile] \label{def:Lip-spec-profile}
Let  $\delta \in [1,\infty)$, and $\beta \in[1,\infty)$, and $C\ge 1$.  We say that $(G,d)$ has $(\mu, d)$-\emph{Lipschitz-spectral profile of dimension $\delta$ and bandwidth $\beta$ with constant} $C$ if there exists a collection of functions $F = \{f_i\colon V(G) \to \R\}_{i\in I}$ satisfying:
	\begin{enumerate}
		\item\label{item:1} $C^{-1} \le \inf_{i\in I} \|f_i\|_{L_1(\mu)} \le \sup_{i\in I} \|f_i\|_{L_\infty(\mu)} \le C,$ 
		\item $\{f_i\}_{i\in I}$ is an orthogonal family in $L_2(\mu)$, and
		\item\label{item:3} for every $s \in [1, \beta]$, $ |\{i \in I \colon \Lip(f_i) \leq s\}|\geq C^{-1} s^\delta$.
	\end{enumerate} 
\end{defin}

\subsection{Main result}\label{SS:4.3}

\begin{thm}\label{T:4.3}
 Let $\delta_{iso}\in [2, \infty)$, $\delta_{spec}\in [1, \infty)$ and $C\ge 1$. If $G$ has $(\nu, d)$-isoperimetric dimension $\delta_{iso}$ with constant $C$, and Lipschitz-spectral profile of dimension $\delta_{spec}$, bandwidth $\beta$, with constants $C$, then any $D$-isomorphic embedding from the Lipschitz-free space $\cF(V(G),d)$ into a finite-dimensional $L_1$-space $\ell_1^N$ satisfies 
	\begin{equation}\label{E:2.3.1}
		D \ge \frac{ 1 }{2C^5}  \left( \int_{1}^\beta s^{ \delta_{spec} - \delta_{iso} - 1}  ds \right)^{\frac{1}{\delta_{iso}}}.
	\end{equation}
\end{thm}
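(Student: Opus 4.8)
The plan is to play the Sobolev inequality coming from the isoperimetric hypothesis against the Lipschitz--spectral richness, in the spirit of Kislyakov \cite{Kislyakov75} and of Naor--Schechtman \cite{NaorSchechtman2007}. Suppose $T\colon\cF(V(G),d)\to\ell_1^N$ is linear with $\|\mu\|_\cF\le\|T\mu\|_1\le D\|\mu\|_\cF$ for all $\mu$; this is the normalization to aim for, and by homogeneity it costs nothing. The two facts about $T$ that will be used are: (a) $\|T\mu\|_1\ge\|\mu\|_\cF$ and $\|T\mu\|_1\le D\|\mu\|_\cF$, and (b) $\ell_1^N$ has cotype $2$ with an absolute constant -- \emph{this is where the target dimension $N$ disappears}, so that the final bound is $N$-free. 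Concretely, for any finite family $(\mu_j)_j\subset\cF(V(G),d)$ and independent signs $(\varepsilon_j)$,
\begin{equation}\label{E:plan-star}
\Big(\sum_j\|\mu_j\|_\cF^2\Big)^{1/2}\le\sqrt2\,\E_\varepsilon\Big\|\sum_j\varepsilon_jT\mu_j\Big\|_1=\sqrt2\,\E_\varepsilon\Big\|T\Big(\sum_j\varepsilon_j\mu_j\Big)\Big\|_1\le\sqrt2\,D\,\E_\varepsilon\Big\|\sum_j\varepsilon_j\mu_j\Big\|_\cF .
\end{equation}
(Equivalently, after dualizing, $T^*\colon\ell_\infty^N\to\Lip_0(V(G),d)=\cF^*(V(G),d)$ is a metric surjection up to the factor $D$ -- with $B_{\Lip_0}\subset T^*(B_{\ell_\infty^N})$ since $\|T^{-1}\|\le1$ on the range -- which is the form in which the argument appears in Kislyakov's work.)

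\textbf{The two analytic inputs.} First, feeding the $(\nu,d)$-isoperimetric hypothesis into Theorem~\ref{T:4.1.3} gives $\|f-\E_\mu f\|_{L_{\delta'_{iso}}(\mu)}\le 2C\,\|f\|_{W^{1,1}(\nu,d)}\le 2C\,\Lip(f)$, where $\delta'_{iso}=\delta_{iso}/(\delta_{iso}-1)\in(1,2]$; the hypothesis $\delta_{iso}\ge2$ is used precisely to have $\delta'_{iso}\le2$. Dualizing: for any $\mu$-mean-zero $g\colon V(G)\to\R$, writing $\mu_g:=\sum_v\mu(v)g(v)\delta_v\in\cF(V(G),d)$, one gets on one side, by H\"older and the Sobolev inequality,
$\|\mu_g\|_\cF=\sup_{\Lip(h)\le1}\langle g,h-\E_\mu h\rangle_\mu\le 2C\,\|g\|_{L_{\delta_{iso}}(\mu)}$,
and on the other side, for $g$ Lipschitz, testing against $h=g/\Lip(g)$, the matching lower bound $\|\mu_g\|_\cF\ge\|g\|_{L_2(\mu)}^2/\Lip(g)$. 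Secondly, from Definition~\ref{def:Lip-spec-profile} we have an orthogonal family $\{f_i\}_{i\in I}$ in $L_2(\mu)$ with $C^{-1}\le\|f_i\|_{L_1(\mu)}$, $\|f_i\|_{L_\infty(\mu)}\le C$ (we may assume $\E_\mu f_i=0$, at the cost of one factor of $C$), and the counting condition $|\{i:\Lip(f_i)\le s\}|\ge C^{-1}s^{\delta_{spec}}$ for $s\in[1,\beta]$, which by the layer-cake formula yields
$$\sum_{i\in I}\Lip(f_i)^{-\delta_{iso}}\ge\delta_{iso}\int_1^\beta s^{-\delta_{iso}-1}\,|\{i:\Lip(f_i)\le s\}|\,ds\ge\frac{\delta_{iso}}{C}\int_1^\beta s^{\delta_{spec}-\delta_{iso}-1}\,ds .$$

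\textbf{The combination and the main obstacle.} One now applies \eqref{E:plan-star} to a rescaled version of the family $(\mu_{f_i})_{i\in I}$. The left-hand side collects, via $\|\mu_{f_i}\|_\cF\ge\|f_i\|_{L_2(\mu)}^2/\Lip(f_i)\ge C^{-2}\Lip(f_i)^{-1}$ and the counting estimate above, a total mass of order $\int_1^\beta s^{\delta_{spec}-\delta_{iso}-1}\,ds$ (the exponent $\delta_{iso}$, rather than $2$, arising because the Sobolev inequality places the estimate in the $L_{\delta'_{iso}}$--$L_{\delta_{iso}}$ duality, whose relevant side $L_{\delta_{iso}}(\mu)$ has cotype $\delta_{iso}$); the right-hand side is kept bounded by writing $\E_\varepsilon\|\sum_i\varepsilon_i\mu_{f_i}\|_\cF=\E_\varepsilon\|\mu_{\sum_i\varepsilon_if_i}\|_\cF\le 2C\,\E_\varepsilon\|\sum_i\varepsilon_if_i\|_{L_{\delta_{iso}}(\mu)}$ and using Khintchine in $L_{\delta_{iso}}(\mu)$ together with the orthogonality and $L_\infty$-bound of the $f_i$. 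Crucially, a single application of \eqref{E:plan-star} to all of $F$ is too lossy -- the $\sqrt{|I|}$ from Khintchine would cancel the gain -- so the estimate must be run \emph{scale by scale}: for each dyadic $s\in[1,\beta]$ one groups the spectral functions of Lipschitz constant $\asymp s$, rescales so that the contributed measures have $\cF$-norm of order one, and applies \eqref{E:plan-star} within the scale, keeping the Rademacher average controlled by the $L_{\delta_{iso}}$-Sobolev bound; summing the $\Theta(\log\beta)$ scales reproduces $\int_1^\beta s^{\delta_{spec}-\delta_{iso}-1}\,ds$, and tracking the five factors of $C$ (one each from the $L_\infty/L_1$ comparison in Definition~\ref{def:Lip-spec-profile}\,(\ref{item:1}), the counting bound in (\ref{item:3}), the Sobolev constant of Theorem~\ref{T:4.1.3}, and two H\"older/Khintchine losses) yields $D\ge\frac1{2C^5}\big(\int_1^\beta s^{\delta_{spec}-\delta_{iso}-1}\,ds\big)^{1/\delta_{iso}}$. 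The delicate part -- the genuine Kislyakov/Naor--Schechtman core -- is exactly this scale-by-scale bookkeeping: choosing, at each scale $s$, the right combination of the $\approx s^{\delta_{spec}}$ available functions and the right normalization so that distinct scales behave almost independently under the Rademacher average while the per-scale contribution has the correct order $s^{\delta_{spec}-\delta_{iso}}$, all with constants that remain free of $N$ and of spurious $\log|V(G)|$ factors; the isoperimetric and spectral hypotheses enter only through the two clean inputs above.
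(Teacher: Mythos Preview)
Your two ``analytic inputs'' and the final layer--cake computation are correct, but the combination step has a real gap, and it is not a bookkeeping issue. The engine you propose is the cotype--$2$ inequality \eqref{E:plan-star} for $\ell_1^N$; this forces the exponent $2$, and there is no mechanism in your sketch that upgrades it to $\delta_{iso}$. Your parenthetical explanation (``$L_{\delta_{iso}}(\mu)$ has cotype $\delta_{iso}$'') does not help: cotype of an auxiliary space into which you map the Rademacher average cannot strengthen a cotype inequality that already lives in $\ell_1^N$. Concretely, at a single scale $s$ with $n_s$ functions of Lipschitz constant $\asymp s$, your left side is $\gtrsim n_s^{1/2}/s$, while your right side, after the Sobolev bound and Khintchine in $L_{\delta_{iso}}(\mu)$ together with $\|f_j\|_\infty\le C$, is $\lesssim D\,n_s^{1/2}$. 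The $n_s^{1/2}$ cancels and you obtain only $D\gtrsim 1/s$, which is vacuous for $s\ge 1$. Summing over dyadic scales does not recover anything, and no choice of per--scale normalization avoids this cancellation.

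The paper's proof runs through a genuinely different idea: it dualizes to $T^*:\ell_\infty^N\twoheadrightarrow \Lip_0(V(G),d)$ and then uses that the identity $\iota_1:\Lip_0\to W^{1,1}(\nu,d)$ is \emph{$1$--summing} with $\pi_1(\iota_1)\le 1$ (because $\nabla_d$ realizes it as a piece of $L_\infty(\nu)\hookrightarrow L_1(\nu)$). Composing with the Sobolev map $\iota_2$ into $L_{\delta'_{iso}}(\mu)$ and a Fourier--type transform $FT$ into $\ell_{\delta_{iso}}(J)$ (bounded by $C$ via Riesz--Thorin from the $L_1\!\to\!\ell_\infty$ and $L_2\!\to\!\ell_2$ endpoints), one obtains a $1$--summing operator $R:\ell_\infty^N\to \ell_{\delta_{iso}}(J)$ with $\pi_1(R)\le 2C^2 D$. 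The decisive fact---this is the Kislyakov step your sketch alludes to but does not implement---is that a $1$--summing operator into a Banach lattice is \emph{lattice bounded}: there exists $b\in\ell_{\delta_{iso}}^+(J)$ with $\|b\|_{\delta_{iso}}\le \pi_1(R)$ and $|R_j(x)|\le b_j\|x\|_\infty$ for every $x$ and every coordinate $j$. Choosing for each $j$ a preimage $x_j\in B_{\ell_\infty^N}$ of $f_j/\Lip(f_j)$ under $T^*$ gives $R_j(x_j)\ge C^{-2}/\Lip(f_j)$ coordinatewise, hence
\[
2C^2 D\;\ge\;\|b\|_{\delta_{iso}}\;\ge\;\Big(\sum_{j\in J}|R_j(x_j)|^{\delta_{iso}}\Big)^{1/\delta_{iso}}\;\ge\;C^{-2}\Big(\sum_{j\in J}\Lip(f_j)^{-\delta_{iso}}\Big)^{1/\delta_{iso}},
\]
and now your layer--cake estimate finishes the proof. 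The point is that $1$--summing (Pietsch domination) produces a \emph{uniform coordinatewise} bound---one $b_j$ good for all $x$---which is strictly stronger than any cotype inequality and is exactly what delivers the exponent $\delta_{iso}$.
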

\begin{proof}[Sketch] Assume that $T: \cF(V(G),d)\to \ell_1^N$, is such that $\|\mu\|_\cF\le \|T(\mu)\|_1\le D \|\mu\|_\cF$ for all $\mu\in \cF(V(G),d)$.
We need to find a lower estimate for $D$. Passing to the adjoint $T^*:\ell_\infty^N\searrow \Lip_0(V(G),d)$, which is a surjection, it follows 
that
\begin{equation}\label{E:2.3.2}  B_{\Lip_0(V(G),d)}\subset T^*(\ell_\infty^N) \text{ and } \|T^*\|\le D.
\end{equation} 
We define several operators:
\begin{itemize}
\item $\iota_1: \Lip_0(V(G),d)\to W^{1,1}(V(G),d,\nu)$ identity, 1-summing operator $\pi_1(\iota_1)=1$,\\
(Note that $\nabla_d: \Lip_0(V(G),d)\to \ell_\infty(E(G))\equiv L_\infty(E(G),\nu)$, $\nabla_d: W^{(1,1)}(V(G),d,\nu_G) \to  L_1(\nu_G) $ are isometric embeddings.)
\item $\iota_2: W^{1,1}(d,\nu)\to  L_{\delta'_{iso}}(\mu)$ identity. By Sobolev inequality  $\|\iota_2\|\le 2C$.\\
\item Let $F=\{ f_j:j\in J\}$ be the  set of orthogonal function: $f_j: V(G)\to \R$, as required by the  spectral profile, and define
the {\em Fourier Transform}
$$FT: L_{2}(\mu)\to \ell_2(J), \quad g\mapsto (\E_\mu(g \cdot f_j):j\in J).$$
By the assumed orthogonality  we deduce that $\|FT\|_{ L_{2}(\mu)\to \ell_2(J)}=1$.
From the assumption on the $L_1$ - and $L_\infty$-norms of $(f_j)_{j\in J}$
$$\max_{j\in J} \big|\E_\mu(g \cdot f_j)\big\|\le C\|g\|_{L_1(\mu)} , \text{ for all $g\in L_1(\mu)$.}$$
and thus $\|FT\|_{ L_1(\mu)\to \ell_\infty(J)}\le C$.

We deduce, therefore from the Riesz-Thorin Interpolation Theorem  (Recall that $2\le  \delta_{iso}<\infty$, and thus 
$1<\delta'_{iso} \le 2$)
that $\|FT\|_{ L_{\delta'_{iso}}(\mu)\to \ell_{iso}(J)}\le C$ .
\end{itemize}
Then we consider  the product of all these operators
$$R: \ell_\infty^N\mathop{\searrow}^{\quad T^*}\Lip_0(d)\mathop{\to}^{\iota_1}W^{1,1}(d,\nu)\mathop{\to}^{\iota_2} L_{\delta'_{iso}}(\mu) \mathop{\to}^{FT} \ell_{\delta_{iso}}(J)$$
Since the summing norm  $\pi_1$ is an  {\em ideal norm}, we deduce that
$$\pi_1(R)=\pi_1(\iota_1) \|T^*\|\cdot \|\iota_2\|\cdot\|\cdot \|FT\|\le 2DC^2. $$
An important property of $1$-summing operators is that  $1$-summing operators between two Banach lattices are {\em Lattice bounded}. 
In our case, this means the following:

Let $R_j: \ell_\infty^N: \ell_\infty \to \R$ be the $j$-componenet of $R$, \ie $R(x)=\sum_{j\in J} R_j(x) e_j$, where $e_j$ is the $j$-th unit vector basis of $\ell_{\delta_{iso}}(J)$.
Then there exists a $b\in \ell^+_{\delta_{iso}}(J)$ with $\|b\|_{\delta_{iso}}\le \pi_1(R)\le 2 D C^2 $ 
so that for every $x\in \ell_\infty^N$
$$|R_j(x)\le b_j\|x\|_\infty.$$

Now, using that $B_{\Lip_0(V(G),d)}\subset T^*(B_{\ell_\infty^N})$ choose for every $j\in J$ and $x_j\in B_{\ell_\infty^N}$ so that 
$$T^*(x_j)=\frac{f_j}{\|f_j\|_{\Lip}}, \text{ for all $j\in J$.}$$
It follows that 
$$R(x_j)=\frac{R(f_j)}{\|f_j\|_\Lip}= \frac1{\|f_j\|_\Lip} e_j \text{ and } |R_j(x_j)|\le b_j\text{ for all $j\in J$.}$$
Therefore we obtain
\begin{align*} 
 2C^2 D&\ge \|b\|_{\ell_{\delta_{\text iso}}}=\Big(\sum_{j\in J} b_j^{\delta_{\text{iso}}}\Big)^{1/\delta_{\text{iso}}}
 \ge \Big(\sum_{j\in J} |R_j(x_j)|^{\delta_{\text{iso}}}\Big)^{1/\delta_{\text{iso}}}\\
 &= \Big(\sum_{j\in J} \big(\|\cF(f_j)\|^2_{L_2}/\|f_j\|_\Lip\big)^{\delta_{\text{iso}}}\Big)^{1/\delta_{\text{iso}}}
 \ge C^{-2} \Big(\sum_{j\in J} \Big(\frac1{\|f_j\|_\Lip}\Big)^{\delta_{\text{iso}}}\Big)^{1/\delta_{\text{iso}}}
 \end{align*}
 Thus
\begin{equation}\label{E:2.3.3}D\ge \frac1{2C^4} \Big(\sum_{j\in J}\Big(\frac1{\|f_j\|_\Lip}\Big)^{\delta_{\text{iso}}}\Big)^{1/\delta_{\text{iso}}}.\end{equation}
From here, we calculate the sum by applying the classical formula
$$\int_{\Omega} | h|^p d\sigma = p \int_{0}^\infty t^{p-1} \sigma( \{ |h| >t\} ) dt$$
with $\Omega = J$ and $\sigma$ the counting measure:
	\begin{align}
	\nonumber	\sum_{j \in J} \frac{ 1 }{ \Lip(f_j)^{ \delta_{iso}  } } & = \delta_{iso} \int_{0}^\infty t^{ \delta_{iso} - 1}\Big| \Big\{ j \in J \colon \frac{ 1 }{ \Lip(f_j) } > t \Big \} \Big|dt \\ 
	\nonumber	& = \delta_{iso} \int_{0}^\infty  \frac{1}{ s^{ \delta_{iso} - 1}}\Big| \Big\{ j \in J \colon \frac{ 1 }{ \Lip(f_j) } >  \frac{1}{s} \Big \} \Big| \frac{1}{s^2} ds \\
	\nonumber	& = \delta_{iso} \int_{0}^\infty  \frac{1}{ s^{ \delta_{iso} + 1}}\Big|\Big\{ j \in J \colon  \Lip(f_j) < s \Big \} \Big|ds \\
		 & {\geq}  \delta_{iso} \int_{1}^\beta  \frac{1}{ s^{ \delta_{iso} + 1} } \frac{ s^{\delta_{spec}} }{ C } ds.
	\end{align}
	(where we used (3) of the Lipschitz   spectral profile in the last inequality)
from which, together with \eqref{E:2.3.1}, we deduce our claim \eqref{E:2.3.3}.
\end{proof}
\subsection{Examples satisfying Theorem \ref{T:4.3}}
\phantom{mmmmm}

\vfill\eject
\section{Appendix}
 \subsection{Proof of Birkhoff's Theorem \ref{T:1.13}}
   \begin{thm}\label{T:A.1.13}(Birkhoff)
   Assume $n\in\N$ and  that $A=(a_{i,j})_{i,j=1}^n$ is a doubly stochastic matrix, \ie
   \begin{align*}
   &0\le a_{i,j}\le 1\text{ for all $1\le i,j\le n$,}\\
   &\sum_{j=1}^n  a_{i,j}=1\text{ for $i=1,2,\ldots, n$,}\text{ and }
   \sum_{i=1}^n  a_{i,j}=1\text{ for $j=1,2,\ldots, n$.}
   \end{align*}
   Then $A$ is a convex combination of permutation matrices, \ie matrices which have in each 
   row and each column exactly one entry whose value is  $1$ and vanishes elsewhere.
   \end{thm}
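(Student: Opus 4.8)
The plan is to prove the theorem by induction on the number $N$ of strictly positive entries of $A=(a_{i,j})_{i,j=1}^n$. Since each of the $n$ rows of $A$ sums to $1$, every row contains at least one positive entry, so $N\ge n$. For the base case $N=n$, each row then has \emph{exactly} one positive entry, which (the row summing to $1$) equals $1$; two such entries cannot lie in the same column, since that column would then sum to at least $2$, so the $n$ positive entries occupy $n$ distinct columns, and $A$ is a permutation matrix. This settles $N=n$.

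The combinatorial heart of the induction step is the claim that \emph{every doubly stochastic $A$ admits a permutation $\sigma$ of $\{1,\dots,n\}$ with $a_{i,\sigma(i)}>0$ for every $i$}. I would obtain this from Hall's marriage theorem, applied to the bipartite graph $H$ with one class of vertices indexed by the rows and one by the columns, where row $i$ is joined to column $j$ precisely when $a_{i,j}>0$; a perfect matching in $H$ is exactly such a $\sigma$. Hall's condition holds: for a set $S$ of rows with column neighborhood $N(S)$, we have $|S|=\sum_{i\in S}\sum_{j} a_{i,j}=\sum_{j\in N(S)}\sum_{i\in S} a_{i,j}\le\sum_{j\in N(S)}\sum_{i=1}^n a_{i,j}=|N(S)|$, using that entries outside $N(S)$ vanish on rows of $S$ and that each column sums to $1$. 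To keep the appendix self-contained, I would either include the short proof of Hall's theorem, or instead extract the matching from König's min-cover/max-matching theorem, whose proof I would also supply.

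With such a $\sigma$ fixed, let $P_\sigma$ be its permutation matrix and put $\lambda=\min_{1\le i\le n} a_{i,\sigma(i)}>0$. If $\lambda=1$ then $a_{i,\sigma(i)}=1$ for all $i$, which forces $A=P_\sigma$ and we are done. Otherwise $0<\lambda<1$, and I would set $B=\frac{1}{1-\lambda}\,(A-\lambda P_\sigma)$. One checks routinely that $B$ is again doubly stochastic: the matrix $A-\lambda P_\sigma$ has nonnegative entries (at the positions $(i,\sigma(i))$ because $a_{i,\sigma(i)}\ge\lambda$, and elsewhere because $P_\sigma$ vanishes), and each of its rows and columns sums to $1-\lambda$. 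Crucially, $B$ has strictly fewer positive entries than $A$: if $a_{i_0,\sigma(i_0)}=\lambda$ then the $(i_0,\sigma(i_0))$ entry of $B$ is $0$, while no entry that was zero in $A$ becomes positive in $B$. By the induction hypothesis $B=\sum_k c_k P_{\tau_k}$ is a convex combination of permutation matrices, whence $A=\lambda P_\sigma+(1-\lambda)\sum_k c_k P_{\tau_k}$ is one as well, completing the induction.

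I expect the main obstacle to be the combinatorial lemma guaranteeing a positive permutation (equivalently, that the support pattern of a doubly stochastic matrix contains a perfect matching); the rest is the standard ``peel off a permutation matrix and rescale'' bookkeeping. Accordingly, most of the write-up should be devoted to a clean statement and proof of Hall's (or König's) theorem so that this appendix remains self-contained.
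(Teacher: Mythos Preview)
Your proof is correct, but it follows a different route from the paper's. The paper argues geometrically: the set $\DS_n$ of doubly stochastic matrices is compact and convex, hence by Krein--Milman it is the convex hull of its extreme points, and the work goes into showing that any $A\in\DS_n$ with a non-integer entry is \emph{not} extreme. This is done by starting at a fractional entry and walking alternately along rows and columns through other fractional entries until a pair repeats, extracting from this walk an even-length cycle, and perturbing $A$ by $\pm\varepsilon$ alternately along the cycle to write $A=\tfrac12(B^{(1)}+B^{(2)})$ with $B^{(1)},B^{(2)}\in\DS_n$. Your approach is instead constructive/algorithmic: Hall's marriage theorem guarantees a permutation supported on the positive entries, you peel off the corresponding scaled permutation matrix, rescale, and induct on the number of positive entries. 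Your method gives an explicit decomposition algorithm and a bound on the number of permutation matrices needed (at most $N-n+1$), at the cost of importing Hall's theorem; the paper's method avoids Hall entirely via the direct cycle-perturbation argument, and fits more naturally with the extreme-point language used elsewhere in the notes (e.g.\ Theorem~\ref{T:1.11}).
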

   \begin{proof}
   Clearly the set $\DS_n$ of all doubly stochastic  $n$by$n$ matrices is bounded,  convex  and compact in $\R^{n\times n}$,
   and is thus the convex hull of its extreme points.
   It is, therefore, enough to show that a matrix $A\in \DS$, which has entries that are not integers, cannot be an extreme point of $\DS$.
   
   So assume that for some $r_1$ and $s_1$ in  $\{1,2,\ldots,n \}$ we have $0<a_{r_1,s_1}<1$ and  since the row $r_1$ adds up to  $1$
   there must be an $s_2\not=s_1$ in  $\{1,2,\ldots,n \}$,   with $0<a_{r_1,s_2}<1$. Again, since the column $s_2$ adds to up to  $1$ there must
    be an $r_2\not=r_1$ in  $\{1,2,\ldots,n \}$ with $0<a_{r_2,s_2}<1$. We continue this way and
     eventually for some $k$, either  $\{r_k,s_k\}$ or $\{r_k,s_{k+1}\} $ must be among the previously chosen
     pairs $(r_j,s_j)$ or $(r_j,s_{j+1})$.
    
    Possibly by changing the starting point,  relabeling,  and exchanging rows with columns, we  obtain a cycle
    which is either of the form 
    $$(r_1,s_1), (r_1,s_2),(r_2,s_2), \ldots,(r_{k-1},s_k), (r_k,s_k) =(r_1,s_1)$$
    (implying  the cycle is of even length)
 or  of the form 
  $$(r_1,s_1), (r_1,s_2),(r_2,s_2), \ldots, (r_k,s_k),(r_k,s_{k+1})=(r_1,s_1),$$
  (implying the cycle is of odd length)\\
  so that $r_j\not=r_{j+1}$ and $s_{j}\not=s_{j+1}$ 
   (if the cycle is of the first form, we put $s_{i}=s_{i\mod(k-1)}$,  if $ I>k-1$ and 
   if it is of the second form we put  $s_{i}=s_{i\mod(k)}$,  if $i>k$) 
  and $0<a_{r_j,s_j}, a_{r_j,s_{j+1}}<1$.
  Let us assume we have chosen a cycle of minimal length. Then we claim it must be of the first form, \ie\ it must be of even length.
  
  Indeed, assume it is of the second form, then $(r_k,s_{k+1})=(r_1,s_1) $ and $(r_1,s_2)$ are in the same row and therefore  
  $$(r_2,s_2),(r_2,s_3),(r_3,s_3), \ldots,(\underbrace{r_k}_{=r_1},s_{k}), (r_1,s_2), $$
  is a shorter cycle, which is a contradiction; thus, our shortest cycle is of the first form.
  Let now $0<\vp<1$ small enough so that  for all $j\le k$
  $$\vp<\min( a_{r_j,s_j},a_{r_j,s_{j+1}}, 1-a_{r_j,s_j},1-a_{r_j,s_{j+1}})$$
  Then define
  $$b^{(1)}_{s,t}=  \begin{cases} a_{r_j,s_j}+\vp &\text{ if $(s,t)=(r_j,s_j)$ for some $j$}\\
                                                  a_{r_j,s_{j+1}}-\vp &\text{ if $(s,t)=(r_j,s_{j+1})$ for some $j$}\\
                                                    a_{s,t} &\text{ otherwise,}\end{cases}$$
  and 
  $$b^{(2)}_{s,t}=  \begin{cases} a_{r_j,s_j}-\vp &\text{ if $(s,t)=(r_j,s_j)$ for some $j$}\\
                                                  a_{r_j,s_{j+1}}+\vp &\text{ if $(s,t)=(r_j,s_{j+1})$ for some $j$}\\
                                                    a_{s,t} &\text{ otherwise.}\end{cases}$$
  It follows that $B^{(1)}=(b^{(1)})_{s,t=1}^n$ and  $B^{(2)}=(b^{(2)})_{s,t=1}^n$    are in $\DS$ and
  $$A=\frac12\big( B^{(1)}  +  B^{(2)}  \big) ,$$
  which implies that $A$ cannot be an extreme point.                                       
   \end{proof}
   
   \begin{proof}[Proof of Proposition \ref{P:1.12}] Let 
    $A=\{x_1,x_2,\ldots x_n\}$ and $B=\{y_1,y_2,\ldots, y_n\}$. 
   We note that for every $\pi\in \cP(\sigma,\tau)$ 
   the matrix\\ $M=(n \pi(x_i,y_j) : 1\le i,j\le n)$ is a doubly stochastic matrix
   (since $\sum_{x\in A} \pi(x,y) =\tau(y)=\frac1{|B|}=\frac1{|A|}=\sigma(x)=\sum_{y\in B} \pi(x,y)$).
   Thus by \eqref{E:1.3}
   \begin{align*} d_\Wa(\mu_A,\mu_B)=\frac1n \min \Big \{ \sum_{i,j=1}^n M_{i,j} d(x_i,y_j) : M\in \DS_n\Big\}
   \end{align*}
   Since the map
   $$\DS\to [0,\infty), \quad M\mapsto  \sum_{i,j=1}^n M_{i,j} d(x_i,y_j) $$
   is linear, it achieves its minimum on an extreme point, our claim follows from Theorem \ref{T:1.13} 
      \end{proof}

\subsection{Proof  of Theorem  \ref{T:3.2} on stochastic embeddings of finite metric spaces into trees}

\begin{thm}\label{T:6.1} {\rm\cite{FakcharoenpholRaoTalwar2004}} Let $M$ be a metric space with $n\in\N$ elements. Then, there is a $O(\log n)$ stochastic embedding of $M$ into the class of weighted trees. 
\end{thm}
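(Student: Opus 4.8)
The plan is to analyze exactly the random hierarchical decomposition $(P^{(j)})_{j=0}^{k}\in\Omega$ of $M$, with its law $\P$ and the associated weighted trees $T=T\big((P^{(j)})_{j}\big)$ and maps $x\mapsto(k,\{x\})$, introduced in Section~\ref{S:3}. First I would observe that for a fixed permutation $\pi$ the partition $C(\pi,r)$ takes only finitely many values as $r$ runs over $[R/4,R/2]$, so one may replace the uniform law on the radius by the induced discrete law and assume $\Omega$ is finite, as Definition~\ref{D:3.1} requires; also, after rescaling, $d(u,v)>1$ for $u\ne v$, and $2^{k-1}<\diam(M)\le 2^{k}$. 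For $x\ne y$ and $\omega\in\Omega$ let $s=s(x,y;\omega)$ be the first level at which $x$ and $y$ fall into distinct cells of $P^{(s)}$ (well defined, since $P^{(k)}$ consists of singletons). If $A$ denotes the common cell of $x$ and $y$ in $P^{(s-1)}$, the unique path in $T$ between the leaves of $x$ and of $y$ runs through the vertex $(s-1,A)$, whence $d_{T}(x,y)=2\sum_{l=s}^{k}2^{k-l}=2^{k-s+2}-2$; since $P^{(s-1)}$ is $2^{k-s+1}$-bounded one gets $d(x,y)\le 2^{k-s+1}\le d_{T}(x,y)\le 4\cdot2^{k-s}$. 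The lower estimate holds for \emph{every} $\omega$, so expansiveness \eqref{E:3.1.1} comes for free; everything reduces to proving \eqref{E:3.1.2} with $D=O(\log n)$.

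For this, fix $x\ne y$, write $R_{i}=2^{k-i}$, and use the upper estimate to get
\[
 \E\big[d_{T}(x,y)\big]\ \le\ 4\sum_{i=1}^{k}R_{i}\,\P(s=i)\ \le\ 4\sum_{i=1}^{k}R_{i}\,q_{i},
\]
where $q_{i}$ is the conditional probability that $x$ and $y$ lie in distinct cells of $P^{(i)}$ given that they lie in the same cell of $P^{(i-1)}$ (since $\P(s=i)\le q_{i}$, because $\{s=i\}$ entails that $x,y$ share the cell of $P^{(i-1)}$). To bound $q_{i}$ I would unwind the single refinement step $P^{(i-1)}\to P^{(i)}$ inside the common cell of $x$ and $y$: there a uniform permutation $\pi$ and an \emph{independent} radius $r$ uniform on $[R_{i}/4,R_{i}/2]$ are drawn, and the cell of any point is decided by the first center, in $\pi$-order, whose $r$-ball covers it. Order the points $w_{1},w_{2},\dots$ by $m_{w}=\min\{d(w,x),d(w,y)\}$. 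If $x$ and $y$ end up separated, the $\pi$-first center covering one of them, say $w_{t}$, covers exactly one of them; this forces $w_{t}\prec_{\pi}w_{1},\dots,w_{t-1}$ \emph{and} $r$ to lie in the interval with endpoints $d(w_{t},x)$ and $d(w_{t},y)$, an interval whose intersection with $[R_{i}/4,R_{i}/2]$ has length at most $\min\{d(x,y),R_{i}/4\}$. By the independence and uniformity, the probability of this is at most $\tfrac1t\cdot\frac{\min\{d(x,y),R_{i}/4\}}{R_{i}/4}$; summing over the indices $t$ for which that intersection is nonempty, i.e.\ those with $R_{i}/4-d(x,y)\le m_{w_{t}}\le R_{i}/2$, gives $q_{i}\le\frac{\min\{d(x,y),R_{i}/4\}}{R_{i}/4}\sum_{t}\frac1t$.

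The last step is to sum $4\sum_{i}R_{i}q_{i}$ over the scales. I would split the levels into those with $R_{i}<8\,d(x,y)$ and those with $R_{i}\ge 8\,d(x,y)$. For the first group one forgets the bound on $q_{i}$ and simply uses $R_{i}\le 8\,d(x,y)$ together with $\sum_{i}\P(s=i)\le1$, giving a contribution $O(d(x,y))$. For the second group $\min\{d(x,y),R_{i}/4\}=d(x,y)$ and the admissible indices $t$ satisfy $m_{w_{t}}\in[R_{i}/8,R_{i}/2]$, so each point $w$ qualifies for only $O(1)$ of the dyadic scales $R_{i}$; hence $\sum_{i:\,R_{i}\ge 8d(x,y)}\sum_{t}\frac1t\le O(1)\sum_{t=1}^{n}\frac1t=O(\log n)$ and the contribution is $O(d(x,y)\log n)$. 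Adding the two yields $\E[d_{T}(x,y)]=O(d(x,y)\log n)$ for all $x\ne y$, which is \eqref{E:3.1.2} with $D=O(\log n)$.

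I expect the heart of the argument, and the one genuinely delicate point, to be this last paragraph together with the single-scale estimate for $q_{i}$: the ``first covering center'' reduction and, crucially, the decoupling of the random permutation from the random radius (which turns the permutation part into the clean factor $1/t$ and the radius part into the factor $d(x,y)/R_{i}$); and then the scale-counting observation that a given point is ``active'' at only a bounded number of scales, so that summing the harmonic tails over all $k\approx\log\diam(M)$ levels costs a constant rather than a factor $k$. It is exactly this refinement that upgrades the easy $O(\log n\cdot\log\diam(M))$ bound to the scale-free $O(\log n)$ of Fakcharoenphol--Rao--Talwar.
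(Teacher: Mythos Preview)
Your proposal is correct and follows the same overall scheme as the paper (same random hierarchical decomposition $\bar P=(P^{(j)})_{j=0}^k$, same tree $T(\bar P)$, same leaf map, same expansiveness argument). The difference lies in how you control the separation probability at a single scale and then sum over scales.

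The paper does not use your ``first covering center / harmonic sum'' analysis. Instead it proves a padding lemma (Lemma~\ref{L:17.4} and Corollary~\ref{C:17.5a}): averaging the ratio $|B_{r-t}(x)|/|B_{r+t}(x)|$ over the uniform radius $r\in[R/4,R/2]$ and applying Jensen to the exponential gives
\[
\P\big(B_t(x)\not\subset P_x\big)\ \le\ \frac{8t}{R}\,\log\!\Big(\frac{|B_R(x)|}{|B_{R/8}(x)|}\Big),
\]
with balls taken in the whole of $M$. Plugging in $t=d(x,y)$ and summing $\sum_i R_i\cdot\frac{8d(x,y)}{R_i}\log\big(|B_{R_i}(x)|/|B_{R_i/8}(x)|\big)$, the $R_i$'s cancel and the logarithms \emph{telescope} as a product of ball-size ratios, bounded by $n^3$; this yields the $O(\log n)$ factor. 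Your route replaces the Jensen step by the explicit decoupling $\tfrac1t\cdot\tfrac{d(x,y)}{R_i/4}$ and replaces the telescoping by the ``each $w$ is active at $O(1)$ dyadic scales'' counting, which turns $\sum_i\sum_t 1/t$ into $O(1)\cdot H_n$. Both are standard in the literature: the paper's is the original Fakcharoenphol--Rao--Talwar analysis, yours is the Calinescu--Karloff--Rabani style harmonic-sum argument. Your version is a bit more elementary (no Jensen, and it tracks the separation event directly rather than the stronger padding event), while the paper's gives the cleaner bound in terms of local ball growth and a one-line telescoping finish.
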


  We need some notation:

We fix a metric space $(M,d)$. 
\begin{itemize}
 \item $B_r(x)=\{y\in M: d(x,y)\le r\}$,  for $x\in M$ and $r>0$. 
 \item For 
$A\subset M$ the {\em diameter of $A$} is 
$\diam(A)=\sup_{x,y\in A} d(x,y)$. 
\item
The set of partitions of $M$ is denoted by $\cP_M$. The elements of a partition $P$ are called {\em clusters of $P$}.
Let  $P=(A_1,A_2,\ldots A_n)\in \cP_M$. For $r\ge 0$, $P$ is called $r$-{\em bounded} if $\diam(A_j)=\max_{x,z\in A_j} d(x,z)< r$, for all $j=1,2,\ldots,n$.
For $x\in M$ and a partition $P=(A_1,A_2,\ldots A_n)$ of $M$ we denote the unique $A_j\in P$ which contains $x$ by $P_x$.
 \item A {\em stochastic decomposition } is a probability measure $\P$ on $\cP_M$, and its {\em support } is given by
 $\supp(\P)=\{ P\in\cP: \P(P)>0\}$.
 \end{itemize}
%
%

\begin{lem}\label{L:17.4} Let  $R>0$.
There is a Probability measure $\P$ on $\cP_M$ so that 
\begin{align}\label{E:17.4.1} 
&\supp(\P)\subset \big\{ P\in \cP_M: P
\text{ is $R$-bounded}\big\}\text{ and }\\
\label{E:17.4.1a} &\P\big(P\kin \cP_M: B_t(x)\ksubset P_x\big) \!\ge\! \Big(\frac{|B_{R/8} (x)|}{|B_{R} (x)|}\Big)^{8t/R}\text{ for all $0<t\le R/8$ and $x\kin M$.}
\end{align}

\end{lem}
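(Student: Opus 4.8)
The plan is to construct $\P$ explicitly using the random partition scheme already described in Section~\ref{S:3} (random radius plus random permutation), and then prove the padding inequality \eqref{E:17.4.1a} by a union-bound argument bounding the probability that the ball $B_t(x)$ is "cut" by some cluster boundary. Concretely, fix $R>0$. Let $\beta$ be a random radius, chosen not uniformly but with the density $\frac{8/R}{\ln 2}\cdot \frac1\beta$ on the interval $[R/8,R/4]$ (the logarithmic/exponential choice of radius is what produces the exponent $8t/R$; a uniform radius would only give a $\log n$ factor). Independently, let $\pi$ be a uniformly random permutation of $\{1,\dots,n\}$, where $M=\{x_1,\dots,x_n\}$. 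Given $(\pi,\beta)$, define the clusters greedily exactly as in item (7)/(8) of the construction: $\tilde C_j=B_\beta(x_{\pi(j)})\setminus\bigcup_{i<j}\tilde C_i$, and let $P(\pi,\beta)$ be the collection of nonempty ones. Since $\beta\le R/4$, every cluster has diameter $\le R/2 < R$, so $\supp(\P)$ consists of $R$-bounded partitions, giving \eqref{E:17.4.1}. Let $\P$ be the law of $P(\pi,\beta)$.

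For \eqref{E:17.4.1a}, fix $x\in M$ and $0<t\le R/8$. I want a lower bound on $\P(B_t(x)\subset P_x)$. The key observation is the standard one: order the points $y_1,y_2,\dots,y_m$ of $M$ by increasing distance from $x$ (so $m\le n$), and say $y_\ell$ \emph{settles} the pair $(x,t)$ if $y_\ell$ is the first center (in the order given by $\pi$) whose ball $B_\beta(y_\ell)$ intersects $B_t(x)$; that $y_\ell$ \emph{cuts} $B_t(x)$ if, moreover, $B_\beta(y_\ell)\not\supset B_t(x)$. The ball $B_t(x)$ fails to lie inside a single cluster precisely when the settling center cuts it. For a fixed $\ell$, $B_\beta(y_\ell)$ can intersect but not contain $B_t(x)$ only if $\beta\in[d(x,y_\ell)-t,\ d(x,y_\ell)+t]$, an interval of length $2t$; and if $y_\ell$ is to be the settling vertex, then among $y_1,\dots,y_\ell$ it must come first in $\pi$, which (conditionally on $\beta$, and since only the first $\ell$ points can possibly be relevant when $\beta$ is in that window) has probability $\le 1/\ell$ — here one uses that $y_1,\dots,y_{\ell-1}$ are all closer to $x$ than $d(x,y_\ell)-t\le\beta$ would be needed, so if any of them is chosen before $y_\ell$ it settles the pair with a non-cutting ball. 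Combining,
\begin{align*}
\P\big(B_t(x)\not\subset P_x\big)\ \le\ \sum_{\ell=1}^m \frac1\ell\int_{d(x,y_\ell)-t}^{d(x,y_\ell)+t}\frac{8/R}{\ln 2}\cdot\frac{d\beta}{\beta}\ \le\ \frac{8/R}{\ln 2}\sum_{\ell=1}^m\frac1\ell\Big(\ln(d(x,y_\ell)+t)-\ln(d(x,y_\ell)-t)\Big),
\end{align*}
and the log-differences telescope against the $1/\ell$ weights when one sums over only those $\ell$ with $R/8\le d(x,y_\ell)\le R/4$ (outside that range the integral is empty). A clean way to finish is to relate the sum to $\frac{8t}{R}\big(\ln|B_R(x)|-\ln|B_{R/8}(x)|\big)$ via the inequality $\ln(a+t)-\ln(a-t)\le \tfrac{2t}{a-t}\le \tfrac{16t}{R}$ on the relevant range, together with the counting bound that the number of $\ell$'s landing in a dyadic-type shell is controlled by the ratio of ball cardinalities; then use $1-u\ge$ (something) is not quite enough, so instead I would argue directly that $\P(B_t(x)\subset P_x)\ge \prod (\text{stagewise survival probabilities})\ge \big(|B_{R/8}(x)|/|B_R(x)|\big)^{8t/R}$ by a telescoping-product estimate, which is the form the bound is stated in.

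The \textbf{main obstacle} is getting the exponent exactly $8t/R$ rather than merely $O(t/R)$: this forces the precise logarithmic density for $\beta$ and a careful bookkeeping of which balls $B_R(x)$ versus $B_{R/8}(x)$ appear, and it requires replacing the crude union bound above by the sharper multiplicative/telescoping estimate $\P(B_t(x)\subset P_x)=\prod_{\text{shells}}\big(1-(\text{cut prob.\ in that shell})\big)\ge\big(\tfrac{|B_{R/8}(x)|}{|B_R(x)|}\big)^{8t/R}$, where one orders the candidate centers by distance and uses $1-\sum p_i$ replaced by $\prod(1-p_i)$ legitimately because the relevant "bad" events for distinct shells are handled sequentially. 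I would set this up by writing $|B_R(x)|/|B_{R/8}(x)|$ as a telescoping product over the points $y_\ell$ with $R/8<d(x,y_\ell)\le R$, i.e.\ $\prod_\ell \frac{\ell}{\ell-1}$ over that index range (after reindexing so $\ell$ counts points within distance $R/8$ plus those in the shell), and match each factor to the corresponding stagewise survival probability $\big(1-\tfrac1\ell\cdot\tfrac{2t}{\text{(gap)}}\big)\ge (\ell/(\ell+ \text{stuff}))$; the elementary inequality $1-\tfrac{a}{\ell}\ge (1-\tfrac1\ell)^{a}$ for $0\le a\le 1$ (here $a=8t/R\cdot$const appropriately normalized, $\le 1$ since $t\le R/8$) is exactly what converts the per-stage bound into the claimed power. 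Once that algebra is pinned down the rest is routine.
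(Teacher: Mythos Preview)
Your approach has a genuine gap, and it differs from the paper's in a way that matters.

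First, the radius distribution: you insist that a logarithmic density on $[R/8,R/4]$ is ``what produces the exponent $8t/R$'', but the paper uses the \emph{uniform} distribution on $[R/4,R/2)$ and still obtains the exact exponent. The exponent does not come from the radius law; it comes from Jensen's inequality (see below).

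Second, and more seriously: your union-bound argument over ``which center cuts $B_t(x)$'' can at best yield the additive estimate
\[
\P\big(B_t(x)\not\subset P_x\big)\ \le\ \frac{8t}{R}\,\log\frac{|B_R(x)|}{|B_{R/8}(x)|},
\]
which is exactly Corollary~\ref{C:17.5a}, the \emph{weaker} statement that the paper derives \emph{from} the lemma. You recognize this yourself (``$1-u\ge$ (something) is not quite enough'') and then propose to salvage it via a ``multiplicative/telescoping'' product over shells. But that step is not legitimate as you describe it: the ``bad'' events for different centers $y_\ell$ all depend on the \emph{same} single draw of the radius $\beta$, so they are not handled sequentially or independently, and you cannot replace $1-\sum p_\ell$ by $\prod(1-p_\ell)$. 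The inequality $1-a/\ell\ge(1-1/\ell)^a$ is fine, but you never set up a genuine product of conditional survival probabilities to which it applies.

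The paper's route avoids this entirely. For each \emph{fixed} radius $r$ it proves the ratio bound
\[
\mu\big(\pi:\ B_t(x)\subset P_x(\pi,r)\big)\ \ge\ \frac{|B_{r-t}(x)|}{|B_{r+t}(x)|},
\]
by arguing that $B_t(x)\subset P_x$ whenever the \emph{first} point (in $\pi$-order) to land in $B_{r+t}(x)$ already lies in $B_{r-t}(x)$; this conditional probability is exactly the ratio. Writing $h(s)=\log|B_s(x)|$, the right-hand side is $e^{h(r-t)-h(r+t)}$, and integrating over $r\in[R/4,R/2)$ with Jensen's inequality gives
\[
\frac{4}{R}\int_{R/4}^{R/2} e^{h(r-t)-h(r+t)}\,dr\ \ge\ \exp\!\Big(\frac{4}{R}\int_{R/4}^{R/2}\big(h(r-t)-h(r+t)\big)\,dr\Big),
\]
after which a shift of the integration variable collapses the exponent to $\frac{8t}{R}\big(h(R/8)-h(R)\big)$. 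The multiplicative/exponential form is thus produced by (i) the \emph{ratio} lower bound at fixed $r$ and (ii) Jensen; neither ingredient appears in your sketch. If you want to rescue your outline, the missing idea is precisely step~(i): bound $\P(\text{padded}\mid r)$ below by a ratio of ball sizes, not $\P(\text{cut}\mid r)$ above by a sum.
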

\begin{proof} Let $M=\{x_1,x_2,\ldots,x_n\}$, let $\pi$ be a permutation on $\{1,2,\ldots, n\}$ and $r\in \big[\frac{R}4,\frac{R}2)$.
We define a partition $\tilde P(\pi, r)=\big(\tilde C_i(\pi,r)\big)_{i=1}^n$ as follows:

$\tilde C_1(\pi,r) =B_r(x_{\pi(1)})$ and assuming $\tilde C_1(\pi,r)$,  $\tilde C_2(\pi,r), \ldots ,\tilde C_{j-1}(\pi,r)$,
let $\tilde C_j(\pi,r)  =B_r(x_{\pi(j)})\setminus \bigcup_{i=1}^{j-1} B_r(x_{\pi(i)})$. Some of the $\tilde C_j(\pi,r)$, $j=1,2\ldots n$ could be empty
and we let  $P(\pi,r)=\big(C_j(\pi,r)\big)_{j=1}^m$, with $1\le m\le n$ be the non empty members  of $\big(\tilde C_j(\pi,r)\big)_{j=1}^n$,
in the  order inherited from  $\tilde P(\pi, r)$.

Let $\mu$ be the uniform distribution on $\Pi_n$ the set of all permutation on $\{1,2,\ldots, n\}$ (and thus $\mu(\pi)=\frac1{n!}$, for $\pi\in \Pi_n$) and let $\nu$ be the uniform distribution on 
$\big[\frac{R}4,\frac{R}2\big)$. Finally let $\P$ be the image distribution of $\mu\otimes \nu$ under the  map $(\pi,r)\mapsto \tilde P(\pi,r)$,
and thus
$$\P(A)=\mu\otimes \nu\big(\{(\pi,r): \tilde P(\pi,r)\in A\}\big) \text{ for }A\subset \cP_M.$$
It follows from Fubini's  Theorem  that 
\begin{align}\label{E:17.4.2}
\P(A) 
&= \int_{R/4}^{R/2}\int_{\Pi_n} 1_{\{(\pi',r'): \tilde P(\pi',r')\in A\}}(\pi,r) \,d\mu(\pi) \,d\nu(r)\\
&=\int_{R/4}^{R/2}    \mu(\{ \pi\kin \Pi_n:\tilde P(\pi,r)\kin A\})\,d\nu(r)\notag\\
              &=\frac4R\int_{R/4}^{R/2}    \mu(\{ \pi\kin \Pi_n:\tilde P(\pi,r)\kin A\})\,d(r).\notag
              \end{align} 

\noindent{\bf Claim.} For $\frac{R}4\le r<\frac{R}2$, $0<t\le r$ and $x\in M$ it follows that
\begin{equation}\label{E:17.4.3} \mu\big( \{ \pi\in  \Pi_n: B_t(x) \subset \tilde P_x(\pi,r) \big)\ge \frac{|B_{r-t} (x)|}{|B_{r+t} (x)|}.
\end{equation}
In order to prove the claim we order $B_{r+t}(x) $ into $(y_i)_{i=1}^a$  and $(y_i)_{i=a+1}^b$
so that 
$$0=d(x,y_1)\le d(x,y_2)\le  \ldots \le d(x,y_a)\le r-t< d(x,y_{a+1})\le  \ldots \le  d(x,y_b)\le r+t.$$
Thus $|B_{r-t} (x)|=a$,  $|B_{r+t} (x)|=b$, and 
$y_i\in  B_{r-t} (x)$ if $1\le i\le a$, and 
$y_i\in B_{r+t} (x)\setminus B_{r-t} (x)$ if $a<i\le b$. 
Define
$$E_i:= \left\{\pi\in \Pi_n: \begin{matrix} 
                                                               y_{\pi(s)}\not\in                   \{ y_1,y_2,\ldots y_b\} \text{ for $s=1,2\ldots,i-1$}\\
                                                               y_{\pi(i)}\in \{ y_1,y_2,\ldots y_a\} 
                                                                    \end{matrix}\right\}.$$
 In other words $E_i$ is the event that $i$ is the smallest $j\le n$ for which $y_{\pi(j)} \in B_{r+t}(x)$ intersected with the event that $y_{\pi(i)}\in   B_{r-t}(x  )$.
                                                            
Since for   $\pi\in E_i $ and  $s<i$ we have $d(y_{\pi(s)}, x)>r+t$  and  $d(y_{\pi(i)},x)<r-t$ it follows that $B_t(x) \subset \tilde P_x(\pi,r)$.
For $i=1,2,\ldots n$ let
$$A_i=\left\{\pi\in \Pi_n: \begin{matrix} 
                                                               y_{\pi(s)}\not\in                   \{ y_1,y_2,\ldots y_b\} \text{ for $s=1,2\ldots,i-1$}\\ y_{\pi(i)}\in \{ y_1,y_2,\ldots y_b\} 
                                                               \end{matrix}\right\}.$$
                                                               
Then the  sets $(A_i)_{i=1}^n $ are a partition of $\Pi_n$, and $E_i\subset A_i$, and moreover $\mu(E_i|A_i)=\frac{a}{b}$
(since  $\mu(E_i|A_i)$ is the probability  that $\pi(i) \le a$ assuming that  $\pi(i) \le b$).
Thus  
$$\mu(\{ \pi\in \Pi_n: B_t(x)\ksubset \tilde P_x(\pi,r)\}) \ge\mu \Big(\bigcup_{i=1}^n E_i\Big)=\sum_{i=1}^n \mu(A_i) \mu(E_i|A_i) =\frac{a}{b}= \frac{|B_{r-t} (x)|}{|B_{r+t} (x)|},$$
which proves our claim.

To finish the proof of the Lemma we deduce from \eqref{E:17.4.2} and \eqref{E:17.4.3} for $0<t\le R/8$ that
\begin{align*}
\P(\{\pi\in \Pi_n: B_t(x)\subset P_x(\pi,r)\}) &= \frac4R\int_{R/4}^{R/2} \mu\big(\{\pi\in \Pi_n: B_t(x) \subset P_x(\pi,r)\}\big) \, dr\\
&\ge\frac4R\int_{R/4}^{R/2}{\frac{|B_{r-t} (x)|}{|B_{r+t} (x)|}}\, dr\\
&=\frac4R\int_{R/4}^{R/2} e^{ h(r-t)-h(r+t)}\, dr\\
&\text{ where $h(s) =\log\big( |B_s(x)|\big)$}\\
&\ge\exp\Big(  \frac4R\int_{R/4}^{R/2} h(r-t)-h(r+t)\, dr\Big)\\ &\text{ (By Jensen's inequality)}\\
&=\exp\Big( \frac4R\int_{\frac{R}4-t}^{\frac{R}2-t}  h(s)\,ds- \frac4R\int_{\frac{R}4+t}^{\frac{R}2+t} h(s)\,ds\Big)\\
&=\exp\Big(\frac4R\int_{\frac{R}4-t}^{\frac{R}4+t} h(s)\,ds-\frac4R\int_{\frac{R}2-t}^{\frac{R}2+t}  h(s)\, ds\Big)\\
&\ge \exp\Big(\frac{8t}R\Big( h\Big(\frac{R}{4}-t\Big) -h\Big(\frac{R}2+t\Big)\Big)\Big)\\
&\ge\exp\Big(\frac{8t}R\Big(h\Big(\frac{R}{8}\Big)-h(R)\Big)\Big)
= \Bigg(\frac{|B_{R/8}(x)|}{|B_R(x)|}\Bigg)^{\frac{8t}R}.
\end{align*}
\end{proof}
%

\begin{cor}\label{C:17.5a} For  the probability measure  $\P$ defined in Lemma \ref{L:17.4} it follows that 
$$\P\big( \{ P\in \cP_M :B_t(x)\not\subset P_x\}\big)\le  \frac8{R} t\log\Big(\frac{|B_R(x)|}{|B_{R/8}(x)|}\Big),\text{ for all $t\in(0,R/8)$.} $$
\end{cor}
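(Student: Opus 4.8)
The plan is to deduce the estimate directly from the lower bound \eqref{E:17.4.1a} in Lemma \ref{L:17.4} by passing to complements and then applying the elementary inequality $e^u\ge 1+u$.

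First I would note that for fixed $x\in M$ and $t>0$ the two events $\{P\in\cP_M: B_t(x)\subset P_x\}$ and $\{P\in\cP_M: B_t(x)\not\subset P_x\}$ partition $\cP_M$, so that
$$\P\big(\{P\in\cP_M: B_t(x)\not\subset P_x\}\big)=1-\P\big(\{P\in\cP_M: B_t(x)\subset P_x\}\big).$$
For $0<t\le R/8$, Lemma \ref{L:17.4} bounds the subtracted probability from below by $\big(|B_{R/8}(x)|/|B_R(x)|\big)^{8t/R}$, hence
$$\P\big(\{P\in\cP_M: B_t(x)\not\subset P_x\}\big)\le 1-\Big(\frac{|B_{R/8}(x)|}{|B_R(x)|}\Big)^{8t/R}.$$

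Next I would write $a=|B_{R/8}(x)|/|B_R(x)|\in(0,1]$ (note $a>0$ since $x\in B_{R/8}(x)$) and $s=8t/R$, and express $a^s=e^{s\log a}$. Applying $e^u\ge 1+u$ with $u=s\log a\le 0$ gives $a^s\ge 1+s\log a$, so $1-a^s\le -s\log a=s\log(1/a)$. Substituting back the values of $a$ and $s$ yields
$$\P\big(\{P\in\cP_M: B_t(x)\not\subset P_x\}\big)\le \frac{8t}{R}\log\Big(\frac{|B_R(x)|}{|B_{R/8}(x)|}\Big),$$
which is the assertion.

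I do not expect any real obstacle here: essentially all the work is already done in Lemma \ref{L:17.4}, and the only extra ingredient is the one-line bound $1-a^s\le s\log(1/a)$ (valid for all $a\in(0,1]$ and $s\ge 0$; the restriction $t\le R/8$, i.e. $s\le 1$, is not even needed for this step, only to invoke \eqref{E:17.4.1a}). The sole point worth a remark is that the logarithm on the right-hand side is finite and nonnegative because $x$ lies in $B_{R/8}(x)\subseteq B_R(x)$, so the ratio of cardinalities is a well-defined number in $[1,\infty)$.
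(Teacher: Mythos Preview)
Your proof is correct and essentially identical to the paper's: both pass to the complement, invoke the lower bound from Lemma~\ref{L:17.4}, and then use the elementary inequality $1-a^s\le s\log(1/a)$. The only cosmetic difference is that the paper states the inequality as $1-\tfrac1z\le\log z$ for $z\ge 1$ (with $z=a^{-s}$), whereas you derive the same bound via $e^u\ge 1+u$.
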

\begin{proof}  We deduce from Lemma \ref{L:17.4} that 
\begin{align*}
\P\big( \{ P\in \cP_M :  B_t(x)\not\subset P_x\}\big)&=  1- \P\big( \{ P\in \cP_M :  B_t(x)\subset P_x\}\big)\\
                                                                &\le 1- \Big(\frac{|B_{R/8}(x)|}{|B_R(x)|}\Big)^{\frac{8t}R}\\
                                                &\le \log\Bigg( \Big(\frac{|B_R(x)|}{|B_{R/8}(x)|}\Big)^{\frac{8t}R}\Bigg)= \frac8{R} t\log\Big(\frac{|B_R(x)|}{|B_{R/8}(x)|}\Big),
\end{align*} 
where the last inequality follows from the fact that for $z\ge 1$ 
$$1-\frac1z\le \log(z).$$
\end{proof}
\begin{proof}[Proof of Theorem \ref{T:6.1}] After rescaling we can  assume that $d(x,y)\ge1$ for all $x\not =y$, in $M$, and  choose $k\in \N$ so that $\diam(M)\in [2^{k-1}, 2^k)$. 
We will first introduce some notation:

For two partitions $P$ and $Q$  of the same set $S$, we say $P$ {\em subdivides }$Q$ and write $P\succeq Q$
if every cluster $A\in Q$  is a union of clusters in $P$. If $A\subset S$ then the {\em restriction of $P$ onto $A$},
is the 
$$P|_A=\{ B\cap A: B\in P\}\setminus \{\emptyset\}.$$

For a metric space $M$ and any $R>0$, we denote the probability measure on the set $\cP$ on $R$-bounded Partitions 
constructed in Lemma \ref{L:17.4} by $\P^{(M,R)}$.

Our probability will be defined on the set 
$$\Omega=\big\{ (P^{(j)})_{j=0}^k\subset \cP_M: P^{(j)} \text{ is $2^{k-j}$ bounded and } P^{(j)}\succeq P^{(j-1)}, \text{ for }j=1,2,\ldots,k\big\}.$$
We let $\P$  be the probability measure on the subsets of $\Omega$, uniquely defined by the following properties:
\begin{align}
\label{E:17.1}&\P\big(\{ \bar P=(P^{(j)})_{j=0}^k\in \Omega : P^{(0)}=M\big\})=1\\
\intertext{ for $i=1,2, \ldots n$, $B\subset M$, $\diam(B)\le  2^{k-(i-1)}$ and $\cA\subset \cP_B$ we have }
\label{E:17.2}&\P\big( P^{(i)}|_B \in \cA \, \big|  B\text{ is cluster of } P^{(i-1)} \big)= \P^{(B,2^{k-i})}(\cA).
\end{align}  
Condition \eqref{E:17.2} means the following: under the condition that $P^{(i-1)}$ is given and contains a cluster $B\subset M$, the distribution of $P^{(i)}$ restricted to $B$ 
is $\P^{(B,2^{k-i})}$. So we can think of $(P^{(j)})_{j=0}^k$ as a stochastic process  with values in $\cP_M$,
whose distribution  is determined by  transition  probabilities:
$P^{(0)}=M$ (the trivial partition and given $P^{(i-1)}$, we consider each cluster  $B$ of $P^{(i-1)}$, and randomly divide $B$ according to the distribution $\P^{(B,2^{k-i})}$.
Since $d(x,y)\ge1$ for $x\not=y$, and since $\diam(M)<2^k$, it follows that $P^{(k)}$ is the {\em finest partition}, \ie\  $P^{(k)}=\big\{ \{x\}: x\in M\big\}$.

By induction on $k\in\N$ it can easily be seen that the probability $\P$  exists and is uniquely defined by the above property.

For each $\bar P= (P^{(j)})_{j=0}^k \in \Omega$ we define the following weighted tree $T=T(\bar P)=(V(\bar P), E(\bar P) , W(\bar P))$,
where 
\begin{align*}
&V=\bigcup_{j=0} ^k V_j \text{ with } V_j=\{(j, B): B \text { is cluster of $P^{(j)}$}\}, \text{ for $j=0,1,\ldots,n$}\\
&E=\bigcup_{j=1} ^k E_j \text{ with } E_j=\big\{ \{(j,A),(j-1,B)\}: A\in V_j, B\in V_{j-1},\text{ and } A\subset B\big\} \\
&\qquad\qquad \text{ for $j=1,\ldots,n$,}\\
&W: E\to \R, \quad e\mapsto 2^{k-j} \text{ if $e\in E_j$}.
\end{align*}
One might ask why we did not simply define the vertices of $T$ as the set of all the clusters of the $P^{(j)}$.
The problem is that the partitions $P^{(j)} $ and   $P^{(j-1)} $ could share the same clusters, and we need to distinguish them.

We note that $T$ is a tree, and  that then $(k,\{x\})$, $x\in M$ are the leaves  of $T$.

For $\bar P$ we define 
$$f_{\bar P}: M\to  T_{\P} , \quad x\mapsto  (k,\{x\}).$$
We claim that for some universal constant $D$, $(f_{\bar P} )_{\bar P\in \Omega} $ with the coefficients $(\P(\bar B))_{\bar B\in \Omega} $
is a $D\log(n)$ stochastic embedding.

Let us first show the lower bound. Assume that  $\bar P=(P^{(j)})_{j=0}^k\in \Omega$ and $x\not= y$ are in $M$. Then there is an $i\in\{1,2,\ldots,k\} $ so that 
$x,y$ are in the same cluster of $P^{(i-1)} $ but in two different clusters of  $P^{(i)} $.
This implies that $d(x,y)\le 2^{k-(i-1)}=2^{k+1-i} $ and and for the tree metric of $T_{\bar P}$, 
it follows that 
$$d_{T_{\bar P}}(f_{\bar P}(x),    f_{\bar P}(y) )=2\sum_{j=i}^k 2^{k-j} =2\sum_{j=0}^{k-i} 2^{j} =  2 (2^{k+1-i}-1)\ge d(x,y).$$

To get the upper estimate, first note that  from Corollary  \ref{C:17.5a}
it follows  for $i=1,2,\ldots k$, and $x,y\in M$, if $d(x,y) <2^{k-i-3}$
\begin{align}\label{E:17.5}
\P( P^{(i-1)}_x\keq P^{(i-1)}_y &\text{ and } P^{(i)}_x\not= P^{(i)}_y)\\
&= \P(   P^{(i)}_x\not= P^{(i)}_y\big| P^{(i-1)}_x\keq P^{(i-1)}_y )  \P( P^{(i-1)}_x\keq P^{(i-1)}_y)\notag\\
&\le  \P(   P^{(i)}_x\not= P^{(i)}_y\big| P^{(i-1)}_x= P^{(i-1)}_y )\notag \\
&=\P^{(P^{i-1}_x,2^{k-i})}(P\in \cP_M : y\not\in P_x)\notag\\
&\text{(Apply \eqref{E:17.2} to $B=P^{i-1}_x=P^{i-1}_y$)}\notag\\
&\le \P^{(P^{i-1}_x,2^{k-i})}(P\in \cP_M : B_{d(x,y)}\not\subset P_x)\notag\\
&\le \frac{8}{2^{k-i} }\log\Big( \frac{| B_{2^{k-i}}(x)|}{|B_{2^{k-i-3}}(x)|}\Big) d(x,y). \notag
\end{align}
It is also true that if  $d(x,y) >2^{k-i+1}$ then 
$$\P( P^{(i-1)}_x= P^{(i-1)}_y \text{ and } P^{(i)}_x\not= P^{(i)}_y)\le \P( P^{(i-1)}_x= P^{(i-1)}_y)= 0.$$

Secondly, note that for 
$\bar P\in \{ (P^{(j)})_{j=0}^k\in \Omega:  \{P^{(i-1)}_x= P^{(i-1)}_y \text{ and } P^{(i)}_x\not= P^{(i)}_y\}$
it follows that 
$$d_{T(\bar P)}(f(x),f(y))=2\sum_{j=0}^{k-i} 2^{j} =2( 2^{k-i+1}-1)< 2^{k+2-i}.$$
Let $s\le k$ so that $2^{s-1}<d(x,y) \le 2^s$. We compute 
\begin{align*}
\E_\P(d_{T(\bar P)}(f(x),f(y)))&\le \sum_{i=1}^k 2^{k+2-i}\P( P^{(i-1)}_x= P^{(i-1)}_y \text{ and } P^{(i)}_x\not= P^{(i)}_y)\\
&= \underbrace{\sum_{i=1}^{k-s-3} 2^{k+2-i}\P( P^{(i-1)}_x= P^{(i-1)}_y \text{ and } P^{(i)}_x\not= P^{(i)}_y)}_{=\Sigma_1}\\
&\quad+\underbrace{\sum_{i=k-s-2}^{k-s} 2^{k+2-i}\P( P^{(i-1)}_x= P^{(i-1)}_y \text{ and } P^{(i)}_x\not= P^{(i)}_y)}_{=\Sigma_2}\\
&\quad+\underbrace{\sum_{i=k-s+1}^k 2^{k+2-i}\P( P^{(i-1)}_x= P^{(i-1)}_y \text{ and } P^{(i)}_x\not= P^{(i)}_y)}_{=\Sigma_3}.\\
\end{align*}
Since $d(x,y)>2^{s-1} $ it follows that $\Sigma_3=0$.
Secondly 
$$\Sigma_2\le 3\cdot  2^{k+2-(k-s-2)}= 3\cdot 2^{s+4}\le 3 \cdot 2^5 d(x,y)$$
To estimate $\Sigma_1 $ we are able to use \eqref{E:17.5}  since $d(x,y)\le 2^s=\frac18  2^{k-(k-s-3)} $
\begin{align*}
\Sigma_1&\le  \sum_{i=1}^k 2^{k+2-i}\frac{8}{2^{k-i} }\log\Big( \frac{| B_{2^{k-i}}(x)|}{|B_{2^{k-i-3}}(x)|}\Big) d(x,y)\\
&={32}\log\Big(\prod_{i=1}^k  \frac{|B_{2^{k-i}}(x)|}{|B_{2^{k-i-3}}(x)|} \Big) d(x,y)\\
&\le 32 \log(n^3)d(x,y)=96 \log(n) d(x,y)
\end{align*}
And thus 
$$\E_\P(d_{T(\bar P)}(f(x),f(y)))\le 96 \log(n) d(x,y)+ 3\cdot 2^5 d(x,y).$$
\end{proof}

\subsection{Proof of  Theorem \ref{T:3.3}  }

Our goal is to show that for some universal constant $c$, every metric space with $n$ elements can be bijectively $c\log(n)$-stochastically embedded into trees. This will follow from Theorem \ref{T:3.2}  and the following result by Gupta.
\begin{thm}\label{T:13.4} {\rm \cite[Theorem 1.1]{Gupta2001}}  Let $T=(V,E,W)$ be a weighted finite tree and $V'\subset V$. Then there is $E\subset [V']^2$ and $W': E(\G')\to [0,\infty)$
so that $T'=(V',E(\G'),W')$ is a tree 
\begin{equation}\label{E:13.4.1} \frac14\le \frac{d_{T'}(x,y)}{d_{T}(x,y)}\le 2,\text{ for $x,y\in V'$}.\end{equation}
\end{thm}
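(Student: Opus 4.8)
\textbf{Proof plan for Gupta's Restriction Theorem (Theorem \ref{T:13.4}).}

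The plan is to reduce to the case where $V'$ consists of all the leaves of $T$, and then to construct $T'$ by a recursive procedure that ``contracts away'' the internal vertices one at a time, or equivalently by a single global construction. First I would observe that we may assume every vertex of $T$ not in $V'$ has degree at least $3$: a degree-$1$ vertex outside $V'$ can simply be deleted together with its edge (it is never on a path between two points of $V'$), and a degree-$2$ vertex outside $V'$ can be suppressed by merging its two incident edges into one edge whose weight is the sum — this changes no distance between points of $V'$ at all. After this cleanup, root $T$ at an arbitrary vertex $r\in V'$ (if $V'=\emptyset$ or a singleton there is nothing to prove), and process the non-$V'$ vertices in order of decreasing depth.

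The key step is the local move at a single internal vertex $v\notin V'$. Let $u$ be the parent of $v$ and let $c_1,\dots,c_m$ ($m\ge 2$) be its children; by the ordering, all of $c_1,\dots,c_m$ and $u$ already lie in $V'$ (or are being treated as terminals of the current subtree). The natural choice is to pick one child, say the one $c_{i_0}$ minimizing $d_T(v,c_{i_0})$, promote the edge $uv$ together with $vc_{i_0}$ to a single edge $\{u,c_{i_0}\}$ of weight $d_T(u,v)+d_T(v,c_{i_0})$, and reattach each other child $c_i$ directly to $c_{i_0}$ by an edge of weight $d_T(v,c_i)+d_T(v,c_{i_0})$. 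One then checks that for the resulting tree, distances between the surviving terminals are distorted by a factor in $[\tfrac14,2]$ relative to the distances that passed through $v$ in $T$: a path through $v$ had length $d_T(v,a)+d_T(v,b)$; in $T'$ the corresponding length is at most $d_T(v,a)+d_T(v,b)+2d_T(v,c_{i_0})\le 2\bigl(d_T(v,a)+d_T(v,b)\bigr)$ by minimality of $c_{i_0}$, and it is at least $d_T(v,a)+d_T(v,b)$, while the edge $uv$ is absorbed without loss. Paths not through $v$ are unchanged by this move. Iterating the move from the deepest vertices upward, each original $v$-through-path accumulates the factor-$2$ distortion \emph{only once} (after $v$ is removed, no later move re-routes through a copy of it), so the total distortion stays bounded by $2$ on the expansion side; the contraction side is handled symmetrically and gives the lower bound $\tfrac14$ — one must be slightly careful that a path in $V'$ can be affected by the moves at two distinct internal vertices it traverses, which is exactly where the constants $2$ and $\tfrac12\cdot\tfrac12=\tfrac14$ come from rather than $1$.

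The main obstacle I expect is the bookkeeping that shows the distortion does \emph{not} compound over successive moves: naively, each of the (up to $n$) local contractions could multiply the distortion, and one must argue that along any fixed geodesic $[x,y]_{T}$ with $x,y\in V'$, at most two of the internal vertices whose removal actually perturbs that particular path's length contribute, so the factors telescope to a universal constant. A clean way to organize this is not to track the recursion but to write down $T'$ directly: take $V'$ as vertex set, and for each pair define $d_{T'}$ via a carefully chosen spanning structure (e.g. build $T'$ as a minimum-weight structure dominating the ``doubled'' metric $2d_T|_{V'}$, or use the explicit ``caterpillar/star replacement'' at each branching vertex), then verify \eqref{E:13.4.1} by a single case analysis on where the geodesic $[x,y]_T$ branches. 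I would present the recursive version for intuition but prove the bound via this global description, since it makes the two-sided estimate $\tfrac14\le d_{T'}/d_T\le 2$ transparent and avoids the induction-on-compounding-errors pitfall entirely.
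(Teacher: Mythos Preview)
Your local move is broken as stated. You pick $c_{i_0}$ to be the closest \emph{child}, but the factor-$2$ bound you claim for a single move requires $2d_T(v,c_{i_0})\le d_T(v,a)+d_T(v,b)$ for every pair $a,b$ of neighbors of $v$, and when one of them is the parent $u$ this can fail badly. Concretely: take $v\notin V'$ with parent $u$ and children $c_1,c_2$, all three in $V'$, with $d_T(v,u)=1$ and $d_T(v,c_1)=d_T(v,c_2)=10$. Your move creates edges $\{u,c_1\}$ of weight $11$ and $\{c_1,c_2\}$ of weight $20$, so $d_{T'}(u,c_2)=31$ while $d_T(u,c_2)=11$, a ratio above $2.8$. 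The correct choice (and the one Gupta actually makes) is the closest \emph{neighbor}, not the closest child; with that fix the single-move ratio is in $[1,2]$ and distances never decrease, so the lower bound is $1$, not $1/4$ --- your ``symmetric'' explanation of the $1/4$ does not correspond to anything in the construction.

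The deeper problem is the compounding, which you correctly identify as the main obstacle but do not resolve. Your heuristic that ``at most two of the internal vertices whose removal actually perturbs that particular path's length contribute'' is false: a geodesic $[x,y]_T$ with $x,y\in V'$ can pass through arbitrarily many vertices of $V\setminus V'$, and each contraction reroutes the relevant segment and can add length. The content of Gupta's theorem is precisely that these increments telescope to a bounded total factor (his bound is $d_{T'}\le 8\,d_T$), and this requires a genuine charging/amortization argument, not the per-vertex observation you give. The paper takes a completely different route that sidesteps this bookkeeping: it first reduces (as you do) to $V'=\Leaf(T)$, then roots at an internal vertex $x_0$, lets $r_0$ be the distance to the nearest leaf $v_0$, cuts all edges crossing the sphere of radius $r_0/2$ to obtain subtrees $T_1,\dots,T_n$ (each with strictly fewer leaves), solves those recursively, and glues the resulting $T'_j$ together by attaching each $v_j$ to $v_0$ with a single edge. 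The induction is driven not by the target inequality alone but by an auxiliary invariant $d_{T'}(x,v_0)\le 2\,d_T(x,x_0)-r_0$, which is exactly what makes the two-sided bound $[\tfrac14,2]$ close up across the gluing; this replaces the amortization you are missing.
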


We first show the claim of Theorem \ref{T:13.4} in the special case that $V'$ consists of leaves of $T$. Recall that in a tree $T=(V,E)$
$$\Leaf(T)=\{ v\in V: \deg_T(v)=1\}.$$

\begin{lem}\label{L:13.5} Let $T=(V,E,W)$ be  a weighted finite tree and $V'\subset \Leaf(T)$, and let $d_T$ be the geodesic metric generated by $W: E\to (0,\infty)$. Then there is $E(\G')\subset [V']^2$ and $W': E'\to (0,\infty)$
so that $T'=(V',E',W')$ is a tree and 
$$\frac14\le \frac{d_{T'}(x,y)}{d_{T}(x,y)}\le 2,\text{ for $x,y\in V'$}.$$
\end{lem}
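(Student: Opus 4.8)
The plan is to build the tree $T'$ on the leaf set $V'$ by a greedy/recursive contraction of $T$, repeatedly pruning leaves of $T$ that do not lie in $V'$ and, whenever a vertex of degree two is created, suppressing it by merging its two incident edges and adding their weights. More precisely, I would argue by induction on $|V(T)|$. If every leaf of $T$ already lies in $V'$, there is nothing to prune, but we may still need to handle internal vertices; the base case is when $V(T)=V'$ together with at most the branch points needed to connect them, and here one checks the claim directly. For the inductive step, pick a leaf $v\in V(T)\setminus V'$ with incident edge $\{u,v\}$ and delete $v$; if this drops $\deg_T(u)$ to $2$, with the two remaining edges $\{u,a\}$ and $\{u,b\}$ of weights $w_1,w_2$, replace them by a single edge $\{a,b\}$ of weight $w_1+w_2$. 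The resulting tree $\hat T$ has strictly fewer vertices, still contains $V'$, and $d_{\hat T}$ agrees with $d_T$ on all surviving vertices (deleting a non-$V'$ leaf and suppressing a degree-two vertex are both distance-preserving on the remaining vertices). By induction $\hat T$ yields the desired $T'$ on $V'$; but since distances were preserved exactly, we would in fact get $d_{T'}=d_T$ on $V'$, which is stronger than needed.

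The subtlety — and the reason the factors $\frac14$ and $2$ appear — is that the above naive pruning does not terminate with $V(T')=V'$: after deleting all non-$V'$ leaves we are typically left with $V'$ plus a nonempty set of branch points (vertices of degree $\ge 3$ all of whose pendant subtrees contain a point of $V'$), and these branch points are genuinely not in $V'$ and cannot be suppressed. So the real content of the lemma is to show how to redistribute the metric when we are forced to eliminate a branch point. Here is the mechanism I would use. Suppose $z$ is a branch point with neighbours $u_1,\dots,u_k$ ($k\ge 3$) along edges of length $\ell_1,\dots,\ell_k$, and assume $z$ has been reduced (by the pruning above) to the situation where each $u_i$ lies on a path to some leaf in $V'$; pick the smallest $\ell_{i_0}$, say $\ell_1\le\ell_i$ for all $i$. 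Delete $z$ and re-attach: make $u_1$ the new center, joining $u_1$ to each $u_i$ ($i\ge 2$) by an edge of weight $\ell_i$ (dropping the short leg $\ell_1$ entirely, i.e.\ identifying its length into the new edges). Then for $i,j\ge 2$ the distance $d(u_i,u_j)=\ell_i+\ell_j$ is unchanged, while for the pair involving $u_1$ the old distance was $\ell_1+\ell_i$ and the new one is $\ell_i$, so it shrank by a factor between $\frac12$ and $1$. Iterating this over all branch points, each edge of the original tree gets ``used up'' at most a bounded number of times and each pairwise distance in $V'$ is distorted by a controlled multiplicative amount; tracking the worst case gives the lower bound $\frac14$ and the upper bound $2$.

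The step I expect to be the main obstacle is the bookkeeping in that last paragraph: one must verify that the cumulative effect of suppressing \emph{all} branch points, in some order, never shrinks a $V'$-distance below $\frac14$ of the original and never expands it above $2$ — a single branch point only costs a factor of $2$ on the short leg, but a path in $T'$ between two points of $V'$ may pass through several former branch points, so one needs an argument that the ``bad'' contractions along a single geodesic cannot compound badly. The clean way to do this is to process branch points from the leaves inward (a post-order traversal of $T$ rooted anywhere) and to maintain the invariant that, for each current vertex $x$ and each point $p\in V'$ already ``routed through'' $x$, the accumulated edge weight from $x$ toward $p$ is within $[\frac14,2]$ of the true $d_T(x,p)$; the choice of always discarding the \emph{shortest} incident leg at each branch point is exactly what keeps the lost length at most half of what is retained on any continuing path, and a geometric-series estimate $\sum 2^{-j}\le 1$ converts this local control into the global constants. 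I would carry out the induction with this invariant stated explicitly, do the single-branch-point computation in detail, and leave the geometric-series estimate as the one routine calculation.
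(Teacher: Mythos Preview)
Your pruning of non-$V'$ leaves and suppression of degree-two vertices is fine and distance-preserving; the gap is in the branch-point elimination. The losses there compound multiplicatively, not via a summable geometric series, so the constants $\tfrac14$ and $2$ do not survive. Concretely, take a complete binary tree of depth $k$ with all edge weights $1$ and let $V'$ be the $2^k$ leaves. Processing in post-order, at a depth-$(k-1)$ vertex $z$ with children $u_1,u_2$ and parent $p$ (all incident weights $1$) your rule deletes $z$ and attaches $u_1$ to both $u_2$ and $p$ with weight $1$. After this entire layer is processed, every depth-$(k-2)$ vertex again has three incident edges of weight $1$, two of them going down to leaves in $V'$; the local picture is identical, so the same step repeats verbatim. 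After processing the depth-$(k-j)$ layer you have joined two leaves $a,b\in V'$ with $d_T(a,b)=2j$ by a single edge of weight $1$; at the root ($j=k$) this gives $d_{T'}(a,b)/d_T(a,b)=1/(2k)$, violating the $1/4$ bound for every $k\ge 3$ and tending to $0$. The underlying problem is that once you suppress one branch point, the edge weights in your intermediate tree no longer record $d_T$-distances, so ``lost length $\le$ retained length'' holds only against the already-compressed metric; your proposed invariant (current distances within $[\tfrac14,2]$ of $d_T$) is too weak to prevent a further halving at the next level.

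The paper avoids this with a different decomposition and a \emph{stronger} inductive hypothesis. It picks an internal root $x_0$, finds the nearest $v_0\in V'$ at distance $r_0$, and cuts $T$ along the sphere of radius $r_0/2$ about $x_0$ into subtrees $T_1,\dots,T_n$ with new roots $x_j$ on that sphere. After recursing on each $T_j$ to get $T'_j$ with distinguished vertex $v_j$ (the closest point of $V'_j$ to $x_j$, at distance $r_j$), it glues by joining each $v_j$ to $v_1=v_0$ with an edge of length $r_j$. Crucially, the induction carries, in addition to the bi-Lipschitz bound, the inequality
\[
d_{T'}(x,v_0)\ \le\ 2\,d_T(x,x_0)\ -\ r_0\qquad(x\in V');
\]
the subtracted $r_0$ is exactly the additive slack that absorbs the gluing cost and prevents the multiplicative compounding you ran into. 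If you want to salvage your local approach, you would need an invariant of this additive-slack type rather than a purely multiplicative one.
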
 
\begin{proof} 
We start with a general weighted tree $T=(V,E,W)$ and a subset  $V'$ of $V$.
By eliminating successively every leaf of $T$ which is not in $V'$, we can assume that $V'$ are all the leaves. 
Unless $V'=V$ (in which case we are done), there must be 
an element $x_0\in V\setminus V'$,
 and the degree of this element must be at least $2$.
Denote the partial order defined by letting $x_0$ be the root of $T$  by $\succeq$. For $x,y\in V$ we denote 
by $x\wedge y$ the {\em minimum of $x$ and $y$} meaning the maximal vertex $z$  with respect to $\succeq$ 
for which $x\succeq z$ and $y\succeq z$.

Let $v_0\in V'$ for which $r_0:=d_{T}(x_0,v_0)$ is minimal. 
Let $\tilde E\subset E$ be the set of  edges $e=\{a,b\}$ in $E$ for 
which $d_T(x_0,a)< r_0/2\le d_T(x_0,b)$.
Order $\tilde E$ into $\{e_1,e_2,\ldots,e_n\}$, $e_i=\{a_i,b_i\}$ with $d_T(x_0,a_i)< r_0/2\le d_T(x_0,b_i)$.
One of the edges in $\tilde E$ must be contained in $[v_0, x_0]$ and assume that $\tilde E$ was ordered so that $e_1\subset[v_0,x_0]$.

We now define new trees $T_1,T_2,\ldots, T_n$, which are, up to possibly one additional element subtrees of $T$.

If $r_0/2=  d_T(x_0,b_j)$  we let $T_j$ be the subtree   $T_j=(V_j,E_j,W_j)$ with $$V_j=\{x\in V: x\succeq b_j\},\quad
E_j=E\cap[V_j]^2 \text{ and }W_j=W|_{E_j}.$$ In that case we put $x_j=b_j$.

 If $r_0/2<  d_T(x_0,b_j)$ then $T_j=(V_j,E_j,W_j)$, with $V_j= \{x\in V: x\succeq b_j\}\cup \{x_j\}$ where
 $x_j$ is an element not in $V$, and distinct from all the other $x_i$
  and we let $$E_j=E\cap[V_j]^2\cup \big\{\{ x_j, b_j\}\big\} \text{ and }
  W_j(e)=\begin{cases}  W(e)  &\text{if $e\in E\cap[V_j]^2$,}\\
                                                        d(x_0,b_j)-r_0/2 &\text{if $e=\{ b_j,x_j\}$.} \end{cases} $$
     
     We also define the following tree $\bar T=(\bar V, \bar E, \bar W)$  which contains $T$, and $T_1$, $T_2,\ldots T_n$ isometrically:
     \begin{align*}\bar V &=V\cup\{ x_j: j=1,2\ldots, n\}=V\dot\cup\{ x_j: j=1,2\ldots, n, d_T(x_0, b_j)>r_0/2\}\\
 \bar E&=(E\setminus \tilde E)\cup\big\{\{a_j,x_j\}:j=1,2\ldots n\} \big\}\cup \{\{b_j,x_j\}:j=1,2\ldots n, b_j\not= x_j\} \\
     \bar W&:\bar E\to (0,\infty_,\quad e\mapsto \begin{cases}  W(e) &\text{ if $e\in E\setminus E'$},\\
                                                                                                   r_0/2- d_T(a_j, x_0)&\text{ if $e= \{ a_j, x_j\} $},\\
                                                                                                    d_T(b_j,x_0) -r_0/2 &\text{ if $e= \{ b_j, x_j\} $ and $b_j\not= x_j$.}\end{cases}
                                                                                                    \end{align*}
                                                        
We note that the inclusion $(V,d_{T} )\subset  (\bar V,d_{\bar T})$, $(V_i,d_{T_i} )\subset  (\bar V,d_{\bar T})$ are isometric embeddings.

Let us make some observations:
\begin{enumerate}
\item Since for $j=1,\ldots n$ the vertex $a_j$ lies on the path $[x_0,b_j]$ connecting $x_0$ and $b_j$, it follows 
if $b_i=b_j$ then also $a_i=a_j$ and thus $i=j$. Thus, all the $b_j$, $j=1,\ldots n$,  are pairwise distinct, and the  $V_j$ are pairwise disjoint.

\item We claim that $n$ is at least $2$. Indeed, let $z_1$ and $z_2$ be two direct successors of $x_0$ (the degree of $x_0$ is at least $2$)
and let $w_1$ be a leaf in $\{ x\in V: x\succeq z_1\}$ and $w_2$ be a leaf in $\{ x\in V: x\succeq z_2\}$,
and let for $s=1,2$  $e_{i_s}=\{a_{i_s},b_{i_s}\}$ be the edge in $[x_0, w_s]$ for which $d_T(x_0,a_{i_s})< r_0/2\le d_T(x_0,b_{i_s})$
(such edges exist because of the minimality of $r_0$).
Since the path from $w_1$ to $w_2$ must go through $x_0$ it follows that $b_{i_1}\not=b_{i_2}$ (it could be possible that $a_{i_1}=a_{i_2}=x_0$!) 

\item Put  $V'_j=V'\cap V_j$ for $j=1, \ldots n$. Since for all $w\in V'$ $d(x_0,w)\ge r_0$, it follows 
that $V'=\bigcup_{j=1}^n V'_j$, and that the $V_j'$ are the leafs of $T_j$.

\item For $i\not= j$ and $v\in V_i'$ and $w\in V'_j$, we observe that $a_i$ and $a_j$ have to lie on the path connecting   $v$ with $w$ and thus 
\begin{align*}
 d_T(v,w)\ge d_T(v, a_i)+d_T(w,a_j)
\end{align*}

\end{enumerate}

For $j=1,2,\ldots n$  put $T_j=(V_j,E_j, W_j)$ with $E_j=\bar E\cap [V_j]^2$ and $W_j=\bar W|_{E_j}$,  and $V_j'$ is of strictly less cardinality 
that $V'$. Moreover for $x,y\in V_j$ we have $d_{\bar T}(x,y)=d_{T_j}(x,y)$.

For $j=1,\ldots n$, put $r_j=\min_{v\in V_j'} d_{T_j}(x_j, v)$, and choose $v_j\in V_j$ so that $d_{T_j}(v_j,x_j)=r_j$.
Note that $r_1=r_0/2= d_{T_j}(v_0, x_1)$ and thus we can assume that $v_1=v_0$.

Then we can continue to decompose  $T_1$, $T_2,\ldots T_n$ until we arrive at trees that consist of one single element of $V'$ by applying inductively the following claim.

We claim the following:  

\noindent{\bf Claim:} Assume  that for $j=1,2\ldots n$, $T_j$ satisfies the  following condition:
There is a tree $T'_j=(V'_j,E'_j,W'_j)$ on $V'_j$ so that 
\begin{align}
\label{E:13.6.1}d_{T'_j}(x,v_j)\le 2d_{T_j} (x,x_j)- r_j ,\text{ for $x\in V_j'$ }\\
\label{E:13.6.2}\frac14\le\frac{d_{T'_j}(x,y)}{d_{T_j}(x,y)} \le 2,\text{ for $x,y\in V_j'$ }
\end{align}  

Then construct $T'=(V',E',W')$ by  {\em glueing } $T'_1$, $T'_2,\ldots T'_n$ together, connecting $v_1=v_0$ to the other $v_j$. More precisely, we put 
$$E'=\bigcup_{j=1}^n E'_j \cup \big\{\{v_1,v_j\}: 2\le j\le n\big\}$$
and use the weight
$$W'(e)=W'(e) \text{ if }e\in \bigcup_{j=1}^n E'_j\text{ and }W(\{v_1,v_j\})=d(x_0,v_j))=r_j.$$
We claim that it  follows that  
\begin{align}
\label{E:13.6.3}&d_{T'}(x,v_0)\le 2d_{T} (x,x_0)- r_0 ,\text{ for $x\in V'$, }\\
\label{E:13.6.4}&\frac14\le\frac{d_{T'}(x,y)}{d_{T}(x,y)} \le 2,\text{ for $x,y\in V'$.}
\end{align}  
Since \eqref{E:13.6.3} and \eqref{E:13.6.4} are clearly satisfied if $V'$ is a singleton, the  Theorem follows by induction from the claim.

We first verify the first inequality in \eqref{E:13.6.4}: For $x,y\in V'$, either $x,y$  lie  both in $V_j'$ for some $j$, we deduce our claim from the induction hypothesis
or  $x\in V_i $ and $y\in V_j$, $1\le i, \le n$, $i\not =j$, and so without loss of generality $j\not=1$
Then 
\begin{align*}d_{T}(x,y)&\le  d_{T} (x,v_i)+ d_{T} (v_i,x_0)+d_{T} (v_j,x_0)+d_{T}(v_j,w)\\
&\le 4d_{T'} (x,v_i)+4d_{T'}(v_j,y)+ d_{T_i} (v_i,x_i)+d_{T_j} (v_j,x_j)+ d_{\bar T} (x_i,x_0)+d_{\bar T} (x_j,x_0)\\
&\text{(By the induction Hypothesis)}\\
&= 4d_{T'} (x,v_i)+4d_{T'}(v_j,y)+ r_i+r_j +r_0\\
&= 4(d_{T'} (x,v_i)+d_{T'}(v_j,y))+ r_i+r_j+ 2r_1\\
&= 4(d_{T'} (x,v_i)+d_{T'}(v_j,y))+\begin{cases} 3r_1 +r_j &\text{if $i=1$}\\ 
                                                                                  2r_i+ 2r_j&\text{if $i\not =1$}\end{cases} \\
&\le 4d_{T'} (x,y).
\end{align*} 
Secondly, we verify the second inequality in \eqref{E:13.6.4}. Let $x,y\in V'$. If $x,y$ lie both in $V_j'$ for some $j$; we deduce our claim again from the induction hypothesis. So assume that $x\in V_i $ and $y\in V_j$, $1\le i, \le n$, $i\not =j$, Also assume that $j\not=1$.

We deduce that 
\begin{align*}d_{T'}(x,y)&= d_{T'_i} (x,v_i)+d_{T'}(v_i,v_j)+d_{T_j'}(y,v_j)\\
                   &\le d_{T'_i} (x,v_i)+d_{T_j'}(y,v_j)+r_i+r_j\\
                   &\le 2d_{T_i}(x, x_i)-r_i +2d_{T_j}(y, x_j)-r_j +r_i+r_j\\
                   &= 2d_{T_i}(x, x_i) +2d_{T_j}(y, x_j)\\
                   &\le 2d_{T}(x,a_i)+2d_{T}(y,a_j) \le 2d_T(x,y).\end{align*}

We finally verify \eqref{E:13.6.3}. Let $x\in V'$ and thus $x\in V'_j$ for some $j=1,2\ldots n$.

\noindent{\bf Case 1:} $j=1$. Thus $x$ and $v_1=v_0$ are in $T_1$ and  it follows that
\begin{align*}
d_{T'} (x, v_1)&=d_{T'_1}(x,v_1)  \text{ by definition of $T'$ $[x,v_1]\subset V_1'$}\\
 &\le 2 d_{\bar T}(x,x_1)   -r_1\text{ (by \eqref{E:13.6.1} )}\\
 &\le 2 (d_T(x , x_0) -r_0/2) -r_1/2    \\
 &  =   2d_T(x, x_0) -r_0. 
\end{align*}
\noindent{\bf Case 2:} $j>1$. Then
\begin{align*}
d_{T'}(x,v_1)&= d_{T'_j} (x,v_j) + d_{T'}(v_j,v_1)\\
&\le 2 d_{T_j}(x,x_j) -r_j +r_j\text{ (by \eqref{E:13.6.1} )}\\
&=2 d_{T_j}(x,x_j)\\
&=2(d_{T}(x,x_0)-r_0/2)=2d_{T}(x,x_0)-r_0.
\end{align*}
\end{proof} 
\begin{proof}[Proof of Theorem \ref{T:13.4}]
 As in the proof of Lemma  \ref{L:13.5}, we can assume that $\Leaf(T)\subset V'$.
 We will prove the result by the induction of the cardinality of the set 
$V'\setminus \Leaf(T)$ Lemma  \ref{L:13.5} handles the case that $V'\subset \Leaf(T)$.

Assume $a\in V'\setminus \Leaf(T)$. We choose $a$ as the root and denote the partial order of $T$, if by $\succeq$. 
Let $n$ be the degree of $a$ and $a_1,a_2, \ldots a_n$ be the neighbors of $a$. We define trees $T_j=(V_j,E_j,W_j)$  and $V_j'\subset V_j$,  for $j=1,2,\ldots n$
by 
$$V_j=\{ x\in V: x\succeq a_j\}\cup\{a\}, E_j=E\cap [V_j]^2,  W_j=W|_{E_j}\text{ and } V_j'=V'\cap V_j$$
Then the cardinality of $V_j'\setminus \Leaf(T_j)$ is strictly smaller than the cardinality 
of $V'\setminus \Leaf(T)$, and we can apply the inductive hypothesis to obtain for each $j=1,2\ldots n$ a tree $T'_j=(V_j', E_j', W'_j)$ satisfying 
\eqref{E:13.4.1}.

Thus since the sets $V_j'\setminus\{a\}$, are pair wise disjoint and $a\in V_j$, for $j=1,2\ldots,n$ 
$T'=(V',E', W')$, with $E'=\cup_{j=1}^n E'_1$, $W':E'\to  (0,\infty), e\mapsto W_j'(e)$, if $e\in E'_j$ is also a tree.
Since every path from an element in $V_i$ to and element in $V_j$ has to contain $a$ it follows that $T'=(V',E',W')$ satisfies 
\eqref{E:13.4.1}.
\end{proof}

Using Theorem \ref{T:6.1} and Corollary \ref{T:13.4} we obtain the following results
\begin{cor}\label{C:13.7a}
Assume that $(M,d)$ embeds $D$-stochastically into trees, then $(M,d)$ embeds bijectively  $8D$-stochastically int trees.

Thus there are $n$, so that for $i=1,2,\ldots$ there is a set $E_i\subset[M]^2$, a metric $d_i$, and numbers $p_i\in (0,1]$, 
so that $\sum_{i=1}^n p_i=1$, $T_i=(M,E_i)$ is a tree, and $d_i$ a geodesic distance on $M$ with respect to $T_i$ and so that 
for all $x,y\in M$
\begin{align}\label{E:13.7a.1} 
&d(x,y)\le d_i(x,y), \text{ for $i=1,2,\ldots, n$,}\\
&\sum_{j=1}^n p_id_i(x,y)\le Dd(x,y).\label{E:13.7a.2}
\end{align}
Moreover we can assume that for $i=1,2,\ldots n$ actually $d_i$ is the geodesic metric generated by the 
weight function
$$w_i: E_i\to [0,\infty),\quad   \text{ with } w_i(e) =d(u,v)\text{ if } e=\{u,v\}\in E_i.$$
\end{cor}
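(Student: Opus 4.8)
The plan is to feed each weighted tree occurring in the given $D$-stochastic embedding into Gupta's restriction theorem (Theorem~\ref{T:13.4}), transport the resulting subtrees back to $M$ along the embedding maps, and absorb the distortion loss of Gupta's theorem into the stochastic distortion.

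First I would fix a $D$-stochastic embedding of $(M,d)$ into trees: weighted trees $(T_i,d_i)$, maps $f_i\colon M\to V(T_i)$, and a probability $\P=(p_i)_i$ with $d(x,y)\le d_i(f_i(x),f_i(y))$ and $\sum_i p_i\, d_i(f_i(x),f_i(y))\le D\,d(x,y)$ for all $x,y\in M$. Since $d(x,y)>0$ for $x\ne y$, expansiveness forces each $f_i$ to be injective, so $V_i':=f_i(M)$ is a subset of $V(T_i)$ of cardinality $|M|$; replacing $T_i$ by the (finite) subtree spanned by $V_i'$, which does not change distances on $V_i'$, I may assume each $T_i$ is finite. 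Applying Theorem~\ref{T:13.4} to $T_i$ and $V_i'$ yields a tree $T_i'=(V_i',E_i',W_i')$ with
\[
\tfrac14\, d_i(f_i(x),f_i(y))\ \le\ d_{T_i'}(f_i(x),f_i(y))\ \le\ 2\, d_i(f_i(x),f_i(y)),\qquad x,y\in M.
\]

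Next I would pull this structure back to $M$ along the bijections $f_i^{-1}\colon V_i'\to M$: this produces an edge set $E_i\subset[M]^2$ making $(M,E_i)$ a tree and a geodesic metric $\bar d_i$ on $M$ satisfying the same two-sided estimate with $f_i(x),f_i(y)$ on the right replaced by $x,y$. Rescaling by a factor $4$, set $d_i^{\#}:=4\bar d_i$ (equivalently, multiply each edge weight by $4$). Then $d_i^{\#}(x,y)\ge d_i(f_i(x),f_i(y))\ge d(x,y)$, so the identity $M\to(M,d_i^{\#})$ is expansive, while $d_i^{\#}(x,y)\le 8\, d_i(f_i(x),f_i(y))$, whence $\sum_i p_i\, d_i^{\#}(x,y)\le 8D\,d(x,y)$. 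Thus the identity maps $M\to(M,E_i)$, the metrics $d_i^{\#}$, and the weights $(p_i)$ constitute a bijective $8D$-stochastic embedding of $(M,d)$ into trees.

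For the last assertion I would apply the Remark following Theorem~\ref{T:3.3}: on each tree $(M,E_i)$ replace the weight of an edge $e=\{u,v\}$ by $w_i(e):=d(u,v)$, and let $d_i$ now denote the geodesic metric generated by $w_i$. Because $d(u,v)\le d_i^{\#}(u,v)$, which is at most the $d_i^{\#}$-weight of $e$, shrinking the weights can only shrink geodesic distances, so $d_i\le d_i^{\#}$ and the average bound $\sum_i p_i\, d_i(x,y)\le 8D\, d(x,y)$ is preserved; expansiveness survives since for any path $x=x_0,\dots,x_m=y$ in $(M,E_i)$ the triangle inequality for $d$ gives $\sum_{j=1}^m w_i(\{x_{j-1},x_j\})=\sum_{j=1}^m d(x_{j-1},x_j)\ge d(x,y)$, hence $d_i(x,y)\ge d(x,y)$. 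The only substantive ingredient in all of this is Gupta's theorem; everything else is bookkeeping, and the one point demanding attention is the constant: the ratio $\tfrac14\le\,\cdot\,\le 2$ in Theorem~\ref{T:13.4} is exactly what forces the rescaling by $4$, and $4\cdot 2=8$ is where the factor in "$8D$" originates.
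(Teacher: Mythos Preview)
Your proposal is correct and follows essentially the same route as the paper: apply Gupta's restriction theorem (Theorem~\ref{T:13.4}) to the image $f_i(M)$ in each tree, pull back to $M$, rescale by $4$ to restore expansiveness (yielding the factor $8$), and then for the ``moreover'' part replace each edge weight by the original $d$-distance and invoke the triangle inequality---exactly as in the Remark after Theorem~\ref{T:3.3} and the paper's own proof. Your write-up is in fact more careful than the paper's one-line justification, in particular in noting injectivity of the $f_i$ and passing to a finite spanning subtree so that Theorem~\ref{T:13.4} applies.
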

\begin{proof} The existence of $n$  sets $E_i\subset[M]^2$, a metrics $d_i$, and numbers $p_i\in (0,1]$, for $i=1,2,\ldots,n$ follows from
Theorem \ref{T:6.1} and Corollary \ref{T:13.4}. To see the moreover part,  let us denote  for $i=1,2,\ldots, n$ the geodesic distance on $M$
generated by the above defined weight function $w_i$ by $\tilde d_i$. From the triangle inequality, it follows that $\tilde d_i(x,y)\ge d(x,y)$.
 We note that from  \eqref{E:13.7a.1} it follows that 
for every $i=1,2,\ldots,n$, any $e=\{u,v\} \in E_i$, we have that $d(u,v)\le d_i(u,v)$, and thus $d(x,y)\le \tilde d_i(x,y)\le d_i (x,y)$, for any $x,y\in M$,
which implies \eqref{E:13.7a.2}.
\end{proof}

\begin{bibsection}
\begin{biblist} 

\bib{Aliaga_Pernecka2020}{article}{
  author={Aliaga, R.},
  author={Perneck\'{a}, E.},
  title={Supports and extreme points in {L}ipschitz-free spaces},
  journal={Rev. Mat. Iberoam.},
  fjournal={Revista Matem\'{a}tica Iberoamericana},
  volume={36},
  year={2020},
  number={7},
  pages={2073--2089},
  issn={0213-2230},
  mrclass={46B20 (54E50)},
  mrnumber={4163992},
  mrreviewer={Antonis N. Manoussakis},
  doi={10.4171/rmi/1191},
  url={https://doi.org/10.4171/rmi/1191},
}

\bib{BaudierGartlandSchlumprecht2023}{article}{
  author={Baudier, F.},
  author={Gartland, C.},
  author={Schlumprecht, Th.},
  title={$L_1$-distortion of Wasserstein metrics: a tale of two dimensions},
  journal={Transactions of the American Mathematical Society},
  volume={10},
  year={2023},
  pages={1077--1118},
}

\bib{BMSZ2022}{article}{
  author={Baudier, F. },
  author={Motakis, P. },
  author={Schlumprecht, Th. },
  author={Zs\'{a}k, A.},
  title={Stochastic approximation of lamplighter metrics},
  journal={Bull. Lond. Math. Soc.},
  fjournal={Bulletin of the London Mathematical Society},
  volume={54},
  year={2022},
  number={5},
  pages={1804--1826},
  issn={0024-6093},
  mrclass={46B85 (05C05 05C12)},
  mrnumber={4512686},
  doi={10.1112/blms.12657},
  url={https://doi-org.srv-proxy1.library.tamu.edu/10.1112/blms.12657},
}

\bib{Dalet_Kaufmann_Prochazka2016}{article}{
  author={Dalet, A.},
  author={Kaufmann, P.},
  author={Proch\'{a}zka, Anton\'{\i }n},
  title={Characterization of metric spaces whose free space is isometric to {$\ell _1$}},
  journal={Bull. Belg. Math. Soc. Simon Stevin},
  fjournal={Bulletin of the Belgian Mathematical Society. Simon Stevin},
  volume={23},
  year={2016},
  number={3},
  pages={391--400},
  issn={1370-1444},
  mrclass={46B04 (46B20)},
  mrnumber={3545460},
  mrreviewer={Antonio J. Guirao},
  url={http://projecteuclid.org/euclid.bbms/1473186513},
}

\bib{Dixmier1948}{article}{
  author={Dixmier, J.},
  title={Sur un th\'eore\`eme de Banach},
  journal={Duke Mathematical Journal},
  volume={15},
  year={1948},
  pages={1057 -- 71},
}

\bib{FakcharoenpholRaoTalwar2004}{article}{
  author={Fakcharoenphol, J.},
  author={Rao, S.},
  author={Talwar, K.},
  title={A tight bound on approximating arbitrary metrics by tree metrics},
  journal={J. Comput. System Sci.},
  fjournal={Journal of Computer and System Sciences},
  volume={69},
  year={2004},
  number={3},
  pages={485--497},
  issn={0022-0000},
  mrclass={05C10 (68R10 68W25)},
  mrnumber={2087946},
  mrreviewer={A. Vijayakumar},
  doi={10.1016/j.jcss.2004.04.011},
  url={https://doi-org.srv-proxy1.library.tamu.edu/10.1016/j.jcss.2004.04.011},
}

\bib{Folland1999}{book}{
  author={Folland, G. B.},
  title={Real analysis},
  series={Pure and Applied Mathematics (New York)},
  edition={Second},
  note={Modern techniques and their applications, A Wiley-Interscience Publication},
  publisher={John Wiley \& Sons, Inc., New York},
  year={1999},
  pages={xvi+386},
}

\bib{Godard2020}{article}{
  author={Godard, A.},
  title={Tree metrics and their Lipschitz-free spaces},
  journal={Proc. Amer. Math. Soc.},
  volume={138},
  date={2010},
  pages={4311 -- 4320},
}

\bib{GodefroyKalton2003}{article}{
  author={Godefroy, G.},
  author={Kalton, N. J.},
  title={Lipschitz-free Banach spaces},
  journal={Studia Math.},
  volume={159},
  date={2003},
  pages={121\ndash 141},
}

\bib{Gupta2001}{article}{
  author={Gupta, A.},
  title={Steiner points in tree metrics don't (really) help},
  booktitle={Proceedings of the {T}welfth {A}nnual {ACM}-{SIAM} {S}ymposium on {D}iscrete {A}lgorithms ({W}ashington, {DC}, 2001)},
  pages={220--227},
  publisher={SIAM, Philadelphia, PA},
  year={2001},
  mrclass={05C85 (51E10)},
  mrnumber={1958411},
}

\bib{GuptaNewmanRabinovichSinclair2004}{article}{
  author={Gupta, A. },
  author={ Newman, I.},
  author={Rabinovich, Y.},
  author={ Sinclair, Al.},
  title={Cuts, trees and {$l_1$}-embeddings of graphs},
  journal={Combinatorica},
  fjournal={Combinatorica. An International Journal on Combinatorics and the Theory of Computing},
  volume={24},
  year={2004},
  number={2},
  pages={233--269},
}

\bib{heinrichmankiewicz1982}{article}{
  author={Heinrich, S. },
  author={Mankiewicz, P.},
  title={Applications of ultrapowers to the uniform and {L}ipschitz classification of {B}anach spaces},
  journal={Studia Math.},
  fjournal={Polska Akademia Nauk. Instytut Matematyczny. Studia Mathematica},
  volume={73},
  year={1982},
  number={3},
  pages={225--251},
  issn={0039-3223},
  mrclass={46B20 (03C99)},
  mrnumber={675426},
}

\bib{Kalton2004}{article}{
  author={Kalton, N. J.},
  title={Spaces of Lipschitz and H\"older functions and their applications},
  journal={Collect. Math.},
  volume={55},
  date={2004},
  pages={171\ndash 217},
}

\bib{Kantorovich1942}{article}{
  author={Kantorovich, L. V.},
  title={On the translocation of masses},
  journal={Dokl. Akad. Nauk SSSR},
  volume={37},
  date={1942},
  pages={227\ndash 229},
}

\bib{Kislyakov75}{article}{
  author={Kislyakov, S.~V.},
  title={Sobolev imbedding operators, and the nonisomorphism of certain {B}anach spaces},
  date={1975},
  issn={0374-1990},
  journal={Funkcional. Anal. i Prilo\v {z}en.},
  volume={9},
  number={4},
  pages={22\ndash 27},
  review={\MR {0627173}},
}

\bib{Mathey-PrevotValette2023}{article}{
  author={Mathey-Prevot, M.},
  author={Valette, A.},
  title={Wasserstein distance and metric trees},
  ournal={Enseign. Math.},
  fjournal={L'Enseignement Math\'{e}matique},
  volume={69},
  year={2023},
  number={3-4},
  pages={315--333},
  issn={0013-8584},
  mrclass={Prelim},
  mrnumber={4599250},
  doi={10.4171/lem/1052},
  url={https://doi-org.srv-proxy1.library.tamu.edu/10.4171/lem/1052},
}

\bib{NaorSchechtman2007}{article}{
  author={Naor, A.},
  author={Schechtman, G.},
  title={Planar earthmover is not in $L\sb 1$},
  journal={SIAM J. Comput.},
  volume={37},
  date={2007},
  pages={804--826 (electronic)},
}

\bib{Ng1971}{article}{
  author={Ng, K.},
  title={On a theorem of Dixmier},
  journal={Math.Scand.},
  volume={29},
  year={1071},
  pages={279 -- 280},
}

\bib{Weaver1999}{book}{
  author={Weaver, N.},
  title={Lipschitz algebras},
  edition={First Edition},
  publisher={World Scientific Publishing Co. Inc.},
  place={River Edge, NJ},
  date={1999},
  pages={xiv+223},
}

\end{biblist}

\end{bibsection}

\end{document}